\pgfplotsset{compat=1.13}
\theoremstyle{plain}
\newtheorem{theorem}{Theorem}
\newtheorem{lemma}[theorem]{Lemma}
\newtheorem{prop}[theorem]{Proposition}
\newtheorem{corollary}[theorem]{Corollary}
\newtheorem{problem}[theorem]{Problem}
\theoremstyle{definition}
\newtheorem{definition}[theorem]{Definition}
\newenvironment{conclusion}[1]
  {\innercustomthm}
  {\endinnercustomthm}
\theoremstyle{remark}
\newtheorem{remark}[theorem]{Remark}
\newtheorem{question}[theorem]{Question}
\numberwithin{equation}{section}
\numberwithin{theorem}{section}
\numberwithin{conjecture}{section}
\newcommand{\Conv}{\mathop{\scalebox{1.5}{\raisebox{0.0ex}{$\ast$\!}}}}%
\newcommand{\x}{\scalebox{1.2}{$\chi$} } 
\newcommand{\br}{\overline}
\newcommand{\R}{\mathbb R}
\newcommand{\C}{\mathbb C}
\newcommand{\Z}{\mathbb Z}
\newcommand{\N}{\mathbb N}
\newcommand{\Q}{\mathbb Q}
\newcommand{\h}{\mathscr H}
\DeclareMathOperator{\dist}{{\mathrm{dist}}}
\DeclareMathOperator{\diam}{{\mathrm{diam}}}
\DeclareMathOperator{\inter}{{\mathrm{int}}}
\DeclareMathOperator{\md}{\mathrm{Mod}}
\DeclareMathOperator{\cp}{\mathrm{Cap}}
\DeclareMathOperator{\loc}{\mathrm{loc}}
\DeclareMathOperator{\NED}{\mathit{NED}}
\DeclareMathOperator{\CNED}{\mathit{CNED}}
\DeclareMathOperator{\QCH}{\mathit{QCH}}
\DeclareMathOperator{\*ned}{\Conv\NED}
\DeclareMathOperator{\SH}{\mathit{SH}}
\begin{document}
\title{CNED sets: countably negligible for extremal distances}

\author{Dimitrios Ntalampekos}
\address{Mathematics Department, Stony Brook University, Stony Brook, NY 11794, USA.}

\thanks{The author is partially supported by NSF Grant DMS-2000096.}
\email{dimitrios.ntalampekos@stonybrook.edu}

\date{\today}
\keywords{Quasiconformal map, removable set, exceptional set, negligible set, modulus, extremal distance}
\subjclass[2020]{Primary 30C62, 30C65; Secondary 30C35, 30C75, 30C85, 31A15, 31B15, 46E35.}

\setcounter{tocdepth}{1}

\begin{abstract}
The author has recently introduced the class of $\CNED$ sets in Euclidean space, generalizing the classical notion of $\NED$ sets, and shown that they are quasiconformally removable. A set $E$ is $\CNED$ if the conformal modulus of a curve family is not affected when one restricts to the subfamily intersecting $E$ at countably many points. We prove that several classes of sets that were known to be removable are also $\CNED$, including sets of $\sigma$-finite Hausdorff $(n-1)$-measure and boundaries of domains with $n$-integrable quasihyperbolic distance. Thus, this work puts in common framework many known results on the problem  of quasiconformal removability and suggests that the $\CNED$ condition should also be necessary for  removability. We give a new necessary and sufficient criterion for closed sets to be (\textit{C})\textit{NED}. Applying this criterion, we show that countable unions of closed (\textit{C})\textit{NED} sets are (\textit{C})\textit{NED}. Therefore we enlarge significantly the known classes of quasiconformally removable sets. 
\end{abstract}

\maketitle

\tableofcontents

\section{Introduction}

\subsection{Definitions}

Before presenting our results, we first discuss some background. We assume throughout that $n\geq 2$. For an open set $U\subset \R^n$ and two continua  {$F_1,F_2\subset  U$} the family of curves joining $F_1$ and $F_2$ inside $U$ is denoted by $\Gamma(F_1,F_2;U)$. For a set $E\subset \R^n$ we denote by $\mathcal F_0(E)$ the family of curves in $\R^n$ that do not intersect $E$, {except possibly at the endpoints}, and by $\mathcal F_{\sigma}(E)$ the family of curves in $\R^n$ that intersect $E$  at countably many points, not counting multiplicity. 

A set $E\subset \R^n$ is \textit{negligible for extremal distances} if for every pair of non-empty, disjoint continua $F_1,F_2\subset \R^n$ we have
\begin{align*}
\md_n \Gamma(F_1,F_2;\R^n) =\md_n (\Gamma(F_1,F_2;\R^n)\cap \mathcal F_0(E)).
\end{align*}
In this case, we write $E\in \NED$; note that we suppress the dimension $n$ in this notation. If, instead, there exists a uniform constant $M\geq 1$ such that 
\begin{align*}
\md_n \Gamma(F_1,F_2;\R^n)\leq M\cdot\md_n (\Gamma(F_1,F_2;\R^n)\cap \mathcal F_0(E)),
\end{align*}
we say that $E$ is \textit{weakly} $\NED$ and we write $E\in \NED^w$. We remark that we do not require $E$ to be closed. 
For closed sets, the classes $\NED$ and $\NED^w$ agree \cite{AseevSycev:removable}. 

The author in \cite{Ntalampekos:metric_definition_qc} introduced the class of $\CNED$ sets, that is, \textit{countably negligible for extremal distances}. We say that a set $E\subset \R^n$ is of class $\CNED$ if
\begin{align*}
\md_n \Gamma(F_1,F_2;\R^n) =\md_n (\Gamma(F_1,F_2;\R^n)\cap \mathcal F_{\sigma}(E))
\end{align*}
for every pair of non-empty, disjoint continua $F_1,F_2\subset \R^n$. In this case we write $E\in \CNED$. As above, we also define the class $\CNED^w$ in the obvious manner. Again, $E$ need not be closed  and the dimension $n$ is suppressed in this notation. For closed sets we show in Theorem \ref{theorem:criterion_compact} that the classes $\CNED$ and $\CNED^w$ agree. The monotonicity of modulus implies that $\NED\subset \CNED \subset \CNED^w.$

\subsection{Properties of negligible sets}

Closed $\NED$ sets have been studied extensively in the plane by Ahlfors and Beurling in \cite{AhlforsBeurling:Nullsets}, where they proved that these sets coincide with the closed sets $E\subset \C$ that are \textit{removable for conformal embeddings} or else \textit{$S$-removable}; that is, every conformal embedding of $\C\setminus E$ into $\C$ is the restriction of a M\"obius transformation. See also Pesin's work \cite{Pesin:removable}. Equivalently, we may replace conformal with quasiconformal maps in this definition. See \cite{Younsi:removablesurvey} for a survey. V\"ais\"al\"a initiated the study of closed $\NED$ sets in higher dimensions \cite{Vaisala:null}, proving that closed sets of Hausdorff $(n-1)$-measure zero are of class $\NED$. The result of Ahlfors--Beurling was partially generalized in higher dimensions by Aseev--Sy\v{c}ev \cite{AseevSycev:removable} and Vodopyanov--Goldshtein \cite{VodopjanovGoldstein:removable}, who proved that if a closed set $E\subset \R^n$, $n\geq 3$, is of class $\NED$, then it is removable for quasiconformal embeddings. The converse is not known in dimensions $n\geq 3$. Finally, a characterization of closed $\NED$ sets in $\R^n$ was provided by Vodopyanov--Goldshtein \cite{VodopjanovGoldstein:removable}, who proved that closed $\NED$ sets coincide with sets that are removable for the Sobolev space $W^{1,n}$. We direct the reader to  the introduction of \cite{Aseev:nedhyperplane} for a survey of the known results. $\NED$ sets are closely related to quasiextremal distance ($\mathit{QED}$) exceptional sets, introduced by Gehring--Martio \cite{GehringMartio:QEDdomains}. As remarked, here we will work with $\NED$ sets that are not necessarily closed. 

The relation between $\CNED$ sets and quasiconformal maps was unveiled in \cite{Ntalampekos:metric_definition_qc}. We state a special case of the main theorem.

\begin{theorem}\label{theorem:removable}
Let $E\subset \R^n$ be a closed $\CNED$ set. Then every homeomorphism of $\R^n$ that is quasiconformal on $\R^n\setminus E$ is quasiconformal on $\R^n$. 
\end{theorem}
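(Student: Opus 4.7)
The plan is to verify the geometric definition of quasiconformality for $f$ on $\R^n$: given the constant $K\geq 1$ of quasiconformality of $f$ on $\R^n\setminus E$, show that $\md_n f(\Gamma)\leq K\md_n\Gamma$ for every curve family of the form $\Gamma=\Gamma(F_1,F_2;\R^n)$ between disjoint continua (this is a standard generating class for the modulus inequality), together with the symmetric inequality for $f^{-1}$. The hypothesis $E\in \CNED$ enters through the identity $\md_n\Gamma=\md_n\Gamma_{\sigma}$, where $\Gamma_{\sigma}:=\Gamma\cap \mathcal F_{\sigma}(E)$; this reduces the problem to curves that meet $E$ only countably.

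The transfer of admissible densities is the main analytic step. Given $\rho$ admissible for $\Gamma_{\sigma}$, define $\rho^{\ast}:=(\rho\circ f^{-1})\cdot|D(f^{-1})|$ on $\R^n\setminus f(E)$ and set $\rho^{\ast}=0$ on $f(E)$. For each $\gamma\in\Gamma_{\sigma}$, the countable set $\gamma\cap E$ breaks $\gamma\setminus E$ into open sub-arcs $\gamma_j\subset \R^n\setminus E$, whose images $f(\gamma_j)$ lie in $\R^n\setminus f(E)$ where $f^{-1}$ is quasiconformal and hence absolutely continuous on subcurves. Because a countable set has vanishing one-dimensional Hausdorff measure along $f(\gamma)$, a sub-arc-by-sub-arc change of variables yields
\begin{align*}
\int_{f(\gamma)}\rho^{\ast}\,ds=\sum_{j}\int_{f(\gamma_j)}\rho^{\ast}\,ds\geq \sum_{j}\int_{\gamma_j}\rho\,ds=\int_{\gamma}\rho\,ds\geq 1,
\end{align*}
so $\rho^{\ast}$ is admissible for $f(\Gamma_{\sigma})$. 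The pointwise distortion inequality for $f^{-1}$ on $\R^n\setminus f(E)$ combined with the change of variables formula gives $\int_{\R^n}(\rho^{\ast})^n\leq K\int_{\R^n}\rho^n$, and passing to the infimum yields $\md_n f(\Gamma_{\sigma})\leq K\md_n\Gamma_{\sigma}=K\md_n\Gamma$.

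The principal obstacle is then passing from $\md_n f(\Gamma_{\sigma})$ to $\md_n f(\Gamma)$, since monotonicity of modulus only supplies the wrong-direction bound. The natural route is to show that $f(E)$ itself enjoys the $\CNED$ property, from which $\md_n f(\Gamma)=\md_n\bigl(f(\Gamma)\cap \mathcal F_{\sigma}(f(E))\bigr)=\md_n f(\Gamma_{\sigma})$ would follow; attacking this uses that closed $\CNED$ sets have Lebesgue measure zero (so that $f(E)$ is closed of measure zero), together with the observation that the modulus transfer above can be run in the opposite direction to control curve families meeting $f(E)$ uncountably. I expect this bootstrap — transferring the $\CNED$ property across the homeomorphism $f$ using only local quasiconformality on the complement — to be the genuinely delicate step. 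Combining it with the analogous argument applied to $f^{-1}$ produces both modulus inequalities required to conclude that $f$ is quasiconformal on $\R^n$.
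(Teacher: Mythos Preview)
The paper does not prove this theorem; it is quoted from the author's earlier work \cite{Ntalampekos:metric_definition_qc} (see the sentence immediately preceding the statement and the remark following it). So there is no proof in the present paper to compare against. That said, a few comments on your proposal are in order.

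Your outline correctly isolates the crux: the only nontrivial step is going from control of $\md_n f(\Gamma_\sigma)$ to control of $\md_n f(\Gamma)$, which you propose to do by showing $f(E)\in\CNED$. But you do not actually carry this out, and the ``bootstrap'' you describe is circular as stated: to transfer modulus estimates \emph{back} across $f$ you already need $f$ (or $f^{-1}$) to be quasiconformal on all of $\R^n$, which is the conclusion. There is no a~priori reason why quasiconformality on $\R^n\setminus E$ alone should force $f(E)$ to be $\CNED$; indeed, if that implication were automatic, the class $\CNED$ would be quasiconformally invariant before one knows removability, and the paper's Corollary~\ref{corollary:qc_invariance} would be trivial rather than a consequence of the main criterion.

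A secondary technical issue: the sub-arc change of variables $\int_{f(\gamma_j)}\rho^{\ast}\,ds\geq\int_{\gamma_j}\rho\,ds$ holds only for $n$-modulus--almost every curve (via Fuglede's theorem), not for every $\gamma\in\Gamma_\sigma$, and you must also ensure $f(\gamma)$ is rectifiable. This is repairable by discarding an exceptional family, but it should be said. The cited paper's approach goes through the \emph{metric} definition of quasiconformality (as its title suggests), which avoids the need to establish $f(E)\in\CNED$ directly and instead verifies a pointwise distortion bound along appropriate curves; that route sidesteps the circularity in your sketch.
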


In fact, in \cite{Ntalampekos:metric_definition_qc}*{Theorem 1.2} the set $E$ is not assumed to be closed, in which case the quasiconformality of $f$ in the set $\R^n\setminus E$ has to be interpreted appropriately, using the metric definition or a variant.

Closed sets $E$ satisfying the conclusion of Theorem \ref{theorem:removable} are called \textit{removable for quasiconformal homeomorphisms} or else \textit{$\QCH$-removable}. The difference to $S$-removable sets that we discuss above is that here we study global homeomorphisms of $\R^n$, instead of topological embeddings of $\R^n\setminus E$. Moreover, note that if a planar set is $S$-removable, then it is $\QCH$-removable. Although we have satisfactory characterizations of the former sets by Ahlfors--Beurling \cite{AhlforsBeurling:Nullsets} in dimension $2$, it is a notoriously difficult problem to characterize $\QCH$-removable sets even in dimension $2$. The current work suggests that $\CNED$ sets could provide a characterization. 

There are many open problems related to $\QCH$-removable sets, one of which is the problem of \textit{local removability} \cite{Bishop:flexiblecurves}*{Question 4}, \cite{Ntalampekos:gasket}*{Question 2}: if a closed set $E$ is $\QCH$-removable, is it true that every topological embedding of an open set $\Omega\subsetneq \R^n$ into $\R^n$ that is quasiconformal in $\Omega\setminus E$, is quasiconformal in $\Omega$? For $\CNED$ sets an affirmative answer is provided by \cite{Ntalampekos:metric_definition_qc}*{Theorem 1.2}.

Another open problem is whether the union of two $\QCH$-removable closed sets is removable \cite{JonesSmirnov:removability}. While for disjoint sets the answer is affirmative, in general, for intersecting sets this is known only in the cases of totally disconnected sets and quasicircles \cite{Younsi:RemovabilityRigidityKoebe}*{Theorem 4}. We prove here that countable unions of closed $\NED$ and $\CNED$ sets are $\NED$ and $\CNED$, respectively. 

\begin{theorem}\label{theorem:unions}
Let $E_i$, $i\in \N$, be a countable collection of closed subsets of $\R^n$.
\begin{enumerate}[\upshape(i)]\smallskip
	\item $\displaystyle{\textrm{If $E_i$ is $\NED$ for each $i\in \N$, then $\bigcup_{i\in \N} E_i$ is $\NED.$}}$\label{item:union:i}\smallskip
	\item $\displaystyle{\textrm{If $E_i$ is $\CNED$ for each $i\in \N$, then $\bigcup_{i\in \N} E_i$ is $\CNED.$}}$\label{item:union:ii}
\end{enumerate}
\end{theorem}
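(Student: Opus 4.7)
The plan is to combine the announced criterion of Theorem \ref{theorem:criterion_compact} with countable subadditivity of $n$-modulus. I focus on the $\CNED$ case \eqref{item:union:ii}; part \eqref{item:union:i} follows by the same reasoning with $\mathcal F_\sigma$ replaced by $\mathcal F_0$ throughout. Set $E=\bigcup_{i\in\N}E_i$, fix non-empty disjoint continua $F_1,F_2\subset\R^n$, and write $\Gamma=\Gamma(F_1,F_2;\R^n)$. The first observation, which makes countable unions tractable, is the set-theoretic identity
\[
\mathcal F_\sigma(E)=\bigcap_{i\in\N}\mathcal F_\sigma(E_i),
\]
valid because a countable union of countable sets is countable; equivalently, the exceptional family $\Gamma\setminus\mathcal F_\sigma(E)$ of curves in $\Gamma$ meeting $E$ uncountably is contained in $\bigcup_{i\in\N}(\Gamma\setminus\mathcal F_\sigma(E_i))$.

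My expectation is that the criterion of Theorem \ref{theorem:criterion_compact} upgrades the $\CNED$-equality $\md_n\Gamma=\md_n(\Gamma\cap\mathcal F_\sigma(E_i))$ to the stronger statement that $\md_n(\Gamma\setminus\mathcal F_\sigma(E_i))=0$ for each closed $\CNED$ set $E_i$, or to a quantitative analogue. In the former case, countable subadditivity of $n$-modulus immediately gives
\[
\md_n\bigl(\Gamma\setminus\mathcal F_\sigma(E)\bigr)\leq\sum_{i\in\N}\md_n\bigl(\Gamma\setminus\mathcal F_\sigma(E_i)\bigr)=0,
\]
and the desired equality $\md_n\Gamma=\md_n(\Gamma\cap\mathcal F_\sigma(E))$ follows from monotonicity of modulus. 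If the criterion is instead phrased quantitatively, saying that any density admissible for $\Gamma\cap\mathcal F_\sigma(E_i)$ can be modified at arbitrarily small $L^n$-cost into one admissible for $\Gamma$, the same conclusion is reached by starting from a density $\rho$ admissible for $\Gamma\cap\mathcal F_\sigma(E)$, which is automatically admissible for each larger family $\Gamma\cap\mathcal F_\sigma(E_i)$, and summing the corrections $\sigma_i$ with weights $\epsilon/2^i$.

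The principal obstacle is bridging the gap between the plain $\CNED$ definition, which yields only an equality of two moduli and does not \emph{a priori} force $\md_n(\Gamma\setminus\mathcal F_\sigma(E_i))=0$, and the stronger sub-additive form that the countable union argument requires. Closing this gap is precisely what the new criterion of Theorem \ref{theorem:criterion_compact} should achieve; once it is available, the union theorem follows essentially by bookkeeping with subadditivity or a geometric series.
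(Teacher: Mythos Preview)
Your set-theoretic identity $\mathcal F_\sigma(E)=\bigcap_i\mathcal F_\sigma(E_i)$ is correct, and you have located the right tool. But both of the arguments you sketch break down, and the gap you identify at the end is genuine and is \emph{not} closed by Theorem~\ref{theorem:criterion_compact} in the way you hope.

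First, the criterion does not give $\md_n(\Gamma\setminus\mathcal F_\sigma(E_i))=0$. What condition \ref{cc:v} (or \ref{cc:iv}) says is that, for a fixed $\rho\in L^n_{\loc}$, there is a null family $\Gamma_0=\Gamma_0(\rho)$ such that every $\gamma\notin\Gamma_0$ can be \emph{perturbed}, with $\rho$-cost at most $\varepsilon$, to a nearby path $\widetilde\gamma\in\mathcal F_\sigma(E_i)$. It does not assert that $\gamma$ itself lies in $\mathcal F_\sigma(E_i)$; for instance, with $E_i=[0,1]\times\{0\}\subset\R^2$ (which is $\CNED$), the horizontal segment through $E_i$ is not in $\mathcal F_\sigma(E_i)$ yet is certainly not exceptional for any reasonable $\rho$. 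Since $\Gamma_0$ depends on $\rho$, one cannot extract from the criterion a single $\rho$-independent null family containing $\Gamma\setminus\mathcal F_\sigma(E_i)$, so the subadditivity step has nothing to act on.

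Second, your alternative argument has the admissibility direction reversed. If $\rho$ is admissible for the \emph{smaller} family $\Gamma\cap\mathcal F_\sigma(E)$, this says nothing about curves in the strictly larger family $\Gamma\cap\mathcal F_\sigma(E_i)$; admissibility passes to subfamilies, not to superfamilies. So you cannot feed $\rho$ into the criterion for $E_i$.

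The paper's proof is accordingly much more hands-on. One fixes $\rho$, invokes Conclusion~\ref{conclusion:a} for each $E_i$, and then for a generic curve $\gamma$ builds inductively simple paths $\widetilde\gamma_m\in\mathcal F_*(E_1\cup\dots\cup E_m)$ by perturbing $\widetilde\gamma_{m-1}$ inside carefully chosen nested neighborhoods $U_w$ whose closures avoid $E_1\cup\dots\cup E_m$. The nesting is exactly what prevents the step-$m$ perturbation from destroying membership in $\mathcal F_*(E_1\cup\dots\cup E_{m-1})$. One then controls $\sum_m$ of the lengths and $\rho$-integrals of the modifications and passes to a subsequential limit $\widetilde\gamma$, using Lemma~\ref{lemma:hausdorf_continua} to handle the $\rho$-integral in the limit. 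This iterative perturbation with nested confinement is the substantive content; it is not reducible to subadditivity bookkeeping.
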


In the case that a countable union of closed $\NED$ sets is \textit{closed}, this result follows from the Baire category theorem \cite{Younsi:removablesurvey}*{Section 4}. The case of \textit{non-closed} unions is significantly more complicated, since they could even be dense.  The proof is given in Section \ref{section:union} and relies on an intricate characterization of $\NED$ and $\CNED$ sets from Section \ref{section:criteria}. We give a vague formulation of this characterization here. 

\begin{theorem}
A closed set $E\subset \R^n$ is $\NED$ (resp.\ $\CNED$) if and only if for every $n$-integrable metric $\rho\, ds$, almost every path $\gamma$ in $\R^n$ can be perturbed by an arbitrarily small amount of $\rho$-length to avoid the set $E$ (resp.\ to intersect the set $E$ at countably many points). 
\end{theorem}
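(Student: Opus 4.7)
The plan is to treat the two directions separately, handling the $\NED$ and $\CNED$ cases in parallel since they differ only in what is required of the perturbed path (avoiding $E$ entirely versus meeting $E$ at countably many points).

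For the sufficiency direction, assume the perturbation property and fix disjoint non-empty continua $F_1,F_2\subset \R^n$. Let $\rho$ be an admissible density for $\Gamma(F_1,F_2;\R^n)\cap \mathcal F_0(E)$ (resp.\ $\cap\mathcal F_\sigma(E)$). Apply the hypothesis to this very $\rho$: for every $\epsilon>0$, outside an exceptional family of modulus zero, each curve $\gamma\in\Gamma(F_1,F_2;\R^n)$ admits a perturbation $\gamma'$ with $\gamma'\in\mathcal F_0(E)$ (resp.\ $\mathcal F_\sigma(E)$) and $\int_{\gamma'}\rho\leq\int_\gamma\rho+\epsilon$. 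Provided the perturbation is localized in a small tube around $\gamma$---a geometric feature that should come built into the perturbation property---the perturbed path $\gamma'$ still joins $F_1$ and $F_2$, so admissibility gives $\int_{\gamma'}\rho\geq 1$ and hence $\int_\gamma\rho\geq 1-\epsilon$. Passing to a countable sequence $\epsilon\to 0$ and replacing $\rho$ by $(1+\delta)\rho$ for small $\delta$ yields an admissible density for the full family off a null set of curves, which gives $\md_n\Gamma(F_1,F_2;\R^n)\leq (1+\delta)^n\|\rho\|_n^n$, and the modulus equality follows as $\delta\to 0$ and $\rho$ ranges over admissible densities.

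For the necessity direction, assume $E\in\NED$ (resp.\ $E\in\CNED$) and fix an $n$-integrable metric $\rho\, ds$. For each quadruple $(\epsilon,\eta,R,k)$ with $\epsilon,\eta$ rational and $(F_1^{(k)},F_2^{(k)})$ ranging over a countable dense collection of pairs of continua, introduce the exceptional family $\Gamma^{\epsilon,\eta}_{k,R}$ consisting of curves in $\Gamma(F_1^{(k)},F_2^{(k)};B(0,R))$ that admit no $\eta$-neighborhood perturbation $\gamma'\in\mathcal F_0(E)$ (resp.\ $\mathcal F_\sigma(E)$) with $\int_{\gamma'}\rho<\int_\gamma\rho+\epsilon$. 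The plan is to prove $\md_n\Gamma^{\epsilon,\eta}_{k,R}=0$ for each choice; a countable union then gives the ``almost every curve'' conclusion. To this end, one combines the given $\rho$ with a near-optimal admissible density $\rho'$ for $\Gamma(F_1^{(k)},F_2^{(k)};\R^n)\cap \mathcal F_0(E)$ (resp.\ $\cap\mathcal F_\sigma(E)$); the hypothesis $E\in\NED$ (resp.\ $\CNED$) forces $\|\rho'\|_n$ to coincide with the modulus infimum for the full family, and a test function obtained as an appropriate truncation or convex combination of $\rho$ and $\rho'$ on small balls covering possible detour regions produces the desired contradiction with non-perturbability on a subfamily of positive modulus.

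The principal obstacle is the necessity implication: one must convert a global $L^n$-infimum equality between admissible densities into a pointwise, quantitative statement about individual paths. The argument will rely on a Fuglede-type exceptional set lemma combined with an explicit geometric construction of short detours around the portions of $\gamma$ meeting $E$, with the added $\rho$-length controlled by a Vitali-type covering tied to the competing admissible density $\rho'$. The $\CNED$ case introduces an additional complication, since the perturbed path is permitted to meet $E$ at countably many points; the detour construction must therefore allow isolated contacts with $E$, and this extra flexibility must be threaded carefully through the admissibility comparison so that the resulting density is in fact admissible for $\Gamma(F_1,F_2;\R^n)\cap \mathcal F_\sigma(E)$ rather than for the strictly smaller family $\cap\mathcal F_0(E)$.
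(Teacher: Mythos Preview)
Your sufficiency direction is essentially the paper's argument (the implication \ref{cc:v} $\Rightarrow$ \ref{cc:i} in Theorem~\ref{theorem:criterion_compact}): take $\rho$ admissible for the restricted family, perturb an arbitrary $\gamma\notin\Gamma_0$ to $\widetilde\gamma\in\mathcal F_*(E)$ with $\int_{\widetilde\gamma}\rho\le\int_\gamma\rho+\varepsilon$, use admissibility on $\widetilde\gamma$, and let $\varepsilon\to 0$. Your $(1+\delta)$-rescaling is unnecessary but harmless.

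The necessity direction, however, has a genuine gap. Your plan is to take a near-optimal admissible density $\rho'$ for the restricted family and manufacture a contradiction via ``truncation or convex combination of $\rho$ and $\rho'$.'' But the equality $\md_n\Gamma=\md_n(\Gamma\cap\mathcal F_*(E))$ is a statement about infima over densities; it does not by itself produce, for a \emph{given} curve $\gamma$ and a \emph{given} $\rho$, an actual nearby curve in $\mathcal F_*(E)$. Nothing in your sketch explains how $\rho'$ would locate such a curve, and no standard technique does this. The phrase ``explicit geometric construction of short detours \dots tied to the competing admissible density $\rho'$'' conflates two incompatible strategies.

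The paper's route is different and passes through an intermediate \emph{local Loewner condition} (condition~\ref{cc:loewner} in Theorem~\ref{theorem:criterion_compact}): from $E\in\*ned$ one first extracts, via Lemma~\ref{lemma:measure_zero}, a uniform lower bound $\md_n(\Gamma(F_1,F_2;\R^n)\cap\mathcal F_*(E))\ge\phi>0$ for all sufficiently local pairs $F_1,F_2$ with bounded relative distance. This, together with $m_n(\br E)=0$, is the engine that actually \emph{finds} detours. One then covers $\br E$ by small balls $\{B_{i,m}\}$ chosen (Lemma~\ref{lemma:exceptional_family}) so that, outside a curve family $\Gamma_0$ of modulus zero, the sums $\sum_{B_{i,m}\cap|\gamma|\ne\emptyset} r_{i,m}(\fint_{\lambda B_{i,m}}\rho^n)^{1/n}$ and $\sum r_{i,m}$ tend to zero. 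For $\gamma\notin\Gamma_0$, one covers $|\gamma|\cap\br E$ by finitely many such balls $D_k$; in each annulus $aD_k\setminus D_k$ the local Loewner bound gives positive modulus for $\Gamma(G_1,G_2;\R^n)\cap\mathcal F_*(E)$, and Lemma~\ref{lemma:path_bound_modulus} then extracts a short detour $\gamma_k\in\mathcal F_*(E)$ with $\int_{\gamma_k}\rho\,ds$ controlled by the local $L^n$-average of $\rho$. Splicing these detours into $\gamma$ produces $\widetilde\gamma$. The competing density $\rho'$ never appears; what matters is the positive-modulus lower bound, which is precisely the nontrivial consequence of $\*ned$ that your outline does not isolate.
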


See Theorem \ref{theorem:criterion_compact} for a precise statement. Another application of this characterization is the removability of $\CNED$ sets for continuous Sobolev functions. The proof is given in Section \ref{section:sobolev}.
\begin{theorem}\label{theorem:sobolev_intro}
Let $E\subset \R^n$ be a closed $\CNED$ set. Then every continuous function $f\colon \R^n\to \R$ with  $f\in W^{1,n}(\R^n\setminus E)$ lies in $W^{1,n}(\R^n)$. 
\end{theorem}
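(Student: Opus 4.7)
The plan is to verify that $g := |\nabla f|\cdot \1_{\R^n\setminus E}$, extended by zero on $E$, serves as an $n$-integrable upper gradient of $f$ on all of $\R^n$ in the sense that
\begin{align*}
|f(\gamma(b)) - f(\gamma(a))| \leq \int_\gamma g\, ds
\end{align*}
holds for $n$-modulus-almost every rectifiable curve $\gamma\colon[a,b]\to\R^n$. Since $f$ is continuous and $g\in L^n(\R^n)$, this will identify $f$ with an element of $W^{1,n}(\R^n)$ via the standard upper gradient / ACL characterization of the Sobolev space. By Fuglede's lemma applied in $\R^n\setminus E$, the inequality above already holds for $n$-modulus-a.e.\ rectifiable curve whose image lies in $\R^n\setminus E$, and continuity of $f$ extends it to curves with endpoints in $E$. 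The task is to promote this to arbitrary curves in $\R^n$ via the $\CNED$ hypothesis.

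Fix a rectifiable curve $\gamma\colon[a,b]\to\R^n$ lying outside an exceptional modulus-zero family to be specified. Apply the characterization in Theorem \ref{theorem:criterion_compact} to the $n$-integrable metric $\rho := g$: for every $\epsilon > 0$ this yields a rectifiable curve $\tilde\gamma$ with the same endpoints as $\gamma$ such that $\tilde\gamma^{-1}(E)$ is countable (so closed in $[a,b]$, by closedness of $E$) and $\int_{\tilde\gamma} g\, ds \leq \int_\gamma g\, ds + \epsilon$. Combined with a subcurve form of Fuglede's lemma (applied after discarding a further modulus-zero family of base curves), one may assume that every sub-arc $\tilde\gamma|_{[a_i,b_i]}$ corresponding to a component $(a_i,b_i)$ of $[a,b]\setminus \tilde\gamma^{-1}(E)$ lies in $\R^n\setminus E$ and satisfies the upper gradient inequality
\begin{align*}
|f(\tilde\gamma(b_i)) - f(\tilde\gamma(a_i))| \leq \int_{a_i}^{b_i} g(\tilde\gamma(t))|\tilde\gamma'(t)|\, dt,
\end{align*}
where the boundary values $\tilde\gamma(a_i), \tilde\gamma(b_i) \in E$ are obtained by interior approximation and continuity of $f$.

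It then remains to combine these sub-arc inequalities into a single estimate along $\tilde\gamma$. Set $h := f\circ\tilde\gamma$, $S := \tilde\gamma^{-1}(E)$ (countable and compact), and $d\mu := g(\tilde\gamma(t))|\tilde\gamma'(t)|\, dt$, an absolutely continuous (hence atomless) measure on $[a,b]$. By continuity of $h$ one can choose, for each $s_k\in S$, an open neighborhood on which $h$ oscillates by at most $\epsilon\cdot 2^{-k}$; by compactness a finite subfamily covers $S$, and the total $h$-oscillation across the components of this cover $V$ is at most $\epsilon$. Splitting $[a,b]$ into alternating pieces in $V$ and in $[a,b]\setminus V$ (the latter a finite union of closed intervals each contained in some component $[a_i,b_i]$) and applying the sub-arc inequalities on the latter pieces yields
\begin{align*}
|h(b) - h(a)| \leq \mu([a,b]\setminus V) + \epsilon \leq \mu([a,b]) + \epsilon = \int_{\tilde\gamma} g\, ds + \epsilon.
\end{align*}
Letting first the covering-$\epsilon$ and then the perturbation-$\epsilon$ tend to zero gives $|f(\gamma(b)) - f(\gamma(a))| \leq \int_\gamma g\, ds$, as required.

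The main obstacle I anticipate is the coordination in the second paragraph: one must arrange that the perturbation $\tilde\gamma$ produced by Theorem \ref{theorem:criterion_compact} can be chosen, simultaneously for the base curve $\gamma$, so that every component of $\tilde\gamma\setminus E$ is ``good'' for the upper gradient inequality in $\R^n\setminus E$. This asks for a version of Fuglede's lemma that is stable under taking subcurves and compatible with the approximation scheme of Theorem \ref{theorem:criterion_compact}; arranging both to avoid the same modulus-zero family of base curves is the delicate point, and will probably require working with the finer pointwise modulus statements underlying the proof of Theorem \ref{theorem:criterion_compact} rather than its black-box form.
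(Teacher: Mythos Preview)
Your overall strategy matches the paper's: show that $g=|\nabla f|$ (extended by zero on $E$) is an upper gradient of $f$ along $n$-modulus almost every curve, by perturbing into $\mathcal F_\sigma(E)$ via Theorem \ref{theorem:criterion_compact}. The gap you flag in your last paragraph is real and is exactly where your argument is incomplete: Conclusion \ref{conclusion:b} gives no control over whether the sub-arcs of the perturbed curve $\tilde\gamma$ avoid the Fuglede exceptional family $\Gamma_1$, so there is no reason the sub-arc upper-gradient inequalities should hold along $\tilde\gamma$.

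The paper resolves this not by digging into the proof of Theorem \ref{theorem:criterion_compact}, but by using Conclusion \ref{conclusion:a} (part \ref{cc:iv}) instead of Conclusion \ref{conclusion:b} (part \ref{cc:v}). The decisive extra feature of Conclusion \ref{conclusion:a} is its final clause: the auxiliary paths $\gamma_i$ may be chosen to lie outside any prescribed modulus-zero family $\Gamma'$. Taking $\Gamma'=\Gamma_1$, one obtains
\[
|\widetilde\beta|\setminus\partial\beta\subset(|\beta|\setminus E)\cup\bigcup_{i\in I}|\gamma_i|,\qquad \gamma_i\notin\Gamma_1,\qquad \sum_{i\in I}\int_{\gamma_i}|\nabla f|\,ds<\varepsilon,
\]
and the base subpath $\beta$ itself is already outside $\Gamma_1$. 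So every piece that enters the estimate is Fuglede-good; this is precisely the coordination you were worried about, and it is already packaged into the statement of Theorem \ref{theorem:criterion_compact}.

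The paper's combination step is also different from your oscillation-cover argument, and slicker. It first records, for any path $\eta\notin\Gamma_1$, the inequality $m_1(f(|\eta|\setminus E))\leq\int_\eta|\nabla f|\,ds$ (sum the upper-gradient inequality over the components of $\eta^{-1}(\R^n\setminus E)$). Then, since $\widetilde\beta\in\mathcal F_\sigma(E)$,
\[
|f(\beta(b))-f(\beta(a))|\leq m_1(f(|\widetilde\beta|))\leq m_1(f(|\beta|\setminus E))+\sum_{i\in I} m_1(f(|\gamma_i|))\leq \int_\beta|\nabla f|\,ds+\sum_{i\in I}\int_{\gamma_i}|\nabla f|\,ds<\int_\beta|\nabla f|\,ds+\varepsilon.
\]
Your covering argument would also work once the coordination issue is fixed, but the image-measure trick avoids any analysis of the order structure of $\tilde\gamma^{-1}(E)$.
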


\subsection{Examples of negligible sets}
So far we understand some general classes of $\QCH$-removable sets. First, sets of $\sigma$-finite Hausdorff $(n-1)$-measure are removable as shown by Besicovitch  \cite{Besicovitch:Removable} in dimension $2$ and by Gehring \cite{Gehring:Rings} in higher dimensions. Thus, we can say that such sets are removable for \textit{rectifiability} reasons.

Next, it is known that sets with good geometry, such as quasicircles, are removable in dimension $2$. More generally, in all dimensions, boundaries of John domains, H\"older domains, and domains with $n$-integrable quasihyperbolic distance are removable  \cites{Jones:removability, JonesSmirnov:removability, KoskelaNieminen:quasihyperbolic}. Roughly speaking, all of these sets have either no outward cusps or  some outward cusps,  but not too many on average. Thus, these sets are removable for \textit{geometric} reasons.

Finally, $\NED$ sets are removable as well due to \cite{AhlforsBeurling:Nullsets} in dimension $2$ and   \cites{AseevSycev:removable,VodopjanovGoldstein:removable} in higher dimensions. Thus, one could say that $\NED$ sets are removable because they are small in a \textit{potential theoretic} sense. Note that all $\NED$ sets are necessarily totally disconnected in dimension $2$.

The three classes of sets are mutually singular in a sense. Namely, there are rectifiable sets that have bad geometry and are large from a potential theoretic point of view. For example, consider a rectifiable curve with a dense set of both inward and outward cusps. Likewise, there are sets with good geometry that are not rectifiable and are large for potential theory. As an example, take a quasicircle of Hausdorff dimension larger than $1$. Finally, there are sets that are small in a potential theoretic sense, but are large in terms of rectifiability and have bad geometry. For example, consider a Cantor set $E\subset \R$ of measure zero and Hausdorff dimension $1$, and then take the set $E\times E$; this is an $\NED$ set by \cite{AhlforsBeurling:Nullsets}*{Theorem 10} since its projections to the coordinate directions have measure zero. 

A natural question is whether one can reconcile these three different worlds. In other words, is there a common reason for which all of the above classes of sets are $\QCH$-removable?  We provide an affirmative answer to this question.

\begin{theorem}\label{theorem:cned}
The following sets are of class $\CNED$ in $\R^n$.
\begin{enumerate}[\upshape(i)]
	\item Sets of class $\NED$.\label{item:cned:i}
	\item Sets of $\sigma$-finite Hausdorff $(n-1)$-measure. \label{item:cned:ii}
	\item Boundaries of domains with $n$-integrable quasihyperbolic distance. \label{item:cned:iii}
\end{enumerate}
\end{theorem}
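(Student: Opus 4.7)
The plan is to treat the three items separately: (i) is immediate from monotonicity, (ii) reduces to a modulus estimate for sets of $\sigma$-finite Hausdorff $(n-1)$-measure, and (iii) I would derive from the perturbation-type characterization of $\CNED$ announced in Section~\ref{section:criteria}. I expect (iii) to be the main obstacle.

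For part (i), since $\mathcal F_0(E)\subset \mathcal F_\sigma(E)$, monotonicity of modulus yields
\[
\md_n(\Gamma\cap \mathcal F_0(E))\le \md_n(\Gamma\cap \mathcal F_\sigma(E))\le \md_n\Gamma
\]
for every curve family $\Gamma$. If $E\in \NED$, the outer terms agree for $\Gamma=\Gamma(F_1,F_2;\R^n)$, forcing the middle term to agree as well, and hence $E\in \CNED$.

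For part (ii), it suffices to show that the family $\Gamma^{*}=\{\gamma:\gamma\cap E\text{ is uncountable}\}$ has $n$-modulus zero, for then the subfamily of $\Gamma(F_1,F_2;\R^n)$ contained in $\mathcal F_\sigma(E)$ has the same modulus as the full family. By countable subadditivity of $n$-modulus I first reduce to the case $\mathcal H^{n-1}(E)<\infty$. Invoking the Federer structure theorem I split $E$ into a rectifiable piece, essentially supported on countably many $C^{1}$-hypersurfaces across which generic rectifiable curves cross transversally in only finitely many points, and a purely unrectifiable piece, whose intersection with a typical rectifiable curve is negligible by Besicovitch's projection theorem. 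In both cases a covering argument driven by the $(n-1)$-Hausdorff content of $E$, in the spirit of V\"ais\"al\"a~\cite{Vaisala:null} and Gehring~\cite{Gehring:Rings}, produces an admissible density with small $L^n$-norm for $\Gamma^{*}$.

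For part (iii), let $\Omega\subset \R^{n}$ be a domain whose quasihyperbolic distance $k_{\Omega}$ lies in $L^{n}(\Omega)$, and set $E=\partial\Omega$. The criterion of Theorem~\ref{theorem:criterion_compact} reduces the task, given an $n$-integrable metric $\rho\,ds$ and $\varepsilon>0$, to perturbing $n$-modulus-a.e.\ curve $\gamma$ to a nearby curve $\gamma'$ with $\gamma'\cap E$ countable and $\int_{\gamma'}\rho\,ds\le \int_{\gamma}\rho\,ds+\varepsilon$. I would build $\gamma'$ by decomposing $\gamma$ according to whether it lies in $\Omega$, in $\R^{n}\setminus\overline{\Omega}$, or on $\partial\Omega$, and replacing the boundary portions by short detours inside $\Omega$ that shadow quasihyperbolic geodesics to nearby ``deep'' points of a Whitney decomposition; the $L^n$-integrability of $k_{\Omega}$, combined with H\"older's inequality, then bounds the cumulative $\rho$-length cost of these detours by something proportional to $\|\rho\|_{L^{n}}\cdot\|k_{\Omega}\|_{L^{n}(\Omega)}$. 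The hard part will be the bookkeeping: arranging countably many detours so that $\gamma'\cap\partial\Omega$ becomes countable, summing their total cost along a stopping-time chain, and verifying that the construction depends measurably on $\gamma$ so as to apply for modulus-a.e.\ curve, refining the scheme that Koskela--Nieminen~\cite{KoskelaNieminen:quasihyperbolic} used for $\QCH$-removability.
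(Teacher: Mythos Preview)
Your treatment of (i) is correct and matches the paper.

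For (ii) there is a genuine gap. You propose to show directly that the family $\Gamma^{*}$ of curves meeting $E$ uncountably often has $n$-modulus zero, via the Federer structure theorem and Besicovitch's projection theorem. The projection theorem is the wrong tool: it controls projections of a purely $(n-1)$-unrectifiable set onto hyperplanes, not its intersections with general rectifiable curves, and there is no mechanism to pass from one to the other. For the rectifiable piece, a rectifiable curve can meet a single $C^{1}$ hypersurface in an uncountable set of $\mathcal H^{1}$-measure zero (e.g.\ $\gamma(t)=(t,\dist(t,C))$ in $\R^{2}$ with $C$ the Cantor set), so ``transversality for generic curves'' fails. Finally, it is unclear how any covering of $E$ by balls yields a density admissible for $\Gamma^{*}$: the condition ``$|\gamma|\cap E$ uncountable'' is not detected by a line integral against a fixed $\rho$, since the uncountable intersection may sit inside a single ball of the cover. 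The paper avoids all of this. It introduces $P$-families (Definition~\ref{definition:pfamily}) and proves an invariance theorem (Theorem~\ref{theorem:perturbation_family}): restricting $\Gamma(F_{1},F_{2};U)$ to a $P$-family leaves $n$-modulus unchanged. Lemma~\ref{lemma:p2} then gives, by a direct covering argument, $m_{n}^{*}\{x:\#(E\cap|\gamma+x|)\ge N\}\lesssim_{n}\ell(\gamma)N^{-1}\mathcal H^{n-1}(E)$; sending $N\to\infty$ shows $\mathcal F_{\sigma}(E)$ satisfies (P1), and a parallel radial estimate gives (P2). No structure theorem is needed.

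For (iii) your outline is in the right spirit but omits the estimate that makes the bookkeeping converge. The paper uses the Jones--Smirnov shadow construction: each Whitney cube $Q$ is assigned a shadow $\SH(Q)\subset\partial\Omega$ with $s(Q)=\diam(\SH(Q))$, and the hypothesis $k_{\Omega}\in L^{n}(\Omega)$ yields $\sum_{Q}s(Q)^{n}<\infty$ (see \eqref{quasi:shadow_quasihyperbolic}). It is this $\ell^{n}$-summability, combined with H\"older's inequality, that forces the total $\rho$-cost of the detours along a curve to vanish as the scale shrinks (see \eqref{quasi:zeta}); a bare bound of the form $\|\rho\|_{L^{n}}\|k_{\Omega}\|_{L^{n}}$ would not go to zero. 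The verification is then run through the $P$-family criterion (Theorem~\ref{theorem:criterion_pfamily}) rather than Theorem~\ref{theorem:criterion_compact}: one checks Conclusion~\ref{conclusion:b} for a.e.\ translate $\gamma+x$ and a.e.\ radial segment, which is exactly what the integrated shadow estimate delivers and which sidesteps the measurability-in-$\gamma$ issue you anticipate.
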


The class of sets in \ref{item:cned:iii} is defined and discussed in Section \ref{section:quasi}, where we also give the proof. This class includes quasicircles, boundaries of John domains, and boundaries of H\"older domains. As discussed earlier, \ref{item:cned:i} is immediate; however, the other conclusions are new. The technique used for the proof of \ref{item:cned:ii} allows us to generalize a result of V\"ais\"al\"a \cite{Vaisala:null}, stating that a closed set $E\subset \R^n$ with Hausdorff $(n-1)$-measure zero is of class $\NED$, to non-closed sets.

\begin{theorem}\label{theorem:zero}
Let $E\subset \R^n$ be a set of Hausdorff $(n-1)$-measure zero. Then $E$ is $\NED$.
\end{theorem}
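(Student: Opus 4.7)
By monotonicity of modulus the inequality
\[
\md_n \Gamma(F_1,F_2;\R^n) \geq \md_n\bigl(\Gamma(F_1,F_2;\R^n) \cap \mathcal F_0(E)\bigr)
\]
is immediate, so the plan is to prove the reverse. Writing $\Gamma = \Gamma(F_1,F_2;\R^n)$ and $\Gamma^E = \Gamma \setminus \mathcal F_0(E)$ for the subfamily of curves in $\Gamma$ that meet $E$ at a non-endpoint, it suffices by subadditivity of modulus to show $\md_n(\Gamma^E) = 0$.

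Fix $\epsilon > 0$. Using $\mathcal H^{n-1}(E) = 0$, I would choose an open ball cover $\{B_i\}_i = \{B(x_i,r_i)\}_i$ of $E$ with $\sum_i r_i^{n-1} < \epsilon$ and every radius $r_i$ smaller than $\delta := \tfrac{1}{10}\min\{\dist(F_1,F_2),\diam F_1,\diam F_2\}$. A Vitali-type refinement can be used to ensure that the doubled balls $2B_i$ have bounded overlap. The key geometric observation is that, by the choice of $\delta$, each $2B_i$ is disjoint from at least one of $F_1,F_2$, so any curve $\gamma \in \Gamma^E$ whose non-endpoint intersection with $E$ lies in $B_i$ must cross the annular shell $A_i := 2B_i\setminus B_i$; in particular, $\mathcal H^1(\gamma \cap A_i) \geq r_i$.

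Next, I would construct a Borel function $\rho_\epsilon \geq 0$ on $\R^n$ that is admissible for $\Gamma^E$ and satisfies $\|\rho_\epsilon\|_n^n \leq C(F_1,F_2,n)\,\epsilon$. This construction is the technical core of the theorem and mirrors the one used in the proof of Theorem~\ref{theorem:cned}\ref{item:cned:ii}: the function $\rho_\epsilon$ is a weighted superposition of local pieces supported near each $B_i$, calibrated so that the combined $L^n$-mass scales like $\sum_i r_i^{n-1}$ rather than like the number of balls. Sending $\epsilon \to 0$ would then give $\md_n \Gamma^E = 0$.

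The main obstacle will be obtaining the $L^n$-estimate on $\rho_\epsilon$. A direct construction of the form $\rho_i \asymp r_i^{-1}\mathbf 1_{A_i}$ contributes a universal constant to $\|\rho_\epsilon\|_n^n$ per ball, and logarithmic cusp-type penalties improve this only to $(\log 1/r_i)^{1-n}$; neither bound decays like $\sum r_i^{n-1}$ as $\epsilon \to 0$. Extracting the $(n-1)$-content bound thus requires a more delicate, potential-theoretic construction that exploits the macroscopic separation of $F_1$ from $F_2$ and amortizes the penalty across the covering balls rather than summing per-ball contributions. Crucially, this construction depends on $E$ only through the $(n-1)$-content of its covers and not on any closedness hypothesis, which is precisely what permits the extension of V\"ais\"al\"a's closed-set theorem to non-closed $E$.
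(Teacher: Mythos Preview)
Your reduction to proving $\md_n\Gamma^E=0$ is not just a gap you have left unfilled; the claim itself is false in general. The obstruction you yourself identify is decisive: the best per-ball admissible weight contributes $(\log 1/r_i)^{1-n}$ to $\|\rho_\epsilon\|_n^n$, and the infimum of $\sum_i(\log 1/r_i)^{1-n}$ over covers of $E$ is precisely (a version of) the $n$-capacity of $E$, not its $\mathcal H^{n-1}$-content. Take $n=2$ and $E=C\times\{0\}$ with $C$ the middle-thirds Cantor set: then $\mathcal H^1(E)=0$ but $E$ has positive logarithmic (hence $2$-)capacity. For such $E$ one checks that $\md_n\Gamma^E>0$: if $\rho$ is admissible for $\Gamma^E$ then for each $p\in E$ either every curve from $F_1$ to $p$ or every curve from $p$ to $F_2$ has $\rho$-length $\geq 1/2$ (otherwise concatenate to violate admissibility), so $E$ splits as $E_1\cup E_2$ with $2\rho$ admissible for $\Gamma(E_1,F_1;\R^n)$ and for $\Gamma(E_2,F_2;\R^n)$, and subadditivity of condenser capacity then bounds $\int\rho^n$ below by a fixed multiple of $\cp_n(E)>0$. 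There is no ``more delicate, potential-theoretic construction'' that bridges the gap between capacity and $(n-1)$-content; your reference to the proof of Theorem~\ref{theorem:cned}\ref{item:cned:ii} is a misreading --- that proof does not proceed this way either.

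The paper's argument is of a completely different nature. It never attempts to show that curves meeting $E$ form a null family. Instead it proves (Theorem~\ref{theorem:perturbation_hausdorff}\ref{theorem:perturbation_hausdorff:i}) that when $\mathcal H^{n-1}(E)=0$ the family $\mathcal F_0(E)$ is a \emph{family of curve perturbations}: almost every translate $\gamma+x$ of any rectifiable curve, and $\mathcal H^{n-1}$-a.e.\ radial segment from any point, lies in $\mathcal F_0(E)$. This is a Fubini-type estimate (Lemma~\ref{lemma:p2}) that genuinely uses only the $(n-1)$-content. The general invariance result (Theorem~\ref{theorem:perturbation_family}) then shows that intersecting $\Gamma(F_1,F_2;U)$ with any $P$-family leaves $n$-modulus unchanged. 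The mechanism is perturbation rather than excision: any $\rho$ admissible for $\Gamma\cap\mathcal F_0(E)$ is, after discarding a modulus-zero exceptional family, already admissible for $\Gamma$, because each curve $\gamma$ can be approximated by translates $\gamma+x\in\mathcal F_0(E)$ carrying nearly the same $\rho$-length (a Lebesgue differentiation theorem for line integrals, Theorem~\ref{theorem:lebesgue_differentiation_lp}, combined with Lemma~\ref{lemma:fattening}).
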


Theorem \ref{theorem:cned} \ref{item:cned:ii} and Theorem \ref{theorem:zero} are proved in Section \ref{section:perturbation} with the aid of the notion of a \textit{family of curve perturbations}; see Theorem \ref{theorem:perturbation_hausdorff}. Roughly speaking, such curve families contain almost every parallel translate of a curve and almost every radial segment. The main theorem of that section is Theorem \ref{theorem:perturbation_family}, which asserts that the modulus of a curve family remains unaffected, if one restricts to the intersection of that family with a family of curve perturbations.

Combining Theorems \ref{theorem:removable}, \ref{theorem:unions}, and \ref{theorem:cned}, we obtain the next removability result.

\begin{theorem}
Let $E\subset \R^n$ be a closed set that admits a decomposition into countably many sets $E_i$, $i\in \N$, each of which is contained in a closed set that is either $\NED$, or has $\sigma$-finite Hausdorff $(n-1)$-measure, or is the boundary of a domain with $n$-integrable quasihyperbolic distance. Then $E$ is $\CNED$ and $\QCH$-removable.
\end{theorem}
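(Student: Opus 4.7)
The plan is to assemble the statement from Theorems~\ref{theorem:removable}, \ref{theorem:unions}, and~\ref{theorem:cned}, together with one small monotonicity observation. First, using the hypothesis, for each $i\in\N$ fix a closed set $F_i\supset E_i$ that is of class $\NED$, has $\sigma$-finite Hausdorff $(n-1)$-measure, or is the boundary of a domain with $n$-integrable quasihyperbolic distance. Theorem~\ref{theorem:cned} then tells us that each such $F_i$ is $\CNED$.

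Next, since the $F_i$ are closed $\CNED$ sets, Theorem~\ref{theorem:unions}\ref{item:union:ii} yields that $F := \bigcup_{i\in\N} F_i$ is $\CNED$ (noting that the conclusion of Theorem~\ref{theorem:unions} does not require the union itself to be closed). The set $E$ satisfies $E=\bigcup_i E_i\subset F$, so the remaining step is the auxiliary observation that a subset of a $\CNED$ set is $\CNED$. This is immediate from the definitions: if $E\subset F$ and a curve meets $F$ in at most countably many points, then the same is true for $E$, so $\mathcal F_\sigma(F)\subset \mathcal F_\sigma(E)$. For any pair of disjoint non-empty continua $F_1, F_2\subset\R^n$, monotonicity of modulus combined with $F\in\CNED$ yields
\begin{align*}
\md_n\Gamma(F_1,F_2;\R^n) &=\md_n\bigl(\Gamma(F_1,F_2;\R^n)\cap \mathcal F_\sigma(F)\bigr)\\
&\leq \md_n\bigl(\Gamma(F_1,F_2;\R^n)\cap \mathcal F_\sigma(E)\bigr)\leq \md_n\Gamma(F_1,F_2;\R^n),
\end{align*}
forcing equality throughout, so $E\in\CNED$ as well.

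Finally, since $E$ is by hypothesis a \emph{closed} $\CNED$ set, Theorem~\ref{theorem:removable} yields that $E$ is $\QCH$-removable. There is no serious obstacle here: the substantive content is absorbed entirely by the three cited theorems, and the monotonicity step is routine. The only conceptual subtlety is that the cover $F=\bigcup_i F_i$ may fail to be closed even though $E$ is, but this causes no trouble because Theorem~\ref{theorem:unions}\ref{item:union:ii} produces a $\CNED$ set regardless, and the hereditary property above transfers $\CNED$-ness from $F$ down to the closed subset $E$.
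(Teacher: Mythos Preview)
Your proof is correct and follows exactly the approach the paper indicates: combine Theorems~\ref{theorem:removable}, \ref{theorem:unions}, and~\ref{theorem:cned}, with the hereditary property of $\CNED$ under subsets (which the paper records in Section~\ref{section:elementary}) supplying the passage from $F=\bigcup_i F_i$ down to $E$.
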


We also present some further examples. We show in Theorem \ref{theorem:projection} that planar sets (not necessarily closed) whose projection to each coordinate axis has measure zero are $\NED^w$, generalizing a result of Ahlfors--Beurling for closed sets \cite{AhlforsBeurling:Nullsets}*{Theorem 10}. Theorem \ref{theorem:projection} is used in the proof of Theorem \ref{example:ned} below. In Section \ref{section:nonmeasurable} we present an example of a non-measurable $\CNED$ set, constructed by Sierpi\'nski.

The results of this paper have already found an application in the problem of \textit{rigidity of circle domains}. A connected open set $\Omega$ in the Riemann sphere $\widehat{\C}$ is a \textit{circle domain} if each boundary component of $\Omega$ is either a circle or a point. A circle domain $\Omega$ is \textit{conformally rigid} if every conformal map from $\Omega$ onto another circle domain is the restriction of a M\"obius transformation of  $\widehat \C$. He--Schramm \cite{HeSchramm:Rigidity} proved that circle domains whose boundary has $\sigma$-finite Hausdorff $1$-measure are rigid. Later, Younsi and the author \cite{NtalampekosYounsi:rigidity} proved the rigidity of circle domains with $2$-integrable quasihyperbolic distance (as in Theorem \ref{theorem:cned} \ref{item:cned:iii}). It is conjectured that rigidity of a circle domain is equivalent to  $\QCH$-removability of the boundary. The next result incorporates all previous results and provides strong evidence towards this conjecture. 

\begin{theorem}[\cite{Ntalampekos:rigidity_cned}]
A circle domain is conformally rigid if every compact subset of its point boundary components is $\CNED$. 
\end{theorem}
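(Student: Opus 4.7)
The plan is to deduce the rigidity statement from the removability theorem (Theorem \ref{theorem:removable}) applied to the closed exceptional set $\Lambda$ that arises from iterated Schwarz reflection across the circular boundary components, following the template of He--Schramm \cite{HeSchramm:Rigidity}. Let $\Omega\subset \widehat{\C}$ be a circle domain satisfying the hypothesis, let $E$ denote the union of its point boundary components, and let $f\colon \Omega\to\Omega'$ be a conformal map onto another circle domain. First, using the boundary analysis in \cite{HeSchramm:Rigidity}, one extends $f$ continuously to a homeomorphism of $\widehat{\C}$ which carries each circular boundary component of $\Omega$ onto a circular boundary component of $\Omega'$ (and point components to point components). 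Schwarz reflection across each pair of corresponding circles, iterated through all words in the countable reflection group generated by these reflections, then produces a homeomorphism $F\colon \widehat{\C}\to \widehat{\C}$ that is conformal on $\widehat{\C}\setminus \Lambda$, where $\Lambda$ is the closure of all reflected copies of $E$ together with the residual limit set of the reflection group.

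The crucial step is to verify that $\Lambda$ is a closed $\CNED$ set. Since reflections across circles are M\"obius transformations, they preserve moduli of curve families, and therefore send $\CNED$ sets to $\CNED$ sets. By hypothesis, every compact subset of $E$ is $\CNED$; applying this to the countably many reflected copies of $E$ exhibits the orbit of $E$ as a countable union of closed $\CNED$ sets. The closure of this union, once the residual limit set is also expressed as a countable union of closed $\CNED$ subsets of the accumulation locus of reflected points of $E$, still falls under the scope of Theorem \ref{theorem:unions}\,\ref{item:union:ii}, yielding that $\Lambda$ itself is $\CNED$. By Theorem \ref{theorem:removable}, the homeomorphism $F$ is then quasiconformal on all of $\widehat{\C}$; since $F$ is conformal on the dense open set $\widehat{\C}\setminus \Lambda$, its dilatation is identically $1$, so $F$ is a M\"obius transformation of $\widehat{\C}$ and $f$ is its restriction to $\Omega$.

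The main obstacle is the treatment of the residual limit set $\Lambda\setminus \bigcup_w w(E)$, whose points are reached only as accumulation points of infinite sequences of reflected images and need not belong to any single reflected copy of $E$. Controlling this set requires tracking the shrinking diameters of nested reflected disks and identifying each such residual point as a limit of reflected point boundary components, thereby embedding the residual set inside the closure of an appropriate countable family of compact $\CNED$ subsets of $E$. A secondary technical point is to ensure that the initial boundary extension and the matching of circular boundary components proceed cleanly when $E$ is dense in $\partial\Omega$, which is handled by standard arguments from the circle domain rigidity literature.
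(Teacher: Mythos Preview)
The paper does not actually prove this theorem; it is cited from \cite{Ntalampekos:rigidity_cned}, and the only information given here is that ``the proof features especially Theorem \ref{theorem:unions} and the characterization of $\CNED$ sets given in Theorem \ref{theorem:criterion_compact}.'' So there is no detailed argument to compare against, but it is worth noting that your outline invokes Theorems \ref{theorem:removable} and \ref{theorem:unions}, while the paper singles out Theorem \ref{theorem:criterion_compact} (the curve-perturbation criterion) as essential. That already suggests your route is not the intended one.

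More importantly, your sketch contains a genuine gap at exactly the point you flag as ``the main obstacle.'' You assert that the residual limit set $\Lambda \setminus \bigcup_w w(E)$ can be ``expressed as a countable union of closed $\CNED$ subsets of the accumulation locus of reflected points of $E$,'' but nothing in the hypothesis gives you this. The hypothesis controls only compact subsets of $E$ and, via M\"obius invariance, compact subsets of each individual reflected copy $w(E)$. The residual limit set consists of accumulation points of nested reflected disks and has no a priori relation to $E$ at all; for a circle domain with uncountably many boundary components it can be a genuine Cantor set that is not contained in the orbit of $E$. There is no mechanism in your argument to conclude it is $\CNED$, and Theorem \ref{theorem:unions} does not help, since it does not pass to closures. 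A second, related issue: even the orbit $\bigcup_w w(E)$ is not obviously a countable union of \emph{closed} $\CNED$ sets, because $E$ itself need not be closed, and a compact subset of the orbit may meet infinitely many $w(E)$ in a way that does not decompose nicely.

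The appearance of Theorem \ref{theorem:criterion_compact} in the paper's description is a hint that the actual proof likely works at the level of curve families and admissible functions (Conclusion~\ref{conclusion:a} or \ref{conclusion:b}) rather than by assembling $\Lambda$ as a union of known $\CNED$ pieces and invoking removability directly.
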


The proof features especially Theorem \ref{theorem:unions} and the characterization of $\CNED$ sets given in Theorem \ref{theorem:criterion_compact}.

\subsection{Examples of non-negligible sets}
We remark that in Theorem \ref{theorem:unions} we are not requiring that the union of the closed sets $E_i$ be closed. However, both cases of the theorem fail without assuming that each individual set $E_i$ is closed. 

\begin{theorem}\label{example:ned}
There exist Borel $\NED$ sets $E_1,E_2 \subset \R^2$ such that $E_1\cup E_2$ is a closed set that is neither $\NED$ nor $\CNED$ nor $\QCH$-removable. 
\end{theorem}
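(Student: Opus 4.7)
The strategy is to exhibit a closed set $F \subset \R^2$ that fails all three removability properties and then to split it as $F = E_1 \cup E_2$ with each $E_i$ a Borel $\NED$ set, using Theorem \ref{theorem:projection} as the main tool for verifying $\NED$-membership.

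Since any Borel $\NED$ set has vanishing two-dimensional Lebesgue measure, the union $F$ must itself have area zero. To simultaneously fail $\CNED$-ness (and hence $\QCH$-removability) via the sufficient conditions of Theorem \ref{theorem:cned}, $F$ must avoid the $\sigma$-finite $\mathcal{H}^1$-measure criterion and must not be $\NED$. A natural candidate is a grid-like set of the form $F = (C \times [0,1]) \cup ([0,1] \times C)$, where $C \subset [0,1]$ is a Cantor set of Lebesgue measure zero and positive Hausdorff dimension: such $F$ is closed, connected (hence not $\NED$ in the plane), has area zero, and has non-$\sigma$-finite $\mathcal{H}^1$-measure whenever $\dimh C > 0$, since then $\dimh F > 1$. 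Failure of $\CNED$ and $\QCH$-removability will then be verified by a direct modulus obstruction coming from the barrier character of the grid.

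For the decomposition, each $E_i$ is to be a Borel subset of $F$ whose projection on each coordinate axis has Lebesgue measure zero, placing $E_i \in \NED^w$ via Theorem \ref{theorem:projection}. The $\NED^w$-to-$\NED$ upgrade is obtained by arranging that each $E_i$ lies in a Borel product $A_i \times B_i$ of measure-zero Borel sets and then invoking a Borel-measurability refinement of the Ahlfors--Beurling criterion in this specific product setting. Concretely, an auxiliary Borel measure-zero set $D \subset [0,1]$ parcels the horizontal and vertical strands of the grid $F$ between $E_1$ and $E_2$ so that the measure-zero projection condition holds for each piece individually while the union recovers all of $F$.

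The principal obstacle is the following tension. A naive $F_\sigma$ decomposition in which each $E_i$ is presented as a countable union of closed pieces satisfying Ahlfors--Beurling's closed-set criterion would force each closed piece to be $\NED$, and consequently $F = E_1 \cup E_2$ would be a countable union of closed $\NED$ sets, hence $\NED$ by Theorem \ref{theorem:unions}\ref{item:union:i}, contradicting the construction. Accordingly, at least one of the $E_i$ must fail to be $F_\sigma$, and its $\NED$ property must be established by a direct modulus argument rooted in the projection structure rather than by reduction to closed subsets. Reconciling Borel-but-not-$F_\sigma$ structure with the $\NED$ property is the technical heart of the proof.
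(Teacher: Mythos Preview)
Your decomposition plan cannot work. If $E_1\cup E_2 \supset C\times[0,1]$, then the projections to the $y$-axis satisfy $\pi_y(E_1)\cup\pi_y(E_2)\supset [0,1]$, so at least one of $\pi_y(E_1),\pi_y(E_2)$ has positive measure. The same obstruction applies to the $x$-projection via the $[0,1]\times C$ part of your grid. Hence it is impossible to write your set $F$ (or even just $C\times[0,1]$) as a union of two sets each of which has measure-zero projection to both coordinate axes, and Theorem~\ref{theorem:projection} alone cannot handle both pieces. Your remark that ``at least one of the $E_i$ must fail to be $F_\sigma$'' correctly identifies a tension, but the actual obstruction is more basic and is not resolved by passing to more exotic Borel classes.

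The paper's argument is genuinely different and hinges on an ingredient you are missing: Wu's construction of Cantor sets $G,F\subset\R$ with $m_1(G)=0$, $m_1(F)>0$, and $G\times F\in\NED$. Taking the closed set to be $G\times[0,1]$ (not a grid), one writes $[0,1]=E_1\cup E_2$ where $E_1$ has measure zero and $E_2=\bigcup_i F_i$ is a countable union of similar copies of $F$. Then $G\times E_1$ is $\NED$ by Theorem~\ref{theorem:projection} (both projections have measure zero and the closure lies in $G\times[0,1]$, which has area zero), while $G\times E_2=\bigcup_i G\times F_i$ is a countable union of \emph{closed} $\NED$ sets and is $\NED$ by Theorem~\ref{theorem:unions}\ref{item:union:i}. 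The point is that one of the two pieces is allowed to have a large projection; its $\NED$ property comes from Wu's result and the union theorem, not from the Ahlfors--Beurling projection criterion.
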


The proof is given in Section \ref{section:nonexamples}. One can construct a more basic example with $E_1\cup E_2\notin \NED$ as follows. Tukia \cite{Tukia:hausdorff} gives an example of a set $E_1\subset [0,1]$ of full measure that can be mapped under a quasiconformal map of $\C$ onto a set of $1$-measure zero in the real line. Note that sets of $1$-measure zero are $\NED$ by Theorem \ref{theorem:zero} and such $\NED$ sets are quasiconformally invariant by Corollary \ref{corollary:qc_invariance}. Thus, $E_1$ is $\NED$. Also, $E_2=[0,1]\setminus E_1$ is $\NED$ because it has measure zero. However, $E_1\cup E_2=[0,1]$, which is not totally disconnected, so it is not $\NED$. 

Compared to $\NED$ sets, it is significantly harder to produce sets that are not $\CNED$. For the proof of Theorem \ref{example:ned} we use tools from the existing literature, and in particular from a work of Wu \cite{Wu:cantor}, to construct $\NED$ sets $E_1$ and $E_2$ such that $E_1\cup E_2$ is the product of a Cantor set in $\R$ with $[0,1]$. Such sets are not $\QCH$-removable (see \cite{Carleson:null} or the Introduction of \cite{Ntalampekos:removabilitydetour}) and thus they are not $\CNED$ by Theorem \ref{theorem:removable}; this can be proved directly in this simple situation.

As a corollary of Theorem \ref{example:ned} and \cite{Ntalampekos:metric_definition_qc}*{Theorem 1.2}, we obtain that unions of exceptional sets for the metric definition of quasiconformality are not necessarily exceptional. Here $H_f$ denotes the metric distortion of a map $f$; see \cite{Ntalampekos:metric_definition_qc}. 

\begin{corollary}
There exist Borel sets $E_1,E_2\subset \R^2$ such that $E_1\cup E_2$ is closed and for each $i\in \{1,2\}$, every homeomorphism $f$ of $\R^2$ with 
$$\sup_{x\in \R^2\setminus E_i}H_f(x)<\infty$$
is quasiconformal, but there exists a homeomorphism $g$ of $\R^2$ that is quasiconformal in $\R^2\setminus (E_1\cup E_2)$ and not quasiconformal in $\R^2$.
\end{corollary}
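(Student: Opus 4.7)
The plan is to read off both conclusions from Theorem \ref{example:ned} combined with \cite{Ntalampekos:metric_definition_qc}*{Theorem 1.2}. I would let $E_1,E_2\subset \R^2$ be the Borel $\NED$ sets produced by Theorem \ref{example:ned}, so that $E_1\cup E_2$ is closed, is not $\NED$, is not $\CNED$, and is not $\QCH$-removable. Since $\NED\subset \CNED$, each individual $E_i$ is a Borel $\CNED$ set, which is the regime where the metric-definition machinery is available.

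For the first assertion, I would fix $i\in\{1,2\}$ and a homeomorphism $f\colon\R^2\to\R^2$ with $\sup_{x\in\R^2\setminus E_i}H_f(x)<\infty$. This pointwise bound on the metric distortion is precisely the metric definition of quasiconformality on $\R^2\setminus E_i$, which is the ``appropriate interpretation of quasiconformality on $\R^n\setminus E$ for non-closed $E$'' flagged in the excerpt just after Theorem \ref{theorem:removable}. Applying the non-closed version of \cite{Ntalampekos:metric_definition_qc}*{Theorem 1.2} to the Borel $\CNED$ set $E_i$ then yields that $f$ is quasiconformal on all of $\R^2$. This step requires no new ideas; one only needs to check that the hypothesis $\sup H_f<\infty$ on $\R^2\setminus E_i$ matches verbatim the input used in that theorem, which it does by construction of $H_f$.

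For the second assertion, since $E_1\cup E_2$ is a closed set that is not $\QCH$-removable, the very definition of $\QCH$-removability (applied in its contrapositive form) furnishes a homeomorphism $g$ of $\R^2$ that is quasiconformal on $\R^2\setminus(E_1\cup E_2)$ but fails to be quasiconformal on $\R^2$; this $g$ is exactly the witness demanded by the corollary.

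The main obstacle is not in the present proof at all, but was already surmounted in Theorem \ref{example:ned}: producing two individually $\NED$ Borel sets whose union is closed, non-$\CNED$, and non-$\QCH$-removable. Once that example is in hand, the present corollary is a direct translation, via Theorem \ref{theorem:removable} and \cite{Ntalampekos:metric_definition_qc}*{Theorem 1.2}, of the $\CNED$/removability language into the metric-distortion language, with the only mild care being the use of the non-closed version of Theorem 1.2 for Borel $\CNED$ sets.
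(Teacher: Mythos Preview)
Your proposal is correct and matches the paper's own justification, which simply states that the corollary follows from Theorem \ref{example:ned} together with \cite{Ntalampekos:metric_definition_qc}*{Theorem 1.2}. You have accurately unpacked both ingredients: the Borel $\NED\subset\CNED$ sets $E_1,E_2$ feed into the non-closed version of the cited Theorem 1.2 for the first assertion, and the failure of $\QCH$-removability of $E_1\cup E_2$ supplies the witness $g$ for the second.
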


Finally, we end the introduction with some remarks on non-removable sets. In dimension $2$ it is known that all sets of positive area are not $\QCH$-removable \cite{KaufmanWu:removable}. There are Jordan curves of Hausdorff dimension $1$ that are non-removable \cites{Kaufman:graphnonremovable,Bishop:flexiblecurves}. Moreover, if $C\subset \R$ is a Cantor set, then $C\times [0,1]$ is non-removable as we discussed above. More interestingly, Wu \cite{Wu:cantor} proved that if $E,F\subset \R$ are Cantor sets and $E\notin \NED$, then $E\times F$ is non-removable; the converse is not true since $\NED$ sets can have positive length \cite{AhlforsBeurling:Nullsets}. More recently, the current author studied the problem of removability for fractals with infinitely many complementary components and proved that the Sierpi\'nski gasket and all Sierpi\'nski carpets are non-removable \cites{Ntalampekos:gasket,Ntalampekos:CarpetsNonremovable}. The latter result was generalized to higher dimensional carpets, known as Sierpi\'nski spaces, by the author and Wu \cite{NtalampekosWu:Sierpinski}. 

Gaskets and carpets fall into the general class of \textit{residual sets of packings}. A packing $\mathcal P$ in $\R^n$ is a collection of bounded, connected open sets $D_i$, $i\in \N\cup \{0\}$, such that $D_i\subset D_0$ for every $i\in \N$ and $D_i\cap D_j=\emptyset$ for $i,j\in \N$ with $i\neq j$.  The residual set of the packing $\mathcal P$ is the set 
\begin{align*}
\br {D_0} \setminus \bigcup_{i\in \N}D_i.
\end{align*}
We observe below that in many cases such residual sets  are not $\CNED$. 

\begin{prop}\label{proposition:packings}
Let $\mathcal P=\{D_i\}_{i\in \N\cup \{0\}}$ be a packing in $\R^n$ such that $\partial D_i\cap \partial D_j$ is  countable for $i\neq j$, $i,j\in \N\cup\{0\}$. Then the residual set of $\mathcal P$ is not  $\CNED$. 
\end{prop}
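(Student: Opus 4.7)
The plan is to choose continua $F_1\subset D_1$ and $F_2\subset \R^n\setminus \overline{D_0}$ so that the family $\Gamma := \Gamma(F_1,F_2;\R^n)$ has positive $n$-modulus while every curve of the subfamily $\Gamma\cap \mathcal F_\sigma(E)$ passes through the countable set
\[
T := \bigcup_{\substack{i,j\in \N\cup\{0\}\\ i\neq j}} \partial D_i \cap \partial D_j,
\]
which is countable by hypothesis. Since the family of curves through any single point of $\R^n$ has $n$-modulus zero, countable subadditivity of modulus will yield $\md_n(\Gamma\cap \mathcal F_\sigma(E)) = 0 < \md_n\Gamma$, contradicting $E\in\CNED$. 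I would take $F_1$ to be a closed ball in $D_1$ and $F_2$ any nondegenerate continuum in a component of $\R^n\setminus \overline{D_0}$; standard estimates give $\md_n\Gamma>0$.

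For the core step, fix $\gamma\in \Gamma\cap \mathcal F_\sigma(E)$ and set $C := \gamma^{-1}(E)$, which is closed and countable. Each component of $[0,1]\setminus C$ is mapped by $\gamma$ into one component of $\R^n \setminus E$, and these are precisely the sets $D_i$ with $i\geq 1$ together with the components of $\R^n\setminus\overline{D_0}$. Pairwise disjointness of the $D_i$ forces $\overline{D_i}\cap \overline{D_j} = \partial D_i \cap \partial D_j$ for $i\ne j$, so whenever two distinct components $V,V'$ of $\R^n\setminus E$ include an inner set one has $\partial V\cap \partial V' \subset T$, while exterior--exterior intersections lie in $\partial D_0$. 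Moreover, two adjacent components of $[0,1]\setminus C$ must share an endpoint, since any open gap between them would be an open interval entirely contained in the countable set $C$.

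Put $t^* := \sup \gamma^{-1}(D_1)$; by openness and continuity, $0<t^*<1$, $\gamma(t^*)\in \partial D_1\subset E$, and $\gamma(t)\notin D_1$ for $t>t^*$. In the easy subcase, some component of $[0,1]\setminus C$ has $t^*$ as its left endpoint; it is mapped to some $V\ne D_1$ and continuity gives $\gamma(t^*)\in \partial D_1\cap \partial V\subset T$ (directly if $V$ is inner; via $\partial V\subset \partial D_0$ if $V$ is exterior). Otherwise $t^*$ is a right-accumulation point of $C$, so choose intervals $J_k$ of $[0,1]\setminus C$ with $J_k\to t^*$ mapped to $V_k\ne D_1$. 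If some $V_k$ is exterior, or if infinitely many $V_k$ coincide with one component, continuity places $\gamma(t^*)$ on $\partial D_1\cap \partial V\subset T$. In the residual case the $V_k = D_{i_k}$ are pairwise distinct inner sets, and one exploits the adjacency structure to extract adjacent intervals $J_k, J_{k+1}$ with distinct labels, whose shared endpoint $s$ satisfies $\gamma(s)\in \partial D_{i_k}\cap \partial D_{i_{k+1}}\subset T$. The main obstacle is this last subcase: producing adjacent intervals near $t^*$ that witness a transition between distinct $D_{i_k}$ requires a careful descent through the Cantor--Bendixson structure of the scattered countable closed set $C$ near $t^*$, either returning to the easy subcase at a smaller scale or locating an adjacent transition pair. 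Once $\gamma^{-1}(T)\neq \emptyset$ is established in every case, the modulus estimate concludes the proof.
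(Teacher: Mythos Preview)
Your overall strategy is the contrapositive of the paper's and is sound in outline, but the residual subcase is a genuine gap, and one of your earlier claims is unjustified.

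First, the claim ``if some $V_k$ is exterior, continuity places $\gamma(t^*)$ on $\partial D_1\cap\partial V\subset T$'' does not follow: a \emph{single} interval $J_{k_0}$ to the right of $t^*$ with $\gamma(J_{k_0})\subset V$ does not force $\gamma(t^*)\in\overline V$ unless $J_{k_0}$ has $t^*$ as its left endpoint, which you have already excluded. You would need infinitely many such intervals accumulating at $t^*$, and you only assert this in the ``infinitely many $V_k$ coincide'' branch. Second, and more seriously, your acknowledged obstacle is real: two intervals $J_k,J_{k+1}$ need not be adjacent, since the portion of $C$ between them can be an arbitrary countable closed set containing further intervals of $[0,1]\setminus C$. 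Your ``descent through the Cantor--Bendixson structure'' can be made rigorous---one shows by transfinite induction on $\alpha$ that either $\gamma$ hits $T$, or the label function $\ell$ extends to a locally constant map on $[0,1]\setminus C^{(\alpha)}$; since $C$ is scattered this eventually forces $\ell$ to be globally constant, contradicting $\ell(0)\ne\ell(1)$---but you have not carried this out. A further wrinkle with your choice $F_2\subset\R^n\setminus\overline{D_0}$ is that a transition between two \emph{distinct exterior} components yields only a point of $\partial D_0$, not of $T$; you must then argue that the \emph{first} transition (in the transfinite sense) necessarily involves the inner component $D_1$.

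The paper sidesteps all of this. It takes $F_1\subset D_1$, $F_2\subset D_2$, and restricts to curves in $D_0$, so only inner components appear. Rather than locating a transition point, it argues the contrapositive directly: for a simple curve $\gamma$ avoiding $S$, it shows the set $B=\gamma^{-1}(E)\setminus A$ is nonempty and perfect, where $A$ is the countable set of ``bounce'' times (where $\gamma$ touches some $\partial D_i$ and immediately re-enters $D_i$). Nonemptiness uses exactly your $t^*=\sup\gamma^{-1}(D_1)$; perfectness is a short three-case analysis at each $t\in B$. This avoids any transfinite induction and yields uncountability of $|\gamma|\cap E$ in a few lines.
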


It was earlier observed that such residual sets in the plane can have Hausdorff dimension $1$ but not $\sigma$-finite Hausdorff $1$-measure \cite{MaioNtalampekos:packings}. Proposition \ref{proposition:packings} covers the Sierpi\'nski gasket and all Sierpi\'nski carpets. 

\subsection{Open problems}
Based on the results of this work, it is natural to propose the following problem, whose resolution would answer many of the open questions related to removable sets.

\begin{problem}\label{problem}
Do $\QCH$-removable sets coincide with closed $\CNED$ sets?
\end{problem}
We also formulate a series of questions for $\CNED$ sets.

\begin{question}\label{question:positive}
If $E\subset \R^n$ is a closed set that is not $\CNED$, does there exist a homeomorphism of $\R^n$ that is quasiconformal in $\R^n\setminus E$ and maps $E$ to a set of positive $n$-measure?
\end{question}
A positive answer to this question would resolve Problem \ref{problem} and thus it would also resolve among others the problems of local removability and of removability of unions of removable sets mentioned earlier. For closed $\NED$ sets in the plane the answer to the corresponding question is already known to be affirmative by Ahlfors--Beurling \cite{AhlforsBeurling:Nullsets}: if $E\notin \NED$, then there exists a conformal embedding $f\colon \R^2\setminus E \to \R^2$ such that the complement of $f(\R^2\setminus E)$ has positive area.
\begin{question}If $E\subset \R^n$ is a closed set that is not $\CNED$, does there exist a totally disconnected closed subset of $E$ that is not $\CNED$?
\end{question}
If yes, it would suffice to answer Question \ref{question:positive} for totally disconnected sets, which could be more approachable. Note that $\NED$ sets in the plane are already totally disconnected, so this makes the study of these sets more accessible.

\begin{problem}
Do removable sets for continuous $W^{1,n}$ functions  coincide with closed $\CNED$ sets?
\end{problem}
This problem is motivated by Theorem \ref{theorem:sobolev_intro}.  Obviously, the answer would be positive if the answer to Problem \ref{problem} were positive, in which case, $\QCH$-removable sets would coincide with removable sets for continuous $W^{1,n}$ functions. This is another open problem discussed in \cites{Bishop:flexiblecurves,JonesSmirnov:removability}.

\subsection*{Acknowledgements}
The author would like to thank Hrant Hakobyan and Malik Younsi for their comments.

\section{Preliminaries}\label{section:preliminaries}

\subsection{Notation and definitions}
We denote the Euclidean distance between points $x,y\in \R^n$ by $|x-y|$. For $x\in \R^n$ and $0\leq r<R$ we denote by $B(x,R)$ the open ball $\{y\in \R^n: |x-y|<R\}$ and by $A(x;r,R)$ the open ring $\{y\in \R^n: r<|x-y|<R\}$. The corresponding closed ball and ring are denoted by $\br B(x,r)$ and $\br A(x;r,R)$, respectively. If $B$ is an open (resp.\ closed) ball, then for $\lambda>0$ we denote by $\lambda B$ the open (resp.\ closed) ball with the same center as $B$ and radius multiplied by $\lambda$. We also set $S^{n-1}(x,r)=\partial B(x,r)$.  The open $\varepsilon$-neighborhood of a set $E\subset \R^n$ is denoted by $N_{\varepsilon}(E)$.

We use the notation $m_n$ for the $n$-dimensional Lebesgue measure in $\R^n$, $m_n^*$ for the outer $n$-dimensional Lebesgue measure, and $\int_{\R^n}\rho(x) \, dx$ or simply $\int \rho$ for the Lebesgue integral of a Lebesgue measurable extended function $\rho\colon \R^n\to [-\infty,\infty]$, if it exists. For such a function $\rho$, if $B$ is a measurable set with $m_n(B)\in (0,\infty)$, we define
$$\fint_B \rho= \frac{1}{m_n(B)} \int_B \rho.$$
For simplicity, extended functions will be called functions. A non-negative function is assumed to take values in $[0,\infty]$.

The cardinality of a set $E$ is denoted by $\#E$. For quantities $A$ and $B$ we write $A\lesssim B$ if there exists a constant $c>0$ such that $A\leq cB$. If the constant $c$ depends on another quantity $H$ that we wish to emphasize, then we write instead $A\leq c(H)B$ or $A\lesssim_H B$. Moreover, we use the notation $A\simeq B$ if $A\lesssim B$ and $B\lesssim A$. As previously, we write $A\simeq_H B$ to emphasize the dependence of the implicit constants on the quantity $H$. All constants in the statements are assumed to be positive even if this is not stated explicitly and the same letter may be used in different statements to denote a different constant.  

For $s\geq 0$ the \textit{$s$-dimensional Hausdorff measure} $\mathscr H^s(E)$ of a set $E\subset \R^n$ is defined by
$$\mathscr{H}^{s}(E)=\lim_{\delta \to 0} \mathscr{H}_\delta^{s}(E)=\sup_{\delta>0} \mathscr{H}_\delta^{s}(E),$$
where
$$
\mathscr{H}_\delta^{s}(E)=\inf \bigg\{ c(s)\sum_{j=1}^\infty \operatorname{diam}(U_j)^{s}: E \subset \bigcup_j U_j,\, \operatorname{diam}(U_j)<\delta \bigg\}
$$
for a normalizing constant $c(s)>0$ so that the $n$-dimensional Hausdorff measure agrees with Lebesgue measure in $\R^n$. Note that $c(1)=1$. The quantity $\mathscr{H}_\delta^{s}(E)$, $\delta\in (0,\infty]$, is called the \textit{$s$-dimensional Hausdorff $\delta$-content} of $E$. If $\delta=\infty$ we simply call this quantity the \textit{$s$-dimensional Hausdorff  content} of $E$. A standard fact that we will use is that 
\begin{align*}
\textrm{$\h^s(E)=0$ if and only if $\h^s_{\infty}(E)=0$. }
\end{align*}
We note that if $E\subset \R^n$ is a connected set, then (see 	\cite{BuragoBuragoIvanov:metric}*{Lemma 2.6.1, p.~53})
\begin{align*}
\h^1(E) \geq \h^1_{\infty}(E)=\diam(E).
\end{align*}

\begin{lemma}[\cite{Bojarski:inequality}]\label{lemma:bojarski}
Let $p\geq 1$ and $\lambda >0$. Suppose that $\{B_i\}_{i\in \N}$ is a collection of balls in $\R^n$ and $a_i$, $i\in \N$, is a sequence of non-negative numbers. Then
\begin{align*}
\left \|  \sum_{i\in \N} a_i \x_{\lambda B_i} \right \|_{L^p(\R^n)} \leq c({n,p,\lambda})  \left \|  \sum_{i\in \N} a_i \x_{B_i} \right \|_{L^p(\R^n)}.
\end{align*} 
\end{lemma}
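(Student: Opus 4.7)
The plan is to prove this by a standard duality argument with the Hardy--Littlewood maximal operator $M$. The case $p=1$ is essentially immediate from Tonelli's theorem, since
$$\Big\|\sum_{i\in\N} a_i \chi_{\lambda B_i}\Big\|_{L^1(\R^n)} = \sum_{i\in\N} a_i\, m_n(\lambda B_i) = \lambda^n\sum_{i\in\N} a_i\, m_n(B_i) = \lambda^n \Big\|\sum_{i\in\N} a_i \chi_{B_i}\Big\|_{L^1(\R^n)}.$$

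For $p>1$, let $p'=p/(p-1)$ and fix a non-negative test function $g\in L^{p'}(\R^n)$ with $\|g\|_{p'}\le 1$; by $L^p$--$L^{p'}$ duality it suffices to bound $\int(\sum_i a_i\chi_{\lambda B_i})g$ uniformly in $g$. The key observation is the pointwise estimate
$$\frac{1}{m_n(B_i)}\int_{\lambda B_i} g \le c(n,\lambda)\, Mg(x) \qquad \text{for every } x \in B_i,$$
which holds because, if $B_i$ has radius $r_i$, then $\lambda B_i$ is contained in a concentric enlargement of the ball $B(x,(1+\lambda)r_i)$, whose volume is comparable to $m_n(B_i)$ with a constant depending only on $n$ and $\lambda$. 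Integrating this inequality in $x$ over $B_i$ yields $\int_{\lambda B_i} g \le c(n,\lambda) \int_{B_i} Mg$, and summing against $a_i$ gives
$$\int_{\R^n}\Big(\sum_{i\in\N} a_i\chi_{\lambda B_i}\Big)g \;\le\; c(n,\lambda)\int_{\R^n}\Big(\sum_{i\in\N} a_i\chi_{B_i}\Big) Mg.$$

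To finish I would apply H\"older's inequality to the right-hand side and then invoke the boundedness of $M$ on $L^{p'}(\R^n)$, which is available precisely because $p'>1$. This produces the bound $c(n,p,\lambda)\|g\|_{p'}\big\|\sum_i a_i\chi_{B_i}\big\|_{L^p}$, and taking the supremum over admissible $g$ yields the claimed inequality. There is no genuine obstacle; the only point requiring attention is to extract the correct dependence on $\lambda$ in the pointwise comparison between $\fint_{\lambda B_i} g$ and $Mg|_{B_i}$, particularly for $\lambda<1$, where one should use the enlargement $B(x,(1+\lambda)r_i)$ with $x \in B_i$ so that the ratio of volumes is controlled by $\big((1+\lambda)/\lambda\big)^n$.
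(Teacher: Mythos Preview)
Your argument is correct and is in fact the standard proof of Bojarski's inequality via duality and the Hardy--Littlewood maximal function. The paper does not supply its own proof of this lemma; it is stated with a citation to \cite{Bojarski:inequality} and used as a black box, so there is nothing to compare against. One small remark: in your final paragraph you switch from the quantity $\tfrac{1}{m_n(B_i)}\int_{\lambda B_i} g$ (used in the key estimate) to $\fint_{\lambda B_i} g$, which differ by a factor $\lambda^n$; this explains the $((1+\lambda)/\lambda)^n$ you mention, but the constant $(1+\lambda)^n$ already works uniformly for all $\lambda>0$ in the estimate as you stated it, so no separate treatment of $\lambda<1$ is needed.
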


\subsection{Rectifiable paths}
A \textit{path} or \textit{curve} is a continuous function $\gamma\colon I \to \R^n$, where $I\subset \R$ is a compact interval. The \textit{trace} of a path $\gamma$ is the image $\gamma(I)$ and will be denoted by $|\gamma|$. The \textit{endpoints} of a path $\gamma\colon [a,b]\to \R^n$ are the points $\gamma(a),\gamma(b)$ and we set $\partial \gamma= \{\gamma(a),\gamma(b)\}$. We say that a path $\widetilde \gamma$ is a \textit{weak subpath} of a path $\gamma$ if ${\#}\widetilde {\gamma}^{-1}(x)\leq \#\gamma^{-1}(x)$ for every $x\in \R^n$. In particular, this implies that $|\widetilde \gamma|\subset |\gamma|$. A path $\widetilde \gamma$ is a \textit{(strong) subpath} of a path $\gamma \colon I\to \R^n$ if $\widetilde \gamma$ is the restriction of $\gamma$ to a closed subinterval of $I$. Note that a strong subpath is always a weak subpath, but not vice versa. A path $\gamma$ is \textit{simple} if it is injective. Equivalently, $\#\gamma^{-1}(x)=1$ for each $x\in |\gamma|$. It is well-known that every path has a simple weak subpath with the same endpoints \cite{Willard:topology}*{Theorem 31.2, p.~219}. 

If $\gamma$ is a path and $E\subset \R^n$ is a set, then we say that $\gamma$ \textit{avoids} the set $E$ if $E\cap |\gamma|=\emptyset$ and intersects $E$ at (e.g.) finitely many points if $E\cap |\gamma|$ is a finite set; note that we are not taking into account the multiplicity in the latter case.

If $\gamma_i\colon [a_i,b_i]\to \R^n$, $i=1,2$, are paths such that $\gamma_1(b_1)=\gamma_2(a_2)$, then we can define the \textit{concatenation} of the two paths to be the path $\gamma\colon [a_1,b_2]\to \R^n$ such that $\gamma|_{[a_1,b_1]}=\gamma_1$ and $\gamma|_{[a_2,b_2]}=\gamma_2$. If $x,y\in \R^n$, then we denote the line segment from $x$ to $y$ by $[x,y]$. 

The \textit{length} of a path $\gamma$ is the total variation of the function $\gamma$ and is denoted by $\ell(\gamma)$. A path is \textit{rectifiable} if it has finite length. Let $\gamma\colon [a,b]\to \R^n$ be a path and $s\colon [c,d]\to [a,b]$ be an increasing or decreasing continuous surjection. Then the path $\gamma\circ s\colon [c,d]\to \R^n$ is called a  \textit{reparametrization} of $\gamma$ (by the function $s$). Every rectifiable path $\gamma$ admits a unique reparametrization $\widetilde \gamma \colon [0,\ell(\gamma)]\to \R^n$ by an increasing function so that $\ell(\widetilde \gamma|_{[0,t]})=t$ for all $t\in [0,\ell(\gamma)]$. The path $\widetilde \gamma$ is called the \textit{arclength parametrization} of $\gamma$.

If $\rho\colon \R^n \to [0,\infty]$ is a Borel function and $\gamma$ is a rectifiable path, then one can define the line integral $\int_{\gamma} \rho\, ds$ using the arclength parametrization of $\gamma$; see \cite{Vaisala:quasiconformal}*{Chapter 1, pp.~8--9}. Namely, if $\gamma\colon [0,\ell(\gamma)]\to \R^n$ is parametrized by arclength, then 
$$\int_{\gamma}\rho\, ds = \int_0^{\ell(\gamma)}\rho(\gamma(t))\, dt. $$
We gather some properties of line integrals below.

\begin{lemma}\label{lemma:paths}
Let $\gamma$ be a rectifiable path, $\widetilde \gamma$ be a weak subpath of $\gamma$, and $\rho\colon \R^n \to [0,\infty]$ be a Borel function. The following statements are true.
\begin{enumerate}[\upshape(i)]\smallskip
	\item\label{lemma:paths:i} $\displaystyle{\int_{\gamma}\rho\, ds}=  \int_{\R^n} \rho(x) \#\gamma^{-1}(x)\, d\h^1(x)$.\smallskip
	\item\label{lemma:paths:iv}$\displaystyle{\int_{|\gamma|} \rho\, d\h^1 \leq \int_{\gamma}\rho\, ds}$ with equality if $\gamma$ is simple.\smallskip
	\item\label{lemma:paths:ii} $\displaystyle{\int_{\widetilde \gamma}\rho\, ds\leq \int_{\gamma}\rho\, ds}$.\smallskip
	\item\label{lemma:paths:iii} If $G\subset \R^n$ is a Borel set such that $\h^1(G\cap |\gamma|)=0$, then 
	\begin{align*}
\int_{\gamma} \rho \, ds =  \int_{\gamma}\rho \x_{\R^n\setminus G}\, ds.
	\end{align*}
\end{enumerate}
In particular, the above statements hold for $\rho=1$, in which case $\displaystyle{\displaystyle{\int_{\gamma}\rho\, ds}=\ell(\gamma)}$.
\end{lemma}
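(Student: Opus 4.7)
The plan is to prove part~\ref{lemma:paths:i} first via the area formula for Lipschitz maps on an interval, and then deduce \ref{lemma:paths:iv}, \ref{lemma:paths:ii}, and \ref{lemma:paths:iii} as formal consequences. The only substantive obstacle is \ref{lemma:paths:i} itself, namely invoking the area formula and the Borel measurability of the multiplicity function $x\mapsto \#\gamma^{-1}(x)$ (the Banach indicatrix); once this is granted, every remaining assertion of the lemma is a one-line argument.

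For \ref{lemma:paths:i}, I would pass to the arclength parametrization $\gamma\colon [0,\ell(\gamma)]\to \R^n$, which is $1$-Lipschitz and satisfies $|\gamma'(t)|=1$ for almost every $t$. The classical area formula for Lipschitz curves, as recorded in \cite{Vaisala:quasiconformal}*{Chapter~1, pp.~8--9}, then yields
\begin{align*}
\int_{\gamma}\rho\, ds = \int_{0}^{\ell(\gamma)} \rho(\gamma(t))\, dt = \int_{\R^n} \rho(x)\,\#\gamma^{-1}(x)\, d\h^1(x),
\end{align*}
which is precisely the asserted identity. This is essentially where the definition of $\int_\gamma \rho\, ds$ given in the preamble meets the coarea-type formula for $1$-Lipschitz maps; the measurability of the indicatrix is classical.

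Given \ref{lemma:paths:i}, the rest is bookkeeping. For \ref{lemma:paths:iv}, note that $\#\gamma^{-1}(x)=0$ for $x\notin |\gamma|$ and $\#\gamma^{-1}(x)\geq 1$ on $|\gamma|$, with equality throughout $|\gamma|$ precisely when $\gamma$ is injective; integrating $\rho\geq 0$ against \ref{lemma:paths:i} gives both the inequality and its equality case. For \ref{lemma:paths:ii}, the defining property of a weak subpath gives $\#\widetilde{\gamma}^{-1}(x)\leq \#\gamma^{-1}(x)$ for every $x\in \R^n$, so applying \ref{lemma:paths:i} to both $\widetilde{\gamma}$ and $\gamma$ delivers the inequality of line integrals immediately. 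For \ref{lemma:paths:iii}, \ref{lemma:paths:i} gives
\begin{align*}
\int_{\gamma}\rho\,\x_{G}\, ds = \int_{G\cap|\gamma|} \rho(x)\,\#\gamma^{-1}(x)\, d\h^1(x) = 0,
\end{align*}
since $\h^1(G\cap|\gamma|)=0$ and the integral of a non-negative Borel function over an $\h^1$-null set vanishes; writing $\rho=\rho\,\x_{G}+\rho\,\x_{\R^n\setminus G}$ and integrating then yields the claimed identity. The final sentence, the specialization $\rho=1$, reduces \ref{lemma:paths:i} to $\ell(\gamma)=\int_{\R^n}\#\gamma^{-1}\, d\h^1$, and in particular $\int_\gamma 1\, ds=\ell(\gamma)$, so every part of the lemma inherits the correct $\rho=1$ instance without further work.
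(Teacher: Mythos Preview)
Your proposal is correct and follows essentially the same approach as the paper: establish \ref{lemma:paths:i} via the area formula for the arclength (Lipschitz) parametrization, then read off \ref{lemma:paths:iv}, \ref{lemma:paths:ii}, and \ref{lemma:paths:iii} as formal consequences of the multiplicity identity. The only minor difference is the reference---the paper invokes Federer \cite{Federer:gmt}*{Theorem 2.10.13, p.~177} for the area formula with the Banach indicatrix, whereas the pages you cite from \cite{Vaisala:quasiconformal} only give the definition of the line integral, not the indicatrix identity itself.
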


\begin{proof}
Part \ref{lemma:paths:i} follows from \cite{Federer:gmt}*{Theorem 2.10.13, p.~177}. The inequality and equality in \ref{lemma:paths:iv} follow from \ref{lemma:paths:i}. Since $\widetilde \gamma$ is a weak subpath of $\gamma$, we have ${\#}\widetilde {\gamma}^{-1}(x)\leq \#\gamma^{-1}(x)$. Thus \ref{lemma:paths:i} implies \ref{lemma:paths:ii}. Part \ref{lemma:paths:iii} also follows from \ref{lemma:paths:i}  upon observing that $\x_{\R^n\setminus G}(x)\#\gamma^{-1}(x)= \#\gamma^{-1}(x)$ for $x\notin G\cap |\gamma|$ and thus for $\h^1$-a.e.\ $x\in \R^n$.  
\end{proof}

\subsection{Modulus}
Let $\Gamma$ be a family of curves in $\R^n$. A Borel function $\rho\colon \R^n \to [0,\infty]$ is \textit{admissible} for the path family $\Gamma$ if $$\int_{\gamma}\rho\, ds\geq 1$$
for all rectifiable paths $\gamma\in \Gamma$. For $p\geq 1$ we define the \textit{$p$-modulus} of $\Gamma$ as 
$$\md_p \Gamma = \inf_\rho \int \rho^p,$$
where the infimum is taken over all admissible functions $\rho$ for $\Gamma$. By convention, $\md_p \Gamma = \infty$ if there are no admissible functions for $\Gamma$. Note that unrectifiable paths do not affect modulus. Hence, we will assume that families of $p$-modulus zero appearing in the next considerations contain all unrectifiable paths. We will use the following standard facts about modulus:
\begin{enumerate}[label=(M\arabic*)]
\item The modulus $\md_p$ is an outer measure in the space of all curves in $\R^n$. In particular, it obeys the monotonicity and countable subadditivity laws. \label{m:outer_measure}

\item If every path of a family $\Gamma_1$ has a subpath lying in a family $\Gamma_2$, then $\md_p\Gamma_1\leq \md_p\Gamma_2.$\label{m:subordinate}

\item If $\Gamma_0$ is a path family with $\md_p\Gamma_0=0$, then the family of paths $\gamma$ that have a weak subpath in $\Gamma_0$ also has $n$-modulus zero (by Lemma \ref{lemma:paths} \ref{lemma:paths:ii}). Moreover, the family of paths that have a reparametrization contained in $\Gamma_0$ also has $p$-modulus zero. \label{m:subpath}


\item If $\rho\colon \R^n\to [0,\infty]$ is a Borel function with $\rho=0$ a.e., then for the family $\Gamma_0$ of paths $\gamma$ with $\int_{\gamma}\rho\, ds>0$ we have  $\md_n\Gamma_0=0$. \label{m:zero_ae}

\item  The modulus $\md_p$ obeys the \textit{serial} law: if $\Gamma_i$, $i\in \N$, are curve families supported in disjoint Borel sets, then
\begin{align*}
\md_p\left(\bigcup_{i=1}^\infty \Gamma_i \right) \geq \sum_{i=1}^\infty \md_p \Gamma_i.
\end{align*}\label{m:serial}


\item Let $E\subset \R^n$ be a set with $m_n(E)=0$. Let $\Gamma_0$ be the family of paths $\gamma$ such that $\h^1( |\gamma|\cap E)>0$. Then $\md_p\Gamma_0=0$. \label{m:positive_length}

\item For $p=n$ the modulus $\md_n$ is invariant under conformal maps.

\item Let $\Gamma=\Gamma(A(x;r,R))$ be the family of curves in $\R^n$ joining the boundary components of the ring $A(x;r,R)$.  Then 
$$\md_n\Gamma= \omega_{n-1}\left(\log\frac{R}{r} \right)^{1-n}.$$\label{m:ring}
\end{enumerate}
Here $\omega_{n-1}$ is the area of the unit sphere in $\R^n$. See \cite{Vaisala:quasiconformal}*{Chapter 1} and \cite{HeinonenKoskelaShanmugalingamTyson:Sobolev}*{Sections 5.2--5.3} for more details about modulus and proofs of these facts.

For a path $\gamma\colon I\to \R^n$ and $x\in \R^n$ we define $\gamma+x$ to be the path $I\ni t\mapsto \gamma(t)+x$.  

\begin{lemma}\label{modulus:avoid}
Let $\Gamma$ be a family of paths in $\R^n$ with $\md_p\Gamma=0$.
\begin{enumerate}[\upshape(i)]
	\item For each rectifiable path $\gamma$ and for a.e.\ $x\in \R^n$ we have $\gamma+x\notin \Gamma$.
	\item For each line segment $L$ parallel to a direction $v\in \R^n$ and for $\h^{n-1}$-a.e.\ $x\in \{v\}^{\perp}$ we have $L+x\notin \Gamma$.
	\item For each $x\in \R^n$ and $w\in S^{n-1}(0,1)$ define $\gamma_w(t)=x+tw$, $t\geq 0$. Then for $0<r<R$ we have $\gamma_w|_{[r,R]}\notin \Gamma$ for $\h^{n-1}$-a.e.\ $w\in S^{n-1}(0,1)$.  
\end{enumerate}  
\end{lemma}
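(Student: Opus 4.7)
The plan is to reduce all three parts to a single integrability argument via a standard Fuglede-type majorant. First I would produce a Borel function $\rho\colon\R^n\to[0,\infty]$ with $\rho\in L^p(\R^n)$ and $\int_{\gamma'}\rho\,ds=\infty$ for every rectifiable $\gamma'\in\Gamma$. Choose admissible functions $\rho_k$ for $\Gamma$ with $\int\rho_k^p<2^{-kp}$ and set $\rho=\sum_{k\in\N}\rho_k$; Minkowski's inequality gives $\|\rho\|_p\le 1$, while monotone convergence combined with the admissibility of each $\rho_k$ gives $\int_{\gamma'}\rho\,ds=\infty$. The translates and restrictions appearing in (i)--(iii) are all rectifiable, so once we show that the $\rho$-line integral along such a path is finite, the path is forced to lie outside $\Gamma$.

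For (i), fix a rectifiable $\gamma\colon[0,\ell(\gamma)]\to\R^n$ parametrized by arclength and a bounded Borel set $B\subset\R^n$. Then $K=|\gamma|+\br{B}$ is compact, so H\"older gives $\int_K\rho\le\|\rho\|_p\,m_n(K)^{1/p'}<\infty$. Since translation is an isometry, $\gamma+x$ inherits the arclength parametrization from $\gamma$, and Fubini yields
\begin{align*}
\int_B\int_{\gamma+x}\rho\,ds\,dx=\int_0^{\ell(\gamma)}\int_B\rho(\gamma(t)+x)\,dx\,dt\le\ell(\gamma)\int_K\rho<\infty.
\end{align*}
Thus $\int_{\gamma+x}\rho\,ds<\infty$ for a.e.\ $x\in B$, hence $\gamma+x\notin\Gamma$; exhausting $\R^n$ by balls gives (i). Part (ii) proceeds identically: parametrize $L$ as $L(t)=x_0+tv$ with $|v|=1$, fix a bounded Borel $K'\subset\{v\}^\perp$, and observe that $(x,t)\mapsto x_0+tv+x$ is an isometric bijection from $K'\times[0,\ell(L)]$ onto a compact subset of $\R^n$, pushing $\h^{n-1}\times dt$ forward to Lebesgue measure on its image. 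Applying the same Fubini estimate, now with $\h^{n-1}$-measure on $\{v\}^\perp$ in place of Lebesgue measure on $B$, concludes (ii).

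For (iii) I would use the polar coordinate formula centered at $x$: for any non-negative Borel $f$,
\begin{align*}
\int_{\R^n}f\,dm_n=\int_{S^{n-1}(0,1)}\int_0^\infty f(x+tw)\,t^{n-1}\,dt\,d\h^{n-1}(w).
\end{align*}
Since $\br A(x;r,R)$ has finite Lebesgue measure, H\"older gives $\int_{\br A(x;r,R)}\rho<\infty$, and using $1\le r^{-(n-1)}t^{n-1}$ for $t\in[r,R]$ we obtain
\begin{align*}
\int_{S^{n-1}(0,1)}\int_r^R\rho(x+tw)\,dt\,d\h^{n-1}(w)\le r^{-(n-1)}\int_{\br A(x;r,R)}\rho<\infty.
\end{align*}
Therefore $\gamma_w|_{[r,R]}$ has finite $\rho$-length, and hence is not in $\Gamma$, for $\h^{n-1}$-a.e.\ $w\in S^{n-1}(0,1)$. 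There is no serious obstacle; the entire lemma is a threefold application of Fubini once the Fuglede-type majorant is in place. The only nuance is that $\rho$ lies only in $L^p$ rather than in $L^1$, which is handled throughout by restricting to compact regions and invoking H\"older's inequality to pass to local integrability.
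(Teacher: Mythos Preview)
Your proof is correct. The underlying mechanism is the same as the paper's—reduce each item to a Fubini (or polar) integrability estimate using an admissible function for $\Gamma$—but the packaging differs. The paper argues by Chebyshev: for each $\varepsilon>0$ it picks an admissible $\rho$ with $\int\rho^p<\varepsilon$, bounds the outer measure of the bad set $\{x:\gamma+x\in\Gamma\}$ by $\ell(\gamma)\|\rho\|_{L^1}$ on a compact region, and lets $\varepsilon\to 0$. You instead build a single Fuglede-type majorant $\rho=\sum_k\rho_k\in L^p$ with $\int_{\gamma'}\rho\,ds=\infty$ for every rectifiable $\gamma'\in\Gamma$, and then observe via Fubini that $\int_{\gamma+x}\rho\,ds$ is a.e.\ finite. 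Your version is slightly cleaner in that it avoids the limiting argument and the outer-measure bookkeeping; the paper's version has the minor advantage of not needing to invoke the Fuglede construction explicitly. Both are standard and entirely equivalent in strength.
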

\begin{proof}
For each $\varepsilon>0$ there exists a function $\rho$ that is admissible  for $\Gamma$ with $\int \rho^p<\varepsilon$. 

For (i), let $\gamma$ be a rectifiable path in $\R^n$, parametrized by arclength. Let $r>0$ and $G_r$ be the set of $x\in B(0,r)$ such that $\gamma+x\in \Gamma$. We also fix $R>0$ such that $|\gamma+x|\subset B(0,R)$ whenever $x\in B(0,r)$. Then by Chebychev's inequality and Fubini's theorem we have
\begin{align*}
m_n^*(G_r) &\leq m_n \left(\left\{x \in B(0,r): \int_{\gamma+x} \rho\, ds \geq 1  \right\} \right) \leq \int_{B(0,r)} \int_0^{\ell(\gamma)} \rho(\gamma(t)+x)\, dtdx\\
&= \int_0^{\ell(\gamma)} \int_{B(0,r)} \rho(\gamma(t)+x)\, dxdt\leq \ell(\gamma)\|\rho\|_{L^1(B(0,R))}.
\end{align*}
As $\varepsilon\to 0$, we have $\|\rho\|_{L^p(B(0,R))} \to 0$, so $\|\rho\|_{L^1(B(0,R))}\to 0$. This shows that $m_n(G_r)=0$ for all $r>0$. Thus, $\gamma+x\notin \Gamma$ for a.e.\ $x\in \R^n$.

For part (ii), let $G_r$ be the set of $x\in B(0,r)\cap \{v\}^{\perp}$ such that $L+x\in \Gamma$. Then
\begin{align*}
\h^{n-1}(G_r) \leq \h^{n-1} \left( \left\{ x\in B(0,r)\cap \{v\}^{\perp} : \int_{L+x} \rho\, ds \geq 1  \right\}\right)\leq \|\rho\|_{L^1(D)},
\end{align*}
where $D$ is the cylinder of radius $r$ with axis $L$. We now let $\varepsilon\to 0$ as above.  

Finally, for (iii), let $G$ be the set of $w\in S^{n-1}(0,1)$ such that $\gamma_w|_{[r,R]}\in \Gamma$. Then, using polar integration we have
\begin{align*}
\h^{n-1}(G) &\leq \h^{n-1} \left( \left\{ w\in S^{n-1}(0,1) : \int_{\gamma_w|_{[r,R]}} \rho\, ds \geq 1  \right\}\right)\\
&\leq \int_{S^{n-1}(0,1)} \int_{r}^R \rho( x+tw)\, dt d\h^{n-1}(w)\lesssim_n r^{-n+1} \|\rho\|_{L^1(B(x,R))}.
\end{align*}
As before, we let $\varepsilon\to 0$ to obtain the conclusion. 
\end{proof}

\subsection{\texorpdfstring{Elementary properties of $\NED$ and $\CNED$ sets}{Elementary properties of NED and CNED sets}}\label{section:elementary}

We first recall the definitions. For an open set $U\subset \R^n$ and for any two closed sets {$F_1,F_2\subset \R^n$} the family of curves joining $F_1$ and $F_2$ inside $U$ is denoted by $\Gamma(F_1,F_2;U)$. In other words, this family contains the curves $\gamma\colon [a,b]\to \br U$ with $\gamma(a)\in F_1$, $\gamma(b)\in F_2$, and $\gamma((a,b))\subset U$.  For a set $E\subset \R^n$ we denote by $\mathcal F_0(E)$ the family of curves in $\R^n$ that do not intersect $E$, {except possibly at the endpoints}; that is, $\mathcal F_0(E)$ contains the curves $\gamma\colon [a,b]\to \R^n$ such that $|\gamma|\setminus \gamma(\{a,b\})$ does not intersect $E$. Moreover, we define $\mathcal F_{\sigma}(E)$ to be the family of curves in $\R^n$ that intersect $E$ in countably many points; that is, the set $E\cap |\gamma|$ is countable (finite or infinite). 

Let $E\subset \R^n$ be a set and $\Omega\subset \R^n$ be an open set. The set $E$ lies in $\NED(\Omega)$ if for every pair of non-empty, disjoint continua $F_1,F_2\subset \Omega$ we have
\begin{align*}
\md_n \Gamma(F_1,F_2;\Omega) =\md_n (\Gamma(F_1,F_2;\Omega)\cap \mathcal F_0(E)).
\end{align*}
If we have instead
\begin{align*}
\md_n \Gamma(F_1,F_2;\Omega) \leq M\cdot \md_n (\Gamma(F_1,F_2;\Omega)\cap \mathcal F_0(E))
\end{align*}
for a uniform constant $M\geq 1$, then $E$ lies in $\NED^w(\Omega)$. A set $E\subset \R^n$ lies in $\CNED(\Omega)$ and $\CNED^w(\Omega)$ if the above equality and inequality, respectively, hold with $\mathcal F_{\sigma}(E)$ in place of $\mathcal F_0(E)$. In the case that $\Omega=\R^n$, we simply use the notation $\NED$, $\NED^w$, $\CNED$, and $\CNED^w$.

We will use the notation $\*ned(\Omega)$ and $\mathcal F_*(E)$ for $\NED(\Omega)$ or $\CNED(\Omega)$ and for $\mathcal F_0(E)$ or $\mathcal F_{\sigma}(E)$, respectively. Similarly, we will use the notation $\*ned^w(\Omega)$ for $\NED^w(\Omega)$ or $\CNED^w(\Omega)$.  By the monotonicity of modulus, if $E\in \*ned(\Omega)$ (resp.\ $\*ned^w(\Omega)$) and $F\subset E$, then $F$ lies in $\*ned(\Omega)$ (resp.\ $\*ned^w(\Omega)$). Moreover, if $E\in \*ned^w(\Omega)$, it is immediate that $E\cap \Omega$ must have empty interior.

A set is \textit{non-degenerate} if it contains more than one points. For two non-degenerate sets $F_1,F_2\subset \R^n$ we define the \textit{relative distance} $\Delta(F_1,F_2)$ by
\begin{align*}
\Delta(F_1,F_2)=\frac{\dist(F_1,F_2)}{\min\{\diam(F_1),\diam(F_2)\}}.
\end{align*}
\begin{lemma}\label{lemma:measure_zero_loewner}
Let $E\subset \R^n$ be a set. Suppose that there exist constants $t,\phi>0$ such that for each $x_0\in \R^n$ there exists $r_0>0$ with the property that for every pair of non-degenerate, disjoint continua $F_1,F_2\subset B(x_0,r_0)$ with $\Delta(F_1,F_2)\leq t$ we have
\begin{align*}
\md_n(\Gamma(F_1,F_2;\R^n) \cap \mathcal F_*(E)) \geq \phi.
\end{align*}
If $E$ is Lebesgue measurable, then $m_n(E)=0$. 
\end{lemma}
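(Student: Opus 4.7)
The plan is to argue by contradiction: assume $m_n(E)>0$ and derive that the modulus in the hypothesis can be made arbitrarily small by concentrating $F_1,F_2$ near a density point of $E$. Since $E$ is measurable with positive measure, the Lebesgue density theorem provides a point $x_0\in E$ of density $1$, i.e.,
\[
m_n(B(x_0,r)\setminus E) = o(r^n) \quad \text{as } r \to 0^+.
\]
Let $r_0>0$ be the radius furnished by the hypothesis at $x_0$. Without loss of generality we may assume $t\leq 2$, since the hypothesis for a given $t$ implies the hypothesis for any smaller $t$.

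The crucial observation is that every $\gamma\in \mathcal F_*(E)$ satisfies $\h^1(|\gamma|\cap E)=0$: the intersection is contained in $\partial\gamma$ in the case $\mathcal F_*(E)=\mathcal F_0(E)$, and is countable in the case $\mathcal F_*(E)=\mathcal F_\sigma(E)$. Since the trace of any subpath $\widetilde\gamma$ of $\gamma$ is contained in $|\gamma|$, the same negligibility holds for $\widetilde\gamma$, and Lemma \ref{lemma:paths} \ref{lemma:paths:iii} with $G=E$ yields $\int_{\widetilde\gamma}\x_{\R^n\setminus E}\,ds = \ell(\widetilde\gamma)$.

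Fix $\varepsilon\in (0,r_0/4)$ and choose two disjoint closed balls $F_1,F_2\subset B(x_0,2\varepsilon)$ of diameter $\varepsilon$ with $\dist(F_1,F_2)=t\varepsilon$, so that $\Delta(F_1,F_2)=t$. Set $R=4\varepsilon$ and define
\[
\rho = \frac{1}{t\varepsilon}\, \x_{B(x_0,R)\setminus E}.
\]
To verify that $\rho$ is admissible for $\Gamma(F_1,F_2;\R^n)\cap \mathcal F_*(E)$, let $\gamma$ be a rectifiable member. If $|\gamma|\subset B(x_0,R)$, then by the key observation $\int_\gamma \rho\, ds = \ell(\gamma)/(t\varepsilon) \geq \dist(F_1,F_2)/(t\varepsilon)=1$. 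Otherwise, taking the restriction of $\gamma$ up to its first hitting time of $\partial B(x_0,R)$ yields a strong subpath $\widetilde\gamma\subset \br B(x_0,R)$ joining $F_1\cup F_2$ to $\partial B(x_0,R)$, of length at least $R-2\varepsilon=2\varepsilon$, and hence $\int_\gamma \rho\,ds \geq \ell(\widetilde\gamma)/(t\varepsilon)\geq 2/t \geq 1$.

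Consequently,
\[
\md_n\bigl(\Gamma(F_1,F_2;\R^n)\cap \mathcal F_*(E)\bigr) \leq \int \rho^n = \frac{m_n(B(x_0,R)\setminus E)}{(t\varepsilon)^n} \xrightarrow{\varepsilon\to 0^+} 0
\]
by the density property at $x_0$, contradicting the uniform lower bound $\phi>0$ for all sufficiently small $\varepsilon$. The main subtlety is the admissibility check, where one must verify that the $\h^1$-negligibility of $|\gamma|\cap E$ passes to subpaths and then invoke Lemma \ref{lemma:paths} \ref{lemma:paths:iii}; once this is in place, the density property drives the upper bound to zero and the constants are routine.
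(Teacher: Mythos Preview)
Your argument follows the same strategy as the paper's: take a density point of $E$, build continua with the correct relative distance nearby, and exhibit an admissible function for $\Gamma(F_1,F_2;\R^n)\cap\mathcal F_*(E)$ whose $L^n$-mass is controlled by $m_n(B\setminus E)$, which vanishes by density. The paper chooses concentric spheres $F_i=\partial B(x_0,a^i r)$ with the extremal ring weight $\rho(x)=(|x-x_0|\log a^{-1})^{-1}\x_{A}$, while you use two small balls and the constant weight $\frac{1}{t\varepsilon}\x_{B(x_0,R)\setminus E}$; your choice is slightly more elementary but otherwise the arguments are parallel, and your two-case admissibility check (curve stays in $B(x_0,R)$ versus exits) is correct.

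There is one genuine technical point you skipped that the paper handles explicitly. Admissible functions in the definition of modulus must be \emph{Borel}, and Lemma~\ref{lemma:paths}~\ref{lemma:paths:iii} likewise requires the set $G$ to be Borel. Since $E$ is only assumed Lebesgue measurable, your $\rho=\frac{1}{t\varepsilon}\x_{B(x_0,R)\setminus E}$ need not be Borel, and you cannot invoke Lemma~\ref{lemma:paths}~\ref{lemma:paths:iii} with $G=E$ as written. The fix is exactly what the paper does at the outset: pass to a Borel subset $E'\subset E$ with $m_n(E\setminus E')=0$; then $\mathcal F_*(E)\subset\mathcal F_*(E')$, so the modulus hypothesis persists for $E'$, and proving $m_n(E')=0$ gives $m_n(E)=0$. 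With this reduction in place your proof goes through.
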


\begin{proof}
Since $E$ is measurable, it differs from a Borel subset by a set of measure zero. Note also that the family $\mathcal F_*(E)$ increases when we pass to a subset of $E$.  Thus, we assume that $E$ is Borel itself. Suppose that $m_n(E)>0$ and that $x_0$ is a Lebesgue density point of $E$. Let $r_0$ be as in the assumption and consider $r<r_0$. Define $F_1=\partial B(x_0,r)$ and $F_2=\partial B(x_0,ar)$, where $a\in (0,1)$ is chosen so that 
\begin{align*}
\Delta(F_1,F_2)= \frac{1-a}{2a} \leq t. 
\end{align*} 
Consider the function 
$$\rho(x)=\frac{1}{|x-x_0|\log(a^{-1})} \x_{A(x_0;ar,r)}(x).$$
Then $\rho$ is admissible for $\Gamma(F_1,F_2;\R^n)$; see for example \cite{Heinonen:metric}*{7.14, pp.~52--53}. We set $\rho_1=\rho\x_{\R^n\setminus E}$, which is Borel measurable. If $\gamma$ is a curve in $\Gamma(F_1,F_2;\R^n)\cap \mathcal F_{*}(E)$, then $\gamma$ intersects $E$ at countably many points, so by Lemma \ref{lemma:paths} \ref{lemma:paths:iii} we have
\begin{align*}
	\int_{\gamma} \rho_1\, ds= \int_{\gamma}\rho\, ds\geq 1.
\end{align*}
Hence, $\rho_1$ is admissible for $\Gamma(F_1,F_2;\R^n)\cap \mathcal F_{*}(E)$. We conclude that
\begin{align*}
\phi\leq  \md_n (\Gamma(F_1,F_2;\R^n)\cap \mathcal F_{*}(E))\leq \int_{\R^n\setminus E} \rho^n\leq \frac{m_n( B(x_0,r)\setminus E)}{(\log (a^{-1}))^n a^n r^n} .
\end{align*} 
Letting $r\to 0$ contradicts the assumption that $x_0$ is a density point of $E$.  
\end{proof}

\begin{lemma}\label{lemma:measure_zero}
Let $\Omega\subset \R^n$ be an open set and $E\subset \Omega$ be a set with $E\in \*ned^w(\Omega)$. 
\begin{enumerate}[\upshape(i)]
\item If  $\br E\subset \Omega$, then $E$ satisfies the assumption of Lemma \ref{lemma:measure_zero_loewner}.\label{lemma:measure_zero:i}
\item If $E$ is Lebesgue measurable, then $m_n(E)=0$. \label{lemma:measure_zero:ii}
\end{enumerate}
\end{lemma}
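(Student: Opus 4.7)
The plan is to deduce \ref{lemma:measure_zero:i} from a Loewner-type lower bound for $n$-modulus inside balls, and then to reduce \ref{lemma:measure_zero:ii} to \ref{lemma:measure_zero:i} by exhausting $\Omega$ by compact subsets.

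For \ref{lemma:measure_zero:i}, fix any $t>0$ (say $t=1$). I will invoke the following ball-localized Loewner estimate: there exist constants $C=C(n,t)>1$ and $\phi_0=\phi_0(n,t)>0$ such that for every ball $B(y,s)\subset \R^n$ and every pair of non-degenerate, disjoint continua $F_1,F_2\subset B(y,s)$ with $\Delta(F_1,F_2)\leq t$, one has $\md_n \Gamma(F_1,F_2;B(y,Cs))\geq \phi_0$. This is standard: by the Loewner property of $\R^n$, $\md_n \Gamma(F_1,F_2;\R^n)\geq \phi(t)>0$, while any curve in $\Gamma(F_1,F_2;\R^n)$ leaving $B(y,Cs)$ has a weak subpath joining $\br{B}(y,s)$ to $\partial B(y,Cs)$, and \ref{m:ring} bounds the modulus of that family by $\omega_{n-1}(\log C)^{1-n}$, which can be made arbitrarily small by choosing $C$ large. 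Now given $x_0 \in \R^n$, I split into two cases. If $x_0\notin \br E$, choose $r_0>0$ so that $B(x_0,Cr_0)\cap \br E=\emptyset$; every curve in $\Gamma(F_1,F_2;B(x_0,Cr_0))$ then avoids $E$, so lies in $\mathcal F_0(E)\subset \mathcal F_*(E)$, giving
$$\md_n(\Gamma(F_1,F_2;\R^n)\cap \mathcal F_*(E))\geq \md_n\Gamma(F_1,F_2;B(x_0,Cr_0))\geq \phi_0.$$
If instead $x_0\in \br E\subset \Omega$, the openness of $\Omega$ lets me choose $r_0>0$ with $B(x_0,Cr_0)\subset \Omega$; then $F_1,F_2\subset B(x_0,r_0)\subset \Omega$, so the $\*ned^w(\Omega)$ hypothesis, together with $\Gamma(F_1,F_2;B(x_0,Cr_0))\subset \Gamma(F_1,F_2;\Omega)\subset \Gamma(F_1,F_2;\R^n)$, yields
$$\md_n(\Gamma(F_1,F_2;\R^n)\cap \mathcal F_*(E))\geq \md_n(\Gamma(F_1,F_2;\Omega)\cap \mathcal F_*(E))\geq \tfrac{1}{M}\md_n\Gamma(F_1,F_2;\Omega)\geq \tfrac{\phi_0}{M}.$$
Setting $\phi:=\phi_0/M$ verifies the hypothesis of Lemma \ref{lemma:measure_zero_loewner}.

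For \ref{lemma:measure_zero:ii}, I exhaust $\Omega$ by an increasing sequence of compacta $K_1\subset K_2\subset\cdots$ with $\bigcup_k K_k=\Omega$ and set $E_k:=E\cap K_k$. Each $E_k$ is Lebesgue measurable, and $\br{E_k}\subset K_k\subset \Omega$. Since $E_k\subset E$, we have $\mathcal F_*(E)\subset \mathcal F_*(E_k)$, which upgrades $E\in \*ned^w(\Omega)$ to $E_k\in \*ned^w(\Omega)$ with the same constant $M$. Applying \ref{lemma:measure_zero:i} and then Lemma \ref{lemma:measure_zero_loewner} to each $E_k$ gives $m_n(E_k)=0$. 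Because $E\subset \Omega$, $E=\bigcup_k E_k$, and hence $m_n(E)=0$.

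The main obstacle is establishing the ball-localized Loewner estimate with constants independent of $y$ and $s$; by the scale and translation invariance of $n$-modulus, it suffices to prove it at a single scale. A smaller but important subtlety is that Lemma \ref{lemma:measure_zero_loewner} requires the modulus condition at \emph{every} $x_0\in \R^n$ (not just $x_0\in \Omega$ or $x_0\in \br E$), and at points $x_0\notin \br E$ the $\*ned^w(\Omega)$ hypothesis is not invoked at all---there one simply exploits that a small ball about $x_0$ is disjoint from $E$.
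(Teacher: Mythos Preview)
Your proof is correct and follows essentially the same approach as the paper: for \ref{lemma:measure_zero:i} the paper also splits into the cases $x_0\notin \br E$ and $x_0\in \br E$, invoking the $\*ned^w(\Omega)$ inequality only in the latter, and bounds $\md_n\Gamma(F_1,F_2;B(x_0,r_0))$ from below via the Loewner property of balls (you instead derive an equivalent bound in a dilated ball from the $\R^n$-Loewner property and a ring estimate); for \ref{lemma:measure_zero:ii} the paper reduces to \ref{lemma:measure_zero:i} by inner regularity (closed $K\subset E$ have $\br K\subset \Omega$), whereas you intersect $E$ with a compact exhaustion of $\Omega$---both are equally valid reductions.
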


It was proved in \cite{Vaisala:null}*{Theorem 1} that closed $\NED$ sets have measure zero. However, we remark that there exists a non-measurable set, constructed by Sierpi{\'n}\-ski for a different purpose, that is of class $\CNED$; see the discussion in Section \ref{section:nonmeasurable}.  

\begin{proof}
Let $\br E\subset \Omega$ as in \ref{lemma:measure_zero:i}. If $x_0\notin \br E$, there exists a ball $B(x_0,r_0)\subset \R^n\setminus E$. For any pair of non-degenerate, disjoint continua $F_1,F_2\subset B(x_0,r_0)$ we have 
\begin{align*}
\md_n(\Gamma(F_1,F_2;\R^n) \cap \mathcal F_*(E)) \geq \md_n\Gamma(F_1,F_2;B(x_0,r_0))
\end{align*}
by the monotonifity of modulus. If $x_0\in \br E$, consider a ball $B(x_0,r_0)\subset \Omega$. Since $E\in \*ned^w(\Omega)$, there exists a uniform constant $M\geq 1$ such that for $F_1,F_2\subset B(x_0,r_0)$ as above, we have
\begin{align*}
\md_n(\Gamma(F_1,F_2;\R^n) \cap \mathcal F_*(E)) &\geq \md_n(\Gamma(F_1,F_2;\Omega)\cap \mathcal F_*(E))\\
&\geq M^{-1} \cdot \md_n \Gamma(F_1,F_2;\Omega)\\
&\geq M^{-1} \cdot  \md_n \Gamma(F_1,F_2;B(x_0,r_0)). 
\end{align*} 
Each open ball in Euclidean space is a \textit{Loewner space} \cite{Heinonen:metric}*{Example 8.24 (a), p.~65}, so the latter modulus is uniformly bounded from below, provided that $\Delta(F_1,F_2)\leq 1$. Therefore the assumption of Lemma \ref{lemma:measure_zero_loewner} is satisfied.

For  \ref{lemma:measure_zero:ii}, note that each closed set $K\subset E$ is contained in $\Omega$, lies in $\*ned^w(\Omega)$, and by part \ref{lemma:measure_zero:i} satisfies the assumption of Lemma \ref{lemma:measure_zero_loewner}. By Lemma \ref{lemma:measure_zero_loewner}, $m_n(K)=0$. Since $E$ is measurable, $m_n(E)=0$.
\end{proof}

\subsection{\texorpdfstring{Comparison to classical definition of $\NED$ sets}{Comparison to classical definition of NED sets}}\label{section:ned_classical}

According to the classical definition, a \textit{closed} set $E\subset \R^n$ is $\NED$ if for every pair of non-empty, disjoint continua $F_1,F_2\subset \R^n\setminus E$ we have 
\begin{align*}
\md_n \Gamma(F_1,F_2;\R^n) = \md_n \Gamma(F_1,F_2;\R^n\setminus E)=\md_n (\Gamma(F_1,F_2;\R^n)\cap \mathcal F_0(E)).
\end{align*}
In our definition, we required the stronger condition that the above equality holds for all disjoint continua $F_1,F_2\subset \R^n$ regardless of whether they intersect the set $E$. The reason for allowing such a generality in our approach is that we impose no topological assumptions on $E$, which could be even dense in $\R^n$; therefore it would be too restrictive and unnatural to work with continua $F_1,F_2\subset \R^n\setminus E$.

We show that the two definitions agree. The proof relies on some results relating $n$-modulus with \textit{$n$-capacity}. Let $U\subset \R^n$ be an open set and $F_1,F_2\subset \br U$ be disjoint continua. The $n$-capacity of the \textit{condenser} $(F_1,F_2;U)$ is defined as 
\begin{align*}
\cp_n(F_1,F_2;U)= \inf_{u} \int_U |\nabla u|^n,
\end{align*}
where the infimum is taken over all functions $u$ that are continuous in $U\cup F_1\cup F_2$, ACL  in $U$ \cite{Vaisala:quasiconformal}*{Section 26, p.~88}, with $u=0$ in a neighborhood of $F_1$ and $u=1$ in a neighborhood of $F_2$ \cite{Hesse:capacity}*{Theorem 3.3}.  Equivalently, one can replace in this definition continuous ACL functions with the Dirichlet space $L^{1,n}(U)$ of locally integrable functions in $U$ with distributional derivatives of first order lying in $L^n(U)$. It has been shown by Hesse \cite{Hesse:capacity}*{Theorem 5.5} that whenever $F_1,F_2$ are continua in $U$ (specifically, not intersecting $\partial U$), then 
$$\cp_n(F_1,F_2;U) =\md_n\Gamma(F_1,F_2;U).$$
This result was generalized to the case that $F_1,F_2\subset \br U$, provided that $U$ is a $\mathit{QED}$ \textit{domain}, by Herron--Koskela \cite{HerronKoskela:QEDcircledomains}; see also the work of Shlyk \cite{Shlyk:CapacityModulus} for a more general result. By definition, a connected open set $U\subset \R^n$ is a $\mathit{QED}$ domain if there exists a constant $M\geq 1$ such that 
$$ \md_n \Gamma(F_1,F_2;\R^n)\leq M\cdot \md_n\Gamma(F_1,F_2;U)$$ 
for all pairs of non-empty, disjoint continua $F_1,F_2\subset U$.  

Suppose that $E$ is an $\NED$ set according to the classical definition. Then $U=\R^n\setminus E$ is a $\mathit{QED}$ domain, whose closure is $\R^n$; classical $\NED$ sets have empty interior \cite{Vaisala:null}. Thus, the result of Herron--Koskela gives
\begin{align*}
\cp_n(F_1,F_2;\R^n\setminus E) &=\md_n\Gamma(F_1,F_2;\R^n\setminus E)
\end{align*}
for all non-empty, disjoint continua $F_1,F_2\subset \R^n$. Summarizing, in order to show the equivalence of the classical definition to the current one, it suffices to show that 
\begin{align*}
\cp_n(F_1,F_2;\R^n)= \cp_n(F_1,F_2;\R^n\setminus E)
\end{align*}
for all non-empty, disjoint continua $F_1,F_2\subset \R^n$. Observe  that this equality already holds if $F_1,F_2\subset \R^n\setminus E$. Hence, $E$ is \textit{removable for $n$-capacity}, in the sense of Vodopyanov--Goldshtein \cite{VodopjanovGoldstein:removable}. By \cite{VodopjanovGoldstein:removable}*{Theorem 3.1, p.~46}, such sets coincide with the sets that are removable for the Dirichlet space $L^{1,n}$. That is, $m_n(E)=0$ and if $u\in L^{1,n}(\R^n\setminus E)$, then $u\in L^{1,n}(\R^n)$ and the distributional derivatives of $u$ are the same in both spaces.   Now, let $F_1,F_2$ be any non-empty, disjoint continua in $\R^n$; in particular, they might intersect the set $E$, as in the current definition of $\NED$ sets. We trivially have $$\cp_n(F_1,F_2;\R^n\setminus E)\leq \cp_n(F_1,F_2;\R^n).$$
Since $E$ is removable for the Dirichlet space $L^{1,n}$,  we also have the reverse inequality, completing the proof.

\section{Families of curve perturbations}\label{section:perturbation}
Let $\mathcal F$ be a path family in $\R^n$. We define $\partial \mathcal F$ to be the set of points $x\in \R^n$ that are endpoints of some path of $\mathcal F$ and $x\notin |\gamma|\setminus \partial \gamma$ for any path $\gamma\in \mathcal F$. In other words, $\partial \mathcal F$ contains endpoints of paths in $\mathcal F$ that are not interior points of any path of $\mathcal F$. For example, if $\mathcal F$ is the family $\Gamma( F_1,F_2;U)$, where $U$ is a ring $A(0;r,R)$ and $F_1,F_2$ are the boundary components of $U$, then $\partial \mathcal F=F_1\cup F_2$. Another example is the family $\mathcal F_0(E)$ for some set $E\subset \R^n$; recall its definition from Section \ref{section:elementary}. Then, $\partial \mathcal F_0(E)\supset E$. Indeed, every point $x\in E$ can be considered as a constant path in $\mathcal F_0(E)$; recall that paths of $\mathcal F_0(E)$ can have endpoints in $E$. Moreover, if $x\in E$, then $x\notin |\gamma|\setminus \partial \gamma$ for any $\gamma\in \mathcal F_0(E)$; thus $x\in \partial \mathcal F_0(E)$. 

\begin{definition}\label{definition:pfamily}
We say that a path family $\mathcal F$ in $\R^n$ is a \textit{family of curve perturbations}, or else, a \textit{$P$-family}, if
\begin{enumerate}[\upshape(P1)]
 	\item\label{perturbations:1} for all non-constant rectifiable paths $\gamma$ in $\R^n$ we have $\gamma+x\in \mathcal F$ for a.e.\ $x\in \R^n$,
 	\item\label{perturbations:2} for every $x\in \R^n$ and $r>0$, the radial segment $t\mapsto x+tw$, $0\leq t\leq r$, lies in $\mathcal F$ for $\h^{n-1}$-a.e.\ $w\in S^{n-1}(0,1)$,
 	\item\label{perturbations:3} $\mathcal F$ is closed under strong subpaths and reparametrizations, and
 	\item\label{perturbations:4} if two paths $\gamma_1,\gamma_2\in \mathcal F$ have a common endpoint that does not lie in $\partial \mathcal F$, then the concatenation of $\gamma_1$ with $\gamma_2$ on that endpoint lies in $\mathcal F$.
\end{enumerate}
\end{definition}

Property \ref{perturbations:4} holds always for families that are closed under concatenations; for example the family $\mathcal F_{\sigma}(E)$ of curves intersecting a given set $E$ at countably many points is such. The reason for requiring that the common endpoint of $\gamma_1$ and $\gamma_2$ does not lie in $\partial \mathcal F$ is that we wish to accommodate curve families such as $\mathcal F_0(E)$, which contains paths that do not intersect $E$ \textit{except possibly at the endpoints}. Since $\partial \mathcal F_0(E)\supset E$, we remark that $\mathcal F_0(E)$ always satisfies \ref{perturbations:4}. Finally, we note that \ref{perturbations:3} always holds for $\mathcal F_0(E)$ and $\mathcal F_{\sigma}(E)$. We summarize these remarks below.

\begin{remark}\label{remark:p3p4}
Properties \ref{perturbations:3} and \ref{perturbations:4} always hold for the families $\mathcal F_0(E)$ and $\mathcal F_{\sigma}(E)$, for each $E\subset \R^n$. 
\end{remark}

\begin{lemma}\label{lemma:intersection}
The intersection of countably many $P$-families $\mathcal F_i$, $i\in \N$, is again a $P$-family.
\end{lemma}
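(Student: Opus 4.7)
The plan is to verify the four defining properties of a $P$-family for $\mathcal F = \bigcap_{i\in \N} \mathcal F_i$ one by one, with the first three being essentially bookkeeping and the fourth requiring a small observation about how $\partial \mathcal F$ relates to $\partial \mathcal F_i$.

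For \ref{perturbations:1}, given a non-constant rectifiable path $\gamma$, property \ref{perturbations:1} for $\mathcal F_i$ gives a null set $N_i \subset \R^n$ outside of which $\gamma + x \in \mathcal F_i$; then $N = \bigcup_i N_i$ is null and $\gamma + x \in \mathcal F$ for $x \notin N$. Property \ref{perturbations:2} is identical in spirit, using that a countable union of $\h^{n-1}$-null subsets of $S^{n-1}(0,1)$ is $\h^{n-1}$-null. Property \ref{perturbations:3} is immediate, since strong subpaths and reparametrizations of a path in every $\mathcal F_i$ lie in every $\mathcal F_i$.

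The only point requiring care is \ref{perturbations:4}. Suppose $\gamma_1, \gamma_2 \in \mathcal F$ share a common endpoint $p \notin \partial \mathcal F$. The issue is that $\partial \mathcal F$ refers to the intersection family, not to any $\mathcal F_i$, so we cannot directly quote \ref{perturbations:4} for each $\mathcal F_i$ without first checking that $p \notin \partial \mathcal F_i$. The key observation is that $p$ is an endpoint of $\gamma_1 \in \mathcal F$, so by the definition of $\partial \mathcal F$, the only way $p \notin \partial \mathcal F$ is that $p$ is an interior point of some path $\eta \in \mathcal F$, i.e.\ $p \in |\eta| \setminus \partial \eta$. Since $\mathcal F \subset \mathcal F_i$, the same path $\eta$ witnesses $p \notin \partial \mathcal F_i$ for every $i$. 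Now property \ref{perturbations:4} applied to each $\mathcal F_i$ yields that the concatenation of $\gamma_1$ and $\gamma_2$ at $p$ lies in $\mathcal F_i$, and taking the intersection over $i$ places it in $\mathcal F$.

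I do not expect any serious obstacle here; the one place to be cautious is precisely the interplay between $\partial \mathcal F$ and $\partial \mathcal F_i$, since $\partial \mathcal F_i$ could in principle be strictly larger than $\partial \mathcal F$ (a point could be an endpoint of some $\mathcal F_i$-path without being an endpoint of any $\mathcal F$-path). The observation above sidesteps this by using only the forward containment $\partial \mathcal F_i \supset \partial \mathcal F$ restricted to candidates for the concatenation, which is what the definition readily provides.
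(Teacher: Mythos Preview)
Your proof is correct and follows essentially the same approach as the paper: properties \ref{perturbations:1}--\ref{perturbations:3} are immediate, and for \ref{perturbations:4} one must verify that the common endpoint $p$ lies outside each $\partial \mathcal F_i$. The paper does this by first observing (via \ref{perturbations:2} and \ref{perturbations:3}) that every constant path lies in $\mathcal F$, which yields the full inclusion $\bigcup_i \partial \mathcal F_i \subset \partial \mathcal F$; you instead argue directly that since $p$ is an endpoint of $\gamma_1\in\mathcal F$ and $p\notin\partial\mathcal F$, there is a witness $\eta\in\mathcal F\subset\mathcal F_i$ with $p\in|\eta|\setminus\partial\eta$, giving $p\notin\partial\mathcal F_i$. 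Both routes are equally short and valid.

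One small slip in your concluding remarks: the containment you actually establish (and need) is $\partial\mathcal F_i \subset \partial\mathcal F$, not $\partial\mathcal F_i \supset \partial\mathcal F$ as written, and accordingly $\partial\mathcal F_i$ cannot be strictly larger than $\partial\mathcal F$ for $P$-families. This is only a wording issue in the commentary and does not affect the argument itself.
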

\begin{proof}
Let $\mathcal F=\bigcap_{i\in \N} \mathcal F_i$. Properties \ref{perturbations:1}, \ref{perturbations:2}, and \ref{perturbations:3} are immediate for $\mathcal F$. For \ref{perturbations:4}, note that every constant path in $\R^n$ lies in $\mathcal F_i$ for each $i\in \N$; this follows by combining \ref{perturbations:2} with \ref{perturbations:3}. Thus, if $x\in \partial \mathcal F_i$ for some $i\in \N$, then $x$ is the endpoint of a (constant) path in $\mathcal F$. Moreover, $x \notin |\gamma|\setminus \partial \gamma$ for any path $\gamma$ of $\mathcal F_i\supset \mathcal F$, so
\begin{align*}
\bigcup_{i\in \N} \partial \mathcal F_i\subset \partial \mathcal F.
\end{align*}
Now, if $\gamma_1,\gamma_2\in \mathcal F$ have a common endpoint that does not lie in $\partial \mathcal F$, then it also does not lie in $\partial \mathcal F_i$ for any $i\in \N$. Hence, by \ref{perturbations:4}, the concatenation of $\gamma_1$ with $\gamma_2$ lies in $\mathcal F_i$ for each $i\in \N$. This proves that \ref{perturbations:4} holds for $\mathcal F$.
\end{proof}

In Section \ref{section:perturbation:examples} we will see important examples of such families. Specifically, if $\mathscr H^{n-1}(E)=0$, then the family $\mathcal F_0(E)$ is a $P$-family and if $E$ has $\sigma$-finite Hausdorff $(n-1)$-measure, then $\mathcal F_{\sigma}(E)$ is a $P$-family.

\subsection{The invariance theorem}

The main result of the section states that $n$-modulus is not affected if we restrict a path family to a $P$-family.

\begin{theorem}\label{theorem:perturbation_family}
Let $\mathcal F$ be a family of curve perturbations in $\R^n$.  Then for every open set $U\subset \R^n$ and all pairs of non-empty, disjoint continua $F_1,F_2 \subset  U$ we have
\begin{align*}
\md_n \Gamma (F_1,F_2;U) = \md_n (\Gamma(F_1,F_2;U) \cap \mathcal F)
\end{align*}
\end{theorem}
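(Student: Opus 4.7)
By monotonicity (property \ref{m:outer_measure}), the inequality $\md_n(\Gamma(F_1,F_2;U)\cap \mathcal F) \leq \md_n\Gamma(F_1,F_2;U)$ is automatic, so the content of the theorem is the reverse inequality. The plan is to start with an arbitrary admissible $\rho$ for $\Gamma(F_1,F_2;U)\cap \mathcal F$ with $\int \rho^n<\infty$ and, for each $\varepsilon>0$, construct a function $\tilde\rho_\varepsilon$ which is admissible for $\Gamma(F_1,F_2;U)\setminus \Gamma_0$ for some exceptional family $\Gamma_0$ with $\md_n\Gamma_0=0$, and satisfies $\|\tilde\rho_\varepsilon\|_{L^n(\R^n)}\leq (1+C\varepsilon)\|\rho\|_{L^n(\R^n)}$. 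Sending $\varepsilon\to 0$ and invoking property \ref{m:outer_measure} to discard $\Gamma_0$ will then conclude the proof.

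The heart of the argument is a geometric perturbation scheme. Given a rectifiable $\gamma\in \Gamma(F_1,F_2;U)$, I fix a scale $\delta\in (0,\dist(F_1\cup F_2,\partial U))$, partition $\gamma$ into subpaths $\gamma_i=\gamma|_{[t_{i-1},t_i]}$ of diameter less than $\delta$ for $i=1,\dots,k$, and select perturbation vectors $v_1,\dots,v_k\in B(0,\delta)$. I form $\gamma^*$ by concatenating, in order, the straight segment $[\gamma(t_0),\gamma(t_0)+v_1]$, the translated piece $\gamma_1+v_1$, the segment $[\gamma(t_1)+v_1,\gamma(t_1)+v_2]$, the piece $\gamma_2+v_2$, and so on, closing with $[\gamma(t_k)+v_k,\gamma(t_k)]$. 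The $P$-family axioms are calibrated exactly for this purpose: by \ref{perturbations:1} combined with Lemma \ref{modulus:avoid}(i), $\gamma_i+v_i\in \mathcal F$ for a.e.\ $v_i$; by \ref{perturbations:2} combined with Lemma \ref{modulus:avoid}(iii), each connecting straight segment, viewed as a radial segment emanating from its starting point, lies in $\mathcal F$ for a.e.\ choice of direction; applying \ref{perturbations:1} to a fixed non-constant rectifiable curve through a given point shows that $\partial \mathcal F$ has Lebesgue measure zero, so the joining points $\gamma(t_i)+v_i$ and $\gamma(t_i)+v_{i+1}$ can be chosen outside $\partial \mathcal F$; and then \ref{perturbations:3}--\ref{perturbations:4} certify that the full concatenation belongs to $\mathcal F$. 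For $\delta$ small enough that the radial legs remain inside $U$, we obtain $\gamma^*\in \Gamma(F_1,F_2;U)\cap \mathcal F$, so admissibility of $\rho$ yields $\int_{\gamma^*}\rho\,ds\geq 1$.

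The quantitative step is a Fubini average of $\int_{\gamma^*}\rho\,ds$ over $(v_1,\dots,v_k)\in B(0,\delta)^k$. The translated-piece terms average to $\int_{\gamma}\bar\rho_\delta\,ds$, where $\bar\rho_\delta(x)=\fint_{B(x,\delta)}\rho$, and the connecting-segment terms average (using polar integration on each $v_i$) to a contribution controlled by $\sum_i \delta\cdot \bar\rho_{c\delta}(\gamma(t_i))$ with a dimensional constant $c$. A Chebyshev-type selection extracts a single tuple $(v_i)$ for which all the a.e.\ $P$-family and $\partial\mathcal F$-avoidance conditions hold simultaneously with a value close to the mean, producing an inequality of the form $\int_\gamma \bar\rho_\delta\,ds + C\sum_i \delta\,\bar\rho_{c\delta}(\gamma(t_i))\geq 1$ along $\gamma$. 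To represent the discrete sum by an integral of a single density $g$ along $\gamma$, I take $g$ to be a weighted sum of indicators of the balls $B(\gamma(t_i),2\delta)$; the Bojarski inequality (Lemma \ref{lemma:bojarski}) is then the key tool for controlling $\|g\|_n$ uniformly in $k$, since it converts overlaps among inflated balls into a controlled comparison with the tighter balls used to define $g$. Combining this with the elementary bound $\|\bar\rho_\delta\|_n\leq \|\rho\|_n$ (Jensen's inequality), one obtains $\tilde\rho_\varepsilon=(1+\varepsilon)\bar\rho_\delta+\varepsilon g$ with $\|\tilde\rho_\varepsilon\|_n\leq (1+C\varepsilon)\|\rho\|_n$. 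The main obstacle I expect is precisely this uniform Fubini extraction: to produce one fixed $\tilde\rho_\varepsilon$ that works for \emph{every} rectifiable $\gamma\in \Gamma(F_1,F_2;U)$ outside a modulus-zero family, with a bound independent of the partition size $k$. This is what forces the use of Lemma \ref{lemma:bojarski} rather than a naive summation; additional care is needed near curves that come close to $\partial U$, handled by exhausting $U$ from the inside so the perturbation stays inside $U$ at the cost of an exceptional family whose modulus tends to zero.
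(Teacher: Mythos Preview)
Your scheme has a genuine gap in the step where you pass from the averaged inequality to an admissible density of the form $(1+\varepsilon)\bar\rho_\delta+\varepsilon g$. With partition scale and translation scale both equal to $\delta$, the number of connecting segments is $k-1\sim\ell(\gamma)/\delta$ and each has length $\lesssim\delta$, so after averaging their total $\rho$-contribution is of order $\sum_i \delta\,\bar\rho_{c\delta}(\gamma(t_i))$, which is \emph{comparable} to $\int_\gamma \bar\rho_{c\delta}\,ds$, not small relative to it. Nothing in your argument produces the factor $\varepsilon$ in front of $g$; at best you obtain $\tilde\rho=\bar\rho_\delta+Cg$ with a dimensional constant $C$, hence only the weak inequality $\md_n\Gamma(F_1,F_2;U)\leq C'\,\md_n(\Gamma(F_1,F_2;U)\cap\mathcal F)$, not equality. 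A second, related defect is that your $g$ is built from the partition points $\gamma(t_i)$ and therefore depends on $\gamma$; even if Bojarski bounds $\|g\|_n$ uniformly in $k$, a curve-dependent $g$ cannot serve as a single admissible function for the modulus.

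The paper sidesteps both issues by using a \emph{single} translation $\gamma\mapsto\gamma+x$, $x\in B(0,r)$, with no partition and hence no intermediate connecting segments. The price is that $\gamma+x$ now joins the fattened continua $F_1^r,F_2^r$ rather than $F_1,F_2$; this is exactly what Lemma~\ref{lemma:fattening} absorbs, showing $\md_n(\Gamma(F_1^r,F_2^r;U)\cap\mathcal F)\to\md_n(\Gamma(F_1,F_2;U)\cap\mathcal F)$ as $r\to 0$. That lemma is where (P2)--(P4) are really used, via the Loewner-type estimate of Lemma~\ref{lemma:loewner}. With an admissible $\rho\in L^n$ for $\Gamma(F_1^r,F_2^r;U)\cap\mathcal F$ fixed, (P1) gives $\int_{\gamma+x}\rho\,ds\geq 1$ for a.e.\ $x\in B(0,r)$, and the Lebesgue differentiation theorem for line integrals (Theorem~\ref{theorem:lebesgue_differentiation_lp}) upgrades this, off a zero-modulus exceptional family $\Gamma_0$, to $\int_\gamma\rho\,ds\geq 1$. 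Thus $\rho$ itself is admissible for $\Gamma(F_1,F_2;U)\setminus\Gamma_0$; no averaging of $\rho$, no Bojarski, and no curve-dependent density is needed.
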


We first establish several auxiliary results.

\begin{lemma}\label{lemma:loewner}
Let $\mathcal F$ be a family of curve perturbations in $\R^n$. Let $A=A(0;r,R)$, $0<7r\leq R$, and $F_1,F_2\subset \R^n$ be disjoint continua such that every sphere $S^{n-1}(0,\rho)$, $r<\rho<R$, intersects both $F_1$ and $F_2$. Then 
\begin{align*}
\md_n(\Gamma(F_1,F_2;A)\cap \mathcal F)\geq c(n) \log\left(\frac{R}{r}\right).
\end{align*}
\end{lemma}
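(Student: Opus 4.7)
Plan. The proof would combine a reduction via the serial law with a single-shell Loewner-type estimate. Partition $A$ into the disjoint sub-rings $A_k = A(0; 7^k r, 7^{k+1} r)$ for $k = 0, \ldots, K-1$ with $K = \lfloor \log_7(R/r)\rfloor \geq 1$. The hypothesis that every sphere in $A$ meets both $F_1, F_2$ transfers to each $A_k$, so by the serial law \ref{m:serial},
\[
\md_n(\Gamma(F_1, F_2; A) \cap \mathcal F) \geq \sum_{k=0}^{K-1} \md_n(\Gamma(F_1, F_2; A_k) \cap \mathcal F),
\]
reducing matters to a uniform per-shell lower bound $\md_n(\Gamma(F_1, F_2; A_k) \cap \mathcal F) \geq c_0(n) > 0$, which yields $\sim \log(R/r)$ upon summation.

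For the single-shell estimate, work in $A' = A(0; 1, 7)$. By connectedness of $F_i$, extract a subcontinuum $K_i \subset F_i \cap \overline{A(0; 2, 6)}$ meeting both $S^{n-1}(0, 2)$ and $S^{n-1}(0, 6)$; then $\diam K_i \geq 4$ and $K_1, K_2$ are disjoint. I would then build a rich family of test curves in $\mathcal F \cap \Gamma(K_1, K_2; A')$. First note that $\partial \mathcal F$ has Lebesgue measure zero: applying (P1) to any short non-constant rectifiable segment, $\h^n$-a.e.\ point of $\R^n$ lies in the interior of some curve in $\mathcal F$. Fix a pivot $x_0 \in A(0;2,6) \setminus \partial \mathcal F$ chosen so that every segment $[y, x_0]$ with $y \in K_1 \cup K_2$ lies inside $A'$ (inserting an auxiliary pivot $x_0'$ and using a three-edge path to route around the inner hole if necessary). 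For each $(y_1, y_2) \in K_1 \times K_2$ consider the polygonal path $\beta_{y_1, y_2} := [y_1, x_0] * [x_0, y_2]$: by (P2) applied at $y_1$ and at $y_2$, each edge $[y_i, x_0]$ is in $\mathcal F$ for $\h^n$-a.e.\ $x_0$, and by (P3) (reversal) together with (P4) at $x_0 \notin \partial \mathcal F$, the concatenation lies in $\mathcal F$. A Fubini argument with the measure $\h^1|_{K_1} \otimes \h^1|_{K_2} \otimes \h^n$ then selects $x_0$ so that $\h^1 \otimes \h^1$-a.e.\ pair $(y_1, y_2) \in K_1 \times K_2$ produces $\beta_{y_1, y_2} \in \mathcal F \cap \Gamma(K_1, K_2; A')$.

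To finish, let $\rho$ be admissible for $\Gamma(F_1, F_2; A') \cap \mathcal F$. Integrating $\int_{\beta_{y_1, y_2}} \rho \, ds \geq 1$ against $d\h^1(y_1)\, d\h^1(y_2)$ over $K_1 \times K_2$, converting the resulting iterated line integrals into volume integrals on $A'$ by switching to polar coordinates centered at $x_0$, and applying H\"older's inequality with a Bojarski-type bound (Lemma~\ref{lemma:bojarski}) on the overlap weight (which counts how many test paths pass through a given point), yields $\int_{A'} \rho^n \geq c_0(n)$, completing the single-shell estimate.

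The crux is the final Fubini/H\"older step. In dimension $n=2$, the $\h^1 \otimes \h^1$-parametrization of $K_1 \times K_2$ has total dimension matching $\R^2$ and the estimate runs cleanly. For $n \geq 3$, however, the parameter dimension is too small for a direct $L^n$ lower bound, so I would enlarge the test family by inserting $n-2$ further intermediate pivots $x_0^{(1)}, \ldots, x_0^{(n-2)}$ each ranging over positive-Lebesgue-measure regions, with every additional segment in $\mathcal F$ by (P1) or (P2) and every concatenation performed outside $\partial \mathcal F$ via (P4). A secondary geometric concern is keeping the polygonal paths inside the non-convex ring $A'$; this is handled by the careful choice of pivots near the middle sphere $S^{n-1}(0, 4)$.
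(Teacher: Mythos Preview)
Your serial-law reduction to a single shell of ratio $7$ is exactly what the paper does. The divergence is in the single-shell estimate, and there the paper's route is both different and substantially cleaner.

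In the paper, the pivot variable $w$ lives in an $(n-1)$-dimensional disk $S$ (not an $n$-dimensional region), and the test paths are $[y,w]*[w,z]$ with $y\in F_1'$, $z\in F_2'$ running over small continua of uniformly positive diameter. The admissibility inequality $\int_{[y,w]}\rho+\int_{[w,z]}\rho\geq 1$ is then fed directly into Lemma~\ref{lemma:kallunki_koskela} (Kallunki--Koskela): for any continuum $F$, if from every $y\in F$ a set of directions of uniformly positive $\h^{n-1}$-measure satisfies $\int_{[y,w]}u\geq 1$, then $\int u^n\gtrsim\diam F$. A simple dichotomy (either the first edge is large for many $w$ at every $y\in F_1'$, or the second edge is forced to be large for many $w$ at every $z\in F_2'$) finishes the shell bound in a few lines, uniformly in $n$. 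No extra pivots, no dimension-dependent bookkeeping.

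By contrast, your final step---integrating $\int_{\beta_{y_1,y_2}}\rho\geq 1$ over $d\h^1(y_1)\,d\h^1(y_2)$ and ``converting to volume integrals''---is the real crux and is not justified. If you carry it out for a single edge $[y_1,x_0]$ with $y_1\in K_1$ and $x_0$ ranging over an $n$-dimensional region, Fubini leads to $\int \rho(z)\,I_1(\h^1|_{K_1})(z)\,dz$ with $I_1$ the Riesz potential of order $1$; the mapping property $I_1\colon M^1\to L^{n/(n-1),\infty}$ is only weak-type, so H\"older does not close to an $L^n$ lower bound without additional argument, even for $n=2$. Adding $n-2$ further Lebesgue pivots does not repair this: each segment now has both endpoints varying in full-measure sets and the overlap weight is not governed by Lemma~\ref{lemma:bojarski}, which concerns sums of indicators of balls, not the multiplicity function of a family of polygonal paths. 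The paper's choice of an $(n-1)$-dimensional pivot set is exactly what makes the polar coarea computation behind Lemma~\ref{lemma:kallunki_koskela} balance, and you should use that lemma rather than attempt a direct averaging argument.
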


The statement is also true for $r<R<7r$ but we do not prove this for the sake of brevity. The statement without restricting to the family $\mathcal F$ is classical and can be found in \cite{Vaisala:quasiconformal}*{Theorem 10.12, p.~31}. Our proof relies on the next lemma.

\begin{lemma}[\cite{KallunkiKoskela:quasiconformal}*{Lemma 2.1}]\label{lemma:kallunki_koskela}
Let $u\colon \R^n\to [0,\infty]$ be a Borel function and $F\subset \R^n$ be a continuum. Suppose that for each $y\in F$ there exists a set $D_y\subset S^{n-1}(y,1)$ with $\h^{n-1}(D_y)\geq a>0$ for some $a>0$ such that 
$$\int_{[y,w]}u\, ds \geq 1$$
for each $w\in D_y$. Then
$$\int_{\R^n} u^n \geq c(n,a) \diam (F).$$
\end{lemma}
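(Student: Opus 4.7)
The plan is to convert the pointwise line-integral hypothesis into a lower bound for a Riesz 1-potential of $u$, integrate that estimate against a one-dimensional Frostman measure on $F$, and finish with Hölder's inequality.

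First, for each fixed $y\in F$, I would integrate the hypothesis $\int_{[y,w]}u\,ds\geq 1$ over $w\in D_y$ with respect to $\h^{n-1}$ and convert from polar to Cartesian coordinates centred at $y$ via the identity $dx=|x-y|^{n-1}\,dt\,d\h^{n-1}(v)$. This yields
\[
\int_{C_y}\frac{u(x)}{|x-y|^{n-1}}\,dx\geq \h^{n-1}(D_y)\geq a,
\]
where $C_y:=\{y+t(w-y):0<t\leq 1,\ w\in D_y\}\subset \overline{B}(y,1)$ is the solid cone over $D_y$ with apex $y$. Observe that no Jensen-type step is needed here, so the resulting bound is linear in $u$ rather than in $u^n$.

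Next, I would build a Borel measure $\mu$ on $F$ of total mass $d:=\diam(F)$ satisfying the linear-growth (1-Frostman) bound $\mu(B(x,r))\leq 2r$ for every $x\in\R^n$ and $r>0$. Choose a unit vector $e$ for which the orthogonal projection $\pi(x)=x\cdot e$ satisfies $\pi(F)\supset[0,d]$ (such $e$ exists by the connectedness of $F$ together with the definition of diameter); by the Kuratowski--Ryll-Nardzewski selection theorem there is a Borel section $s\mapsto y(s)\in F$ with $\pi(y(s))=s$, and $\mu$ is taken to be the pushforward of Lebesgue measure on $[0,d]$ under this section. The linear-growth bound holds because $y(s)\in B(x,r)$ forces $|s-\pi(x)|\leq r$. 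Integrating the cone estimate against $\mu$ and applying Fubini gives $ad\leq \int_{\R^n}u(x)\,K(x)\,dx$, where
\[
K(x):=\int_{F}\frac{\chi_{C_y}(x)}{|x-y|^{n-1}}\,d\mu(y)
\]
is the truncated Riesz 1-potential of $\mu$, supported in the $1$-neighbourhood of $\operatorname{supp}\mu$. The main analytic step is then to prove $\|K\|_{L^{n/(n-1)}(\R^n)}\leq c_n\,d^{(n-1)/n}$: the support truncation $C_y\subset\overline B(y,1)$, the layer-cake formula and the linear-growth bound yield $K(x)\leq c_n\,\dist(x,\operatorname{supp}\mu)^{2-n}$ for $n\geq 3$ (with the logarithmic substitute $c_n\log^{+}(1/\dist(x,\operatorname{supp}\mu))$ for $n=2$), and a computation in tube coordinates around $\operatorname{supp}\mu$---which projects onto $[0,d]$---integrates this out to produce the stated $L^{n/(n-1)}$ bound. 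Hölder's inequality with exponents $n$ and $n/(n-1)$ applied to $ad\leq \int u\cdot K\,dx$ then gives $\int_{\R^n}u^n\geq c(n,a)\,d$ with $c(n,a)=c_n^{-n}a^n$.

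The technical heart of the argument is the $L^{n/(n-1)}$ bound on $K$. The linear growth $\mu(B(x,r))\leq 2r$ is a borderline condition that by itself does not put the full Riesz 1-potential of $\mu$ into $L^{n/(n-1)}(\R^n)$; the favourable $d^{(n-1)/n}$ scaling relies on both the truncation to a bounded neighbourhood of $\operatorname{supp}\mu$ and on the essentially one-dimensional geometry of $\operatorname{supp}\mu$ along the projection axis---the same geometric input that underlies the Adams--Hedberg embedding for truncated Riesz potentials of Frostman measures.
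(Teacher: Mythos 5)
Your overall route is viable and genuinely different from the one the paper relies on (the paper does not prove this lemma at all; it imports it from Kallunki--Koskela, where the truncated Riesz potential bound $\int_{\br B(y,1)}u(x)|x-y|^{1-n}\,dx\geq a$ at each $y\in F$ is converted into $Mu^n(y)\gtrsim_n a^n$, one extracts for each $y$ a ball $B(y,r_y)$ with $\int_{B(y,r_y)}u^n\gtrsim_n a^n r_y$, and one finishes with the $5B$-covering lemma and $\h^1_\infty(F)\geq \diam(F)$). Your first two steps are fine: the cone estimate is a correct polar-coordinates computation, and the projection construction does give a measure $\mu$ on $F$ with mass $d=\diam(F)$ and $\mu(B(x,r))\leq 2r$. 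The genuine gap is exactly at the step you call the technical heart, $\|K\|_{L^{n/(n-1)}}\leq c_n d^{(n-1)/n}$: the mechanism you propose for it does not exist. Linear growth is only an \emph{upper} bound on $\mu$ and gives no geometric control whatsoever on $\operatorname{supp}\mu$; the closure of the image of a Borel selection $s\mapsto y(s)$ can be a fat set (for a fat continuum $F$ it can even have positive $n$-measure), so the pointwise bound $K\lesssim_n \dist(\cdot,\operatorname{supp}\mu)^{2-n}$ is infinite on a set of positive measure, and the ``tube coordinates around $\operatorname{supp}\mu$'' integration requires a Minkowski-type estimate $m_n\{x:\dist(x,\operatorname{supp}\mu)\leq t\}\lesssim_n d\,t^{n-1}$ that does not follow from anything you have established (``projects onto $[0,d]$'' is far too weak, and the support is in general neither rectifiable nor of locally finite $\h^1$-measure). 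As written, the decisive inequality is therefore unsupported.

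The inequality itself is true, and can be proved from the linear growth and the total mass alone, with no reference to the geometry of the support, which repairs your argument: since $|z|^{1-n}\x_{\{|z|\leq 1\}}\leq \sum_{j\geq 0}2^{(j+1)(n-1)}\x_{\{|z|\leq 2^{-j}\}}$, one has $K\leq 2^{n-1}\sum_{j\geq 0}K_j$ with $K_j(x)=2^{j(n-1)}\mu(B(x,2^{-j}))$, and, using $\mu(B(x,r))\leq 2r$ together with $\int \mu(B(x,r))\,dx=m_n(B(0,1))\,r^n\mu(\R^n)$ (Tonelli), $\int K_j^{n/(n-1)}\leq 2^{jn}(2^{1-j})^{1/(n-1)}\,m_n(B(0,1))\,2^{-jn}d\leq c_n 2^{-j/(n-1)}d$, so $\|K_j\|_{L^{n/(n-1)}}\leq c_n 2^{-j/n}d^{(n-1)/n}$ and the series sums geometrically; equivalently one can quote a truncated Wolff-energy estimate. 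Two smaller points: define $K$ with the kernel $\x_{\{|x-y|\leq 1\}}|x-y|^{1-n}$ rather than $\x_{C_y}(x)|x-y|^{1-n}$ --- you only ever use $C_y\subset \br B(y,1)$, and $y\mapsto \x_{C_y}(x)$ need not be jointly measurable, whereas the ball kernel makes the Tonelli step legitimate (likewise, replace $D_y$ by the Borel set $\{w\in S^{n-1}(y,1):\int_{[y,w]}u\,ds\geq 1\}\supset D_y$); and for $n=2$ your pointwise logarithmic bound needs an additive constant, though this is moot once the dyadic argument replaces the tube computation.
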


\begin{proof}[Proof of Lemma \ref{lemma:loewner}]
We will first prove the statement for $R=7r$. We will perform several normalizations and reductions. By the conformal invariance of $n$-modulus, we may assume that $r=1$. There exist closed balls ${B_i}=\br B(z_i,1/2)$ with $z_i\in F_i\cap S^{n-1}(0,9/2)$, and  disjoint continua $F_i'\subset F_i\cap B_i$ with $\diam(F_i')\geq 1/2$ for $i=1,2$. We will find a uniform lower bound for $\md_n\Gamma(F_1',F_2';A)$, which will give a uniform lower bound for $\md_n\Gamma(F_1,F_2;A)$ by the monotonicity of modulus. From now on, we denote $F_i'$ by $F_i$, $i=1,2$, for simplicity. 

By applying a conformal map, we assume that $B_1$ and $B_2$ are symmetric with respect to the hyperplane $ \R^{n-1}\times\{0\}$ and their centers have non-negative first coordinate. The choice of the balls and of the normalization is such that for all points $w$ in the $(n-1)$-dimensional disk $S=B((6,0,\dots,0),1)\cap (\{6\}\times \R^{n-1})$ and for all $y\in B_i$, $i=1,2$, the segment $[y,w]$ lies in the original ring $A$; see Figure \ref{figure:rings}. 

\begin{figure}
	\begin{tikzpicture}[line cap=round,line join=round,>=triangle 45,x=1.0cm,y=1.0cm, scale=0.5]
\clip(-11.372839727182667,-7.1) rectangle (11.610591314631261,7.1);
\draw [line width=.4pt] (0.,0.) circle (1.cm); 
\draw [line width=.4pt] (0.,0.) circle (7.cm);
\draw [line width=.4pt] (6.,1.)-- (6.,0.);
\draw [line width=.4pt] (6.,-1.)-- (6.,0.);
\fill (6,0)  circle[radius=2pt] node[anchor=west] {${(6,0)}$};
\node[anchor=north] (s) at (6,-1) {$S$};
\draw [line width=.4pt] (0.,4.5) circle (0.5cm);
\node[anchor=west] (b1) at (0.5,5) {$B_1$};
\draw [line width=.4pt] (0.,-4.5) circle (0.5cm);
\node[anchor=west] (b2) at (0.5,-5) {$B_2$};
\draw [line width=.4pt] (0.3378467503693706,4.546400712676679)-- (6.,0.5482064741847412);
\fill (0.33,4.54)  circle[radius=2pt] node[anchor=east] {$y$};
\draw [line width=.4pt] (6.,0.5482064741847412)-- (-0.3311118187168322,-4.430089853898605);
\fill (-0.33,-4.43)  circle[radius=2pt] node[anchor=west] {$z$};
\fill (6,0.54)  circle[radius=2pt] node[anchor=south west] {$w$};
\end{tikzpicture}
	\caption{}\label{figure:rings}
\end{figure}

We remark that $1/2\leq \diam(F_i)\leq 1$, $\diam(S)=2$, and $1\leq \dist(F_i,S)\leq 14=2R$ for $i=1,2$. Thus, $\diam(S)\simeq \diam(F_i)\simeq \dist(F_i,S)\simeq 1$. For $y\in F_1$, $w\in S$, and $z\in F_2$ consider the concatenation $\gamma(y,w,z)$ of the line segments $[y,w]$ and $[w,z]$.  Note that $\gamma(y,w,z)\subset A$ and $\gamma(y,w,z)\in \Gamma(F_1,F_2;A)\cap \mathcal F$ for each $y\in F_1$, $z\in F_2$, and a.e.\ $w\in S$, by the properties of a $P$-family. Indeed, \ref{perturbations:2} and \ref{perturbations:3} imply that for a.e.\ $w\in S$ the segments $[y,w]$ and $[w,z]$ lie in $\mathcal F$. Moreover, the same properties imply that a.e.\ $w\in S$ does not lie in $\partial \mathcal F$. Hence, by \ref{perturbations:4}, the concatenation of the segments $[y,w]$ and $[w,z]$ lies in $\mathcal F$. 

For each fixed $y\in F_i$ consider the map $S\ni w\mapsto \Phi_y(w)= \frac{w-y}{|w-y|}$. By the relative position of $y$ and $S$, this map is uniformly bi-Lipschitz. Thus, if $S'\subset S$ and $\h^{n-1}(S') \geq a$ for some $a>0$, then  $\h^{n-1}( \Phi_y(S'))\gtrsim_n a$.  We note that the implicit constants are independent of the point $y\in F_i$.

Now, let $\rho$ be an admissible function for $\Gamma(F_1,F_2;A)\cap \mathcal F$. We have 
\begin{align*}
\int_{\gamma(y,w,z)} \rho\, ds\geq 1
\end{align*}
for all $y\in F_1$, $z\in F_2$ and a.e.\ $w\in S$. Suppose that for each $y\in F_1$ there exists $S_y\subset S$ with $\h^{n-1}(S_y)\geq \frac{1}{2}\h^{n-1}(S)$ such that we have $$\int_{[y,w]}\rho\, ds \geq 1/2$$
for all $w\in S_y$. Then for the set $D_y= \Phi_y(S_y)$ we have $\h^{n-1}(D_y)\gtrsim_n 1$.  Lemma \ref{lemma:kallunki_koskela} now implies that $$\int \rho^n \gtrsim_n 1. $$

The other case is that there exists $y\in F_1$ such that there exists a subset $S'$ of $S$ with $\h^{n-1}(S')\geq \frac{1}{2}\h^{n-1}(S)$, and 
$$\int_{[y,w]}\rho\, ds < 1/2$$
for each $w\in S'$. This implies that for each $z\in F_2$ and for a.e.\ $w\in S'$ we have $$\int_{[z,w]} \rho\, ds\geq 1/2.$$
As before, Lemma \ref{lemma:kallunki_koskela} gives that 
$$\int \rho^n \gtrsim_n 1.$$ 
Therefore, we have shown that 
$$\md_n(\Gamma(F_1,F_2;A)\cap \mathcal F)  \geq c(n)>0$$
for a uniform constant $c(n)$ depending only on $n$, whenever $R=7r$.

In the general case, let $k\in \N$ be the largest integer such that $R\geq 7^kr$. Consider the rings $A_i=A(0;7^{i-1}r, 7^ir)$, $i\in \{1,\dots,k\}$. By the serial law \ref{m:serial}, we have
\[
\md_n(\Gamma(F_1,F_2;A)\cap \mathcal F)\geq \sum_{i=1}^k \md_n(\Gamma(F_1,F_2;A_i)\cap \mathcal F)\gtrsim_n k \gtrsim_n \log(R/r). \qedhere\]
\end{proof}

\begin{lemma}\label{lemma:path_bound_modulus}
Let $x\in \R^n$, $R>0$, and $\rho\colon \R^n\to [0,\infty]$  be a Borel function with $\rho \in L^n(\R^n)$.
\begin{enumerate}[\upshape(i)]
	\item\label{lemma:path_bound_modulus:i} For $M>0$, let $\Gamma_M$ be the family of paths $\gamma$ that intersect the ball $B(x,R)$ and satisfy $\ell(\gamma)>MR$. Then $\md_n \Gamma_M\to 0$ as $M\to\infty$.\smallskip
	\item\label{lemma:path_bound_modulus:ii}  Let  $\Gamma$ be a path family with $\md_n\Gamma>a$ for some $a>0$ such that each path $\gamma\in \Gamma$ intersects the ball $B(x,R)$. Then there exists a path $\gamma\in \Gamma$ with
\begin{align*}
\int_{\gamma}\, \rho \, ds \leq c(n,a) \left(\int_{B(x,c(n,a)R)} \rho^n \right)^{1/n}\quad \textrm{and}\quad \ell(\gamma)\leq c(n,a)R.
\end{align*}
\end{enumerate}
\end{lemma}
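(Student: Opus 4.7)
For part \ref{lemma:path_bound_modulus:i}, my plan is to split $\Gamma_M$ into two subfamilies based on geometric behavior and estimate each using the standard tools \ref{m:subordinate} and \ref{m:ring}. Specifically, I would let $\Gamma_M^1$ be the subfamily of paths in $\Gamma_M$ whose trace exits the ball $B(x,\sqrt{M}\,R)$, and $\Gamma_M^2$ the subfamily whose trace stays inside $B(x,\sqrt{M}\,R)$. For $\Gamma_M^1$, each such path intersects $B(x,R)$ and also a point outside $B(x,\sqrt M\,R)$, so by connectedness it admits a (strong) subpath joining the two boundary components of the ring $A(x;R,\sqrt M\,R)$; hence \ref{m:subordinate} and \ref{m:ring} give
\begin{align*}
\md_n\Gamma_M^1 \,\le\, \omega_{n-1}(\log\sqrt M)^{1-n},
\end{align*}
which tends to $0$ as $M\to\infty$ since $n\ge 2$. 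For $\Gamma_M^2$, the constant-density function $\rho=(MR)^{-1}\chi_{B(x,\sqrt M\,R)}$ is admissible because $\int_\gamma\rho\,ds\ge \ell(\gamma)/(MR)>1$ whenever $|\gamma|\subset B(x,\sqrt M\,R)$ and $\ell(\gamma)>MR$; direct computation yields $\int\rho^n\simeq_n M^{-n/2}\to 0$. Subadditivity \ref{m:outer_measure} then combines these two bounds to finish (i).

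For part \ref{lemma:path_bound_modulus:ii}, the plan is to combine (i) with a localized Chebychev-type argument on a ball large enough to contain every short path meeting $B(x,R)$. I would first invoke (i) to choose $M=M(n,a)$ so large that $\md_n\Gamma_M<a/2$; then by \ref{m:outer_measure}, the family $\Gamma'=\Gamma\setminus\Gamma_M$ has $\md_n\Gamma'>a/2$. Each $\gamma\in\Gamma'$ has $\ell(\gamma)\le MR$ and meets $B(x,R)$, so $|\gamma|\subset B(x,(M+1)R)$. Setting $I=\bigl(\int_{B(x,(M+1)R)}\rho^n\bigr)^{1/n}$ and $C=(2/a)^{1/n}$, I would argue by contradiction: if every $\gamma\in\Gamma'$ had $\int_\gamma\rho\,ds>CI$, then $\widetilde\rho=\rho\,\chi_{B(x,(M+1)R)}/(CI)$ would be admissible for $\Gamma'$, forcing $\md_n\Gamma'\le C^{-n}=a/2$, which contradicts the lower bound just obtained. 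Hence some $\gamma\in\Gamma'$ satisfies both $\ell(\gamma)\le MR$ and $\int_\gamma\rho\,ds\le CI$, which is exactly the desired conclusion with $c(n,a)=\max\{M+1,C\}$.

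The main obstacle, which is mild rather than conceptual, is keeping the various constants aligned so that a single $c(n,a)$ simultaneously governs the length bound on $\gamma$, the radius of the ball over which $\rho^n$ is integrated, and the multiplicative factor on the line integral. The only genuinely quantitative inputs are the ring modulus formula \ref{m:ring} and an elementary volume computation; everything else is bookkeeping via \ref{m:outer_measure} and \ref{m:subordinate}.
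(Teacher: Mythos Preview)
Your proposal is correct and follows essentially the same approach as the paper: the same $\sqrt{M}$-splitting for part (i), and for part (ii) the same choice of $M=M(n,a)$ with $\md_n\Gamma_M<a/2$ followed by a Chebychev-type admissibility argument on $\rho\chi_{B(x,(M+1)R)}$. The only omission is the degenerate case $I=0$, where $\widetilde\rho$ is undefined; the paper handles this separately via property \ref{m:zero_ae}, and you should do the same.
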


\begin{proof}
Both statements are conformally invariant. Hence, using a conformal transformation, we may assume that $x=0$ and $R=1$. For $M>1$, the family $\Gamma_M$ is contained in the union of the families
\begin{align*}
\Gamma_1&= \{ \gamma: \ell(\gamma)> M \,\,\, \textrm{and}\,\,\, |\gamma|\subset B(0,\sqrt{M})\} \\
\Gamma_2&=\{\gamma: |\gamma|\cap \partial B(0,1)\neq \emptyset \,\,\, \textrm{and}\,\,\, |\gamma|\cap \partial B(0,\sqrt{M})\neq \emptyset \}
\end{align*}
By the subadditivity of modulus, it suffices to show that $\md_n\Gamma_i$ converges to $0$ as $M\to\infty$ for $i=1,2$. The function $M^{-1}\x_{B(0,\sqrt{M})}$ is admissible for $\Gamma_1$, so $\md_n\Gamma_1\leq c(n)M^{-n/2}$. The modulus of $\Gamma_2$ is given by the explicit formula $\md_n\Gamma_2=c(n)( \log \sqrt{M})^{1-n}$;see property \ref{m:ring}. This proves part \ref{lemma:path_bound_modulus:i}.

Now we prove \ref{lemma:path_bound_modulus:ii}. Let $M=M(n,a)$ be sufficiently large, so that $\md_n\Gamma_M<a/2$. Define $\rho_1=\rho \x_{ B(0,M+1)}$ and let $\Gamma_{1}$ be the family of paths $\gamma\in \Gamma$ such that 
\begin{align*}
\int_{\gamma}\, \rho_1 \, ds >  2^{1/n}a^{-1/n} \|\rho_1\|_{L^n(\R^n)}.
\end{align*}
Then the function $$2^{-1/n}a^{1/n}\|\rho_1\|_{L^n(\R^n)}^{-1}\rho_1$$
is admissible for $\Gamma_1$, provided that $\|\rho_1\|_{L^n(\R^n)}\neq 0$, in which case we have $\md_n\Gamma_1 \leq a/2$. If $\|\rho_1\|_{L^n(\R^n)}=0$, then $\md_n\Gamma_1=0$ by property \ref{m:zero_ae}.  Also, let $\Gamma_2$ be the family of paths $\gamma\in \Gamma$ such that $\ell(\gamma)>M$, so $\md_n\Gamma_2\leq \md_n\Gamma_M <a/2$. By the subadditivity of modulus we have 
$\md_n(\Gamma_1\cup \Gamma_2)<a <\md_n\Gamma$. It follows that   $\Gamma\setminus (\Gamma_1\cup \Gamma_2)$ has positive modulus, and in particular it is non-empty. Thus, there exists a path $\gamma\in \Gamma$ with $\ell(\gamma)\leq M$ and 
$$\int_{\gamma}\rho_1\, ds \leq 2^{1/n}a^{-1/n} \|\rho_1\|_{L^n(\R^n)}.$$
Finally, note that $|\gamma|\subset  B(0,M+1)$ since $|\gamma|\cap B(0,1)\neq \emptyset$ and $\ell(\gamma)\leq M$. Thus,
\begin{align*}
\int_{\gamma}\rho\, ds =\int_{\gamma}\rho_1\, ds,
\end{align*}
which completes the proof, with $c(n,a)=\max\{ M(n,a)+1, 2^{1/n}a^{-1/n}\}$. 
\end{proof}

For a continuum $F\subset \R^n$ and $r>0$ we define $F^r$ to be the continuum $F+\br B(0,r)=\{x+y: x\in F, y\in \br B(0,r)\}$. 

\begin{lemma}\label{lemma:fattening}
Let $\mathcal F$ be a family of curve perturbations in $\R^n$. Then for every open set $U\subset \R^n$ and all pairs of non-empty, disjoint continua $F_1,F_2 \subset  U$ we have
\begin{align*}
\md_n (\Gamma(F_1,F_2;U) \cap \mathcal F)=\lim_{r\to 0} \md_n (\Gamma(F_1^r,F_2^r;U) \cap \mathcal F).
\end{align*}
\end{lemma}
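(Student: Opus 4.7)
\emph{Easy direction.} For $r>0$ small enough that $F_1^r$ and $F_2^r$ are disjoint and contained in $U$, every $\gamma\in\Gamma(F_1,F_2;U)$ has endpoints in $F_1\subset F_1^r$ and $F_2\subset F_2^r$, so $\Gamma(F_1,F_2;U)\subset \Gamma(F_1^r,F_2^r;U)$. Intersecting with $\mathcal F$ and applying the monotonicity of modulus (M\ref{m:outer_measure}), we obtain
\[
\md_n(\Gamma(F_1,F_2;U)\cap \mathcal F)\le \md_n(\Gamma(F_1^r,F_2^r;U)\cap \mathcal F)
\]
for all small $r$. Since the right side is non-decreasing in $r$, this gives half of the claim upon passing to the limit.

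\emph{Strategy for the reverse direction.} Fix $\epsilon>0$ and let $\sigma$ be admissible for $\Gamma(F_1,F_2;U)\cap \mathcal F$ with $\int_{\R^n}\sigma^n<\md_n(\Gamma(F_1,F_2;U)\cap \mathcal F)+\epsilon$. I will construct, for each sufficiently small $r>0$, an admissible function $\rho_r=\sigma+\lambda_r$ for $\Gamma(F_1^r,F_2^r;U)\cap \mathcal F$, where $\lambda_r\ge 0$ is a correction supported in $F_1^r\cup F_2^r$ with $\|\lambda_r\|_{L^n(\R^n)}\to 0$ as $r\to 0$. This suffices, because then
\[
\md_n(\Gamma(F_1^r,F_2^r;U)\cap \mathcal F)\le \int(\sigma+\lambda_r)^n\longrightarrow \int\sigma^n,
\]
and letting $\epsilon\to 0$ closes the chain of inequalities.

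\emph{Construction and admissibility.} Let $\gamma\in \Gamma(F_1^r,F_2^r;U)\cap \mathcal F$ with endpoints $x_i\in F_i^r$, and pick $y_i\in F_i$ with $|y_i-x_i|\le r$. The concatenation $\tilde\gamma=[y_1,x_1]\cdot\gamma\cdot[x_2,y_2]$ is then a curve in $\Gamma(F_1,F_2;U)$ for $r$ small enough that the short segments remain in $U$. To force $\tilde\gamma\in \mathcal F$, I invoke the $P$-family axioms: by \ref{perturbations:2} applied at $y_i$, for $\h^{n-1}$-a.e.\ direction the radial segment of length $r$ from $y_i$ lies in $\mathcal F$; by \ref{perturbations:3} its initial portion $[y_i,x_i]$ lies in $\mathcal F$; and by \ref{perturbations:4} the concatenation at $x_i$ belongs to $\mathcal F$ provided $x_i\notin \partial\mathcal F$. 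Choosing $y_i$ by an averaging argument over $F_i\cap \bar B(x_i,r)$ (which is non-empty because $x_i\in F_i^r$ and $F_i$ is a continuum) and, if necessary, perturbing the endpoint via \ref{perturbations:1}, one can arrange that $\tilde\gamma\in \Gamma(F_1,F_2;U)\cap \mathcal F$. Admissibility of $\sigma$ then yields
\[
1\le \int_{\tilde\gamma}\sigma\, ds = \int_\gamma\sigma\, ds+\int_{[y_1,x_1]}\sigma\, ds+\int_{[x_2,y_2]}\sigma\, ds.
\]
Hence $\int_\gamma\sigma\, ds\ge 1-E(\gamma)$, where $E(\gamma)$ denotes the total contribution of the two short extension segments. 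The function $\lambda_r$ is designed to dominate $E(\gamma)$ on every such $\gamma$: concretely, using Lemma \ref{lemma:bojarski} on a covering of $F_i^r$ by balls of radius $\sim r$, I take $\lambda_r$ to majorize the radial maximal function of $\sigma$ localized to $F_1^r\cup F_2^r$, so that $\int_\gamma \lambda_r\, ds \ge E(\gamma)$ for every such $\gamma$ and at the same time $\|\lambda_r\|_n\lesssim \|\sigma \chi_{F_1^r\cup F_2^r}\|_n$.

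\emph{Vanishing of the correction.} The $L^n$-norm bound $\|\lambda_r\|_n\to 0$ reduces to showing $\int_{F_1^r\cup F_2^r\setminus(F_1\cup F_2)}\sigma^n\to 0$. This follows from dominated convergence since $\sigma\in L^n$ and $m_n\bigl(F_i^r\setminus F_i\bigr)\to 0$ as $r\to 0$ by monotone convergence applied to the decreasing intersection $\bigcap_{r>0}F_i^r=F_i$ (note $F_i$ is closed).

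\emph{Main obstacle.} The subtle point is step (C): ensuring that the extended curve $\tilde\gamma$ actually lies in $\mathcal F$, i.e., that the concatenation is allowed by \ref{perturbations:4}. The difficulty is that $x_i$ could belong to $\partial\mathcal F$ (for instance, $\partial \mathcal F_0(E)\supset E$, so if $x_i\in E$ the concatenation fails). Overcoming this requires the averaging argument over $y_i\in F_i\cap \bar B(x_i,r)$ combined with property \ref{perturbations:1}, exploiting that the exceptional set of directions/translations has measure zero at each individual point, so for a generic choice one obtains a valid extension in $\mathcal F$. Making this genericity argument fit together with the uniform admissibility of $\rho_r$ is the technical heart of the proof.
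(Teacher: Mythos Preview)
Your easy direction is fine and matches the paper. The reverse direction, however, has two genuine gaps.

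\textbf{The correction $\lambda_r$ cannot do what you ask.} You need $\int_\gamma\lambda_r\,ds\ge E(\gamma)$ for \emph{every} $\gamma\in\Gamma(F_1^r,F_2^r;U)\cap\mathcal F$, with $\lambda_r$ supported near $F_1\cup F_2$. But $\gamma$ can start at $x_1\in\partial F_1^r$ and head straight away from $F_1$, so its length inside any fixed neighborhood $F_1^{Cr}$ is only $\sim r$; meanwhile $E(\gamma)=\int_{[y_1,x_1]}\sigma+\int_{[x_2,y_2]}\sigma$ depends on the values of $\sigma$ along a \emph{specific} segment through $x_1$, and these are not controlled by any maximal average of $\sigma$ on $B(x_1,Cr)$ integrated along $\gamma$. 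There is no inequality of the form $r\cdot M\sigma(x_1)\gtrsim \int_{[y_1,x_1]}\sigma$ for an \emph{individual} segment. The Bojarski lemma controls $\|\lambda_r\|_n$, not this pointwise comparison.

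\textbf{The extension $\tilde\gamma\in\mathcal F$ is not secured by (P2) and (P4).} Property (P2) gives, for each fixed center $y$, that $[y,y+tw]\in\mathcal F$ for $\h^{n-1}$-a.e.\ $w$; it says nothing about hitting the \emph{prescribed} endpoint $x_i$ of $\gamma$. Averaging over $y_i\in F_i\cap\bar B(x_i,r)$ does not help, since $F_i$ is an arbitrary continuum with no useful $(n-1)$-dimensional structure. Worse, (P4) requires the junction point $x_i\notin\partial\mathcal F$, and your proposed fix (``perturb the endpoint via (P1)'') would alter $\gamma$ itself, so you would no longer be testing admissibility on the given curve.

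The paper avoids both problems by arguing by contradiction and connecting to an \emph{interior} point of $\gamma$ rather than to its endpoint. If some $\gamma_{r_k}$ has $\int_{\gamma_{r_k}}\sigma<q<1$, one passes to a subpath whose endpoints lie on $\partial B(x_{i,k},r_k)$ with $x_{i,k}\in F_i$; both $|\gamma_{r_k}|$ and $F_i$ cross the ring $A(x_{i,k};r_k,R_0)$. Lemma~\ref{lemma:loewner} gives $\md_n(\Gamma(|\gamma_{r_k}|,F_i;A_{i,k})\cap\mathcal F)\gtrsim\log(R_0/r_k)\to\infty$, and then Lemma~\ref{lemma:path_bound_modulus} extracts a connector $\gamma_i\in\mathcal F$ with $\int_{\gamma_i}\sigma\lesssim(\int_{B(x_{i,k},cr_k)}\sigma^n)^{1/n}<\varepsilon$. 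The concatenation at the junction on $|\gamma_{r_k}|$ is permitted by (P4) precisely because that point is an interior point of a path in $\mathcal F$, hence not in $\partial\mathcal F$. This is the mechanism you are missing: use a modulus lower bound to \emph{find} a good connector, rather than prescribing one.
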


Our proof relies on the properties of $P$-families, which is a new concept, but the main ideas originate in the proof of \cite{Vaisala:null}*{Lemma 2}. 

\begin{proof}Note that $\Gamma(F_1,F_2;U)\cap \mathcal F\subset \Gamma(F_1^r,F_2^r;U)\cap \mathcal F$ for every $r>0$, so one inequality is immediate. Also, if $F_i$ is a point $x_0$ for some $i=1,2$, then there exists $R>0$ such that by properties \ref{m:subordinate} and \ref{m:ring} we have
$$\md_n (\Gamma(F_1^r,F_2^r;U) \cap \mathcal F) \leq \md_n \Gamma(A(x_0;r,R))= c(n) \left(\log\frac{R}{r}\right)^{1-n}$$
for all sufficiently small $r>0$. Taking $r\to 0$, we obtain the desired conclusion.

We suppose that $\diam(F_i)>0$ for $i=1,2$. Let $\rho \in L^n(\R^n)$ be admissible for $\Gamma(F_1,F_2;U)\cap \mathcal F$. We will show that for each $q<1$ we have 
$$\int_{\gamma}\rho\, ds \geq q$$
for all sufficiently small $r>0$ and $\gamma\in \Gamma(F_1^r,F_2^r;U)\cap \mathcal F$. This will imply that $$\limsup_{r\to 0} \md_n (\Gamma(F_1^r,F_2^r;U) \cap \mathcal F) \leq q^{-n}\int \rho^n.$$
Letting $q\to 1$ and then infimizing over $\rho$ gives the desired
$$\limsup_{r\to 0} \md_n (\Gamma(F_1^r,F_2^r;U) \cap \mathcal F) \leq \md_n (\Gamma(F_1,F_2;U)\cap \mathcal F).$$

Arguing by contradiction, assume that there exists $0<q<1$ and  $r_k\to 0^+$ such that for each $k\in \N$ there exists a path $\gamma_{r_k}\in \Gamma(F_1^{r_k},F_2^{r_k};U)\cap \mathcal F$ with
$$\int_{\gamma_{r_k}}\rho\, ds < q<1.$$
By passing to a subpath, we assume in addition that $|\gamma_{r_k}|$ is disjoint from $F_1\cup F_2$; here we use property \ref{perturbations:3}. We fix $R_0>0$ such that $F_i^{R_0}\subset U$, $\diam(F_i)>2R_0$ for $i=1,2$, and $F_1^{R_0}\cap F_2^{R_0}=\emptyset$. For each  $r_k<R_0$ and $i=1,2$, there exists $x_{i,k}\in F_i$ such that $|\gamma_{r_k}|$ connects the boundary components of the ring $A_{i,k}=A(x_{i,k};r_k,R_0)$. Note that $F_i$ also connects the boundary components of the ring $A_{i,k}$, since $\diam(F_i)>\diam(A_{i,k})$. By passing to a further subpath, we assume in addition that the endpoints of $\gamma_{r_k}$ lie in the inner boundary components of $A_{i,k}$.

We fix $\varepsilon=(1-q)/2>0$. By Lemma \ref{lemma:loewner} we have that if $r_k<R_0/8$, then
\begin{align*}
\md_n (\Gamma( |\gamma_{r_k}|,F_i; A_{i,k}) \cap \mathcal F)\geq c(n) \log(R_0/r_k) \gtrsim_n 1.
\end{align*}
Lemma \ref{lemma:path_bound_modulus} \ref{lemma:path_bound_modulus:ii} implies that if $r_k$ is sufficiently small, depending on $\varepsilon$, then there exists a path $\gamma_i\in \Gamma( |\gamma_{r_k}|,F_i; A_{i,k}) \cap \mathcal F$ such that 
\begin{align*}
\int_{\gamma_i} \rho\, ds\leq c(n) \left(\int_{B(x_{i,k},c(n)r_k)} \rho^n \right)^{1/n} <\varepsilon.
\end{align*}
We concatenate $\gamma_i$, $i=1,2$, with a suitable subpath of $\gamma_{r_k}$; note that the endpoint of $\gamma_i$ that lies in $|\gamma_{r_k}|$ is not in $\partial \mathcal F$ because it is an interior point of a path of $\mathcal F$. By property \ref{perturbations:4}, the concatenation lies in $\mathcal F$.   In this way, we obtain a path $\gamma \in \Gamma( F_1,F_2;U)\cap \mathcal F$ such that 
$$\int_{\gamma} \rho\, ds \leq \int_{\gamma_{r_k}}\rho\, ds + \int_{\gamma_1}\rho\, ds+\int_{\gamma_2}\rho\, ds<q+2\varepsilon=1.$$  
This contradicts the admissibility of $\rho$. 
\end{proof}
\begin{remark}\label{remark:fattening}
From the proof we see that Lemma \ref{lemma:fattening} is valid more generally for families $\mathcal F$ satisfying \ref{perturbations:3}, \ref{perturbations:4}, and the conclusion of Lemma \ref{lemma:loewner} (or a variant of it, such as Lemma \ref{lemma:loewner_proj}, which uses rectangular instead of spherical rings). Recall that $\mathcal F_*(E)$ always satisfies \ref{perturbations:3} and \ref{perturbations:4}; see Remark \ref{remark:p3p4}.
\end{remark}

Our ultimate preliminary result before the proof of Theorem \ref{theorem:perturbation_family} is the following theorem, which is a version of the Lebesgue differentiation theorem for line integrals.

\begin{theorem}\label{theorem:lebesgue_differentiation_lp}
Let $\rho\colon \R^n\to [-\infty,\infty]$ be a Borel function with $\rho\in L^p_{\loc}(\R^n)$ for some $p>1$. Then there exists a path family $\Gamma_0$ with $\md_p\Gamma_0=0$ such that for every rectifiable path $\gamma\notin \Gamma_0$ we have $\int_{\gamma}|\rho|\, ds<\infty$ and 
\begin{align*}
\lim_{r\to 0} \fint_{B(0,r)} \int_{\gamma+x} \rho\, ds= \int_{\gamma}\rho\, ds.
\end{align*} 
\end{theorem}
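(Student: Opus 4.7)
The plan is to reduce the question to the classical pointwise Lebesgue differentiation theorem via Fubini, and then pass the limit under the line integral by dominated convergence with the Hardy--Littlewood maximal function serving as a pointwise dominator. For a rectifiable $\gamma\colon[0,\ell(\gamma)]\to\R^n$ parametrized by arclength, Fubini gives
\begin{align*}
\fint_{B(0,r)}\int_{\gamma+x}\rho\,ds\,dx=\int_0^{\ell(\gamma)}\rho_r(\gamma(t))\,dt,
\end{align*}
where $\rho_r(y):=\fint_{B(y,r)}\rho$, provided the double integral is absolutely convergent. The desired identity then reduces to showing that $\int_0^{\ell(\gamma)}\rho_r(\gamma(t))\,dt\to\int_\gamma\rho\,ds$ as $r\to 0$ for modulus-a.e.\ rectifiable $\gamma$.

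I will take $\Gamma_0$ to be the union of three $p$-modulus-zero families.
\textbf{(a)} The family $\Gamma_0^{(1)}$ of rectifiable $\gamma$ with $\int_\gamma|\rho|\,ds=\infty$; its $p$-modulus vanishes because each sub-family $\{\gamma\in\Gamma_0^{(1)}:|\gamma|\subset B(0,k)\}$ admits $c|\rho|\x_{B(0,k)}\in L^p$ as an admissible function for every $c>0$.
\textbf{(b)} The family $\Gamma_0^{(2)}$ of rectifiable $\gamma$ with $\h^1(|\gamma|\cap E)>0$, where $E\subset\R^n$ is a Borel set with $m_n(E)=0$ outside which $\rho_r(y)\to\rho(y)$; by property \ref{m:positive_length}, $\md_p\Gamma_0^{(2)}=0$.
\textbf{(c)} For each $k\in\N$, the family $\Gamma_0^{(3,k)}$ of rectifiable $\gamma$ with $\int_\gamma M^{(k)}\,ds=\infty$, where $M^{(k)}$ denotes the Hardy--Littlewood maximal function of $|\rho|\x_{B(0,k+2)}\in L^p(\R^n)$. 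Since $p>1$, the strong-type $(p,p)$ inequality places $M^{(k)}\in L^p(\R^n)$, and the scaling argument of (a) yields $\md_p\Gamma_0^{(3,k)}=0$. Set $\Gamma_0^{(3)}=\bigcup_k\Gamma_0^{(3,k)}$ and $\Gamma_0=\Gamma_0^{(1)}\cup\Gamma_0^{(2)}\cup\Gamma_0^{(3)}$.

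Now fix a rectifiable $\gamma\notin\Gamma_0$ parametrized by arclength and choose $k$ with $|\gamma|\subset B(0,k)$. For $0<r<1$ and every $t\in[0,\ell(\gamma)]$ one has $B(\gamma(t),r)\subset B(0,k+2)$, hence $|\rho_r(\gamma(t))|\leq M^{(k)}(\gamma(t))$; since $\gamma\notin\Gamma_0^{(3,k)}$, the right-hand side lies in $L^1([0,\ell(\gamma)])$. This integrability justifies Fubini in the first display and simultaneously supplies the dominator. Next, since $\gamma\notin\Gamma_0^{(2)}$, Lemma \ref{lemma:paths} \ref{lemma:paths:i} applied to $\x_E$ gives
\begin{align*}
m_1(\{t\in[0,\ell(\gamma)]:\gamma(t)\in E\})=\int_{\R^n}\x_E(x)\,\#\gamma^{-1}(x)\,d\h^1(x)=0,
\end{align*}
since the integrand is supported on the $\h^1$-null set $|\gamma|\cap E$. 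Classical Lebesgue differentiation therefore yields $\rho_r(\gamma(t))\to\rho(\gamma(t))$ for a.e.\ $t\in[0,\ell(\gamma)]$, and dominated convergence concludes the proof.

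The main obstacle is producing an integrable dominator along the path, which forces the use of $M|\rho|$, whose $L^p_{\loc}$ membership is guaranteed only for $p>1$; this is the unique point where the hypothesis on $p$ enters. A secondary subtlety, handled by Lemma \ref{lemma:paths} \ref{lemma:paths:i}, is translating the $\h^1$-negligibility of $|\gamma|\cap E$ (obtained from property \ref{m:positive_length}) into $m_1$-a.e.\ convergence on the parameter interval $[0,\ell(\gamma)]$, a step that matters when $\gamma$ is not simple.
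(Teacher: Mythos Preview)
Your proof is correct, but it takes a different route from the paper's. The paper argues by approximation: for continuous compactly supported $\phi$ the statement is trivial, and for general $\rho$ one bounds the $p$-modulus of the ``bad'' family $\Gamma_\lambda$ (where the $\limsup$ exceeds $\lambda$) by $c(n,p,\lambda)\|\rho-\phi\|_{L^p}^p$, using the maximal inequality only to control the averaged error term, and then lets $\phi\to\rho$ in $L^p$. You instead construct $\Gamma_0$ explicitly up front and, for $\gamma\notin\Gamma_0$, apply the classical pointwise Lebesgue differentiation theorem together with dominated convergence along the parameter interval, with the maximal function of $|\rho|$ itself (rather than of $|\rho-\phi|$) serving as the integrable dominator. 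Both arguments hinge on the strong $(p,p)$ maximal inequality at exactly the same place, so neither is more general, but your approach is arguably more transparent: it separates the measure-theoretic input (Lebesgue differentiation at $m_n$-a.e.\ point) from the path-analytic step (pulling convergence back to the parameter interval via Lemma~\ref{lemma:paths}~\ref{lemma:paths:i}), whereas the paper's argument blends these into a single modulus estimate in the style of the classical proof of the differentiation theorem.
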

\begin{proof}
By the subadditivity of modulus, it suffices to prove the statement for paths $\gamma$ contained in a compact set. Thus, we may assume that $\rho\in L^p(\R^n)$. For continuous functions $\rho$ with compact support the statement is trivially true for all rectifiable paths by uniform continuity. For the general case, for fixed $\lambda>0$ consider the  family  $\Gamma_\lambda$ of rectifiable paths $\gamma$ with $\int_{\gamma} |\rho|\, ds<\infty$ and
$$\limsup_{r\to 0} \left|  \fint_{B(0,r)}\int_{\gamma+x}\rho\, ds -\int_{\gamma} \rho\, ds \right| >\lambda.$$
It suffices to show that $\md_p\Gamma_\lambda=0$ for each $\lambda>0$. Let $\phi$ be a continuous function with compact support. Then,  $\Gamma_{\lambda}\subset \Gamma_1\cup \Gamma_2$, where $\Gamma_1$ is the family of rectifiable paths $\gamma$ with  
\begin{align*}
\limsup_{r\to 0}   \fint_{B(0,r)}\int_{\gamma+x}|\rho-\phi|\, ds >\lambda/2
\end{align*}
and $\Gamma_2$ is the family of rectifiable paths $\gamma$ with
\begin{align*}
\int_{\gamma} |\rho-\phi|\, ds>\lambda/2.
\end{align*}
We note that 
$$\md_p\Gamma_2\leq 2^p\lambda^{-p} \|\rho-\phi\|_{L^p(\R^n)}^p.$$
Moreover, if $\gamma$ is parametrized by arclength, we have
\begin{align*}
 \fint_{B(0,r)}\left(\int_{\gamma+x}|\rho-\phi|\, ds\right)dx = \int_{0}^{\ell(\gamma)} \left(\fint_{B(\gamma(t),r)}|\rho-\phi|\right)\, dt \leq \int_{\gamma}M(\rho-\phi)\, ds,
\end{align*}
where $M(\cdot )$ denotes the centered Hardy--Littlewood maximal function. Hence,
\begin{align*}
\int_{\gamma} M(\rho-\phi)\, ds >\lambda/2
\end{align*}
for $\gamma\in \Gamma_1$. The Hardy--Littlewood maximal $L^p$-inequality \cite{Ziemer:Sobolev}*{Theorem 2.8.2, p.~84} implies that 
\begin{align*}
\md_p\Gamma_1 \leq 2^p \lambda^{-p} \|M(\rho-\phi)\|^p_{L^p(\R^n)} \leq c(n,p) 2^p\lambda^{-p} \|\rho-\phi\|_{L^p(\R^n)}^p.
\end{align*}
Altogether,
\begin{align*}
\md_p\Gamma_{\lambda}\leq \md_p\Gamma_1+\md_p\Gamma_2\lesssim_{n,p,\lambda} \|\rho-\phi\|_{L^p(\R^n)}^p
\end{align*}
Since $\phi$ was arbitrary, we conclude that $\md_p\Gamma_{\lambda}=0$, as desired.
\end{proof}

\begin{proof}[Proof of Theorem \ref{theorem:perturbation_family}]
By the monotonicity of modulus, it suffices to prove that 
\begin{align*}
\md_n \Gamma (F_1,F_2;U) \leq  \md_n (\Gamma(F_1,F_2;U) \cap \mathcal F).
\end{align*}
By Lemma \ref{lemma:fattening}, it suffices to prove that 
\begin{align*}
\md_n \Gamma (F_1,F_2;U) \leq  \md_n (\Gamma(F_1^r,F_2^r;U) \cap \mathcal F)
\end{align*}
for all sufficiently small $r>0$. We fix $r>0$ so that $F_1^r,F_2^r\subset U$. Let $\rho\colon \R^n \to [0,\infty]$ be an admissible function for $\Gamma(F_1^r,F_2^r;U) \cap \mathcal F$ with $\rho\in L^n(\R^n)$. Consider the curve family $\Gamma_0$ with $\md_n\Gamma_0=0$, given by Theorem \ref{theorem:lebesgue_differentiation_lp} and corresponding to $\rho$. Let $\gamma\in \Gamma(F_1,F_2;U)\setminus \Gamma_0$ be a rectifiable path. Since $\mathcal F$ is a family of curve perturbations, by \ref{perturbations:1}, for a.e.\ $x\in B(0,r)$ we have $\gamma+x\in  \Gamma(F_1^r,F_2^r;U) \cap \mathcal F$. Now, the admissibility of $\rho$ for $\Gamma(F_1^r,F_2^r;U) \cap \mathcal F$ and Theorem \ref{theorem:lebesgue_differentiation_lp} imply that $\int_{\gamma}\rho\, ds\geq 1$, so $\rho$ is admissible for $\Gamma(F_1,F_2;U)\setminus \Gamma_0$. Therefore,
\[
\md_n \Gamma (F_1,F_2;U)=\md_n (\Gamma (F_1,F_2;U)\setminus \Gamma_0)\leq \md_n (\Gamma(F_1^r,F_2^r;U) \cap \mathcal F).
\qedhere\]
\end{proof}

\subsection{Examples of families of curve perturbations}\label{section:perturbation:examples}
The next theorem, combined with Theorem \ref{theorem:perturbation_family}, gives Theorem \ref{theorem:cned} \ref{item:cned:ii} and Theorem \ref{theorem:zero}.

\begin{theorem}\label{theorem:perturbation_hausdorff}
Let $E\subset \R^n$ be a set.
\begin{enumerate}[\upshape(i)]
	\item\label{theorem:perturbation_hausdorff:i} If $\h^{n-1}(E)=0$, then $\mathcal F_0(E)$ is a $P$-family.\smallskip
	\item\label{theorem:perturbation_hausdorff:ii} If $E$ has $\sigma$-finite Hausdorff $(n-1)$-measure, then $\mathcal F_{\sigma}(E)$ is a $P$-family. 
\end{enumerate} 
\end{theorem}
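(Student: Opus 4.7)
The plan is to verify the four defining properties \ref{perturbations:1}--\ref{perturbations:4} in each case. By Remark \ref{remark:p3p4}, properties \ref{perturbations:3} and \ref{perturbations:4} hold automatically for both $\mathcal F_0(E)$ and $\mathcal F_{\sigma}(E)$, so only the translation property \ref{perturbations:1} and the radial property \ref{perturbations:2} remain to be established.

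For part \ref{theorem:perturbation_hausdorff:i}, I would prove the stronger statement that $|\gamma+x|\cap E=\emptyset$ for $m_n$-a.e.\ $x$. The set of bad translations is contained in $E-|\gamma|$; given $\varepsilon>0$ and $\delta\in(0,1]$, the hypothesis $\h^{n-1}(E)=0$ yields a cover of $E$ by balls $B(z_j,r_j)$ with $r_j<\delta$ and $\sum_j r_j^{n-1}<\varepsilon$, hence $E-|\gamma|\subset \bigcup_j N_{r_j}(z_j-|\gamma|)$. Combined with the elementary tubular neighborhood estimate $m_n(N_r(|\gamma|))\leq c(n)r^{n-1}(\ell(\gamma)+r)$ (obtained by spacing $\lceil \ell(\gamma)/r\rceil$ balls of radius $r$ along $|\gamma|$), this gives $m_n^*(E-|\gamma|)\leq c(n)(\ell(\gamma)+\delta)\varepsilon$, which tends to $0$. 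For \ref{perturbations:2}, the set of bad directions $w\in S^{n-1}(0,1)$ is the image of $E\cap(B(x,r)\setminus\{x\})$ under the radial projection $y\mapsto (y-x)/|y-x|$, which is $2^k$-Lipschitz on each dyadic shell $\{2^{-k-1}\leq |y-x|\leq 2^{-k}\}$. Since Lipschitz maps preserve $\h^{n-1}$-nullity, each shell projects to an $\h^{n-1}$-null subset of $S^{n-1}(0,1)$, and the countable union of these projections remains $\h^{n-1}$-null.

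For part \ref{theorem:perturbation_hausdorff:ii}, write $E=\bigcup_{k\in\N} E_k$ with $\h^{n-1}(E_k)<\infty$. It suffices to prove, for each fixed $k$, that $|\gamma+x|\cap E_k$ and $|L_w|\cap E_k$ are \emph{finite} for $m_n$-a.e.\ $x$ and $\h^{n-1}$-a.e.\ $w$ respectively, for intersecting the resulting full-measure sets over $k$ yields the countable-intersection property. For \ref{perturbations:1}, parametrize $\gamma$ by arclength as $\gamma\colon [0,L]\to \R^n$ and introduce
\begin{align*}
\Xi\colon [0,L]\times \R^n\to \R\times \R^n, \qquad \Xi(t,y)=(y+\gamma(t),\,t).
\end{align*}
Since $\gamma$ is $1$-Lipschitz, $\Xi$ is bi-Lipschitz with uniform constants, and it sends $\widetilde E_k:=\{(t,y) : y+\gamma(t)\in E_k\}$ bijectively onto $E_k\times [0,L]$. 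The product inequality for Hausdorff measure then gives $\h^n(\widetilde E_k)\lesssim_n \h^{n-1}(E_k)\cdot L<\infty$. Applying the coarea (Eilenberg) inequality to the $1$-Lipschitz projection $\pi_2(t,y)=y$ from $\widetilde E_k$ to $\R^n$ yields
\begin{align*}
\int_{\R^n} \#\bigl\{t\in[0,L] : y+\gamma(t)\in E_k\bigr\}\,dy \leq c(n)\h^n(\widetilde E_k)<\infty,
\end{align*}
so the integrand is finite for $m_n$-a.e.\ $y$, establishing \ref{perturbations:1}.

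For \ref{perturbations:2} in case \ref{theorem:perturbation_hausdorff:ii}, one proceeds analogously using polar coordinates: fix $x_0\in\R^n$ and $r>0$, and note that $\Phi(t,w)=x_0+tw$ is bi-Lipschitz from $[\varepsilon,r]\times S^{n-1}(0,1)$ onto $\br A(x_0;\varepsilon,r)$ with constants depending only on $\varepsilon$. The preimage $\Phi^{-1}(E_k)\cap ([\varepsilon,r]\times S^{n-1}(0,1))$ therefore has finite $\h^{n-1}$-measure, and the coarea inequality applied to the projection onto $S^{n-1}(0,1)$ (with source and target both of dimension $n-1$) gives finite fiber cardinality above $\h^{n-1}$-a.e.\ $w$; a countable exhaustion via $\varepsilon=1/j$, combined with the decomposition over $k$, then yields \ref{perturbations:2}. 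The main obstacle throughout is \ref{perturbations:1} in case \ref{theorem:perturbation_hausdorff:ii}: the weaker assertion $\h^1(|\gamma+y|\cap E)=0$ for a.e.\ $y$ is an immediate consequence of $m_n(E)=0$ via property \ref{m:positive_length} and Lemma \ref{modulus:avoid}(i), but $\h^1$-nullity does not preclude uncountable Cantor-like intersections. The bi-Lipschitz parametrization through $\Xi$ is the decisive step, recasting the counting problem as a projection from an $n$-dimensional subset of $\R^{n+1}$ onto $\R^n$, where the classical coarea inequality produces finiteness of fibers almost everywhere.
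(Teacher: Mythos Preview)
Your argument is correct and takes a genuinely different route from the paper, particularly in part \ref{theorem:perturbation_hausdorff:ii}. For part \ref{theorem:perturbation_hausdorff:i} the two proofs are close in spirit: your tubular-neighborhood covering for \ref{perturbations:1} is essentially the $N=1$ case of the paper's Lemma \ref{lemma:p2}, and for \ref{perturbations:2} both arguments decompose into dyadic shells and exploit the Lipschitz bound on the radial projection (your constant on the shell $\{2^{-k-1}r\le|y-x|\le 2^{-k}r\}$ should carry a factor $r^{-1}$, and the target of $\Xi$ is $\R^n\times\R$). The substantive divergence is in \ref{theorem:perturbation_hausdorff:ii}. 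The paper establishes quantitative estimates (Lemmas \ref{lemma:p2} and \ref{lemma:p3}) of the form $m_n^*\bigl(\{x:\#(E\cap|\gamma+x|)\ge N\}\bigr)\lesssim_n \ell(\gamma)\,N^{-1}\,\h^{n-1}(E)$, obtained by building an admissible-type function $\rho=N^{-1}\sum r_i^{-1}\x_{2B_i}$ from a fine cover of $E$ and applying Chebyshev plus Fubini; sending $N\to\infty$ then forces finiteness of the intersection almost everywhere. Your approach instead imports standard geometric measure theory: the bi-Lipschitz straightening $\Xi$, the product bound $\h^n(E_k\times[0,L])\lesssim\h^{n-1}(E_k)\,L$, and the Eilenberg coarea inequality with $s=m=n$ (resp.\ $n-1$) to get $\h^0$-finite fibers. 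The paper's method is more elementary and self-contained, and the explicit $N^{-1}$ decay it yields is of independent interest; your method is conceptually cleaner but relies on the nontrivial coarea inequality as a black box.
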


The case of Hausdorff $(n-1)$-measure zero, as in \ref{theorem:perturbation_hausdorff:i}, has already been considered by V\"ais\"al\"a \cite{Vaisala:null}*{Lemma 5}, where it is proved that for a.e.\ $x\in \R^n$ we have $\gamma+x\in \mathcal F_0(E)$; that is, \ref{perturbations:1} is satisfied. Recall also that \ref{perturbations:3} and \ref{perturbations:4} are always satisfied for $\mathcal F_0(E)$ and $\mathcal F_{\sigma}(E)$; see Remark \ref{remark:p3p4}. We first prove two preliminary lemmas that will be used in the proof of both cases  \ref{theorem:perturbation_hausdorff:i} and \ref{theorem:perturbation_hausdorff:ii} of the theorem. 

\begin{lemma}\label{lemma:p2}
Let $E\subset \R^n$ and $\gamma$ be a non-constant rectifiable path. For $N\in \N$, let $F_N$ be the set of $x\in \R^n$ such that $E\cap |\gamma+x|$ contains at least $N$ points. Then 
\begin{align*}
m_n^*(F_1)&\leq c(n) \max\{\ell(\gamma),\diam(E)\} \diam(E)^{n-1}\quad \textrm{and}\\
m_n^*(F_N) &\leq c(n) \ell(\gamma)N^{-1} \h^{n-1}(E).
\end{align*}
\end{lemma}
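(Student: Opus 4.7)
The plan is a two-step argument: first bound $F_1$ by a direct Minkowski-sum computation, then bootstrap to $F_N$ using a small-diameter cover of $E$ together with Fatou's lemma.

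\textbf{Step 1 (bound on $F_1$).} Set $L := \ell(\gamma)$ and $D := \diam(E)$; both may be assumed finite (else the bound is trivial). Observing that $F_1 = \{x : (|\gamma|+x) \cap E \neq \emptyset\} = E - |\gamma|$ (Minkowski difference $\{z-y : z \in E,\ y \in |\gamma|\}$), and enclosing $E$ in a closed ball $B$ of radius $D$, I have $F_1 \subset B - |\gamma|$. Parametrizing $\gamma$ by arclength and partitioning $[0,L]$ into $\lceil L/D \rceil$ subintervals of length at most $D$ produces subpaths $\gamma_1, \ldots, \gamma_M$ with $\diam(|\gamma_i|) \leq D$, so each Minkowski difference $B - |\gamma_i|$ has diameter at most $3D$ and measure at most $c(n) D^n$. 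Subadditivity gives
\begin{align*}
m_n^*(F_1) \leq m_n(B - |\gamma|) \leq \sum_{i=1}^M m_n(B - |\gamma_i|) \leq \lceil L/D \rceil c(n) D^n \leq c(n) \max\{L, D\} D^{n-1}.
\end{align*}

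\textbf{Step 2 (bound on $F_N$).} For each $k \in \N$ with $\delta_k := 1/k \leq L$, choose an open cover $\{U_j^k\}_{j \in \N}$ of $E$ with $\diam(U_j^k) < \delta_k$ and $c(n-1)\sum_j \diam(U_j^k)^{n-1} \leq 2\, \h^{n-1}_{\delta_k}(E) \leq 2\, \h^{n-1}(E)$. Applying Step 1 to $U_j^k$ in place of $E$ (noting $\diam(U_j^k) \leq \delta_k \leq L$) gives $m_n(U_j^k - |\gamma|) \leq c(n) L \diam(U_j^k)^{n-1}$, so the non-negative lower semicontinuous function $g_k := \sum_j \chi_{U_j^k - |\gamma|}$ satisfies
\begin{align*}
\int_{\R^n} g_k \leq c(n) L \sum_j \diam(U_j^k)^{n-1} \leq c'(n) L \h^{n-1}(E)
\end{align*}
uniformly in $k$.

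Now, for any $x \in F_N$ pick $N$ distinct points $p_1, \ldots, p_N \in E \cap |\gamma + x|$ and let $\eta := \tfrac{1}{2}\min_{i \neq j} |p_i - p_j| > 0$. Once $\delta_k < \eta$, the $p_i$ lie in $N$ distinct members of $\{U_j^k\}_j$, each of which meets $|\gamma + x|$; hence $g_k(x) \geq N$. Thus $\liminf_k g_k \geq N \chi_{F_N}$ pointwise, and Chebyshev's inequality combined with Fatou's lemma yields
\begin{align*}
m_n^*(F_N) \leq m_n\bigl(\{\liminf_k g_k \geq N\}\bigr) \leq \frac{1}{N} \int \liminf_k g_k \leq \frac{1}{N}\liminf_k \int g_k \leq \frac{c'(n) L \h^{n-1}(E)}{N}.
\end{align*}

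The only subtlety is the outer-measure bookkeeping for the possibly non-measurable sets $F_1$ and $F_N$, which is handled by bounding them above by the Borel sets $B - |\gamma|$ and $\{\liminf_k g_k \geq N\}$, respectively, and by indexing the cover scales along the countable sequence $\delta_k = 1/k$ so that $\liminf_k g_k$ is automatically Borel. Everything else is routine.
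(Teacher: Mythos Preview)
Your proof is correct. Both steps follow a route that is close in spirit to the paper's but packaged differently, and the differences are worth noting.

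For the first inequality, the paper splits into two cases according to whether $\diam(|\gamma|)>4\diam(E)$ or not, handling the first case via a single ball cover and a path-integral (Chebyshev--Fubini) estimate, and the second case by a crude containment in a large ball. Your direct Minkowski-sum computation---partition $\gamma$ into $\lceil L/D\rceil$ subarcs of diameter at most $D$ and bound each piece of $B-|\gamma|$ separately---is more geometric and avoids the case split entirely. (You implicitly need $D>0$; the case $D=0$ is trivial since then $E$ is at most a point and $F_1$ is a translate of $|\gamma|$, which has $n$-measure zero.)

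For the second inequality, the paper approximates $F_N$ from below by the sets $F_{N,k}$ of points $x$ for which $E\cap|\gamma+x|$ contains $N$ points with mutual distance at least $1/k$, uses the continuity-from-below of outer measure, and then bounds $m_n^*(F_{N,k})$ via an admissible density $\rho=\frac{1}{N}\sum_i r_i^{-1}\chi_{2B_i}$ together with Chebyshev and Fubini along $\gamma$. You instead introduce the counting functions $g_k=\sum_j\chi_{U_j^k-|\gamma|}$ and use Fatou's lemma, which neatly absorbs both the limit in $k$ and the Chebyshev step. The separation argument (once $\delta_k$ is small enough, the $N$ chosen points fall in $N$ distinct cover elements) is the same in both proofs. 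Your approach is slightly more self-contained in that it avoids the path-integral machinery; the paper's approach fits the modulus framework used elsewhere in the article and recycles the same density $\rho$ in the proof of the companion Lemma~\ref{lemma:p3}.
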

\begin{proof}
First we show the second inequality. For $k\in \N$ we define $F_{N,k}$ to be the set of $x\in F_N$ such that $E\cap |\gamma+x|$ contains $N$ points with mutual distances bounded below by $1/k$.  We have $F_{N,k+1} \supset F_{N,k}$, $F_N=\bigcup_{k=1}^\infty F_{N,k}$, and $m_n^*(F_N)=\lim_{k\to\infty}m_{n}^* (F_{N,k})$; see \cite{Bogachev:measure}*{Proposition 1.5.12}. We estimate $m_n^*(F_{N,k})$. We fix a large $k\in \N$ so that $\frac{1}{2k}<{\diam(|\gamma|)}/{4}$. Consider an arbitrary cover of $E$ by sets $U_i$, $i\in I$, with $\diam(U_i)<\frac{1}{2k}$. For each $i\in I$ there exists a closed ball $B_i\supset U_i$ of radius $r_i=\diam(U_i)<\frac{1}{2k}<\diam(|\gamma|)/4$. Define the function 
$$\rho= \frac{1}{N} \sum_{i\in I} \frac{1}{r_i} \x_{2B_i}.$$
If $x\in F_{N,k}$, then $|\gamma+x|$ intersects at least $N$ balls $B_i$ and is not contained in any ball $2B_i$. Therefore,
\begin{align*}
\int_{\gamma+x}\rho\, ds  \geq 1.
\end{align*} 
By Chebychev's inequality and Fubini's theorem, we have
\begin{align*}
m_n^*(F_{N,k}) &\leq\ell(\gamma) \| \rho\|_{L^1(\R^n)}\simeq_n \ell(\gamma) N^{-1} \sum_{i\in I}\diam(U_i)^{n-1}.
\end{align*}
The cover of $E$ by  sets $U_i$, $i\in I$, of diameter less than $\frac{1}{2k}$ was arbitrary, so
$$m_n^*(F_{N,k}) \lesssim_n \ell(\gamma)N^{-1} \mathscr H^{n-1}_{(2k)^{-1}}(E).$$
Letting $k\to \infty$ gives
\[m_n^*(F_N) \lesssim_n \ell(\gamma)N^{-1} \mathscr H^{n-1}(E).\]

For the first inequality, consider two cases. If $\diam(|\gamma|)>4\diam(E)$, then we cover $E$ by a closed ball $B$ of radius $r$ with $0\leq \diam(E)<r<\diam(|\gamma|)/4$. If $x\in F_1$, then $|\gamma+x|$ intersects $B$ and is not contained in $2B$. The above argument for $N=1$ gives $m_n^*(F_1)\lesssim_n \ell(\gamma) r^{n-1}$. Now, we let $r\to \diam(E)$ to obtain $m_n^*(F_1)\lesssim_n \ell(\gamma)\diam(E)^{n-1}$. Next, assume that $\diam(|\gamma|)\leq 4\diam(E)$. In this case, if $x\in F_1$, then $|\gamma+x|\subset \br B(x_0,5\diam(E))$ for a fixed $x_0\in E$. Thus, $m_n^*(F_1)\leq m_n (\br B(x_0,5\diam(E))) =c(n) \diam(E)^n$.
\end{proof}

\begin{lemma}\label{lemma:p3}
Let $E\subset \R^n$, $x\in \R^n$, $r>0$, and for $w\in S^{n-1}(0,1)$ define $\gamma_w(t)=x+tw$, $r/2\leq t\leq r$. For $N\in \N$, let $F_N$ be the set of $w\in S^{n-1}(0,1)$ such that $E\cap |\gamma_w|$ contains at least $N$ points. Then 
\begin{align*}
\h^{n-1}(F_1)&\leq c(n)  r^{-n+1} \min\{r,\diam(E)\}^{n-1}\quad \textrm{and}\\
\h^{n-1}(F_N) &\leq c(n) r^{-n+1}N^{-1} \h^{n-1}(E).
\end{align*}
\end{lemma}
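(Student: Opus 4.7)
My plan is to reproduce the proof of Lemma \ref{lemma:p2}, adapted to the spherical setting: translations of $\gamma$ by a vector $x\in\R^n$ get replaced by rotations of $\gamma_w$ by $w\in S^{n-1}(0,1)$, and integration of an admissible weight against Lebesgue measure $m_n$ gets replaced by integration against the spherical measure $\h^{n-1}$ on $S^{n-1}(0,1)$. The bridge between the two is polar integration around the center $x$: for any non-negative Borel $\rho$,
$$\int_{\R^n}\rho\, dm_n = \int_{S^{n-1}(0,1)}\int_0^\infty \rho(x+tw)\, t^{n-1}\, dt\, d\h^{n-1}(w).$$
Restricting $t$ to $[r/2,r]$ and dropping the Jacobian via $t^{n-1}\geq (r/2)^{n-1}$ supplies the crucial factor $(r/2)^{-n+1}$ appearing in both bounds.

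For the second inequality I would follow Lemma \ref{lemma:p2} essentially verbatim. First, define $F_{N,k}$ to be the set of $w\in F_N$ such that $E\cap|\gamma_w|$ contains $N$ points with pairwise distances at least $1/k$, so that $\h^{n-1}(F_N)=\lim_k \h^{n-1}(F_{N,k})$. Fix $k$ large enough that $\tfrac{1}{2k}<r/8$, and let $\{U_i\}_{i\in I}$ be any cover of $E$ by sets of diameter less than $\tfrac{1}{2k}$, each contained in a closed ball $B_i$ of radius $r_i=\diam(U_i)<r/8$. Use the test function $\rho=\tfrac{1}{N}\sum_i \tfrac{1}{r_i}\x_{2B_i}$. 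The diameter restriction ensures $|\gamma_w|$ meets at least $N$ of the balls $B_i$ but is contained in no $2B_i$ (since $\diam(|\gamma_w|)=r/2>4r_i=\diam(2B_i)$), hence $\int_{\gamma_w}\rho\, ds\geq 1$ on $F_{N,k}$. Applying Chebychev's inequality together with the polar identity above yields
$$\h^{n-1}(F_{N,k}) \leq \int_{S^{n-1}(0,1)}\int_{r/2}^r \rho(x+tw)\,dt\, d\h^{n-1}(w) \leq (r/2)^{-n+1}\|\rho\|_{L^1(\R^n)} \lesssim_n r^{-n+1} N^{-1}\sum_{i\in I} \diam(U_i)^{n-1}.$$
Infimizing over covers gives $\h^{n-1}(F_{N,k})\lesssim_n r^{-n+1}N^{-1}\h^{n-1}_{(2k)^{-1}}(E)$, and then $k\to\infty$ finishes.

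For the first inequality I would split on whether $\diam(E)$ is comparable to $r$ or much smaller. If $\diam(E)\geq r/8$, then $\min\{r,\diam(E)\}\geq r/8$, so $r^{-n+1}\min\{r,\diam(E)\}^{n-1}$ exceeds a dimensional constant and the inequality follows at once from $\h^{n-1}(F_1)\leq \omega_{n-1}$. If $\diam(E)<r/8$, cover $E$ by a single closed ball $B$ of radius $r_0\in[\diam(E),r/8)$ and repeat the argument with $\rho=r_0^{-1}\x_{2B}$ and $N=1$; this yields $\h^{n-1}(F_1)\lesssim_n r^{-n+1}r_0^{n-1}$, and sending $r_0\to\diam(E)$ gives the desired bound.

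I foresee no substantive obstacle. The only nonroutine point is recognizing that the polar Jacobian $t^{n-1}$, once bounded below by $(r/2)^{n-1}$ on the radial interval $[r/2,r]$, supplies exactly the $r^{-n+1}$ factor in both inequalities; this is the structural difference between this spherical estimate and the Fubini-based estimate of Lemma \ref{lemma:p2}, and is also what dictates the threshold $r/8$ controlling the allowable diameter of the covering sets.
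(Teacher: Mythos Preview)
Your argument for the second inequality is essentially identical to the paper's: the same sets $F_{N,k}$, the same test function $\rho$, and the same passage from line integrals to $L^1$ via polar coordinates with the Jacobian bound $t^{n-1}\geq (r/2)^{n-1}$.

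For the first inequality the paper takes a more direct geometric route than your case split. It observes that $F_1$ (translated by $x$) is the radial projection of $E\cap\{y:r/2\leq|x-y|\leq r\}$ onto $S^{n-1}(x,1)$; since this projection is uniformly Lipschitz followed by scaling by $1/r$, one gets $\diam(F_1)\lesssim r^{-1}\min\{r,\diam(E)\}$, and then $\h^{n-1}(F_1)\lesssim\diam(F_1)^{n-1}$ because $F_1$ sits inside a spherical cap. Your approach---handling $\diam(E)\geq r/8$ trivially and otherwise reusing the covering/polar-integration machinery with a single ball---is also correct and has the virtue of uniformity with the second part, while the paper's projection argument is shorter and gives a bit more, namely control on $\diam(F_1)$ rather than just its measure.
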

\begin{proof}
For the first inequality, note that $F_1+x$ is equal to the radial projection of $E\cap \{y\in \R^n: r/2\leq |x-y|\leq r\}$ to the sphere $S^{n-1}(x,1)$. This projection is the composition of a uniformly Lipschitz map (projection of $\{y\in \R^n: r/2\leq |x-y|\leq r\}$ to $S^{n-1}(x,r)$) with a scaling by $1/r$. Thus, 
\begin{align*}
\diam(F_1)\lesssim r^{-1} \diam(E\cap \{y\in \R^n: r/2\leq |x-y|\leq r\}) \lesssim r^{-1}\min\{r,\diam(E)\}.
\end{align*}
Moreover, $F_1$ is contained in the intersection of a ball $B_0=\br B(x_0,\diam(F_1))$, where $x_0\in F_1$, with $S^{n-1}(0,1)$. Thus,
\begin{align*}
\h^{n-1}(F_1)\leq \h^{n-1}(B_0\cap S^{n-1}(0,1)) \simeq_n \diam(F_1)^{n-1}.
\end{align*}
This completes the proof of the first inequality.

For the second inequality, we proceed as in the proof of Lemma \ref{lemma:p2}, by defining $F_{N,k}$ to be the set of $w\in S^{n-1}(0,1)$ such that $E\cap |\gamma_w|$ contains $N$ points with mutual distances bounded below by $1/k$. We define the function $\rho$ exactly as in Lemma \ref{lemma:p2}, using an arbitrary cover of $E$ by sets $U_i$ and corresponding balls $B_i\supset U_i$ with $r_i=\diam(U_i)<\frac{1}{2k}<\frac{r}{8}$. Note that if $w\in F_{N,k}$, then 
$$\int_{\gamma_w}\rho\, ds =\int_{r/2}^r \rho( x+tw)\, ds \geq 1.$$
By Chebychev's inequality and polar integration, it follows that 
\begin{align*}
\h^{n-1}( F_{N,k}) &\leq \int_{S^{n-1}(0,1)} \int_{r/2}^r \rho( x+tw)\, dt d\h^{n-1}(w)\\
&\lesssim_n r^{-n+1} \|\rho\|_{L^1(\R^n)} \simeq_n r^{-n+1}  N^{-1} \sum_{i\in I}\diam(U_i)^{n-1}.
\end{align*}
We now proceed as before, infimizing over the covers of $E$ and letting $k\to \infty$. 
\end{proof}

\begin{proof}[Proof of Theorem \ref{theorem:perturbation_hausdorff}]
By Remark \ref{remark:p3p4}, \ref{perturbations:3} and \ref{perturbations:4} are automatically satisfied for $\mathcal F_0(E)$ and $\mathcal F_{\sigma}(E)$. We will establish below properties \ref{perturbations:1} and \ref{perturbations:2}.

Suppose first that $\h^{n-1}(E)=0$ as in \ref{theorem:perturbation_hausdorff:i}. If $\gamma$ is a non-constant rectifiable path, by the second inequality of Lemma \ref{lemma:p2} (for $N=1$) we have that $E\cap |\gamma+x|=\emptyset$, i.e., $\gamma+x\in \mathcal F_0(E)$, for a.e.\ $x\in \R^n$. Hence, \ref{perturbations:1} holds. 

Next, if $x\in \R^n$, $r>0$, and $w\in S^{n-1}(0,1)$, define $\gamma_w(t)=x+tw$, $0\leq t\leq r$. By applying Lemma \ref{lemma:p3} countably many times (for $N=1$) to the paths $\gamma_w|_{[2^{-k}r,2^{-k+1}r ]}$, we have $E\cap \gamma_w([2^{-k}r,2^{-k+1}r ])=\emptyset$ for all $k\in \N$ and for a.e.\ $w\in S^{n-1}(0,1)$. Hence, $E\cap \gamma_w( (0,r])=\emptyset$ for a.e.\ $w\in S^{n-1}(0,1)$. Recall that a path in $\mathcal F_0(E)$, by definition, is allowed to intersect $E$ only at the endpoints. Hence, $\gamma_w\in \mathcal F_0(E)$ for a.e.\ $w\in S^{n-1}(0,1)$. This completes the proof of \ref{perturbations:2} and of  \ref{theorem:perturbation_hausdorff:i}.
    
Next, we suppose that $E$ has $\sigma$-finite Hausdorff $(n-1)$ measure, as in \ref{theorem:perturbation_hausdorff:ii}. We write $E=\bigcup_{k=1}^\infty E_k$, where $\h^{n-1}(E_k)<\infty$ for each $k\in \N$. If we show that $\mathcal F_{\sigma}(E_k)$ is a $P$-family for each $k\in \N$, then $\mathcal F_{\sigma}(E)$ will also be a $P$-family, since the intersection of countably many $P$-families is a $P$-family by Lemma \ref{lemma:intersection}. Hence, for \ref{theorem:perturbation_hausdorff:ii} it suffices to assume that $\h^{n-1}(E)<\infty$. 

Let $\gamma$ be a non-constant rectifiable path. We define $F$ to be the set of $x\in \R^n$ such that $E\cap |\gamma+x|$ is infinite and consider the set $F_N$ as in Lemma \ref{lemma:p2}. Observe that $F=\bigcap_{N=1}^\infty F_N$.  Since 
$$m_n^*(F_N) \lesssim_n \ell(\gamma)N^{-1} \h^{n-1}(E),$$
by letting $N\to \infty$ we obtain $m_n(F)=0$. This proves  \ref{perturbations:1}. 

For \ref{perturbations:2} we fix $x\in \R^n$, $r>0$, and for $w\in S^{n-1}(0,1)$ consider the segment $\gamma_w(t)=x+tw$, $0\leq t\leq r$, as above. For fixed $k\in \N$ we apply Lemma \ref{lemma:p3} to the paths $\gamma_w|_{[2^{-k}r,2^{-k+1}r ]}$ and conclude (by letting $N\to \infty)$ that the set $E\cap \gamma_w([2^{-k}r,2^{-k+1}r ])$ is finite for a.e.\ $w\in S^{n-1}(0,1)$. Hence, for a.e.\ $w\in S^{n-1}(0,1)$ the set $E\cap |\gamma_w|$ is  countable, i.e., $\gamma_w\in \mathcal F_{\sigma}(E)$. 
\end{proof}

\section{Criteria for negligibility}\label{section:criteria}

In this section we prove criteria for the membership of a set $E$ in $\NED$ or $\CNED$. The first of these criteria is crucially used in the proof of Theorem \ref{theorem:unions} regarding the unions of $\NED$ and $\CNED$ sets. Recall that $\*ned$ denotes either $\NED$ or $\CNED$. Also, recall from Section \ref{section:elementary} the definitions of $\*ned(\Omega)$ and $\*ned^w(\Omega)$, and the definition of the relative distance $\Delta(F_1,F_2)$ of two sets $F_1,F_2\subset \R^n$. Let $\gamma\colon [a,b]\to \R^n$ be a non-constant path. If $[c,d]\subset (a,b)$, then the strong subpath $\gamma|_{[c,d]}$ of $\gamma$ is called \textit{strict}.

\begin{theorem}[Main criterion]\label{theorem:criterion_compact}
Let $E\subset \R^n$ be a set such that either $E$ is closed or $m_n(\br E)=0$. The following are equivalent.
\begin{enumerate}[\upshape(I)]
	\item\label{cc:i} $E\in \*ned(\Omega)$ for all open sets $\Omega\subset \R^n$.
	\item\label{cc:ii} $E\in \*ned$.
	\item\label{cc:iii} $E\in \*ned^w(\Omega)$ for some open set $\Omega\subset \R^n$ with $\Omega\supset \br E$.
	\item\label{cc:loewner}There exist constants $t,\phi>0$ such that for each $x_0\in \R^n$ there exists $r_0>0$ with the property that for every pair of non-degenerate, disjoint continua $F_1,F_2\subset B(x_0,r_0)$ with $\Delta(F_1,F_2)\leq t$ we have
\begin{align*}
\md_n(\Gamma(F_1,F_2;\R^n) \cap \mathcal F_*(E)) \geq \phi.
\end{align*}
	\item\label{cc:iv} For each Borel function $\rho\colon \R^n\to [0,\infty]$ with $\rho\in L^n_{\loc}(\R^n)$ there exists a path family $\Gamma_0$ with $\md_n\Gamma_0=0$ such that Conclusion \ref{conclusion:a} below holds for each rectifiable path $\gamma\notin \Gamma_0$ with distinct endpoints. Moreover, $\Gamma_0$ has the property that if $\{\eta_j\}_{j\in J}$ is a finite collection of paths outside $\Gamma_0$ and $\gamma$ is a path with $|\gamma|\subset \bigcup_{j\in J} |\eta_j|$, then $\gamma\notin \Gamma_0$.
	\item\label{cc:v} For each Borel function $\rho\colon \R^n\to [0,\infty]$ with $\rho\in L^n_{\loc}(\R^n)$ there exists a path family $\Gamma_0$ with $\md_n\Gamma_0=0$ such that Conclusion \ref{conclusion:b} below holds for each rectifiable path $\gamma\notin \Gamma_0$ with distinct endpoints.
\end{enumerate}	
Moreover, the following implications 
are true for all sets $E\subset \R^n$.
\begin{align*}
\textrm{\ref{cc:iv} $\Rightarrow$ \ref{cc:v}  $\Rightarrow$ \ref{cc:i} $\Rightarrow$ \ref{cc:ii} $\Rightarrow$ \ref{cc:iii} $\Rightarrow$ \ref{cc:loewner} } 
\end{align*}
	
\begin{conclusion}{A}[A$(E,\rho,\gamma)$]\label{conclusion:a}
For each open neighborhood $U$ of $|\gamma|\setminus \partial\gamma$  and for each $\varepsilon>0$ there exists a collection of paths $\{\gamma_i\}_{i\in I}$ and a simple path $\widetilde\gamma$ such that
\begin{enumerate}[\upshape({A}-i)]\smallskip
	\item\label{criterion:i} $\widetilde \gamma \in  \mathcal F_*(E)$,\smallskip
	\item\label{criterion:ii} $\partial \widetilde \gamma=\partial \gamma$, $\displaystyle{|\widetilde \gamma|\setminus \partial \gamma\subset {(|\gamma|\setminus  E)}\cup \bigcup_{i\in I} |\gamma_i| }$, and $\bigcup_{i\in I} |\gamma_i|\subset U$,  
	\item\label{criterion:iii} $\displaystyle{\sum_{i\in I}\ell(\gamma_i)<\varepsilon}$, and \smallskip
	\item\label{criterion:iv} $\displaystyle{\sum_{i\in I} \int_{\gamma_i}\rho\, ds <\varepsilon.}$
\end{enumerate}
The paths $\gamma_i$, $i\in I$, may be taken to lie outside a given path family $\Gamma'$ with $\md_n\Gamma'=0$.  If $\br E\cap \partial \gamma=\emptyset$, then $I$ may be taken to be finite. In general, the trace of each strict subpath of $\widetilde \gamma$ intersects finitely many traces $|\gamma_i|$, $i\in I$.
\end{conclusion}	
	
\begin{conclusion}{B}[B$(E,\rho,\gamma)$]\label{conclusion:b}
For each open neighborhood $U$ of $|\gamma|$ and for each $\varepsilon>0$ there exists a simple path $\widetilde \gamma$ such that
\begin{enumerate}[\upshape({B}-i)]
	\item\label{criterion:bi} $\widetilde \gamma\in \mathcal F_*(E)$,\smallskip
	\item\label{criterion:bii} $\partial \widetilde \gamma=\partial \gamma$ and $|\widetilde \gamma|\subset U$,\smallskip
	\item\label{criterion:biii} $\ell(\widetilde \gamma)\leq \ell(\gamma)+\varepsilon$, and  \smallskip
	\item\label{criterion:biv} $\displaystyle{\int_{\widetilde \gamma} \rho\, ds \leq \int_{\gamma} \rho\, ds +\varepsilon.}$
\end{enumerate}
\end{conclusion}

\end{theorem}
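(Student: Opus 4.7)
The easy implications \ref{cc:i}$\Rightarrow$\ref{cc:ii}$\Rightarrow$\ref{cc:iii} are immediate from the definitions (take $\Omega=\R^n$, then use $M=1$). The implication \ref{cc:iii}$\Rightarrow$\ref{cc:loewner} is precisely what Lemma \ref{lemma:measure_zero}\ref{lemma:measure_zero:i} extracts from the $\*ned^w(\Omega)$ property, using that Euclidean balls are Loewner. For \ref{cc:v}$\Rightarrow$\ref{cc:i}, given continua $F_1,F_2\subset \Omega$ and an admissible $\rho\in L^n(\R^n)$ for $\Gamma(F_1,F_2;\Omega)\cap \mathcal F_*(E)$, I would apply Conclusion B to each rectifiable $\gamma\in \Gamma(F_1,F_2;\Omega)\setminus \Gamma_0$ with a small neighborhood $U\subset \Omega$ of $|\gamma|$ and $\varepsilon\to 0$; since $\widetilde \gamma\in \Gamma(F_1,F_2;\Omega)\cap \mathcal F_*(E)$ and $\int_{\widetilde\gamma}\rho\,ds\leq \int_\gamma \rho\,ds+\varepsilon$, admissibility transfers to $\gamma$, yielding the reverse inequality for modulus. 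The implication \ref{cc:iv}$\Rightarrow$\ref{cc:v} is obtained by observing that any simple weak subpath of the concatenation of $\gamma$ with the detours $\gamma_i$ furnished by Conclusion A gives a path $\widetilde\gamma$ for which $\ell(\widetilde\gamma)=\h^1(|\widetilde\gamma|)\leq \ell(\gamma)+\sum_i \ell(\gamma_i)<\ell(\gamma)+\varepsilon$ by Lemma \ref{lemma:paths}\ref{lemma:paths:iv}, and analogously for the $\rho$-integral.

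The main work is the implication \ref{cc:loewner}$\Rightarrow$\ref{cc:iv}, assuming $E$ is closed or $m_n(\br E)=0$. Given $\rho\in L^n_{\loc}(\R^n)$, I would build $\Gamma_0$ as the union of the exceptional families coming from Theorem \ref{theorem:lebesgue_differentiation_lp} applied to $\rho$, Lemma \ref{modulus:avoid} applied to the null family $\Gamma'$, and additional families ensuring integrability of $\rho^n$-averages along $\gamma$ in a Lebesgue-type sense. A generic rectifiable path $\gamma\notin \Gamma_0$ with distinct endpoints then has the property that small translates stay inside $\mathcal F_*(E)$ for most directions and that $\int_\gamma M(\rho^n\x_V)^{1/n}\,ds$ is small whenever $V$ has small measure near $|\gamma|$. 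The stability of $\Gamma_0$ under ``replace by finite unions of traces'' is arranged by closing the family off under this operation, which preserves the property $\md_n\Gamma_0=0$.

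Fix $\gamma\notin \Gamma_0$, a neighborhood $U$ of $|\gamma|\setminus \partial\gamma$, and $\varepsilon>0$. The strategy is to replace the portion of $\gamma$ near $|\gamma|\cap \br E$ by short, $\rho$-thin detours in $\mathcal F_*(E)$. For each $x\in \br E$ close to $|\gamma|$, choose a ball $B_x=B(x,r_x)\subset U$ with radius small enough that the Loewner condition \ref{cc:loewner} furnishes, for any pair $F_1',F_2'\subset B_x$ of disjoint non-degenerate continua with $\Delta(F_1',F_2')\leq t$, a family of modulus $\geq \phi$ in $\mathcal F_*(E)$; picking $F_1',F_2'$ to be small subcontinua of $|\gamma|$ straddling the entry and exit of $\gamma$ through $B_x$ and applying Lemma \ref{lemma:path_bound_modulus}\ref{lemma:path_bound_modulus:ii}, I obtain a detour $\gamma_x\in \mathcal F_*(E)$ with $\ell(\gamma_x)\lesssim r_x$ and $\int_{\gamma_x}\rho\,ds\lesssim (\int_{cB_x}\rho^n)^{1/n}$, which can moreover be arranged to avoid $\Gamma'$.

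The covering step is the main obstacle. If $E$ is closed and $\br E\cap \partial\gamma=\emptyset$, the set $|\gamma|\cap E$ is compact and disjoint from $\partial\gamma$, so a Vitali cover by finitely many balls $B_{x_i}$ suffices; I choose radii small enough to summably control both $\sum r_{x_i}$ (and hence $\sum \ell(\gamma_{x_i})<\varepsilon$) and $\sum (\int_{cB_{x_i}}\rho^n)^{1/n}<\varepsilon$ using H\"older's inequality together with Lemma \ref{lemma:bojarski}. In the general case (including when the endpoints meet $\br E$, or when $E$ is not closed but $m_n(\br E)=0$), the set $|\gamma|\cap \br E$ may be uncountable and accumulate at $\partial\gamma$; here I use a Whitney-type cover of a neighborhood of $|\gamma|\cap \br E$ in $|\gamma|$ by countably many balls, exploiting that $\rho\in L^n_{\loc}$ and the Lebesgue property along $\gamma$ force the total $\rho$-mass of the detours to converge to zero, and that in the case $m_n(\br E)=0$ most translates of $\gamma$ avoid $\br E$ entirely so that the Whitney cover can be made arbitrarily sparse. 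Finally, concatenating $\gamma$ with the detours and passing to a simple weak subpath from $\gamma(a)$ to $\gamma(b)$ yields the desired $\widetilde\gamma$; its trace lies in $(|\gamma|\setminus E)\cup \bigcup_i |\gamma_{x_i}|$ up to $\partial\gamma$ because the original portions of $|\gamma|\cap E$ have been bypassed, and membership of $\widetilde\gamma$ in $\mathcal F_*(E)$ follows from concatenation stability of $\mathcal F_0(E)$ and $\mathcal F_\sigma(E)$, noting that intersections with $E$ contributed by each piece are either empty or countable.
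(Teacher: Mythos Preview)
Your treatment of the easy implications \ref{cc:i}$\Rightarrow$\ref{cc:ii}$\Rightarrow$\ref{cc:iii}$\Rightarrow$\ref{cc:loewner}, of \ref{cc:iv}$\Rightarrow$\ref{cc:v}, and of \ref{cc:v}$\Rightarrow$\ref{cc:i} matches the paper and is correct.

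For the main implication \ref{cc:loewner}$\Rightarrow$\ref{cc:iv} your overall strategy is right (cover $|\gamma|\cap\br E$ by small balls, use the local Loewner bound together with Lemma~\ref{lemma:path_bound_modulus}\ref{lemma:path_bound_modulus:ii} to produce detours in $\mathcal F_*(E)\setminus\Gamma'$, then concatenate and simplify), but two ingredients are missing or misplaced. First, the exceptional family $\Gamma_0$ is \emph{not} built from Theorem~\ref{theorem:lebesgue_differentiation_lp} or from translate arguments; those belong to Section~\ref{section:perturbation} and play no role here. Instead the paper fixes, \emph{before} looking at any $\gamma$, a sequence of covers $\{B_{i,m}\}_{i\in I_m}$ of $\br E$ at scale $<1/m$ (via the $5B$ lemma) chosen so that $\sum_{i\in I_m}\int_{\lambda B_{i,m}}(\rho+1)^n<2^{-m}$; Lemma~\ref{lemma:bojarski} and Fuglede then give a null family $\Gamma_0$ such that for every $\gamma\notin\Gamma_0$ the sums $\sum_{B_{i,m}\cap|\gamma|\neq\emptyset}r_{i,m}$ and $\sum_{B_{i,m}\cap|\gamma|\neq\emptyset}r_{i,m}(\fint_{\lambda B_{i,m}}\rho^n)^{1/n}$ tend to $0$ (this is Lemma~\ref{lemma:exceptional_family}). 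This is exactly the control you need on $\sum\ell(\gamma_i)$ and $\sum\int_{\gamma_i}\rho\,ds$. Your proposal to choose a Vitali cover \emph{after} fixing $\gamma$ and bound the sums by H\"older and Bojarski does not give a single null family $\Gamma_0$ working for all $\gamma$ at once; and the stability property of $\Gamma_0$ in \ref{cc:iv} is not obtained by ``closing off'' (which need not preserve $\md_n=0$) but is automatic because membership in the paper's $\Gamma_0$ depends only on the trace $|\gamma|$.

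Second, the case $\br E\cap\partial\gamma\neq\emptyset$ is not handled by a Whitney cover, and your remark about translates of $\gamma$ avoiding $\br E$ is a red herring. The paper instead proves an iteration lemma (Lemma~\ref{lemma:iteration}): if $|\gamma|\cap\br E$ is totally disconnected and Conclusion~A holds for every strict subpath of $\gamma$ whose endpoints avoid $\br E$, then it holds for $\gamma$ itself, by subdividing $(0,1)$ into countably many pieces with endpoints off $\br E$ and concatenating. The hypothesis that $|\gamma|\cap\br E$ is totally disconnected is ensured by including in $\Gamma_0$ the (null) family of curves with $\h^1(|\gamma|\cap\br E)>0$, which uses $m_n(\br E)=0$; this is where the assumption ``$E$ closed or $m_n(\br E)=0$'' actually enters, via Lemma~\ref{lemma:measure_zero_loewner}.
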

Note that the implications \ref{cc:i} $\Rightarrow$ \ref{cc:ii} $\Rightarrow$ \ref{cc:iii} are trivial. Moreover,  \ref{cc:iii} $\Rightarrow$ \ref{cc:loewner} follows immediately from Lemma \ref{lemma:measure_zero}. Conclusion \ref{conclusion:b} in \ref{cc:v} is only a less technical statement  that follows easily from Conclusion \ref{conclusion:a} in \ref{cc:iv}. Indeed, \ref{criterion:biii} and \ref{criterion:biv} follow from \ref{criterion:ii}, \ref{criterion:iii}, and \ref{criterion:iv}, using Lemma \ref{lemma:paths} \ref{lemma:paths:iv}.  Hence, we will show implications \ref{cc:loewner} $\Rightarrow$ \ref{cc:iv}, which is the most technical one,  and \ref{cc:v} $\Rightarrow$ \ref{cc:i}.

Roughly speaking, Conclusions \ref{conclusion:a} and \ref{conclusion:b} say that with small cost we can alter the path $\gamma$ to bring it inside the curve family $\mathcal F_*(E)$. The assumption that $E$ is closed or $m_n(\br E)=0$ will be crucially used in the proof of \ref{cc:loewner} $\Rightarrow$ \ref{cc:iv} (see Lemma \ref{lemma:exceptional_family}) and it is doubtful whether this implication holds without that assumption.  

An immediate corollary of Theorem \ref{theorem:criterion_compact} is the quasiconformal and bi-Lipschitz invariance of compact $\*ned$ sets.
\begin{corollary}\label{corollary:qc_invariance}
Let $E\subset \R^n$ be a bounded set such that either $E$ is closed or $m_n(\br E)=0$. Let $\Omega\subset \R^n$ be an open set with $\br E\subset \Omega$, and $f\colon \Omega\to \R^n$ be a quasiconformal embedding. If $E\in \*ned$, then $f(E)\in \*ned$. 
\end{corollary}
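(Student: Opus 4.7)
The plan is to use the Main Criterion (Theorem \ref{theorem:criterion_compact}) at both ends: apply \ref{cc:ii} $\Rightarrow$ \ref{cc:i} to $E$ to get $E\in \*ned(\Omega)$, then push this equality across $f$ via quasiinvariance of $n$-modulus to obtain the weak statement \ref{cc:iii} for $f(E)$ on the open neighborhood $V:=f(\Omega)$ of $\br{f(E)}$, and finally apply \ref{cc:iii} $\Rightarrow$ \ref{cc:ii} to conclude $f(E)\in \*ned$.

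First I would verify the hypotheses of Theorem \ref{theorem:criterion_compact} for $f(E)$. Since $E$ is bounded and $\br E\subset \Omega$, the set $\br E$ is compact, hence $f(\br E)$ is compact; because $f$ is a homeomorphism from $\Omega$ onto $V$, one checks that $\br{f(E)}=f(\br E)\subset V$. If $E$ is closed then $f(E)=f(\br E)$ is compact, hence closed; if instead $m_n(\br E)=0$, then Lusin's condition $(N)$ for quasiconformal maps gives $m_n(\br{f(E)})=m_n(f(\br E))=0$. Either way, the hypotheses of Theorem \ref{theorem:criterion_compact} are satisfied for $f(E)$.

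Next I would run the modulus transfer. Fix any pair of non-empty disjoint continua $F_1',F_2'\subset V$ and set $F_i=f^{-1}(F_i')\subset \Omega$. Since $f$ is a homeomorphism on $\Omega$, composition with $f$ (respectively $f^{-1}$) is a bijection between $\Gamma(F_1,F_2;\Omega)$ and $\Gamma(F_1',F_2';V)$ that carries $\mathcal F_*(E)$ onto $\mathcal F_*(f(E))$; here I use that cardinalities of intersections are preserved by homeomorphisms. Letting $K$ be the distortion of $f$ and using standard $K$-quasiinvariance of $n$-modulus together with the equality $E\in \*ned(\Omega)$, I obtain
\begin{align*}
\md_n\Gamma(F_1',F_2';V) &\leq K\cdot \md_n\Gamma(F_1,F_2;\Omega)\\
&= K\cdot \md_n\bigl(\Gamma(F_1,F_2;\Omega)\cap \mathcal F_*(E)\bigr)\\
&\leq K^2\cdot \md_n\bigl(\Gamma(F_1',F_2';V)\cap \mathcal F_*(f(E))\bigr),
\end{align*}
which says precisely $f(E)\in \*ned^w(V)$. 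Since $\br{f(E)}\subset V$, the implication \ref{cc:iii} $\Rightarrow$ \ref{cc:ii} of Theorem \ref{theorem:criterion_compact} finishes the proof.

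There is no real obstacle once Theorem \ref{theorem:criterion_compact} is available: the delicate work of producing the $\*ned$ equality from a weak inequality on some neighborhood of $\br E$ has already been absorbed into that theorem. The only bookkeeping required is the verification that $f(E)$ inherits closedness or measure-zero boundary (handled via Lusin $(N)$) and the clean correspondence of the families $\mathcal F_*(E)$ and $\mathcal F_*(f(E))$ under $f$.
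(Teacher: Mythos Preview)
Your proof is correct and follows essentially the same route as the paper: verify the standing hypothesis of Theorem \ref{theorem:criterion_compact} for $f(E)$ (closedness or $m_n(\br{f(E)})=0$ via Lusin $(N)$), use \ref{cc:ii} $\Rightarrow$ \ref{cc:i} to get $E\in \*ned(\Omega)$, push this through the quasiinvariance of $n$-modulus to obtain $f(E)\in \*ned^w(f(\Omega))$, and close with \ref{cc:iii} $\Rightarrow$ \ref{cc:ii}. The paper's version is slightly more streamlined in that it invokes Lemma \ref{lemma:measure_zero} once to get $m_n(\br E)=0$ under either hypothesis (so $m_n(\br{f(E)})=0$ in both cases), rather than splitting into cases, but the logical content is the same.
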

\begin{proof}
Under either assumption, we have $m_n(\br E)=0$ by Lemma \ref{lemma:measure_zero}. Observe that $f(\br E)=\br{f(E)}$ and that this is a compact subset of $f(\Omega)$ having $n$-measure zero by quasiconformality.  Since $f$ distorts the $n$-modulus of each curve family in $\Omega$ by a bounded multiplicative factor, we see that $f(E)\in \*ned^w (f(\Omega))$.  By Theorem \ref{theorem:criterion_compact}, we conclude that $f(E)\in \*ned$. 
\end{proof}

We also prove an alternative criterion in terms of $P$-families; recall the definition of a $P$-family from Section \ref{section:perturbation}. The result is also true for the case of $\NED$ sets but we do not prove it here to avoid some technicalities.

\begin{theorem}[$P$-family criterion]\label{theorem:criterion_pfamily}
Let $E\subset \R^n$ be a set such that either $E$ is closed or $m_n(\br E)=0$. The following are equivalent.
\begin{enumerate}[\upshape(I)]
	\item\label{cp:i} $E\in \CNED$.
	\item\label{cp:ii} For each Borel function $\rho\colon \R^n\to [0,\infty]$ with $\rho\in L^n(\R^n)$ the following statements are true.
		\begin{enumerate}[\upshape({II}-1)]
			\item\label{cp:ii:i} For each rectifiable path $\gamma$, a.e.\ $x\in \R^n$, and every strong subpath $\eta$ of $\gamma+x$ with distinct endpoints, Conclusion \ref{conclusion:b}$(E,\rho,\eta)$ is true.
			\item\label{cp:ii:ii} For $x\in \R^n$, $0<r<R$, and $w\in S^{n-1}(0,1)$ define $\gamma_{w}(t)=x+tw$, $t\in [r,R]$. Then for $\h^{n-1}$-a.e.\ $w\in S^{n-1}(0,1)$,  and for every strong subpath $\eta$ of $\gamma_w$, Conclusion \ref{conclusion:b}$(E,\rho,\eta)$ is true.
		\end{enumerate}
	\item\label{cp:iii} For each Borel function $\rho\colon \R^n\to [0,\infty]$ with $\rho\in L^n(\R^n)$ there exists a $P$-family $\mathcal F$ such that if $\gamma\in \mathcal F$ is a rectifiable path, then  Conclusion \ref{conclusion:b}$(E,\rho,\eta)$ is true for each strong subpath $\eta$ of $\gamma$ with distinct endpoints.
\end{enumerate}
\end{theorem}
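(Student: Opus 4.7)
The plan is to establish the cycle \ref{cp:i}$\Rightarrow$\ref{cp:ii}$\Rightarrow$\ref{cp:iii}$\Rightarrow$\ref{cp:i}. For \ref{cp:i}$\Rightarrow$\ref{cp:ii}, I would invoke the equivalence \ref{cc:ii}$\Leftrightarrow$\ref{cc:v} of Theorem \ref{theorem:criterion_compact}: given $\rho\in L^n(\R^n)$, this produces a path family $\Gamma_0$ with $\md_n\Gamma_0=0$ such that Conclusion \ref{conclusion:b}$(E,\rho,\gamma)$ holds for every rectifiable $\gamma\notin\Gamma_0$ with distinct endpoints. By \ref{m:subpath} I may enlarge $\Gamma_0$ to be closed under weak subpaths without affecting its $n$-modulus. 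Then \ref{cp:ii:i} is immediate from Lemma \ref{modulus:avoid}(i), which yields $\gamma+x\notin\Gamma_0$ for a.e.\ $x$, so every strong subpath of $\gamma+x$ with distinct endpoints also lies outside $\Gamma_0$ and thus admits Conclusion \ref{conclusion:b}; statement \ref{cp:ii:ii} follows similarly from Lemma \ref{modulus:avoid}(iii), applied for each rational $r>0$ and intersected countably in $w$.

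For \ref{cp:iii}$\Rightarrow$\ref{cp:i}, which is the cleanest direction, I would fix non-empty, disjoint continua $F_1,F_2\subset\R^n$, set $\Gamma=\Gamma(F_1,F_2;\R^n)$, and take any $\rho\in L^n(\R^n)$ admissible for $\Gamma\cap\mathcal F_\sigma(E)$. For each rectifiable $\gamma\in\Gamma\cap\mathcal F$, which has distinct endpoints since $F_1\cap F_2=\emptyset$, Conclusion \ref{conclusion:b}$(E,\rho,\gamma)$ with tolerance $\varepsilon>0$ supplies a simple $\widetilde\gamma\in\Gamma\cap\mathcal F_\sigma(E)$ (via B-i, B-ii) with $\int_{\widetilde\gamma}\rho\, ds\leq\int_{\gamma}\rho\, ds+\varepsilon$. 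Admissibility forces $\int_{\widetilde\gamma}\rho\, ds\geq 1$, so $\int_{\gamma}\rho\, ds\geq 1-\varepsilon$, and letting $\varepsilon\to 0$ shows that $\rho$ itself is admissible for $\Gamma\cap\mathcal F$. Hence $\md_n(\Gamma\cap\mathcal F)\leq\md_n(\Gamma\cap\mathcal F_\sigma(E))$; Theorem \ref{theorem:perturbation_family} equates the left-hand side with $\md_n\Gamma$, and monotonicity supplies the reverse inequality, yielding $E\in\CNED$.

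The bulk of the work, and what I expect to be the main obstacle, lies in \ref{cp:ii}$\Rightarrow$\ref{cp:iii}. I would set
$$\mathcal F=\{\gamma\text{ rectifiable}:\text{Conclusion \ref{conclusion:b}}(E,\rho,\eta)\text{ holds for each strong subpath $\eta$ of $\gamma$ with distinct endpoints}\}$$
and verify the $P$-family axioms \ref{perturbations:1}--\ref{perturbations:4}. Axiom \ref{perturbations:1} is precisely \ref{cp:ii:i}, and \ref{perturbations:3} is built into the definition. For \ref{perturbations:4}, given $\gamma_1,\gamma_2\in\mathcal F$ sharing an endpoint, a strong subpath of the concatenation either lies in a single $\gamma_i$ (handled by hypothesis) or splits at the junction, in which case I apply Conclusion \ref{conclusion:b} to each piece with tolerance $\varepsilon/2$, concatenate the approximations, and pass to a simple weak subpath (using \cite{Willard:topology}*{Theorem 31.2}); membership in $\mathcal F_\sigma(E)$ is preserved under taking unions of traces and weak subpaths, and the length and integral bounds are additive. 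The truly delicate axiom is \ref{perturbations:2}: statement \ref{cp:ii:ii} supplies Conclusion \ref{conclusion:b} only for subpaths $\sigma_w|_{[a,b]}$ with $a>0$, whereas \ref{perturbations:2} demands the entire radial segment $\sigma_w|_{[0,r]}$ to lie in $\mathcal F$, forcing Conclusion \ref{conclusion:b} to hold also for subpaths $\sigma_w|_{[0,b]}$ beginning at the base point $x$. To handle this I would approximate $\sigma_w|_{[0,b]}$ by $\sigma_w|_{[a_k,b]}$ with $a_k\to 0^+$, take approximants $\widetilde\eta_k\in\mathcal F_\sigma(E)$ furnished by \ref{cp:ii:ii}, and splice in a short simple substitute for the prefix $\sigma_w|_{[0,a_k]}$ that lies in $\mathcal F_\sigma(E)$; producing this splice while retaining control of length and line integral is where the hypothesis that $E$ is closed or $m_n(\br E)=0$ must enter, parallel to its role in the proof of \ref{cc:loewner}$\Rightarrow$\ref{cc:iv} in Theorem \ref{theorem:criterion_compact}.
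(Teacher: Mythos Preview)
Your arguments for \ref{cp:i}$\Rightarrow$\ref{cp:ii} and \ref{cp:iii}$\Rightarrow$\ref{cp:i} match the paper's essentially verbatim, and your setup for \ref{cp:ii}$\Rightarrow$\ref{cp:iii} (the definition of $\mathcal F$ and the verification of \ref{perturbations:1}, \ref{perturbations:3}, \ref{perturbations:4}) is also correct and coincides with the paper's. The gap is in your treatment of \ref{perturbations:2}.

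Your splicing idea for \ref{perturbations:2} does not work as stated: you propose to ``splice in a short simple substitute for the prefix $\sigma_w|_{[0,a_k]}$ that lies in $\mathcal F_\sigma(E)$'', but hypothesis \ref{cp:ii} furnishes no such substitute. Statement \ref{cp:ii:ii} only controls subpaths bounded away from the basepoint, and \ref{cp:ii:i} only controls translates, not a specific short segment issued from $x$; there is no non-circular way to manufacture the splice, and even knowing $\h^1(|\sigma_w|\cap E)=0$ does not make the prefix itself lie in $\mathcal F_\sigma(E)$. The paper instead routes through Lemma~\ref{lemma:iteration}, which you do not invoke. The argument is: first establish $m_n(\br E)=0$ (immediate if assumed, but in the closed case one must derive $m_n(E)=0$ from \ref{cp:ii:i} by testing with $\rho=(1-\x_E)\x_B$ on a ball $B$ and running a Fubini argument on line segments). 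Then for $\h^{n-1}$-a.e.\ $w$ one has both $\gamma_w|_{[r_k,R]}\in\mathcal F$ for every $r_k=2^{-k}R$ \emph{and} $\h^1(|\gamma_w|\cap\br E)=0$. Now for any strong subpath $\eta$ of $\gamma_w|_{[0,R]}$, every \emph{strict} subpath of $\eta$ has left parameter strictly positive and is therefore a strong subpath of some $\gamma_w|_{[r_k,R]}\in\mathcal F$, hence satisfies Conclusion~\ref{conclusion:b}. Since $|\eta|\cap\br E$ is totally disconnected, Lemma~\ref{lemma:iteration} upgrades this to Conclusion~\ref{conclusion:b} for $\eta$ itself. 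The mechanism inside Lemma~\ref{lemma:iteration} is not a single prefix splice but a countable partition of $\eta$ into strict subpaths with endpoints chosen outside $\br E$, each approximated separately and then concatenated; this is exactly where the total-disconnectedness, and hence the hypothesis $m_n(\br E)=0$, is genuinely used.
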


Ahlfors--Beurling  \cite{AhlforsBeurling:Nullsets}*{Theorem 10} proved that if a closed set $E\subset \R^n$ is $\NED$ then any two points in $\R^n\setminus E$ can by joined by a curve in $\R^n\setminus E$ of short length. The analogous statement is true for closed $\CNED$ sets.

\begin{corollary}
Let $E\subset \R^n$ be a closed set with $E\in \CNED$. Then for every $\varepsilon>0$ and points $x,y\in \R^n$ there exists a path $\gamma \in \mathcal F_\sigma(E)$ connecting $x$ and $y$ with $\ell(\gamma)\leq  |x-y|+\varepsilon$.
\end{corollary}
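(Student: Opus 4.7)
The plan is to invoke Theorem~\ref{theorem:criterion_pfamily} with $\rho\equiv 0$: the implication \ref{cp:iii} supplies a $P$-family $\mathcal F$ such that for every rectifiable $\gamma\in \mathcal F$ and every strong subpath $\eta$ of $\gamma$ with distinct endpoints, Conclusion~\ref{conclusion:b}$(E,0,\eta)$ holds. In particular, any such $\eta$ admits a perturbation to a simple path $\widetilde\eta\in \mathcal F_\sigma(E)$ with the same endpoints and length $\le \ell(\eta)+\varepsilon'$ for any prescribed $\varepsilon'>0$.

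Assuming $x\neq y$ (the case $x=y$ being trivial), the second step will be to select $v\in \R^n\setminus\{0\}$ with $|v|<\varepsilon/4$ such that the three line segments $[x,x+v]$, $[x+v,y+v]$, and $[y,y+v]$ all lie in $\mathcal F$. Property \ref{perturbations:1} applied to the non-constant rectifiable path $[x,y]$ gives $[x+v,y+v]\in \mathcal F$ for a.e.\ $v\in \R^n$. Property \ref{perturbations:2} at the point $x$, combined with the strong-subpath closure in \ref{perturbations:3}, implies that $[x,x+v]\in \mathcal F$ whenever $v/|v|$ lies in a subset of $S^{n-1}(0,1)$ of full $\h^{n-1}$-measure; a polar-coordinates computation then shows that the set of admissible $v$ has full Lebesgue measure in $B(0,\varepsilon/4)$. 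The symmetric argument handles $[y,y+v]$. Intersecting the three full-measure conditions produces the required $v$.

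For this $v$, I apply Conclusion~\ref{conclusion:b} to each of the three segments (each is a strong subpath of itself, with distinct endpoints) with error parameter $\varepsilon/6$, obtaining simple paths $\widetilde\eta_1,\widetilde L,\widetilde\eta_2\in \mathcal F_\sigma(E)$ from $x$ to $x+v$, from $x+v$ to $y+v$, and from $y$ to $y+v$, with lengths at most $|v|+\varepsilon/6$, $|x-y|+\varepsilon/6$, and $|v|+\varepsilon/6$, respectively. The concatenation $\widetilde\eta_1\cdot \widetilde L\cdot \widetilde\eta_2^{-1}$, where $\widetilde\eta_2^{-1}$ denotes the reverse reparametrization of $\widetilde\eta_2$, is then a path from $x$ to $y$ of total length at most $|x-y|+2|v|+\varepsilon/2\le |x-y|+\varepsilon$. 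Since $\mathcal F_\sigma(E)$ is parametrization-invariant and a finite union of countable sets is countable, the concatenation lies in $\mathcal F_\sigma(E)$.

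The main obstacle is the simultaneous selection of $v$ in the second step: Property \ref{perturbations:2} as stated yields a full-measure set of directions only for each fixed radius $r$, so one must use the strong-subpath closure \ref{perturbations:3} to upgrade it to a statement that is uniform in $r$ over a ball of radius $\varepsilon/4$, and then convert the spherical statement into a Euclidean one via polar coordinates.
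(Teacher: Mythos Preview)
Your proof is correct and follows essentially the same strategy as the paper's: invoke Theorem~\ref{theorem:criterion_pfamily} with $\rho\equiv 0$ to obtain a $P$-family, use \ref{perturbations:1} and \ref{perturbations:2} to select a small displacement for which the translated segment $[x,y]+v$ and both radial segments $[x,x+v]$, $[y,y+v]$ lie in the $P$-family, apply Conclusion~\ref{conclusion:b} to each, and concatenate. The only difference is bookkeeping in the simultaneous selection step: the paper first fixes a radius $r<\varepsilon/5$ via a polar-coordinates Fubini argument applied to the translation condition \ref{perturbations:1}, and then intersects three full-$\h^{n-1}$-measure subsets of $S^{n-1}(0,1)$; you instead apply \ref{perturbations:2} once at the maximal radius $\varepsilon/4$ and use the strong-subpath closure \ref{perturbations:3} to propagate to all smaller radii, intersecting three full-$m_n$-measure subsets of $B(0,\varepsilon/4)$.
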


\begin{proof}
Let $x,y\in \R^n$ be distinct points, $\gamma$ be the line segment $[x,y]$, and $\varepsilon>0$. Let $\rho \equiv 0$ and consider the $P$-family $\mathcal F$ given by Theorem \ref{theorem:criterion_pfamily} \ref{cp:iii}. By property \ref{perturbations:1} of the $P$-family $\mathcal F$, for a.e.\ $z\in \R^n$ the path $\gamma+z$ lies in $\mathcal F$. Using spherical coordiantes we see that for a.e.\ $r>0$ and for $\h^{n-1}$-a.e.\ $w\in S^{n-1}(0,1)$ the above is true for $z=rw$. We fix  $r<\varepsilon/5$ such that this is true. Using property \ref{perturbations:2} of a $P$-family, for $\h^{n-1}$-a.e.\ $w\in S^{n-1}(0,1)$ the radial segments $\gamma_w^x(t)= x+tw$ and $\gamma_w^y(t)= y+tw$, $0\leq t\leq r$, lie in $\mathcal F$. Thus, there exists $w\in S^{n-1}(0,1)$ such that both radial segments lie in $\mathcal F$ and $\gamma+rw\in \mathcal F$. We now apply Conclusion \ref{conclusion:b} \ref{criterion:biii} (with $\varepsilon=r$) to each of $\gamma_w^x$, $\gamma_w^y$, and $\gamma+rw$, to conclude that there exist paths $\eta_x,\eta_y,\eta\in\mathcal F_{\sigma}(E)$ with the same endpoints as $\gamma_w^x,\gamma_w^y,\gamma+rw$, respectively, such that $\ell(\eta_x) \leq 2r$, $\ell(\eta_y)\leq 2r$, and $\ell(\gamma+rw) \leq |x-y|+r$. Concatenating these paths gives a path in $\mathcal F_{\sigma}(E)$ connecting $x$ and $y$ with length bounded by $|x-y|+ 5r<|x-y|+\varepsilon$.
\end{proof}

\subsection{Auxiliary results}

We will need some auxiliary results before we prove Theorems \ref{theorem:criterion_compact} and \ref{theorem:criterion_pfamily}. The following lemma is elementary.

\begin{lemma}\label{claim:lambda}
Let $E\subset \R^n$ be a compact set with $m_n(E)=0$, $g\colon \R^n\to [0,\infty]$ be a function in $L^1(\R^n)$, and $\lambda>0$. For each $\varepsilon>0$ there exists $\delta>0$ such that if $0<r<\delta$ and $B_i=B(x_i,r)$,   $i\in \{1,\dots,N\}$, is a family of pairwise disjoint balls centered at $E$, then 
\begin{align*}
\sum_{i=1}^N\int_{\lambda B_i} g<\varepsilon.
\end{align*}
\end{lemma}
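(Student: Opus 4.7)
The plan is to reduce everything to two standard ingredients: a bounded-multiplicity estimate for the enlarged balls $\lambda B_i$, and the absolute continuity of the Lebesgue integral.

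First, I would establish that the family $\{\lambda B_i\}_{i=1}^N$ has multiplicity bounded by a constant depending only on $n$ and $\lambda$. Indeed, fix $x\in \R^n$ and let $S=\{i: x\in \lambda B_i\}$. For each $i\in S$ we have $|x-x_i|<\lambda r$, so $B(x_i,r)\subset B(x,(\lambda+1)r)$. Since the balls $\{B(x_i,r)\}_{i\in S}$ are pairwise disjoint, comparing Lebesgue measures gives
\begin{align*}
\#S\cdot c(n) r^n \leq c(n)(\lambda+1)^n r^n,
\end{align*}
so $\#S\leq (\lambda+1)^n$. Consequently $\sum_{i=1}^N \chi_{\lambda B_i}\leq (\lambda+1)^n \chi_{U}$, where $U=\bigcup_{i=1}^N \lambda B_i$, and therefore
\begin{align*}
\sum_{i=1}^N \int_{\lambda B_i} g \leq (\lambda+1)^n \int_{U} g.
\end{align*}

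Next I would show that the measure of $U$ can be made arbitrarily small by choosing $r$ small. Since each $x_i\in E$, we have $U\subset N_{\lambda r}(E)$. Because $E$ is compact, the open neighborhoods $N_{\lambda r}(E)$ are decreasing as $r\to 0$ with intersection $E$, and $m_n(N_{\lambda r}(E))<\infty$ for small $r$, so by continuity from above
\begin{align*}
\lim_{r\to 0} m_n(N_{\lambda r}(E)) = m_n(E)=0.
\end{align*}

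Finally, I would invoke absolute continuity of the integral: since $g\in L^1(\R^n)$, for any $\varepsilon>0$ there exists $\eta>0$ such that $m_n(A)<\eta$ implies $\int_A g<\varepsilon(\lambda+1)^{-n}$. Choose $\delta>0$ so that $m_n(N_{\lambda r}(E))<\eta$ whenever $0<r<\delta$. Then for any disjoint family $\{B_i=B(x_i,r)\}_{i=1}^N$ with centers in $E$ and $r<\delta$, the two displays combine to give
\begin{align*}
\sum_{i=1}^N \int_{\lambda B_i} g \leq (\lambda+1)^n \int_{N_{\lambda r}(E)} g <\varepsilon,
\end{align*}
as required. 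There is no serious obstacle here; the only delicate point is recognizing that the overlaps of $\lambda B_i$ are controlled by a purely geometric packing estimate, which allows the problem to be reduced to estimating $g$ on the single set $N_{\lambda r}(E)$.
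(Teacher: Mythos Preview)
Your proof is correct and follows essentially the same route as the paper: a bounded-overlap estimate for the dilated balls (the paper gets the constant $\max\{1,2^n\lambda^n\}$ where you get $(\lambda+1)^n$, but the packing argument is the same) reduces the sum to a single integral over $N_{\lambda r}(E)$, which is then handled via absolute continuity of the integral and $m_n(N_{\lambda r}(E))\to m_n(E)=0$.
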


\begin{proof}
Let $\lambda,\varepsilon>0$. Since $E$ is compact with $m_n(E)=0$ and $g\in L^1(\R^n)$, there exists $\delta>0$ such that
\begin{align}\label{claim:ineq}
\int_{N_{\lambda \delta}(E)}g<C(n,\lambda)^{-1}\varepsilon 
\end{align}
for a constant $C(n,\lambda)>0$ to be determined. Let $0<r<\delta$ and consider a family of finitely many disjoint balls $B_i=B(x_i,r)$, $i\in \{1,\dots,N\}$, centered at $E$. Suppose that $\lambda>1$. If $x\in \lambda B_i$, then $B_i\subset \lambda B_i\subset B(x,2\lambda r)$. Since the balls $B_i$, $i\in \{1,\dots,N\}$, are disjoint, by volume comparison we see that 
\begin{align*}
\sum_{i=1}^N \x_{\lambda B_i }\leq 2^n\lambda^n \x_{\bigcup_{i=1}^N \lambda B_i} \leq 2^n\lambda^n \x_{N_{\lambda r }(E)}.
\end{align*}
The same inequality is trivially true when $0<\lambda\leq 1$ with constant $1$ in place of $2^n\lambda^n$. We now set $C(n,\lambda)=\max\{1,2^n\lambda^n\}$ and by \eqref{claim:ineq} we  obtain
\[
\sum_{i=1}^N \int_{\lambda B_i} g= \int g \sum_{i=1}^N \x_{\lambda  B_i}\leq C(n,\lambda) \int_{N_{\lambda r}(E)} g<\varepsilon.\qedhere
\]
\end{proof}

\begin{lemma}\label{lemma:exceptional_family}
Let $E\subset \R^n$ be a closed set with $m_n(E)=0$. Then for each non-negative function $\rho\in L^n_{\loc}(\R^n)$  {and for each $\lambda>0$}
\begin{enumerate}[\upshape(i)]
	\item there exists a path family $\Gamma_0$ with $\md_n\Gamma_0=0$ and
	\item\label{lemma:exceptional_family_radii} for each $m\in \N$ there exists a family of balls $\{B_{i,m}=B(x_{i,m}, r_{i,m})\}_{i\in I_m}$ covering $E$ with $r_{i,m}<1/m$, $i\in I_m$,
\end{enumerate}
such that for every non-constant curve $\gamma\notin \Gamma_0$ we have
\begin{align}
\begin{aligned}\label{lemma:exceptional_family_limits}
&\lim_{m\to\infty} \sum_{i: B_{i,m}\cap |\gamma|\neq \emptyset} r_{i,m} \left(\fint_{{\lambda B_{i,m}}} \rho ^n \right)^{1/n}=0\quad \textrm{and}\\
&\lim_{m\to\infty} \sum_{i: B_{i,m}\cap |\gamma|\neq \emptyset} r_{i,m} =0.
\end{aligned}
\end{align}
\end{lemma}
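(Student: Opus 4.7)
The plan is to build the covers $\{B_{i,m}\}_i$ and two auxiliary functions in a coordinated way so that the two sums in \eqref{lemma:exceptional_family_limits} are dominated by line integrals of functions with small $L^n$-norm; then a Borel--Cantelli argument on modulus produces the exceptional family $\Gamma_0$.

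\textbf{Step 1 (covers).} For each $m\in\N$, using that $E$ is closed with $m_n(E)=0$ and that $E\cap \br B(0,K)$ is compact, I would pick an open set $V_m\supset E$ with $m_n(V_m)\leq c\cdot 2^{-m}$ (union over $K$ of small open neighborhoods of $E\cap \br B(0,K)$ with measure $\leq 2^{-m}K^{-2}$). The Besicovitch / $5r$-Vitali covering lemma applied to the collection of balls $\{B(x,r):x\in E,\ r<1/m,\ \lambda B(x,r)\subset V_m\}$ then extracts a countable cover $\{B_{i,m}\}_i$ of $E$ with $r_{i,m}<1/m$ and whose dilates $\{\lambda B_{i,m}\}_i$ have bounded overlap with constant $C=C(n,\lambda)$.

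\textbf{Step 2 ($L^n$-bounds).} Put
\[
g_m:=\sum_i \chi_{2B_{i,m}},\qquad h_m:=\sum_i a_{i,m}\chi_{2B_{i,m}},\qquad a_{i,m}:=\left(\fint_{\lambda B_{i,m}}\rho^n\right)^{1/n}.
\]
For each $R>0$, Lemma~\ref{lemma:bojarski} and the elementary inequality $(\sum_i x_i\chi_{A_i})^n\leq C^{n-1}\sum_i x_i^n \chi_{A_i}$ (valid when the $\chi_{A_i}$ overlap at most $C$ times) give
\[
\int_{B(0,R)} g_m^n\lesssim_{n,\lambda} m_n\bigl(V_m\cap B(0,R+\lambda)\bigr),\qquad \int_{B(0,R)} h_m^n\lesssim_{n,\lambda} \int_{V_m\cap B(0,R+\lambda)}\rho^n.
\]
The first bound is $\lesssim 2^{-m}$ by construction of $V_m$, and the second tends to $0$ by absolute continuity of the integral of $\rho^n\in L^1(B(0,R+\lambda))$ together with $m_n(V_m\cap B(0,R+\lambda))\to 0$.

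\textbf{Step 3 (exceptional family).} For each $R\in\N$ pick $\varepsilon_m^R\to 0$ slowly enough that $\sum_m (\varepsilon_m^R)^{-n}\bigl(\|g_m\chi_{B(0,R)}\|_n^n+\|h_m\chi_{B(0,R)}\|_n^n\bigr)<\infty$, and let $\Gamma_m^R$ be the family of curves with $|\gamma|\subset B(0,R)$ and $\max\bigl(\int_\gamma g_m\,ds,\int_\gamma h_m\,ds\bigr)\geq \varepsilon_m^R$. By the admissibility of $(\varepsilon_m^R)^{-1}(g_m+h_m)$ we have $\md_n\Gamma_m^R$ summable in $m$, so by \ref{m:outer_measure} the set
\[
\Gamma_0:=\bigcup_{R\in\N}\bigcap_{M\in\N}\bigcup_{m\geq M}\Gamma_m^R
\]
satisfies $\md_n\Gamma_0=0$.

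\textbf{Step 4 (verification and obstacle).} Fix a non-constant $\gamma\notin\Gamma_0$ and $R$ with $|\gamma|\subset B(0,R)$. For all $m$ large enough that $4/m<\diam|\gamma|$, no $2B_{i,m}$ can contain $|\gamma|$; if $B_{i,m}\cap|\gamma|\neq\emptyset$, the path traverses the annulus $2B_{i,m}\setminus B_{i,m}$, so $\int_\gamma \chi_{2B_{i,m}}\,ds\geq r_{i,m}$. Summing over $i$ yields $T_m(\gamma)\leq\int_\gamma g_m\,ds$ and $S_m(\gamma)\leq\int_\gamma h_m\,ds$, and both quantities are $<\varepsilon_m^R\to 0$ for all large $m$, giving both limits in \eqref{lemma:exceptional_family_limits}. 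The main obstacle is reconciling the unboundedness of $E$ with the merely local integrability of $\rho$: no single global estimate works, and the exhaustion by $\{B(0,R)\}_R$ must appear at each stage (construction of $V_m$, $L^n$-bounds, Borel--Cantelli step), while still producing \emph{one} family of balls $\{B_{i,m}\}_i$ per $m$ rather than an $R$-dependent one.
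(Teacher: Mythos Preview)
Your proposal is correct and follows essentially the same strategy as the paper: construct covers of $E$ by small balls, encode the two sums in \eqref{lemma:exceptional_family_limits} as line integrals of auxiliary functions whose $L^n$-norms are small (via bounded overlap/Bojarski), and then apply a Fuglede/Borel--Cantelli argument to produce $\Gamma_0$. The only organizational difference is that the paper first reduces to the case of compact $E$ (by intersecting with annuli and taking a countable union of exceptional families), which avoids your exhaustion by $B(0,R)$ at every step; it then works with the single function $g=(\rho+1)\chi_{B(0,R)}$ in place of your pair $g_m,h_m$, and invokes Fuglede's lemma directly rather than spelling out the Borel--Cantelli. Your direct treatment of unbounded $E$ is slightly more involved but equally valid.
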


\begin{proof}
First, we reduce to the case that $E$ is compact. Suppose that  the statement is true for compact sets. For $k\in \N$, let $A_k= \{x\in \R^n: k-1\leq |x|\leq k\}$. Then for each $k\in \N$, there exists a curve family $\Gamma_k$ with $\md_n\Gamma_k=0$ and for each $m\in \N$ there exists a family of balls $\{B_{i,m}\}_{i\in I_{m,k}}$ covering $E\cap A_k$, with radii less than $1/m$, so that \eqref{lemma:exceptional_family_limits} is true for non-constant paths $\gamma\notin \Gamma_k$. We discard the balls not intersecting $E\cap A_k$, if any. Let $I_m=\bigcup_{k\in \N}I_{m,k}$ and  $\Gamma_0=\bigcup_{k\in \N}\Gamma_k$. Note that $\md_n\Gamma_0=0$ by the subadditivity of modulus. We claim that \eqref{lemma:exceptional_family_limits} is true for the balls $\{B_{i,m}\}_{i\in I_m}$. 

If $\gamma$ is a non-constant path outside $\Gamma_0$, then $\gamma$ is contained in the union of finitely many sets $A_k$, $k\in \N$. Moreover, there exists $k_0\in \N$ such that $B_{i,m}\cap |\gamma|= \emptyset$ for all $i\in I_{m,k}$, $m\in \N$, and $k>k_0$.  Thus, in \eqref{lemma:exceptional_family_limits}, the sums over the indices $i\in I_{m,k}$ such that $B_{i,m}\cap |\gamma|\neq \emptyset$ are zero for all $k>k_0$. For $k\leq k_0$, the limits of the corresponding sums vanish, since $\gamma\notin \Gamma_k$. Since limits commute with finite sums, we obtain \eqref{lemma:exceptional_family_limits} for the family of balls $\{B_{i,m}\}_{i\in I_m}$.

Suppose now that $E$ is compact. Let $g\in L^n(\R^n)$ be a non-negative function, to be specified later. By the $5B$-covering lemma (\cite{Heinonen:metric}*{Theorem 1.2}) for each $r>0$ we can find a cover of $E$ by finitely many balls of radius $r$ centered at $E$ so that the balls with the same centers and radius $r/5$ are disjoint. Combining this with Lemma \ref{claim:lambda} (for $5\lambda$ in place of $\lambda$), for each $m\in \N$ we may find a cover of $E$ by balls $B_{i,m}=B(x_{i,m},r_m)$, $i\in I_m$, centered at $E$, such that $r_m<1/m$, the balls $\frac{1}{5}B_{i,m}$ are disjoint, and
\begin{align*}
\sum_{i\in I_m} \int_{\lambda B_{i,m}}g^n <\frac{1}{2^m}.
\end{align*}
For $m\in \N$, we define the Borel function 
$$h_m=  \sum_{i\in I_m }\left(\fint_{\lambda B_{i,m}} g ^n \right)^{1/n} \x_{2 B_{i,m}}.$$
By Lemma \ref{lemma:bojarski}, we have
\begin{align*}
\sum_{m\in \N}\|h_m\|_{L^n(\R^n)} &\lesssim_n \sum_{m\in \N} \left\|\sum_{i\in I_m }\left(\fint_{\lambda B_{i,m}} g^n \right)^{1/n} \x_{\frac{1}{5}B_{i,m}} \right\|_{L^n(\R^n)} \\
&\lesssim_{n,\lambda}  \sum_{m\in \N} \left(\sum_{i\in I_m }\int_{\lambda B_{i,m}} g^n \right)^{1/n}\lesssim_{n,\lambda}  \sum_{m\in \N}\frac{1}{2^{m/n}}<\infty.
\end{align*}
By a variant of Fuglede's lemma  \cite{Vaisala:quasiconformal}*{Theorem 28.1}, there exists a curve family $\Gamma_0$ with $\md_n\Gamma_0=0$ such that for each path $\gamma\notin \Gamma_0$ we have
\begin{align*}
\lim_{m\to\infty}\int_{\gamma} h_m\, ds=0.
\end{align*}
Implicitly we assume that $\Gamma_0$ contains all curves that are not rectifiable.

Note that if $\gamma$ is a non-constant rectifiable curve, $B_{i,m}\cap |\gamma|\neq \emptyset$, and $m$ is sufficiently large so that $\diam(|\gamma|)> 4m^{-1}>4r_{m}$, then $|\gamma|$ is not contained in $2B_{i,m}$, so
$$\int_{\gamma}\x_{2B_{i,m}} \, ds \geq r_{m}.$$
Thus, 
$$\int_{\gamma} h_m\, ds  \geq \sum_{i: B_{i,m}\cap |\gamma|\neq \emptyset} r_{m} \left(\fint_{\lambda B_{i,m}} g ^n \right)^{1/n}.$$
We conclude that if $\gamma$ is a non-constant  curve outside $\Gamma_0$, then 
\begin{align}\label{lemma:exceptional:family:g}
\lim_{m\to\infty}\sum_{i: B_{i,m}\cap |\gamma|\neq \emptyset} r_{m} \left(\fint_{\lambda B_{i,m}} g ^n \right)^{1/n}=0.
\end{align}

We finally set $g=(\rho+1)\x_{B(0,R)}$ in the above manipulations, where $\rho$ is the given function from the statement  and $B(0,R)$ is a large ball containing the compact set $E$. Note that for all large $m\in \N$ we have $\lambda B_{i,m}\subset B(0,R)$ for all $i\in I_m$.  Then for every non-constant curve $\gamma\notin \Gamma_0$ we obtain \eqref{lemma:exceptional_family_limits} from \eqref{lemma:exceptional:family:g}. 
\end{proof}

\begin{lemma}\label{lemma:iteration}
Let $E\subset \R^n$ be a set, $\rho\colon \R^n \to [0,\infty]$ be a Borel function, and  $\gamma$ be a rectifiable path with distinct endpoints such that $|\gamma|\cap \br E$ is totally disconnected.  If Conclusion \ref{conclusion:a}$(E,\rho,\eta)$   is true for each {strict} subpath $\eta$ of $\gamma$ with distinct endpoints that do not lie in the set $\br E$, then Conclusion \ref{conclusion:a}$(E,\rho,\gamma)$  is also true.  The above statement remains true for Conclusion \ref{conclusion:b} in place of Conclusion \ref{conclusion:a}.
\end{lemma}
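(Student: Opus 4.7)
The strategy is to approximate $\gamma$ from inside its parameter interval by a countable family of strict subpaths whose endpoints avoid $\br E$, apply Conclusion~\ref{conclusion:a} (respectively~\ref{conclusion:b}) to each, concatenate the results continuously into a single path from $\gamma(a)$ to $\gamma(b)$, and pass to a simple weak subpath with the same endpoints. Write $\gamma\colon[a,b]\to\R^n$ and $T=\gamma^{-1}(\R^n\setminus\br E)$; this is an open subset of $[a,b]$ and, because the connected set $|\gamma|$ has two distinct endpoints and cannot lie in the totally disconnected set $\br E\cap|\gamma|$, it is nonempty. Let $a^*$ denote the supremum of $t\in[a,b]$ with $\gamma\equiv\gamma(a)$ on $[a,t]$, and define $b^*$ symmetrically. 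I first show that $a^*$ is a right-limit point of $T$: otherwise $\gamma$ would map some $(a^*,a^*+\delta]$ into $\br E$, but this image is connected inside the totally disconnected set $\br E\cap|\gamma|$, hence reduces to $\{\gamma(a^*)\}$, contradicting maximality of $a^*$. An analogous argument---any connected subset of $\br E\cup\{p\}$ meeting the isolated point $p\notin\br E$ equals $\{p\}$---lets me pick strictly monotone sequences $a_k\searrow a^*$ and $b_k\nearrow b^*$ in $T$ with all $\gamma(a_k)$ pairwise distinct and all $\gamma(b_k)$ pairwise distinct, and $a_1<b_1$ with $\gamma(a_1)\neq\gamma(b_1)$.

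Set $\eta_1=\gamma|_{[a_1,b_1]}$, and for $k\geq 2$ set $\eta_k=\gamma|_{[a_k,a_{k-1}]}$ and $\zeta_k=\gamma|_{[b_{k-1},b_k]}$. Each is a strict subpath of $\gamma$ with distinct endpoints in $T$, so the hypothesis provides Conclusion~\ref{conclusion:a} for each. Fix a summable sequence $\{\varepsilon_k\}$ with $\sum\varepsilon_k<\varepsilon$. For each piece apply Conclusion~\ref{conclusion:a} with a neighborhood $U_k\subset U$ of the relevant trace-minus-endpoints contained in $N_{\delta_k}(|\eta_k|)$, where $\delta_k\to 0$ in the end-approaching families, obtaining simple paths $\widetilde\eta_k,\widetilde\zeta_k$ and finite collections $\{\gamma_i^k\}_{i\in I_k}$ of detours that avoid any prescribed $\md_n$-null family $\Gamma'$. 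I then concatenate these continuously into a single path from $\gamma(a)$ to $\gamma(b)$, with the tails of $\widetilde\eta_k$ and $\widetilde\zeta_k$ accumulating at $\gamma(a)$ and $\gamma(b)$ respectively; continuity at these limit endpoints follows from $\diam|\widetilde\eta_k|\leq\diam|\eta_k|+2\delta_k\to 0$, since $|\widetilde\eta_k|\subset|\eta_k|\cup U_k$. Finally, pass to a simple weak subpath $\widetilde\gamma$ with the same endpoints via \cite{Willard:topology}*{Theorem 31.2, p.~219}.

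Conclusion~\ref{conclusion:a}$(E,\rho,\gamma)$ now follows with $\{\gamma_i\}_{i\in I}=\bigcup_k\{\gamma_i^k\}$: property (A-i) holds because each piece lies in $\mathcal F_*(E)$, the interior junction points $\gamma(a_k),\gamma(b_k)$ are outside $E$, and $\gamma(a),\gamma(b)\in\partial\widetilde\gamma$ are permitted in $E$; (A-ii) follows by unioning the piecewise trace inclusions, noting $\bigcup_i|\gamma_i|\subset\bigcup_k U_k\subset U$; (A-iii) and (A-iv) follow from $\sum\varepsilon_k<\varepsilon$. Every strict subpath of $\widetilde\gamma$ is bounded away from $\gamma(a),\gamma(b)$, hence meets only finitely many pieces and thus only finitely many $|\gamma_i^k|$; if $\br E\cap\partial\gamma=\emptyset$, then a single application of Conclusion~\ref{conclusion:a} to $\gamma|_{[a_1,b_1]}$, with the two end pieces of $\gamma$ lying in $\R^n\setminus\br E$ for $a_1,b_1$ sufficiently close to $a,b$, suffices and $I$ is finite. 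Conclusion~\ref{conclusion:b} is handled identically: the estimates $\ell(\widetilde\gamma)\leq\sum_k(\ell(\eta_k)+\varepsilon_k)+\ldots\leq\ell(\gamma)+\varepsilon$ and the analogue for $\int\rho\,ds$ come from summing the piecewise bounds, while Lemma~\ref{lemma:paths} ensures passage to a simple weak subpath does not increase either quantity. \emph{The main technical obstacle} is the endpoint-approximation step in the first paragraph---in particular, arranging the pairwise distinctness of the $\gamma(a_k)$---which is where the total-disconnectedness hypothesis on $|\gamma|\cap\br E$ is genuinely used; the remaining assembly is routine modulo careful control of the $U_k$ to maintain continuity of the infinite concatenation.
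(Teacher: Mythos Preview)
Your proof takes essentially the same route as the paper's: decompose the parameter interval into countably many strict subintervals with endpoints mapping outside $\br E$, apply the hypothesis on each piece, concatenate, and pass to a simple weak subpath. Your treatment of the endpoint-approximation step (via $a^*,b^*$ and the argument that the $\gamma(a_k)$ can be chosen pairwise distinct) is in fact more explicit than the paper's.

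There is, however, one genuine gap in the Conclusion~\ref{conclusion:a} case. You assert that for each piece $\eta_k$ one can choose a neighborhood $U_k\subset U$ of $|\eta_k|\setminus\partial\eta_k$. But $U$ is only a neighborhood of $|\gamma|\setminus\partial\gamma$, so it need not contain $\gamma(a)$ or $\gamma(b)$. If $\gamma$ revisits one of its endpoints---say $\gamma(t_0)=\gamma(a)$ for some $t_0\in(a_k,a_{k-1})$---then $\gamma(a)\in|\eta_k|\setminus\partial\eta_k$ (note $\gamma(a_k),\gamma(a_{k-1})\notin\br E$ while in the main case $\gamma(a)\in\br E$, so these points are distinct), and no such $U_k$ exists. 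Your definition of $a^*$ does not prevent this: it only forces $\gamma$ to be non-constant beyond $a^*$, not to avoid the value $\gamma(a)$. The paper handles this by a preliminary reduction you omit: it first replaces $\gamma$ by the strong subpath $\gamma|_{[c,d]}$ where $c=\max\gamma^{-1}(\gamma(a))$ and $d=\min\{t>c:\gamma(t)=\gamma(b)\}$. This subpath has the same endpoints, its strict subpaths are strict subpaths of $\gamma$ (so the hypothesis still applies), and now $\gamma((c,d))\subset|\gamma|\setminus\partial\gamma\subset U$, so every $|\eta_k|\setminus\partial\eta_k$ lies in $U$. After this reduction your argument goes through. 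No such issue arises for Conclusion~\ref{conclusion:b}, since there $U$ is a neighborhood of all of $|\gamma|$.
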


\begin{proof}
We only present the argument for Conclusion \ref{conclusion:a} since the argument for Conclusion \ref{conclusion:b} is similar and less technical. Let $U$ be an open neighborhood of $|\gamma|\setminus \partial \gamma$ and $\varepsilon>0$. It suffices to prove that a strong subpath of $\gamma$ with the same endpoints satisfies Conclusion \ref{conclusion:a}. Consider a parametrization $\gamma\colon [a,b]\to \R^n$. Then there exists an open subinterval $J$ of $[a,b]$ such that $\gamma(J)\subset U$ and $\gamma_{\br J}$ has the same endpoints as $\gamma$. Without loss of generality, we assume that $J=(0,1)$ and we denote the path $\gamma|_{[0,1]}$ by $\gamma$.  

Suppose first that $\br E\cap \partial \gamma=\emptyset$. Since $\br E$ is closed, there exist paths $\gamma|_{[0,t_1]}$, $\gamma|_{[t_2,1]}$ that do not intersect $\br E$, and a strict subpath $\eta=\gamma|_{[t_1,t_2]}$ with $\br E\cap \partial \eta=\emptyset$. By assumption, there exists a simple path $\widetilde \eta$ with the same endpoints as $\eta$ and \textit{finitely many} paths $\eta_i$, $i\in I$, inside $U$ as in  Conclusion \ref{conclusion:a}$(E,\rho,\eta)$. Concatenating $\widetilde \eta$ with $\gamma|_{[0,t_1]}$ and $\gamma|_{[t_2,1]}$, and then passing to a simple weak subpath, gives the desired path $\widetilde \gamma$ that verifies Conclusion \ref{conclusion:a}$(E,\rho,\gamma)$. 

Next, suppose that $\br E\cap \partial\gamma\neq \emptyset$. Consider a sequence $a_j\in (0,1)$, $j\in \Z$, such that $a_{j-1}<a_{j}$ for each $j\in \Z$ and $\bigcup_{j\in \Z}[a_{j-1},a_j]=(0,1)$. Let $\gamma^j=\gamma|_{[a_{j-1},a_j]}$, which is a strict subpath of $\gamma$. Since $\gamma((0,1))$ is disjoint from $\{\gamma(0),\gamma(1)\}$,  the points $a_j$ can be chosen so that $\gamma^j$ has distinct endpoints for $j\in \Z$. Since $|\gamma|\cap \br E$ is totally disconnected, we may further choose the points $a_j$ so that $\gamma(a_j)\notin \br E$ for each $j\in \Z$. 

By assumption, the strict subpath $\gamma^j$ of $\gamma$ satisfies  Conclusion \ref{conclusion:a}$(E,\rho,\gamma^j)$ for each $j\in \Z$. Thus, we obtain paths $\widetilde \gamma^j$ and $\gamma_i^j$, $i\in \{1,\dots,N_j\}$, as in the conclusion, for $\varepsilon 2^{-|j|}$ in place of $\varepsilon$,  and such that $\bigcup_{i=1}^{N_j}|\gamma^j_i|\subset U$. In particular, by \ref{criterion:i} we have $\widetilde \gamma^j\in \mathcal F_*(E)$. If $\mathcal F_*(E)=\mathcal F_0(E)$, since the endpoints of $\widetilde \gamma^j$ do not lie in $E$, we have $|\widetilde\gamma^j|\cap E=\emptyset$. By discarding some paths $\gamma_i^j$, we assume that $|\gamma_i^j|$ intersects $|\widetilde \gamma^j|$ for all $i\in \{1,\dots,N_j\}$. 

We consider parametrizations $\widetilde\gamma^j \colon [a_{j-1},a_j]\to U$ with $\widetilde \gamma^j|_{\{a_{j-1},a_j\}}=\gamma^j|_{\{a_{j-1},a_j\}}$, $j\in \Z$. Then we create a curve $\widetilde \gamma\colon [0,1]\to \br U$ such that $\widetilde \gamma((0,1))\subset U$, by concatenating these paths. Namely, we define $\widetilde \gamma(0)=\gamma(0)$, $\widetilde \gamma(1)=\gamma(1)$, and $\widetilde \gamma|_{[a_{j-1},a_j]}= \widetilde \gamma^j$ for $j\in \Z$. Note that $\ell(\widetilde \gamma^j)\leq \ell(\gamma^j)+\varepsilon2^{-|j|}$, which follows from Conclusion \ref{conclusion:a}. We conclude that $\diam( |\widetilde \gamma^j|)\to 0$, as $|j|\to \infty$, so $\widetilde \gamma$ is continuous. By passing to a weak subpath that has the same endpoints, we assume that $\widetilde \gamma$ is simple.

Property \ref{criterion:i} is immediate for $\widetilde \gamma$. Property \ref{criterion:ii} also holds if $\{\gamma_i\}_{i\in I}$ is the family $\{\gamma_i^j\}_{j\in \Z, i\in \{1,\dots,N_j\}}$. Indeed, all these paths are contained in $U$, and by the properties of the paths $\widetilde \gamma^j$ we have
\begin{align*}
|\widetilde \gamma|\setminus \partial \widetilde \gamma &\subset \bigcup_{j\in \Z} |\widetilde \gamma^j| \subset \bigcup_{j\in \Z} \left( (| \gamma^j|\setminus E) \cup \bigcup_{i=1}^{N_j}|\gamma_i^j| \right) \subset (|\gamma|\setminus E)\cup \bigcup_{j\in \Z} \bigcup_{i=1}^{N_j}|\gamma_i^j|.
\end{align*}
Finally, we have
\begin{align*}
\sum_{j\in \Z} \sum_{i=1}^{N_j} \ell(\gamma_i^j)\leq \sum_{j\in \Z} \varepsilon2^{-|j|}  =3\varepsilon \quad \textrm{and}\quad \sum_{j\in \Z} \sum_{i=1}^{N_j}  \int_{\gamma_i^j}\rho\, ds \leq 3\varepsilon.
\end{align*}
Thus \ref{criterion:iii} and \ref{criterion:iv} hold as well. 

Now, we verify the last part of Conclusion \ref{conclusion:a}. The paths $\gamma_{i}^j$ may be taken to lie outside a given curve family $\Gamma'$ with $\md_n\Gamma'=0$, since they were obtained via Conclusion \ref{conclusion:a}. If $\eta$ is a strict subpath of $\widetilde \gamma$, then $|\eta|$ intersects $\bigcup_{i=1}^{N_j} |\gamma_i^j|$ for finitely many $j\in \Z$. Indeed, as $|j|\to \infty$, the paths $\{\gamma_i^j\}_{i\in \{1,\dots,N_j\}}$ have arbitrarily small lengths by the above, and their traces intersect $|\widetilde \gamma^j|$, which is contained in arbitrarily small neighborhoods of the endpoints of $\widetilde \gamma$. Since $|\eta|$ has positive distance from the endpoints of $\widetilde \gamma$, we conclude that  $|\gamma_{i}^j|$ cannot intersect $|\eta|$ if $|j|$ is sufficiently large. This completes the proof.
\end{proof}

\subsection{Proof of main criterion}
As we have discussed, it suffices to show implication   \ref{cc:v} $\Rightarrow$ \ref{cc:i} for all sets $E$, and implication \ref{cc:loewner} $\Rightarrow$ \ref{cc:iv}, which is the most technical one, for sets $E$ that are closed or whose closure has measure zero.

\begin{proof}[Proof of Theorem \ref{theorem:criterion_compact}: \ref{cc:v} $\Rightarrow$ \ref{cc:i}] 
Let $\Omega\subset \R^n$ be an open set and $F_1,F_2\subset \Omega$ be a pair of non-empty, disjoint continua. By the monotonicity of modulus, it suffices to show that
$$\md_n\Gamma(F_1,F_2;\Omega)\leq \md_n(\Gamma(F_1,F_2;\Omega)\cap \mathcal F_*(E)).$$
Let $\rho\in L^n(\R^n)$ be an admissible function for $\Gamma( F_1,F_2;\Omega)\cap \mathcal F_*(E)$. We consider an exceptional curve family $\Gamma_0$ with $\md_n\Gamma_0=0$ as in \ref{cc:v}. Let $\gamma\in \Gamma(F_1,F_2;\Omega)\setminus \Gamma_0$, so Conclusion \ref{conclusion:b}$(E,\rho,\gamma)$ is true. Thus, for any $\varepsilon>0$ there exists a rectifiable path $\widetilde \gamma\in \Gamma(F_1,F_2;\Omega)\cap \mathcal F_*(E)$ with 
$$1\leq \int_{\widetilde \gamma} \rho\, ds \leq \int_{\gamma} \rho\, ds +\varepsilon.$$
Letting $\varepsilon\to 0$ shows that $\rho$ is admissible for $\Gamma(F_1,F_2;\Omega)\setminus \Gamma_0$. Since $\md_n\Gamma_0=0$, the proof is completed.
\end{proof}

\begin{proof}[Proof of Theorem \ref{theorem:criterion_compact}: \ref{cc:loewner} $\Rightarrow$ \ref{cc:iv}]
By the assumption \ref{cc:loewner}, which coincides with the assumption of Lemma \ref{lemma:measure_zero_loewner}, we have $m_n(E)=0$. Therefore, under either of the initial assumptions of Theorem \ref{theorem:criterion_compact}, we have $m_n(E)=0$. Let $\rho\colon \R^n\to [0,\infty]$ be a Borel function in $L^n_{\loc}(\R^n)$.  By Lemma \ref{lemma:exceptional_family}, for each $m\in \N$ there exists a family of balls $\{B_{i,m}\}_{i\in I_m}$ covering the set $\br E$, with radii converging uniformly in $I_m$ to $0$ as $m\to\infty$, and such that \eqref{lemma:exceptional_family_limits} is true for all paths outside an exceptional family with $n$-modulus zero and for a value of $\lambda>0$ to be specified. Let $\Gamma_0$ be the exceptional family of paths $\gamma$ that either do not  satisfy \eqref{lemma:exceptional_family_limits}, or $\h^1(|\gamma|\cap \br E)>0$. By Lemma \ref{lemma:exceptional_family} and property \ref{m:positive_length}, $\md_n\Gamma_0=0$. Note that the path family $\Gamma_0$ has the required property for \ref{cc:iv}, that if $\{\eta_j\}_{j\in J}$ is a finite collection of paths outside $\Gamma_0$ and $\gamma$ is a path with $|\gamma|\subset \bigcup_{j\in J} |\eta_j|$, then $\gamma\notin \Gamma_0$.

We will show that Conclusion \ref{conclusion:a}$(E,\rho,\gamma)$ holds for all paths $\gamma\notin \Gamma_0$ with distinct endpoints. For $\gamma\notin \Gamma_0$, the set $|\gamma|\cap \br E$ is totally disconnected. In view of Lemma \ref{lemma:iteration}, it suffices to show that Conclusion \ref{conclusion:a}$(E,\rho,\eta)$ is true for each subpath $\eta$ of $\gamma$ with distinct endpoints not lying in the closed set $\br E$. By the defining properties of the curve family $\Gamma_0$, all subpaths of $\gamma$ also lie outside $\Gamma_0$. Therefore, it suffices to show that Conclusion \ref{conclusion:a}$(E,\rho,\gamma)$ is true for each path $\gamma\notin \Gamma_0$ with distinct endpoints \textit{not lying in $\br E$}. 

Let $\gamma\notin \Gamma_0$ be a path with distinct endpoints not lying in $\br E$. As a final reduction, we consider a simple weak subpath $\eta$ of $\gamma$ that has the same endpoints. Note that $\eta\notin \Gamma_0$ by the defining properties of $\Gamma_0$ and that is  suffices to show Conclusion \ref{conclusion:a}$(E,\rho,\eta)$, which implies Conclusion \ref{conclusion:a}$(E,\rho,\gamma)$. Hence, we may impose the additional restriction that $\gamma$ is \textit{simple}.

We fix a simple path $\gamma\notin \Gamma_0$ with distinct endpoints not lying in $\br E$, $\varepsilon>0$, and an open neighborhood $U$ of $|\gamma|\setminus \partial \gamma$. We fix an injective parametrization $\gamma\colon [0,1]\to \R^n$ and note that $\gamma(0),\gamma(1)\notin \br E$ and $\gamma((0,1))\subset U$. 

By the compactness of $|\gamma|\cap \br E$ and the assumption \ref{cc:loewner}, there exists a finite cover of $|\gamma|\cap \br E$ by open balls $V_1,\dots,V_L$ such that for every $i\in \{1,\dots,L\}$ and for any non-degenerate, disjoint continua $F_1,F_2\subset 2V_i$, 
\begin{align}\label{theorem:criterion:nedw}
\textrm{if} \,\,\, \Delta(F_1,F_2)\leq t\,\,\, \textrm{then}\,\,\, \md_n (\Gamma(F_1,F_2;\R^n) \cap \mathcal F_*(E)) \geq \phi.
\end{align}
Observe that if a set $D$ intersects $|\gamma|\cap \br E$ and has sufficiently small diameter, namely $\diam(D)\leq \min\{2^{-1}\diam(V_i): i\in \{1,\dots,L\}\}$, then $D\subset 2V_i$ for some $i\in \{1,\dots,L\}$. Hence,  \eqref{theorem:criterion:nedw} holds for any non-degenerate, disjoint continua $F_1,F_2\subset D$. We also fix $a>1$, depending on $t$, so that if $A=A(x;r,ar)$ is any ring and $F_1,F_2\subset \br A$ are disjoint continua connecting the boundary components of $A$, then 
\begin{align*}
\Delta(F_1,F_2) \leq \frac{2}{a-1} \leq t.  
\end{align*}

If $m\in \N$ is fixed and sufficiently large, by \eqref{lemma:exceptional_family_limits} we have
\begin{align}\label{theorem:criterion:sums}
\sum_{i: B_{i,m}\cap |\gamma|\neq \emptyset} r_{i,m} \left(\fint_{\lambda B_{i,m}} \rho^n \right)^{1/n}< \varepsilon \quad \textrm{and}\quad \sum_{i: B_{i,m}\cap |\gamma|\neq \emptyset} r_{i,m}<\varepsilon.
\end{align}
By the compactness of $|\gamma|\cap \br E$, there exists a finite subcollection $D_1,\dots,D_N$ of  $\{B_{i,m}\}_{i\in I_m}$ covering the set $|\gamma|\cap \br E$. We also assume that $D_i$ intersects $|\gamma|\cap \br E$ for each $i\in \{1,\dots,N\}$ and we denote the radius of $D_i$ by $r_i$. Since the endpoints of $\gamma$ do not lie in $\br E$, we have $|\gamma|\cap \br E \subset U$. Thus, 
$$\delta=  \dist(|\gamma|\cap \br E,\partial \gamma\cup \partial U)>0.$$ 
If $m$ is sufficiently large so that $2(a+\lambda)r_i<\delta$ for each $i\in \{1,\dots,N\}$, we have 
\begin{align}\label{theorem:criterion:inclusion}
\bigcup_{i=1}^N  (a+\lambda)D_i\subset U  \quad \textrm{and}\quad \bigcup_{i=1}^N aD_i\cap \partial\gamma=\emptyset.
\end{align}
Finally, we choose an even larger $m$, so that \eqref{theorem:criterion:nedw} holds for any non-degenerate, disjoint continua $F_1,F_2\subset (a+1)D_i$,  $i\in \{1,\dots,N\}$.

We set $\widetilde \gamma_0=\gamma$. We will define inductively simple paths $\widetilde \gamma_k$, $k\in \{0,\dots,N\}$, with the same endpoints as $\gamma$. Once $\widetilde \gamma_{k-1}$ has been defined for some $k\in \{1,\dots,N\}$ and has the same endpoints as $\gamma$, we define $\widetilde \gamma_k$ as follows. If $D_k\cap |\widetilde \gamma_{k-1}|=\emptyset$, then we set $\gamma_k=\emptyset$ (i.e., the empty path) and $\widetilde \gamma_k=\widetilde \gamma_{k-1}$. Suppose $D_k\cap |\widetilde \gamma_{k-1}|\neq \emptyset$. Consider an injective parametrization $\widetilde \gamma_{k-1}\colon [0,1]\to \R^n$. By \eqref{theorem:criterion:inclusion} the endpoints of $\widetilde \gamma_{k-1}$ do not lie in $aD_k$, thus there exist two moments $0<s_1<s_2<1$ such that $\widetilde \gamma_{k-1}(s_1),\widetilde\gamma_{k-1}(s_2)\in \partial D_k$ and $\widetilde\gamma_{k-1}([0,1]\setminus (s_1,s_2))\cap D_k=\emptyset$.  Moreover, there exist moments $s_1'<s_1$ and $s_2'>s_2$ such that $\widetilde \gamma_{k-1}(s_1'),\widetilde \gamma_{k-1}(s_2')\in \partial (aD_k)$, $G_1=\widetilde\gamma_{k-1}([s_1',s_1])\subset \br{aD_k}\setminus {D_k}$ and $G_2=\widetilde\gamma_{k-1}([s_2,s_2']) \subset \br{aD_k}\setminus {D_k}$; see Figure \ref{figure:gamma}. Note that $G_1$ and $G_2$ are disjoint since the path $\widetilde \gamma_{k-1}$ is simple, and that they connect the boundary components of the ring $aD_k\setminus \br{D_k}$.  Since $\Delta(G_1,G_2)\leq t$ (by the choice of $a$) and $G_1,G_2\subset (a+1)D_k$, by \eqref{theorem:criterion:nedw} we have
\begin{align*}
\md_n (\Gamma(G_1,G_2;\R^n)\cap \mathcal F_*(E))\geq \phi.
\end{align*} 
Note that if $\Gamma'$ is a given path family with $\md_n\Gamma'=0$ as in the end of Conclusion \ref{conclusion:a}, then we also have $$\md_n (\Gamma(G_1,G_2;\R^n)\cap \mathcal F_*(E)\setminus \Gamma')\geq \phi.$$
Each path of $\Gamma(G_1,G_2;\R^n)\cap \mathcal F_*(E)\setminus \Gamma'$ intersects the ball $(a+1)D_k$. By Lemma \ref{lemma:path_bound_modulus} there exists a path $\gamma_k \in \Gamma(G_1,G_2;\R^n)\cap \mathcal F_*(E)\setminus \Gamma'$ such that 
\begin{align}\label{theorem:criterion:integral_length}
\int_{\gamma_k}\rho \, ds \lesssim_{n,\phi,a} r_k \left( \fint_{c(n,\phi,a)D_k} \rho^n \right)^{1/n} \quad \textrm{and}\quad \ell(\gamma_k) \leq  c(n,\phi,a)r_k.
\end{align}
We now set $\lambda=c(n,\phi,a)$ and note that $|\gamma_k|\subset (a+\lambda)D_k\subset U$ by \eqref{theorem:criterion:inclusion}. Also, the endpoints of $\gamma_k$ lie in $|\widetilde \gamma_{k-1}|$. We concatenate $\gamma_k$ with suitable subpaths of $\widetilde \gamma_{k-1}$ that do not intersect $D_k$ to obtain a path $\widetilde \gamma_k$ that has the same endpoints as $\gamma$. If necessary, we replace $\widetilde \gamma_k$ with a simple weak subpath that has the same endpoints. By construction we have 
$$|\widetilde \gamma_k|\subset (|\widetilde \gamma_{k-1}|\setminus D_k)\cup (|\gamma_k|\setminus \partial \gamma_k).$$
Inductively, we see that 
\begin{align}\label{theorem:criterion:induction}
|\widetilde\gamma_k|\subset \left(|\gamma| \setminus \bigcup_{i=1}^k D_i \right)\cup \bigcup_{i=1}^{k} (|\gamma_i|\setminus \partial \gamma_i).
\end{align}

\begin{figure}
\centering
	\begin{overpic}[width=.9\linewidth ]{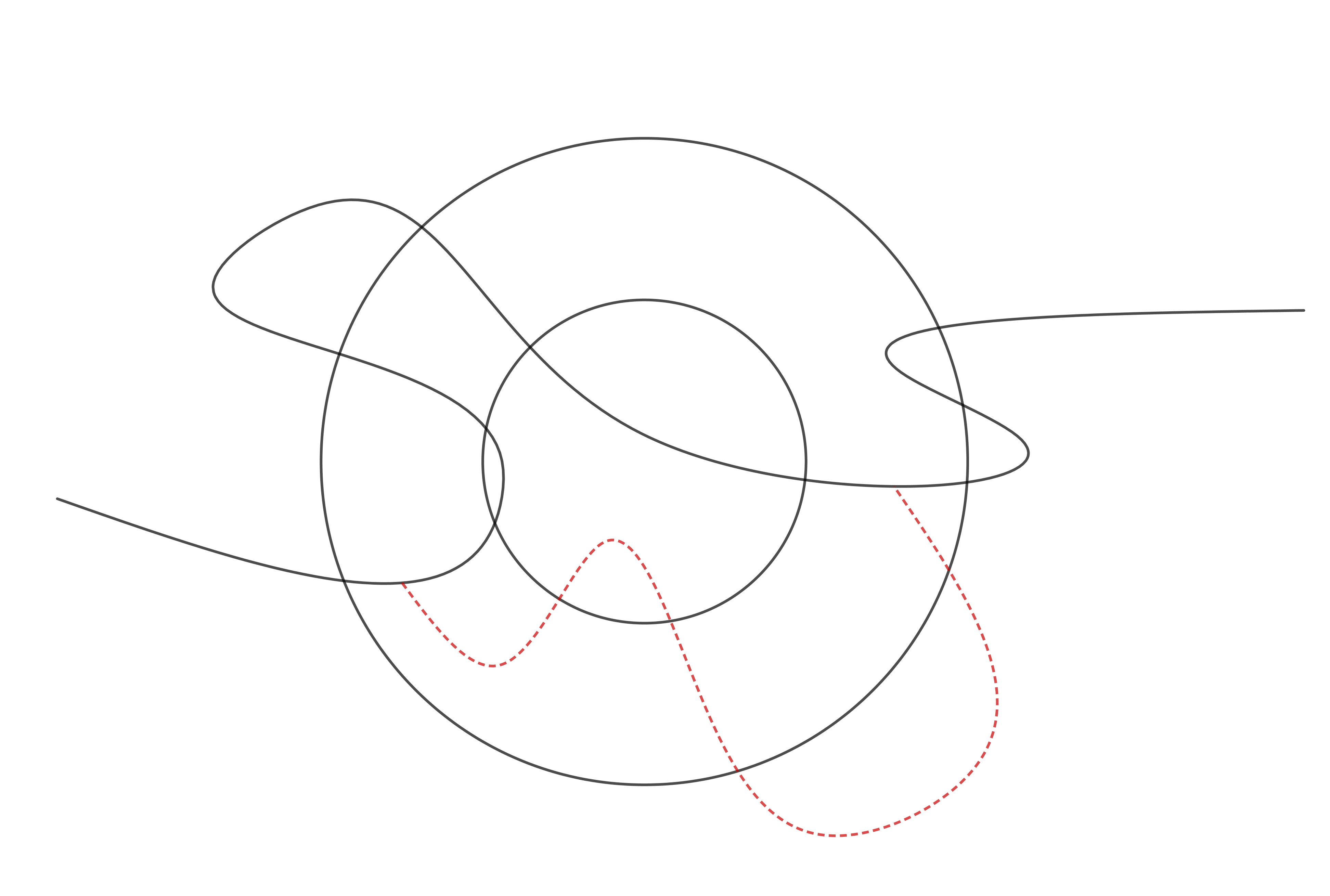}
		\put (3, 29.3) {$\bullet$}
		\put (0,32.3) {$\gamma(0)=\widetilde \gamma_{k-1}(0)$}
		\put (97,43.2) {$\bullet$}
		\put (85,39.3) {$\gamma(1)=\widetilde \gamma_{k-1}(1)$}
		\put (25, 22.8) {$\bullet$}
		\put (36.5, 27.3) {$\bullet$}
		\put (60,30.3) {$\bullet$}
		\put (71.8, 30.3) {$\bullet$}
		\put (55,43.3) {$D_k$}
		\put (65, 51.3 ) {$aD_k$}
		\put (30,25.3) {$G_1$}
		\put (65,31.8) {$G_2$}
		\put (72.5,6) {$\gamma_k$}
		\put (20,53.5) {$\widetilde \gamma_{k-1}$}
	\end{overpic}
	\caption{The construction of $\widetilde \gamma_k$ from $\widetilde \gamma_{k-1}$. }\label{figure:gamma}
\end{figure}

For $k=N$ we obtain a simple path $\widetilde \gamma=\widetilde \gamma_N$ with the same endpoints as $\gamma$. Since $|\gamma|\cap \br E\subset \bigcup_{i=1}^N D_i$, by \eqref{theorem:criterion:induction} we have 
\begin{align}\label{theorem:criterion:induction2}
|\widetilde \gamma| \subset  (|\gamma|\setminus \br E) \cup \bigcup_{i=1}^{N} (|\gamma_i|\setminus \partial \gamma_i) \quad \textrm{and}\quad |\widetilde \gamma|\cap E \subset \bigcup_{i=1}^{N} (|\gamma_i|\setminus \partial \gamma_i)\cap E.
\end{align}
If $\mathcal F_*(E)=\mathcal F_0(E)$, then   $(|\gamma_i|\setminus \partial \gamma_i)\cap E=\emptyset$ since $\gamma_i\in \mathcal F_0(E)$ for each $i\in \{1,\dots,N\}$. Thus, $|\widetilde \gamma|\cap E=\emptyset$ and $\widetilde \gamma\in \mathcal F_0(E)$. If $\mathcal F_*(E)=\mathcal F_{\sigma}(E)$, then $|\widetilde \gamma_i|\cap E$ is countable for each $i\in \{1,\dots,N\}$, so $|\widetilde \gamma|\cap E$ is countable and $\widetilde \gamma\in \mathcal F_{\sigma}(E)$. Thus,  \ref{criterion:i} is satisfied. From \eqref{theorem:criterion:induction2} we obtain immediately
\begin{align*}
|\widetilde \gamma| \subset (|\gamma|\setminus  E) \cup \bigcup_{i=1}^N |\gamma_i|.
\end{align*}
By construction, we have $\bigcup_{i=1}^N|\gamma_i|\subset \bigcup_{i=1}^N (a+\lambda)D_i \subset U$.   Thus, we have established property \ref{criterion:ii}.  Finally, by \eqref{theorem:criterion:sums} and \eqref{theorem:criterion:integral_length} we have 
$$\sum_{i=1}^N \ell(\gamma_i) \lesssim \sum_{i=1}^N r_i \lesssim \varepsilon \,\,\, \textrm{and}\,\,\,
\sum_{i=1}^N \int_{\gamma_i}\rho\, ds \lesssim  \sum_{i=1}^Nr_{i} \left(\fint_{\lambda D_{i}} \rho^n \right)^{1/n}\lesssim \varepsilon.$$
These inequalities address \ref{criterion:iii} and \ref{criterion:iv}. The last part of Conclusion \ref{conclusion:a}, asserting that the collection  $\{\gamma_i\}_i$ is finite, whenever $\br E\cap \partial\gamma=\emptyset$, is also true. 
\end{proof}

\subsection{\texorpdfstring{Proof of $P$-family criterion}{Proof of P-family criterion}}

\begin{proof}[Proof of Theorem \ref{theorem:criterion_pfamily}: \ref{cp:i} $\Rightarrow$ \ref{cp:ii}]
Let $\rho$ be a non-negative Borel function with  $\rho\in L^n(\R^n)$. By Theorem \ref{theorem:criterion_compact} \ref{cc:v}, there exists a path family $\Gamma_0$ with $\md_n\Gamma_0=0$ such that Conclusion \ref{conclusion:b}$(E,\rho,\gamma)$ is true for all paths $\gamma\notin \Gamma_0$ with distinct endpoints. Moreover, by enlarging $\Gamma_0$ while still requiring that $\md_n\Gamma_0=0$, we may assume that if $\gamma\notin \Gamma_0$, then all subpaths of $\gamma$ also have this property; see \ref{m:subpath}.  By Lemma \ref{modulus:avoid}, for each rectifiable path $\gamma$ and for a.e.\ $x\in \R^n$ we have $\gamma+x\notin \Gamma_0$; in particular, the same holds for all strong subpaths of $\gamma+x$, as required in part \ref{cp:ii:i}. Moreover, if $\gamma_w$ is as in \ref{cp:ii:ii}, by the same lemma, for $\h^{n-1}$-a.e.\ $w\in S^{n-1}(0,1)$ the path $\gamma_w$ and all of its strong subpaths lie outside $\Gamma_0$. This completes the proof. 
\end{proof}

\begin{proof}[Proof of Theorem \ref{theorem:criterion_pfamily}: \ref{cp:iii} $\Rightarrow$ \ref{cp:i}]
Let $\rho\in L^n(\R^n)$ be an admissible function for $\Gamma(F_1,F_2;\R^n)\cap \mathcal F_{\sigma}(E)$, where $F_1,F_2$ are disjoint continua. Consider the $P$-family $\mathcal F$ with the given properties; note that $\mathcal F$ depends on $\rho$. Let $\gamma\in \Gamma(F_1,F_2;\R^n)\cap \mathcal F$ be a rectifiable path. By the properties of $\mathcal F$, each strong subpath of $\gamma$ with distinct endpoints, and in particular $\gamma$, satisfies Conclusion \ref{conclusion:b}. Hence, for each $\varepsilon>0$ there exists a rectifiable path $\widetilde \gamma\in \Gamma(F_1,F_2;\R^n) \cap\mathcal F_{\sigma}(E)$ such that 
$$1\leq \int_{\widetilde \gamma}\rho\, ds \leq \int_{\gamma}\rho \, ds+\varepsilon.$$
As $\varepsilon \to 0$, this shows that $\rho$ is admissible for $\Gamma(F_1,F_2;\R^n)\cap \mathcal F$. Hence,
$$\md_n(\Gamma(F_1,F_2;\R^n)\cap \mathcal F) \leq \int \rho^n.$$
Since $\mathcal F$ is a $P$-family, by Theorem \ref{theorem:perturbation_family}, we have
$$\md_n \Gamma(F_1,F_2;\R^n)= \md_n(\Gamma(F_1,F_2;\R^n)\cap \mathcal F)\leq \int \rho^n.$$
Infimizing over $\rho$, gives $\md_n \Gamma(F_1,F_2;\R^n) \leq \md_n (\Gamma(F_1,F_2;\R^n)\cap \mathcal F_{\sigma}(E))$, so $E$ is $\CNED$. 
\end{proof}

For the proof of the last implication \ref{cp:ii} $\Rightarrow$ \ref{cp:iii}, it is crucial that $\mathcal F_{\sigma}(E)$ is closed under concatenations, a fact that is not true for $\mathcal F_0(E)$. Although the result is true for $\mathcal F_0(E)$, the proof is more involved and we only present the argument for $\mathcal F_{\sigma}(E)$.

\begin{proof}[Proof of Theorem \ref{theorem:criterion_pfamily}: \ref{cp:ii} $\Rightarrow$ \ref{cp:iii}]
First, we show that $m_n(E)=0$ in the case that $E$ is closed. Fix an open ball $B$ and consider the Borel function $\rho=(1-\x_E)\x_B$, which lies in $L^n(\R^n)$. By \ref{cp:ii:i}, Conclusion \ref{conclusion:b}$(E,\rho,\gamma)$ is true for a.e.\ line segment $\gamma$ parallel to a given coordinate direction. Let $\gamma$ be such a line segment that is contained in the ball $B$. For each $\varepsilon>0$ there exists a path $\widetilde \gamma \in \mathcal F_{\sigma}(E)$ contained in $B$, with the same endpoints as $\gamma$, and 
\begin{align*}
\int_{\widetilde \gamma}\rho\, ds\leq \int_{\gamma}\rho\,ds+\varepsilon.
\end{align*}
By Lemma \ref{lemma:paths} \ref{lemma:paths:iii} we have $\int_{\widetilde \gamma}\x_E\, ds=0$ since $|\gamma|\cap E$ is countable. Thus,
\begin{align*}
\ell(\gamma)\leq \ell(\widetilde \gamma) =\int_{\widetilde \gamma}1\, ds =\int_{\widetilde \gamma}\rho\, ds\leq \int_{\gamma}\rho\, ds+\varepsilon= \ell(\gamma) - \int_{\gamma}\x_E\, ds+\varepsilon.
\end{align*}
By letting $\varepsilon\to 0$, we obtain $\int_{\gamma}\x_E\, ds=0$, so $\h^1(|\gamma|\cap E)=0$ by Lemma \ref{lemma:paths} \ref{lemma:paths:iv}. This is true for a.e.\ line segment $\gamma$ in $B$ parallel to a coordinate direction, so $m_n(E\cap B)=0$ by Fubini's theorem. The ball $B$ was arbitrary, so $m_n(E)=0$. 

Let $\rho$ be a non-negative Borel function in $L^n(\R^n)$ and let $\mathcal F$ be the family of rectifiable paths $\gamma$ such that Conclusion \ref{conclusion:b}$(E,\rho,\eta)$ holds for each strong subpath $\eta$ of $\gamma$ with distinct endpoints. We show that $\mathcal F$ is a $P$-family. 

First, we see that \ref{perturbations:1} is satisfied. That is, if $\gamma\colon [a,b]\to \R^n$ is a non-constant rectifiable path, then $\gamma+x\in \mathcal F$ for a.e.\ $x\in \R^n$. This is true by the assumption \ref{cp:ii:i}. For \ref{perturbations:2}, we fix $x\in \R^n$ and $R>0$, and we have to show that for a.e.\ $w\in S^{n-1}(0,1)$ the radial segment $\gamma_w(t)=x+tw$, $t\in [0,R]$, lies in $\mathcal F$. By the assumption \ref{cp:ii:ii}, for each $0<r<R$ and for a.e.\ $w\in S^{n-1}(0,1)$ the radial segment $\gamma_w|_{[r,R]}$ lies in $\mathcal F$. For $r_k=2^{-k}R$, $k\in \N$, we see that $\gamma_{w}|_{[r_k,R]}\in \mathcal F$ for a.e.\ $w\in S^{n-1}(0,1)$ and for all $k\in \N$.  We fix $w\in S^{n-1}(0,1)$ such that $\h^1(|\gamma_w|\cap \br E)=0$ and $\gamma_{w}|_{[r_k,R]}\in \mathcal F$ for all $k\in \N$. Since $m_n(\br E)=0$, these statements hold for a.e.\ $w\in S^{n-1}(0,1)$. Our goal is to show that Conclusion \ref{conclusion:b}$(E,\rho,\eta)$ is true for every strong subpath $\eta$ of $\gamma_w$; this will imply that $\gamma_w\in \mathcal F$, as desired. Every \textit{strict} subpath of $\eta$ is a strong subpath of $\gamma_{w}|_{[r_k,R]}$ for some $k\in \N$. Since $\gamma_{w}|_{[r_k,R]}\in \mathcal F$, every strict subpath of $\eta$ satisfies Conclusion \ref{conclusion:b}. Since $|\eta|\cap \br E$ is totally disconnected, we conclude from Lemma \ref{lemma:iteration} that $\eta$ satisfies Conclusion \ref{conclusion:b}.
 
From the definition of $\mathcal F$ it is clear that \ref{perturbations:3} is always satisfied. We finally have to prove \ref{perturbations:4}. Note that  $\mathcal F_*(E)=\mathcal F_{\sigma}(E)$ in \ref{criterion:bi}. Let $\gamma_1,\gamma_2$ be two paths in $\mathcal F$ that have a common endpoint and let $\gamma$ be their concatenation. Consider a strong subpath $\eta$ of $\gamma$ that has distinct endpoints.  Then $\eta$ is either a strong subpath of $\gamma_1$ or $\gamma_2$, or $\eta$ is the concatenation of strong subpaths $\eta_1$ of $\gamma_1$ and $\eta_2$ of $\gamma_2$. In the latter case, which is the nontrivial one, since $\eta_1$ and $\eta_2$ satisfy Conclusion \ref{conclusion:b}, and in particular \ref{criterion:bi}, there exists paths $\widetilde \eta_1, \widetilde \eta_2 \in \mathcal F_{\sigma}(E)$ as in Conclusion \ref{conclusion:b} with the same endpoints as $\eta_1,\eta_2$, by \ref{criterion:bii}. Concatenating $\widetilde \eta_1$ with $\widetilde \eta_2$ gives a path $\widetilde \eta \in \mathcal F_{\sigma}(E)$, which shows that $\eta$ also satisfies Conclusion \ref{conclusion:b}. Thus, $\gamma\in \mathcal F$, as desired. 
\end{proof}

\subsection{Application to Sobolev removability}\label{section:sobolev}
We prove Theorem \ref{theorem:sobolev_intro} as an application  of Theorem \ref{theorem:criterion_compact}. Specifically, we show that closed $\CNED$ sets $E\subset \R^n$ are {removable for continuous $W^{1,n}$ functions}; that is, every continuous function $f\colon \R^n\to \R$ with $f\in W^{1,n}(\R^n\setminus E)$ lies in $W^{1,n}(\R^n)$. 

\begin{proof}[Proof of Theorem \ref{theorem:sobolev_intro}]
Let $f\colon \R^n \to \R$ be continuous with $f\in W^{1,n}(\R^n\setminus E)$. Then the classical gradient $\nabla f$ exists almost everywhere in $\R^n\setminus E$. Since $m_n(E)=0$, there exists a Borel representative of $|\nabla f|$ on $\R^n$. By Fuglede's theorem \cite{Vaisala:quasiconformal}*{Theorem 28.2, p.~95}, there exists a curve family $\Gamma_1$ with $\md_n\Gamma_1=0$ such that for each $\gamma\notin \Gamma_1$ we have $\int_{\gamma} |\nabla f|\, ds<\infty$ and the function $f$ is absolutely continuous on every subpath $\gamma|_{[a,b]}$ of $\gamma$ with $\gamma((a,b))\subset \R^n\setminus E$. Moreover,
$$|f(\gamma(b))-f(\gamma(a))| \leq \int_{\gamma|_{(a,b)}}|\nabla f|\, ds.$$
Let $\gamma\colon [a,b]\to \R^n$ be a path outside  $\Gamma_1$. The set $(a,b)\setminus \gamma^{-1}(E)$ is a countable union of disjoint open intervals $(a_i,b_i)$, $i\in \N$. Using the continuity of $f$ and the above inequality, we have
\begin{align}\label{sobolev:ineq1}
\begin{aligned}
m_1( f( |\gamma|\setminus E))&\leq  \sum_{i\in \N}m_1( f( \gamma([a_i,b_i]))) \leq \sum_{i\in \N}( \max_{[a_i,b_i]} f\circ \gamma- \min_{[a_i,b_i]} f\circ \gamma)\\
&\leq \sum_{i\in \N} \int_{\gamma|_{(a_i,b_i)}} |\nabla f|\, ds \leq  \int_{\gamma} |\nabla f|\, ds.
\end{aligned} 
\end{align} 
In particular, if $\gamma\in \mathcal F_{\sigma}(E)\setminus \Gamma_1$, then
\begin{align}\label{sobolev:ineq2}
m_1(f(|\gamma|)) =m_1(f(|\gamma|\setminus E))\leq \int_{\gamma} |\nabla f|\, ds. 
\end{align}

Since $m_n(E)=0$, in order to show that $f\in W^{1,n}(\R^n)$, it suffices to show that there exists a path family $\Gamma_0$ with $\md_n\Gamma_0=0$, such that $f$ is absolutely continuous along each path $\gamma\notin \Gamma_0$. Let $\Gamma_2$ be the path family given by Theorem \ref{theorem:criterion_compact} \ref{cc:iv}, with $\md_n\Gamma_2=0$ and let $\Gamma_0=\Gamma_1\cup \Gamma_2$. We fix a path $\gamma\notin \Gamma_0$  a subpath $\beta= \gamma|_{[a,b]}$. Note that $\beta\notin \Gamma_2$ by the properties of $\Gamma_2$ in Theorem \ref{theorem:criterion_compact} \ref{cc:iv}. Hence, Conclusion \ref{conclusion:a}$(E,|\nabla f|, \beta)$ is true. For each $\varepsilon>0$ there exists a path $\widetilde \beta \in \mathcal F_{\sigma}(E)$ with the same endpoints as $\beta$ and a collection of paths $\beta_i\notin \Gamma_1$, $i\in I$, such that 
$$|\widetilde \beta|\setminus \partial \widetilde \beta \subset (|\beta|\setminus E) \cup \bigcup_{i\in I}|\beta_i|\,\,\, \textrm{and}\,\,\, \sum_{i\in I}\int_{\beta_i}|\nabla f| \, ds< \varepsilon.$$
Thus, using \eqref{sobolev:ineq1} and \eqref{sobolev:ineq2}, we have
\begin{align*}
|f(\beta(b))-f(\beta(a))| &\leq m_1(f(|\widetilde \beta|)) =m_1( f(|\widetilde \beta|\setminus \partial \beta))\\
&\leq m_1( f(|\beta|\setminus E)) + \sum_{i\in I} m_1(f(|\beta_i|))\\
&\leq \int_{\beta} |\nabla f|  \, ds + \sum_{i\in I} \int_{\beta_i} |\nabla f| \, ds\\
&\leq  \int_{\beta} |\nabla f|  \, ds +\varepsilon.
\end{align*}
Letting $\varepsilon\to 0$ shows that $f$ is absolutely continuous along $\gamma$. 
\end{proof}

\section{Unions of negligible sets}\label{section:union}

In this section we prove Theorem \ref{theorem:unions}, which asserts that the union of countably many closed $\NED$ (resp.\ $\CNED$) sets is $\NED$ (resp.\ $\CNED$). The proof is based on Theorem \ref{theorem:criterion_compact} from the preceding section. We first prove an auxiliary lemma.

\begin{lemma}\label{lemma:hausdorf_continua}
Let $G_i$, $i\in \N$, be a sequence of continua in $\R^n$ with $G_i\subset G_{i+1}$, $i\in \N$, and $\sup_{i\in \N} \h^1(G_i)<\infty$. If $G= \bigcup_{i\in \N} G_i$, then $\h^1(\br G\setminus G)=0$. In particular if $\gamma\colon [0,1]\to \br G$ is a rectifiable path and $\rho\colon \R^n\to [0,\infty]$ is a Borel function, then
\begin{align*}
\int_{\gamma}\rho \, ds =\int_{\gamma} \rho \x_G\, ds. 
\end{align*}
\end{lemma}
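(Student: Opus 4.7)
My plan is to reduce the lemma to the single measure-theoretic fact $\h^1(\br G\setminus G)=0$; the asserted integral identity then falls out by applying Lemma~\ref{lemma:paths}~\ref{lemma:paths:iii} to the Borel set $\br G\setminus G$ (which is Borel since $\br G$ is closed and $G=\bigcup_i G_i$ is $F_\sigma$), after observing that $|\gamma|\subset \br G$ forces $\x_{\R^n\setminus(\br G\setminus G)}=\x_G$ pointwise on $|\gamma|$, so the right-hand side is exactly $\int_\gamma \rho\x_G\,ds$.

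To establish the measure-theoretic claim, set $M=\sup_i \h^1(G_i)<\infty$. First I would use the standard inequality recalled in Section~\ref{section:preliminaries} for connected sets to conclude $\diam(G_i)\leq \h^1_\infty(G_i)\leq \h^1(G_i)\leq M$; since all $G_i$ contain the nonempty continuum $G_1$, the $G_i$ lie in a common bounded set. Hence $\br G$ is compact, and being the closure of the connected set $G$ it is connected, so it is a continuum. Next I would verify that $G_i\to \br G$ in the Hausdorff metric: $G_i\subset \br G$ is trivial, and given $x\in \br G$ and $\varepsilon>0$ I pick $y\in G=\bigcup_j G_j$ with $|x-y|<\varepsilon$ and then $j_0$ with $y\in G_{j_0}$, so that $\dist(x,G_j)<\varepsilon$ for all $j\geq j_0$.

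The key step is to invoke the classical Go\l{}\k{a}b lower semi-continuity theorem for $\h^1$ on continua under Hausdorff convergence, which yields $\h^1(\br G)\leq \liminf_i \h^1(G_i)\leq M$. On the other hand, $\h^1$ is a Borel measure and $\{G_i\}$ is an increasing sequence of Borel sets, so monotone convergence gives $\h^1(G)=\lim_i \h^1(G_i)=M$. Combined with $G\subset \br G$ and the finiteness of $\h^1(\br G)$, this forces $\h^1(\br G\setminus G)=\h^1(\br G)-\h^1(G)=0$.

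I expect the Go\l{}\k{a}b semi-continuity input to be the one genuinely nontrivial step; the Hausdorff convergence $G_i\to \br G$, the monotone convergence for $\h^1$, and the passage from sets to line integrals via Lemma~\ref{lemma:paths} are all routine once the measure-theoretic core is in place. If one wished to avoid a direct appeal to Go\l{}\k{a}b, the alternative would be to use a structure theorem saying that a connected set of finite $\h^1$-measure is contained, up to an $\h^1$-null set, in a countable union of Lipschitz arcs, and then take closures arc by arc; but the semi-continuity route is shorter and more transparent.
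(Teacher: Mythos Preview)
Your argument is correct. The reduction to $\h^1(\br G\setminus G)=0$ and the derivation of the integral identity via Lemma~\ref{lemma:paths}~\ref{lemma:paths:iii} are exactly as in the paper. The difference lies in how you establish $\h^1(\br G\setminus G)=0$: you observe that $G_i\to \br G$ in the Hausdorff metric (your pointwise argument is easily made uniform by compactness of $\br G$), invoke Go\l{}\k{a}b's lower semicontinuity theorem to get $\h^1(\br G)\leq \liminf_i \h^1(G_i)$, and combine this with $\h^1(G)=\lim_i \h^1(G_i)$ from monotone convergence. The paper instead gives a direct, self-contained covering argument: for fixed $k$, it covers $\br G\setminus G$ by balls $B_i$ whose $\tfrac{1}{5}$-shrinkings are disjoint and miss $G_k$, then uses that each such shrunken ball contains a piece of $G_j\setminus G_k$ (for some $j>k$) of length comparable to the radius, yielding $\h^1_\infty(\br G\setminus G)\lesssim \h^1(G\setminus G_k)\to 0$. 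Your route is shorter and conceptually transparent but imports a nontrivial classical result; the paper's route is elementary and in fact re-proves the relevant instance of Go\l{}\k{a}b's theorem by hand, which keeps the argument free of external dependencies.
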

\begin{proof}
Without loss of generality, $\diam(G_1)>0$. Let $d=\diam(G_1)$ and we fix $k\in \N$. Since $G_k$ is closed, for each $x\in \br G\setminus G \subset \br G\setminus G_k$ there exists $r_x>0$ such that $B(x,r_x)\cap G_k=\emptyset$. By the $5B$-covering lemma (\cite{Heinonen:metric}*{Theorem 1.2}), there exists family of balls $B_i=B(x_i,r_i)$ with $x_i\in \br G\setminus G$ and $r_i<d$, $i\in \N$, such that  $\br G\setminus G\subset \bigcup_{i\in \N} B_i$, the balls $\frac{1}{5}B_i$ are disjoint, and $\frac{1}{5}B_i\cap G_k=\emptyset$ for each $i\in \N$. 

Fix $i\in \N$. Since $x_i\in \br G$, there exists a point $y\in \frac{1}{10}B_i\cap G$. The sequence $\{G_j\}_{j\in \N}$ is increasing, so there exists $j>k$ such that $y\in \frac{1}{10}B_i\cap G_j$. Since $\diam(G_j)\geq d> \diam(\frac{1}{5}B_i)$ and $G_j$ is a continuum,  there exists a connected set in $   \frac{1}{5}B_i\cap G_j$ that connects $\partial B(x_i,r_i/10)$ to $\partial B(x_i,r_i/5)$. In combination with the fact that $\frac{1}{5}B_i\cap G_k=\emptyset$, we obtain
\begin{align*}
\h^1\left( (G\setminus G_k) \cap \frac{1}{5}B_i\right) \geq \h^1\left( G_j\cap \frac{1}{5}B_i\right) \geq r_i/10.
\end{align*}

We now have
\begin{align*}
\h^1_{\infty}(\br  G\setminus G)\leq \sum_{i\in \N} 2r_i \leq 20 \sum_{i\in \N} \h^1\left( (G\setminus G_k) \cap \frac{1}{5}B_i\right)\leq 20 \h^1(G\setminus G_k). 
\end{align*} 
Since $\h^1(G)=\sup_{i\in \N} \h^1(G_i)<\infty$ and $G\setminus G_k$ decreases to $\emptyset$ as $k\to\infty$, we have $\h^1(\br G\setminus G)=\h^1_{\infty}(\br G\setminus G)=0$. This completes the proof of the first statement. The last statement follows from Lemma \ref{lemma:paths} \ref{lemma:paths:iii}.
\end{proof}

\begin{proof}[Proof of Theorem \ref{theorem:unions}]
We split the proof into several parts.

\smallskip
\noindent
\textbf{Initial setup.} 
Define $E=\bigcup_{i\in \N}E_i$ and let $\rho\in L^n_{\loc}(\R^n)$ be a non-negative Borel function. For each $i\in \N$ there exists an exceptional family $\Gamma_i$ with $\md_n\Gamma_i=0$, given by Theorem \ref{theorem:criterion_compact} \ref{cc:iv}. Also by  property \ref{m:positive_length} the family $\Gamma'$ of paths $\gamma$ with $\h^1(|\gamma|\cap E)>0$ has $n$-modulus zero. Define $\Gamma_0= \Gamma'\cup \bigcup_{i\in \N}\Gamma_i$. By the properties of $\Gamma_i$, the path family $\Gamma_0$ has the property that if $\{\eta_j\}_{j\in J}$ is a finite collection of paths outside $\Gamma_0$ and $\gamma$ is a path with $|\gamma|\subset \bigcup_{j\in J} |\eta_j|$, then $\gamma\notin \Gamma_0$. Moreover $|\gamma|\cap E$ is totally disconnected for all paths $\gamma\notin \Gamma_0$. 

Our goal is to show that Conclusion \ref{conclusion:b}$(E,\rho,\gamma)$ is true for each $\gamma\notin \Gamma_0$. By the last part of Theorem \ref{theorem:criterion_compact} and \ref{cc:v}, this suffices for $E$ to be $\*ned$.  We fix a path $\gamma\notin \Gamma_0$ with distinct endpoints, $\varepsilon>0$, and a neighborhood $U$ of $|\gamma|$. It suffices to show Conclusion \ref{conclusion:b} for a weak subpath of $\gamma$ with the same endpoints; also all weak subpaths of $\gamma$ lie outside $\Gamma_0$ by the properties of $\Gamma_0$. Thus, by replacing $\gamma$ with a simple weak subpath, we assume that $\gamma$ is simple and $\gamma\notin \Gamma_0$.

We introduce some notation. Define $\mathcal W_{\sigma}=\bigcup_{m=1}^\infty \N^m$ and $\mathcal W_0=\N$. We use the notation $\mathcal W_*$ for $\mathcal W_{\sigma}$ or $\mathcal W_0$, depending on whether we are working with $\CNED$ or $\NED$ sets, respectively.  Each element $w\in \N^m\subset \mathcal W_{\sigma}$ is called a \textit{word} of \textit{length} $l(w)=m$. For $w\in \mathcal W_0$ we also define $l(w)=w$. The \textit{empty word} $\emptyset$ has length $0$.   

\smallskip
\noindent
\textbf{Induction assumption.} 
Set $\widetilde \gamma_0= \alpha_{\emptyset}=\gamma$ and note that $\alpha_{\emptyset}\notin \Gamma_0$. Let $U_{\emptyset}$ be a neighborhood of $|\alpha_{\emptyset}|\setminus \partial \alpha_{\emptyset}$ with $\diam(U_{\emptyset})\leq 2\diam(|\alpha_{\emptyset}|)$ and $\br{U_{\emptyset}}\subset U$.  Suppose that for $m\geq 0$  we have defined simple paths $\widetilde \gamma_k$, $k\in \{0,\dots,m\}$, and collections of paths $\{\gamma_i\}_{i\in I_w}$ for $l(w)\leq m-1$ such that  
\begin{enumerate}[\upshape({A}-1)]\smallskip
	\item\label{ucc:i} $\widetilde \gamma_k\in \mathcal F_{*}( \bigcup_{i=1}^k E_i)$ for $k\in \{0,\dots,m\}$,\smallskip
	\item\label{ucc:ii}$\partial \widetilde \gamma_k=\partial \gamma$ and  $|\widetilde \gamma_k|\subset |\gamma|\cup \bigcup_{l(w)\leq k-1} \bigcup_{i\in I_w} |\gamma_i|$ for $k\in \{0,\dots,m\}$,\smallskip
	\item\label{ucc:iii} $\sum_{l(w)\leq m-1}\sum_{i\in I_w}\ell(\gamma_i)<\varepsilon$, and\smallskip
	\item\label{ucc:iv} $\sum_{l(w)\leq m-1}\sum_{i\in I_w}\int_{\gamma_i}\rho\, ds<\varepsilon$.	  
\end{enumerate}
Moreover, suppose we have defined the following objects: 
\begin{enumerate}[\upshape({A}-1)]\setcounter{enumi}{4}
	\item\label{ucc:v} Simple paths $\{\alpha_w\}_{l(w)\leq m}$ such that $\{\alpha_{w}\}_{l(w)=k}$ is a collection of parametri\-zations of the closures of the components of $|\widetilde \gamma_k|\setminus (E_1\cup \dots \cup E_{k})$, for each $k\in \{0,\dots,m\}$. In addition, for $l(w)\leq m$, each strict subpath of $\alpha_w$ lies outside $\Gamma_0$. We remark that in the case of $\mathcal W_*=\mathcal W_0$, there is only one component of $|\widetilde \gamma_k|\setminus (E_1\cup \dots \cup E_{k})$ and $\alpha_k=\widetilde \gamma_k$. 
	\item\label{ucc:vi}  Open sets $\{U_w\}_{l(w)\leq m}$ such that $U_w$ is a neighborhood of $|\alpha_w|\setminus \partial \alpha_w$ with $\diam(U_w)\leq 2\diam(|\alpha_w|)$ for $l(w)\leq m$ and the collection $\{U_w\}_{l(w)=k}$ is disjointed for each $k\in \{0,\dots,m\}$. Moreover,  if $m\geq 1$ and $l(w)\leq m-1$, then $U_{(w,j)}\subset U_w$ for $j\in \N$ in the case $\mathcal W_{*}=\mathcal W_{\sigma}$ and $U_{w+1}\subset U_{w}$ in the case $\mathcal W_*=\mathcal W_0$. Finally,   $\br{U_w}$ does not intersect $E_1\cup \dots\cup E_{l(w)}$, except possibly at the endpoints of $\alpha_w$, for $l(w)\leq m$.
\end{enumerate}
Finally, we require the compatibility property
\begin{enumerate}[\upshape({A}-1)]\setcounter{enumi}{6}
	\item\label{ucc:vii}$|\widetilde \gamma_m|\setminus \bigcup_{l(w)=k} U_w= |\widetilde \gamma_k| \setminus \bigcup_{l(w)=k} U_w$ for $k\in \{0,\dots,m\}$.
\end{enumerate}

\smallskip
\noindent
\textbf{Inductive step.} 
We now define $\widetilde \gamma_{m+1}$ as follows.
Fix $w\in \mathcal W_*$ with $l(w)=m$.  By \ref{ucc:v}   each strict subpath $\eta$ of $\alpha_w$ avoids the path family $\Gamma_0$; thus Conclusion \ref{conclusion:a}$(E_{m+1},\rho, \eta )$ is true and $\h^1(|\eta|\cap E_{m+1})=0$. The latter implies that $|\alpha_w|\cap E_{m+1}$ is totally disconnected. By Lemma \ref{lemma:iteration}, Conclusion \ref{conclusion:a}$(E_{m+1},\rho, \alpha_w )$ is true. Thus, for each $\delta_w>0$ there exists a simple path $\widetilde \alpha_w$ and paths $\gamma_i\notin \Gamma_0$, $i\in I_w$, such that 
	\begin{enumerate}[\upshape({A'}-1)]\smallskip
	\item\label{uc':i} $\widetilde \alpha_{w} \in \mathcal F_*(E_{m+1})$,\smallskip
	\item\label{uc':ii} $\partial \widetilde \alpha_{w}=\partial \alpha_{w}$ and $|\widetilde \alpha_{w}|\subset |\alpha_{w}|\cup \bigcup_{i\in I_{w}} |\gamma_{i}|$ and $\bigcup_{i\in I_{w}} |\gamma_{i}| \subset U_{w}$,\smallskip
	\item\label{uc':iii} $\sum_{i\in I_{w}}\ell(\gamma_{i})<\delta_w$, and \smallskip
	\item\label{uc':iv} $\sum_{i\in I_{w}} \int_{\gamma_{i}}\rho\, ds <\delta_w.$\smallskip
\end{enumerate}
If we choose a sufficiently small $\delta_w$, we can ensure that \ref{ucc:iii} and \ref{ucc:iv} are true for the index $m+1$ in place of $m$. By \ref{uc':ii}, the path $\widetilde \alpha_w$ is obtained by modifying $\alpha_w$ within $U_w$.  By \ref{ucc:vi}, $\br{U_w}$ does not intersect $E_1\cup \dots\cup E_m$, except possibly at the endpoints of $\alpha_w$. Therefore, $\widetilde \alpha_w\in \mathcal F_{*}(E_1\cup\dots\cup E_{m})$. Combining this with \ref{uc':i}, we obtain $\widetilde \alpha_w\in \mathcal F_{*}(E_1\cup\dots\cup E_{m+1})$. 

\begin{figure}
\centering
	\begin{overpic}[width=.7\linewidth]{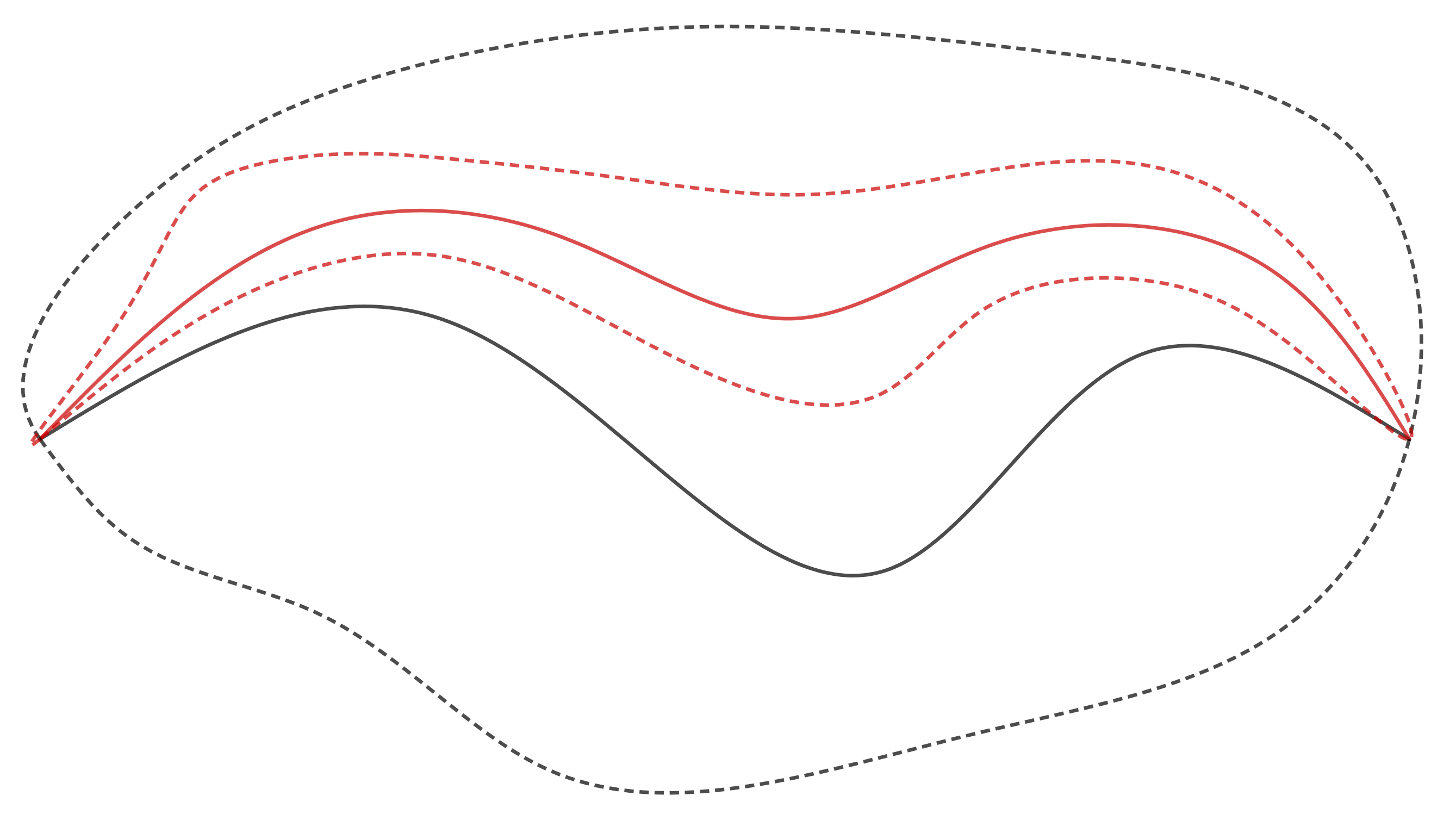}
		\put (2, 26.5) {$\bullet$}
		\put (96,26.5) {$\bullet$}
		\put (90,51) {$U_m$}
		\put (84, 45) {$U_{m+1}$}
		\put (51,38) {$\widetilde \gamma_{m+1}$}
		\put (55,20) {$\widetilde \gamma_m$}
	\end{overpic}
	\begin{overpic}[width=.99\linewidth]{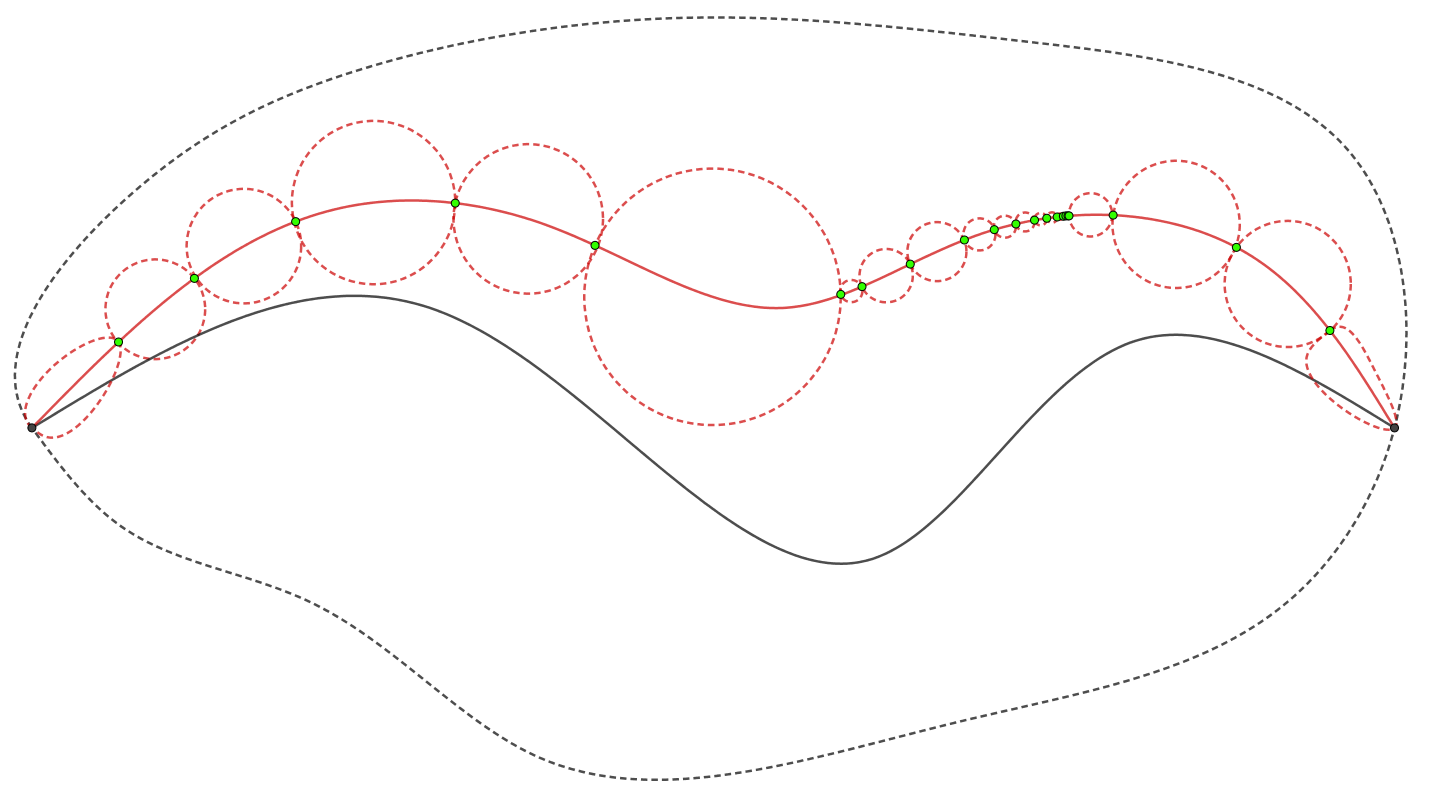}
		\put (90,51) {$U_w$}
		\put (50, 45) {$U_{(w,j)}$}
		\put (50,36) {$ \alpha_{(w,j)}$}
		\put (55,18) {$ \alpha_w$}
	\end{overpic}
	\caption{Construction of $\widetilde \gamma_{m+1}$ from $\widetilde \gamma_m$. Top figure: the case of $\mathcal W_0$. Bottom figure: the case of $\mathcal W_{\sigma}$. The red curve is $\widetilde \alpha_w$ and the green points denote the set $(|\widetilde \gamma_{m+1}|\cap E_{m+1})\cap U_w$, which is countable. In fact, a large part of $\widetilde \alpha_w$ should be shared with $\alpha_w$ by \ref{uc':ii} but we do not indicate this to simplify the figure.}\label{figure:union}
\end{figure}

Using \ref{ucc:v}, we define $\widetilde \gamma_{m+1}$ by replacing each $\alpha_w$ in $\widetilde \gamma_m$, where $l(w)=m$, with $\widetilde \alpha_w$; see Figure \ref{figure:union}. In the case of $\mathcal W_{\sigma}$, we  need to ensure that this procedure gives a path. Indeed, by \ref{uc':ii} and \ref{uc':iii} we have $\ell(\widetilde \alpha_w)\leq \ell(\alpha_w)+\delta_w$, so if $\delta_w$ is sufficiently small, then we obtain a path $\widetilde \gamma_{m+1}$. By \ref{ucc:v}, the endpoints of $\widetilde \gamma_m$ do not lie in $|\alpha_w|\setminus \partial \alpha_w$ for any $w$ with $l(w)=m$, so they are not modified. Thus, $\widetilde \gamma_{m+1}$ has the same endpoints as $\gamma$. Moreover, by \ref{ucc:vi}, the regions $\{U_w\}_{l(w)=m}$, where the paths $\widetilde \alpha_w$ differ from $\alpha_w$ are pairwise disjoint. Hence $\widetilde \gamma_{m+1}$ is a simple path, since $\widetilde \gamma_m$ is simple. 

We now verify \ref{ucc:i} and \ref{ucc:ii}. By construction and \ref{ucc:v}, we have
$$|\widetilde \gamma_{m+1}|\setminus \bigcup_{l(w)=m}(|\widetilde \alpha_w|\setminus \partial \widetilde \alpha_w)= |\widetilde \gamma_m| \setminus \bigcup_{l(w)=m} (|\alpha_w|\setminus \partial \alpha_w) = \partial \gamma \cup  \left(|\widetilde \gamma_m|\cap \bigcup_{i=1}^m E_i\right).$$
In the case of $\mathcal W_{\sigma}$, since $\widetilde \gamma_m\in \mathcal F_{\sigma}(E_1\cup\dots\cup E_m)$ and $\widetilde \alpha_w\in \mathcal F_{\sigma}(E_1\cup\dots\cup E_{m+1})$ for $l(w)=m$,  
we conclude that 
$\widetilde \gamma_{m+1}\in \mathcal F_{\sigma}(E_1\cup\dots\cup E_{m+1})$, as required in \ref{ucc:i}. In the case $\mathcal W_*=\mathcal W_0$ we simply have $\widetilde \gamma_m=\alpha_m$ (see \ref{ucc:v}) and $\widetilde \gamma_{m+1}=\widetilde \alpha_m$, so $\widetilde \gamma_{m+1}\in \mathcal F_0(E_1\cup \dots\cup E_{m+1})$, as desired.  By construction and \ref{uc':ii} we have
\begin{align*}
|\widetilde \gamma_{m+1}| \subset |\widetilde \gamma_m|\cup \bigcup_{l(w)=m}\bigcup_{i\in I_w}|\gamma_i|.
\end{align*}
Thus, by the induction assumption we obtain \ref{ucc:ii} for the index $m+1$. 

Since $\widetilde \gamma_{m+1}$ is obtained by modifying $\widetilde \gamma_m$ in the open sets $U_w$, $l(w)=m$, we have $$|\widetilde \gamma_{m+1}|\setminus \bigcup_{l(w)=m}U_w =|\widetilde \gamma_{m}|\setminus \bigcup_{l(w)=m}U_w.$$
By \ref{ucc:vi}, for $k\leq m$ we have $\bigcup_{l(w)=k}U_w\supset \bigcup_{l(w)=m}U_w$, so 
$$|\widetilde \gamma_{m+1}|\setminus \bigcup_{l(w)=k}U_w= |\widetilde \gamma_m| \setminus \bigcup_{l(w)=k}U_w=  |\widetilde \gamma_k| \setminus \bigcup_{l(w)=k}U_w,$$
where the last equality follows from the induction assumption \ref{ucc:vii}.  This proves the equality in \ref{ucc:vii} for the index $m+1$. 

Next, we verify \ref{ucc:v}. Note that the paths $\{\widetilde \alpha_w\}_{l(w)=m}$, parametrize the closures of the components of $|\widetilde \gamma_{m+1}|\setminus (E_1\cup \dots \cup E_m)$; this follows from the construction and \ref{ucc:v}. For $l(w)=m$, let $\{\alpha_{(w,j)}\}_{j\in \N}$ be a collection of simple paths parametrizing the closures of the components of $|\widetilde \alpha_w|\setminus E_{m+1}$; see Figure \ref{figure:union}. Then $\{\alpha_w\}_{l(w)=m+1}$ gives a collection that parametrizes the closures of the components of $|\widetilde\gamma_{m+1}|\setminus (E_1\cup \dots \cup E_{m+1})$. This verifies the first part of \ref{ucc:v}. Moreover, if $\mathcal W_*=\mathcal W_{\sigma}$, for $l(w)=m$ and $j\in \N$, each strict subpath $\widetilde \eta$ of  $\alpha_{(w,j)}$ is a strict subpath of $\widetilde \alpha_w$. If $\mathcal W_*=\mathcal W_0$, each strict subpath $\widetilde \eta$ of  $\alpha_{w+1}$  is a strict subpath of $\widetilde \alpha_w$. In either case, by \ref{uc':ii}, there exists a strict subpath $\eta$ of $\alpha_w$ such that $|\widetilde \eta|\subset |\eta|\cup \bigcup_{i\in I_w}|\gamma_i|$. By the induction assumption \ref{ucc:v}, $\eta\notin \Gamma_0$. By the last part of Conclusion \ref{conclusion:a} (see the statement in Theorem \ref{theorem:criterion_compact}), $|\widetilde \eta|$ intersects finitely many of the traces $|\gamma_i|$, $i\in I_w$. Recall that $\gamma_i\notin \Gamma_0$, $i\in I_w$. The properties of the family $\Gamma_0$ imply that $\widetilde \eta\notin \Gamma_0$. This completes the proof of the second part of \ref{ucc:v}.

We now discuss \ref{ucc:vi}. If $\mathcal W_*=\mathcal W_0$, we define $U_{m+1}$ to be a neighborhood of $|\alpha_{m+1}|\setminus \alpha_{m+1}$ such that $\diam(U_{m+1})\leq 2\diam(|\alpha_{m+1}|)$, $U_{m+1}\subset U_m$, and $\br{U_{m+1}}$ does not intersect $E_1\cup \dots\cup E_{m+1}$, except possibly at the endpoints of $\alpha_{m+1}$; this uses that $E_1\cup \dots \cup E_{m+1}$ is closed and does not intersect $|\alpha_{m+1}|$, except possibly at the endpoints. If $\mathcal W_*=\mathcal W_{\sigma}$, for $l(w)=m$ we define $U_{(w,1)}$ to be a neighborhood of $|\alpha_{(w,1)}|\setminus  \partial \alpha_{(w,1)}$ such that $\diam(U_{(w,1)})\leq 2\diam(|\alpha_{(w,1)}|)$, $U_{(w,1)}\subset U_w$, and $\br {U_{(w,1)}}$ does not intersect $E_1\cup \dots \cup E_{m+1}$, except possibly at the endpoints of $\alpha_{(w,1)}$. Moreover, since $\alpha_{w}$ is simple, we may require that $\br{U_{(w,1)}}$ is disjoint from $|\alpha_{(w,j)}|\setminus \partial \alpha_{(w,j)}$ for $j>1$. Next, we define $U_{(w,2)}$ to  be a neighborhood of $|\alpha_{(w,2)}|\setminus \partial \alpha_{(w,2)}$ with the same properties as $U_{(w,1)}$ and with $U_{(w,1)}\cap U_{(w,2)}=\emptyset$.  Inductively, we define $U_{(w,j)}$ for all $j\in \N$ with the desired properties; see Figure \ref{figure:union}. We have completed the proof of the inductive step.

\smallskip
\noindent
\textbf{Completion of the proof.} 
Now we will show that Conclusion \ref{conclusion:b}$(E,\rho,\gamma)$ is true. For $m\in \N$ define $G_m=\bigcup_{k=0}^m|\widetilde \gamma_k|$. This is a continuum, since $|\widetilde \gamma_k|$ contains the endpoints of $\gamma$ for each $k\in \N$ by \ref{ucc:ii}. We have $G_{m}\subset G_{m+1}$ and 
\begin{align*}
G_m \subset |\gamma|\cup \bigcup_{l(w)\leq m-1} \bigcup_{i\in I_w} |\gamma_i|
\end{align*}
for $m\in \N$. 
By Lemma \ref{lemma:paths} \ref{lemma:paths:iv} and property \ref{ucc:iii}   we have
\begin{align*}
\h^1(G_m) &\leq \ell(\gamma) + \sum_{l(w)\leq m-1} \sum_{i\in I_w}\ell(\gamma_{i})\leq \ell(\gamma)+\varepsilon.
\end{align*} 
Thus $\sup_{m\in \N} \h^1(G_m)<\infty$, as required in Lemma \ref{lemma:hausdorf_continua}. We set $G=\bigcup_{m\in \N}G_m$.

Since $\widetilde \gamma_m$ is a simple path for each $m\in \N$, by Lemma \ref{lemma:paths} \ref{lemma:paths:iv}  we have
\begin{align*}
\ell(\widetilde \gamma_m)&= \h^1(|\widetilde \gamma_m|) \leq \h^1(G_m)\leq \ell(\gamma)+\varepsilon.
\end{align*} 
By the Arzel\`a--Ascoli theorem, there exists a subsequence of $\widetilde \gamma_m$, parametrized by arclength, that converges uniformly to a path $\widetilde \gamma$ with the same endpoints as $\gamma$ and 
$$\ell(\widetilde \gamma) \leq \liminf_{m\to\infty}\ell(\widetilde \gamma_m)\leq \ell(\gamma)+\varepsilon.$$
Hence, \ref{criterion:biii} holds.   Since $|\widetilde\gamma_m| \subset \br{U_{\emptyset}}$ for each $m\in \N$ by \ref{ucc:vi}, we have $|\widetilde \gamma|\subset  \br{U_{\emptyset}}\subset U$, as required in \ref{criterion:bii}. We assume that $\widetilde \gamma$ is simple by considering a weak subpath if necessary.  Since $|\widetilde \gamma|\subset \br G$, by Lemma \ref{lemma:hausdorf_continua} and Lemma \ref{lemma:paths} \ref{lemma:paths:iv} we have 
\begin{align*}
\int_{\widetilde \gamma}\rho\, ds &= \int_{\widetilde \gamma}\rho \x_G\, ds = \int_{|\widetilde \gamma|\cap G}\rho\, d\h^1 \leq \int_G \rho\, d\h^1\\
&\leq \int _{\gamma}\rho \, ds + \sum_{w\in \mathcal W_*}\sum_{i\in I_w} \int_{\gamma_{i}}\rho\, ds \leq \int_{\gamma}\rho\, ds + \varepsilon,
\end{align*}
where the last inequality follows from \ref{ucc:iv}. This shows \ref{criterion:biv}. 

Finally, we argue for \ref{criterion:bi}. By \ref{ucc:vii}  for $m\geq k\geq 0$ we have
\begin{align}\label{union:inclusion_mk}
|\widetilde \gamma_m| \subset |\widetilde \gamma_k| \cup  \br{\bigcup_{l(w)=k} U_{w}}.
\end{align}
By \ref{ucc:vi},  $\diam(U_w)\leq 2\diam(|\alpha_w|)$. If there are infinitely many non-empty sets $U_w$ with $l(w)=k$, only finitely many curves $\alpha_w$ can have diameter larger than a given number; indeed by \ref{ucc:v} these are subpaths of $\widetilde \gamma_k$ that have pairwise disjoint traces, except possibly at the endpoints. It follows that $U_w$ is contained in a small neighborhood of $|\widetilde \gamma_k|$ for all but finitely many $w$ with $l(w)=k$. Hence, we see that 
$$\br{\bigcup_{l(w)=k} U_{w}} \subset |\widetilde \gamma_k| \cup {\bigcup_{l(w)=k} \br{U_{w}}}.$$
By letting $m\to\infty$ in \eqref{union:inclusion_mk}, we conclude that 
$$|\widetilde \gamma|\subset |\widetilde \gamma_k| \cup  \br{\bigcup_{l(w)=k} U_{w}}\subset  |\widetilde \gamma_k| \cup \bigcup_{l(w)=k} \br{U_w}.$$
In the case of $\mathcal W_{\sigma}$, the set in the right-hand side intersects $E_1\cup \dots\cup E_k$ at countably many points by \ref{ucc:i} and \ref{ucc:vi}. Thus, $\widetilde \gamma\in \mathcal F_{\sigma}(E_1\cup\dots\cup E_k)$ for each $k\in \N$, so $\widetilde \gamma\in \mathcal F_\sigma(E)$. In the case of $\mathcal W_0$, we have
$|\widetilde \gamma|\subset |\widetilde \gamma_k|\cup \br {U_k} \subset \br{U_k}$ 
and the set $\br{U_k}$ does not intersect $E_1\cup \dots \cup E_k$, except possibly at the endpoints of $\alpha_k=\widetilde \gamma_k$. Thus, $|\widetilde \gamma|$ does not intersect $E_1\cup \dots \cup E_k$, except possibly at the endpoints, for each $k\in \N$. We conclude that $\widetilde \gamma \in \mathcal F_0(E)$.  We have completed the verification of Conclusion \ref{conclusion:b}$(E,\rho,\gamma)$, and thus, the proof of Theorem \ref{theorem:unions}. 
\end{proof}

\section{Examples of negligible sets}\label{section:examples}

Recall that all sets of $\sigma$-finite (resp.\ zero) Hausdorff $(n-1)$ measure are $\CNED$ (resp.\ $\NED$), by Theorem \ref{theorem:perturbation_hausdorff}. In this section we will discuss further examples.

\subsection{\texorpdfstring{A quasihyperbolic condition for $\CNED$ sets}{A quasihyperbolic condition for CNED sets}}\label{section:quasi}

Let $\Omega \subsetneq \R^n$ be a domain, i.e., a connected open set. For a point $x\in \Omega$, define $\delta_{\Omega}(x)= \dist(x,\partial \Omega)$. We define the \textit{quasihyperbolic distance} of two points $x_1,x_2\in \Omega$ by
\begin{align*}
k_{\Omega}(x_1,x_2)=\inf_{\gamma} \int_\gamma \frac{1}{\delta_{\Omega}}\,ds,
\end{align*}
where the infimum is taken over all rectifiable paths $\gamma$ in $\Omega$ that connect $x_1$ and $x_2$.  

\begin{theorem}\label{theorem:quasihyperbolic}
Let $\Omega\subset \R^n$ be a domain such that $k_{\Omega}(\cdot, x_0)\in L^n(\Omega)$ for some $x_0\in \Omega$. Then $\partial \Omega\in \CNED$. In particular, boundaries of John and H\"older domains are $\CNED$.
\end{theorem}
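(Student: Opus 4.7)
The plan is to verify condition \ref{cc:v} of the main criterion, Theorem \ref{theorem:criterion_compact}, applied to $E := \partial \Omega$, which is closed. That is, given any Borel function $\rho \in L^n_{\loc}(\R^n)$, I must produce a path family $\Gamma_0$ with $\md_n\Gamma_0=0$ such that every rectifiable $\gamma\notin\Gamma_0$ with distinct endpoints satisfies Conclusion \ref{conclusion:b}$(\partial\Omega,\rho,\gamma)$. The corollary for John and H\"older domains will then follow because those classes are known to have $n$-integrable quasihyperbolic distance.

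The key auxiliary function is $u(x) = k_\Omega(x,x_0)$, extended by $0$ on $\R^n\setminus\Omega$; by hypothesis $u\in L^n(\R^n)$. First I will check that $m_n(\partial\Omega) = 0$: a Lebesgue density point of $\partial\Omega$ combined with the elementary bound $k_\Omega(y,x_0)\gtrsim |\log\delta_\Omega(y)|$ near $\partial\Omega$ and Fubini's theorem would contradict $u\in L^n$. With this in hand, Theorem \ref{theorem:lebesgue_differentiation_lp} applied to $\rho$ and $u$, together with properties \ref{m:positive_length} and \ref{m:subpath}, provides an exceptional family $\Gamma_0$ with $\md_n\Gamma_0=0$ such that each $\gamma\notin\Gamma_0$ satisfies $\int_\gamma\rho\,ds<\infty$, $\int_\gamma u\,ds<\infty$, $\h^1(|\gamma|\cap\partial\Omega)=0$, the Lebesgue differentiation theorem along translates holds for $\rho^n$ and $u^n$, and all weak subpaths of $\gamma$ inherit these properties.

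For $\gamma\notin\Gamma_0$ with distinct endpoints, fix $\varepsilon>0$ and a neighborhood $U$ of $|\gamma|$. Since $\h^1(|\gamma|\cap\partial\Omega)=0$, I cover the compact set $|\gamma|\cap\partial\Omega$ by countably many open balls $B_k=B(y_k,r_k)\subset U$ centered on $\partial\Omega$ with $\sum_k r_k<\varepsilon$; a Vitali/$5B$ refinement allows me to assume that the $B_k$ are pairwise disjoint while the dilates $5B_k$ still cover. For each $k$, the path $\gamma$ has at most countably many maximal excursions into $5B_k$, and I reroute each such excursion by a detour from its entry to its exit point that lies in $\Omega\cap(\lambda B_k)$ for a universal $\lambda$, touching $\partial\Omega$ only at its two endpoints. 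Stitching these detours onto the unchanged portions of $\gamma$ (which stay at positive distance from $\partial\Omega$ off $\bigcup_k 5B_k$) and passing to a simple weak subpath yields $\widetilde\gamma \in \mathcal F_\sigma(\partial\Omega)$ with $\partial\widetilde\gamma=\partial\gamma$, $|\widetilde\gamma|\subset U$, and the required bounds. The main obstacle will be the detour construction itself, with simultaneous control of Euclidean length and $\rho$-integral depending only on $\|k_\Omega(\cdot,x_0)\|_{L^n(\Omega)}$; I plan to use a Fubini/averaging argument over candidate pairs of endpoints in $\Omega\cap B_k$ together with quasihyperbolic geodesics in $\Omega$, analogous in spirit to Lemmas \ref{lemma:p2} and \ref{lemma:p3} and to the modulus lower bound driving the implication \ref{cc:loewner}$\Rightarrow$\ref{cc:iv}, producing a detour of Euclidean length $\lesssim r_k$ and $\rho$-integral $\lesssim r_k\bigl(\fint_{\lambda B_k}\rho^n\bigr)^{1/n}$. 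Summing in $k$ then delivers the $\varepsilon$-closeness of $\widetilde\gamma$ to $\gamma$, completing the verification of Conclusion \ref{conclusion:b}.
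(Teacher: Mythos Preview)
Your proposal has a genuine gap at the detour step. You assert that for each ball $B_k$ centered on $\partial\Omega$ you can reroute the excursion of $\gamma$ by a path in $\Omega\cap(\lambda B_k)$ of Euclidean length $\lesssim r_k$ and $\rho$-integral $\lesssim r_k\bigl(\fint_{\lambda B_k}\rho^n\bigr)^{1/n}$. Such a bound would follow from a uniform Loewner-type lower bound
\[
\md_n\bigl(\Gamma(F_1,F_2;\lambda B_k)\cap\mathcal F_\sigma(\partial\Omega)\bigr)\gtrsim 1
\]
for the entry/exit arcs $F_1,F_2$, but this is exactly condition \ref{cc:loewner} for $\partial\Omega$, which is what you are trying to prove. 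The quasihyperbolic hypothesis does \emph{not} give a uniform estimate of this kind at every scale and location: near a thin slit or outward cusp two points of $|\gamma|$ on opposite sides can be Euclidean-close while every path joining them inside $\Omega$ is arbitrarily long. The condition $k_\Omega(\cdot,x_0)\in L^n(\Omega)$ only controls such behavior \emph{on average}. Your invocation of $u=k_\Omega(\cdot,x_0)\in L^n$ through Theorem \ref{theorem:lebesgue_differentiation_lp} gives you $\int_\gamma u\,ds<\infty$ and control of translates, but neither fact produces the detour you need inside a fixed $B_k$.

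The paper's proof makes the averaging precise via the Jones--Smirnov machinery: the Whitney decomposition $\mathcal W(\Omega)$, the tree $\mathcal G$ of near-geodesics, and the shadows $\SH(Q)$ with the key summability $\sum_Q s(Q)^n\lesssim\|k_\Omega(\cdot,x_0)\|_{L^n(\Omega)}^n$. The detours are built cube by cube (Lemmas \ref{quasi:chain} and \ref{quasi:subpaths}), and the length and $\rho$-integral of each detour are bounded by $\sum_{Q}\ell(Q)$ and $\sum_Q\|\rho\|_{L^n(Q)}$ over the cubes it visits. The crucial point is that whether a cube $Q$ contributes depends on whether $|\gamma|$ hits $\SH(Q)$, and Lemma \ref{lemma:p2} combined with H\"older and the $\ell^n$-summability of $s(Q)$ forces these sums to vanish as $\varepsilon\to 0$ for a.e.\ translate $\gamma+x$ (this is \eqref{quasi:zeta}). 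In other words, the ``balls'' in the correct argument are not Euclidean balls chosen by you, but the shadows $\SH(Q)$ dictated by the geometry of $\Omega$; their sizes are not uniformly small but their $n$-th powers are summable, and that is exactly enough. Your covering by balls $B_k$ with $\sum r_k<\varepsilon$ discards this structure and cannot be salvaged by a local averaging inside each $B_k$.
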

See \cite{SmithStegenga:HolderPoincare} for the definitions of the latter two classes of domains. The condition $k_{\Omega}(\cdot, x_0)\in L^n(\Omega)$ appeared in the work of Jones--Smirnov \cite{JonesSmirnov:removability}, who showed its sufficiency for $\partial \Omega$ to be $\QCH$-removable. The same condition has also been used in recent work of the current author \cite{Ntalampekos:removabilitydetour} to establish the removability of certain fractals with infinitely many complementary components for Sobolev spaces; in addition, it has appeared in work of the current author and Younsi \cite{NtalampekosYounsi:rigidity} in establishing the rigidity of circle domains under this condition. We will use some auxiliary results from \cite{NtalampekosYounsi:rigidity}, which have been proved there in dimension $2$, but the proofs apply to all dimensions.

\begin{remark}\label{remark:quasihyperbolic}
Domains satisfying the condition of the theorem are bounded \cite{NtalampekosYounsi:rigidity}*{Lemma 2.6} and thus have finite $n$-measure. Using this, one can show that an equivalent condition is $k_{\Omega}\in L^n(\Omega\times \Omega)$, so the base point $x_0$ is not of importance.
\end{remark}

We will prepare the necessary background before proving the theorem. 
For a domain $\Omega\subsetneq  \R^n$ we consider the \textit{Whitney cube decomposition} $\mathcal W(\Omega)$, which is a collection of closed dyadic cubes $Q\subset \Omega$, called \textit{Whitney cubes}, such that
\begin{enumerate}
\smallskip
\item the cubes of $\mathcal W(\Omega)$ have disjoint interiors and $\bigcup_{Q\in \mathcal W(\Omega)}Q=\Omega$,
\smallskip
\item $\diam(Q) \leq \dist(Q,\partial \Omega) \leq 4 \diam(Q)$ for all $Q\in \mathcal W(\Omega)$, and
\smallskip
\item if $Q_1\cap Q_2\neq \emptyset$, then $1/4 \leq \diam(Q_1)/\diam(Q_2)\leq 4$, for all $Q_1,Q_2\in \mathcal W(\Omega)$.\smallskip
\end{enumerate}
See \cite{Stein:Singular}*{Theorem 1 and Prop.\ 1, pp.~167--169} for the existence of the decomposition.  We denote by $\ell(Q)$ the side length of a cube $Q$; this is not to be confused with the length $\ell(\gamma)$ of a path $\gamma$. Two Whitney cubes $Q_1,Q_2\in\mathcal W(\Omega)$ with $\ell(Q_1)\geq \ell(Q_2)$ are \textit{adjacent} if a face of $Q_2$ is contained in a face of $Q_1$.

\begin{lemma}\label{quasi:adjacent}
Let $\Omega\subset \R^n$ be a domain and $Q_1,Q_2\in \mathcal W(\Omega)$ be adjacent cubes. Let $F_1\subset Q_1$ and $F_2\subset Q_2$ be continua with $\diam(F_i)\geq a\ell(Q_i)$ for some $a>0$, $i=1,2$, and let  $\rho\colon \R^n\to [0,\infty]$ be a Borel function. Then there exists a rectifiable path $\gamma\in \Gamma(F_1,F_2;\inter(Q_1\cup Q_2))$ such that
\begin{align*}
\int_{\gamma} \rho \, ds \leq c(n,a) \left( \|\rho \|_{L^n(Q_1)} +\|\rho\|_{L^n(Q_2)}  \right) \,\,\, \textrm{and}\,\,\, \ell(\gamma) \leq c(n,a)(\ell(Q_1)+\ell(Q_2)).
\end{align*}
\end{lemma}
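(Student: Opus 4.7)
The plan is to reduce the lemma to a direct application of Lemma~\ref{lemma:path_bound_modulus}~\ref{lemma:path_bound_modulus:ii}: once we exhibit a uniform positive lower bound on the $n$-modulus of the family
\begin{align*}
\Gamma_0 := \Gamma(F_1, F_2; \inter(Q_1 \cup Q_2)),
\end{align*}
a single path of the required type will be extracted from $\Gamma_0$ with the stated length and line-integral bounds.

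Set $U := \inter(Q_1 \cup Q_2)$. By the third defining property of Whitney cubes together with the dyadic structure, $\ell(Q_1)/\ell(Q_2) \in \{1/4, 1/2, 1, 2, 4\}$; hence $U$ is either a rectangular box (when $\ell(Q_1)=\ell(Q_2)$) or an L-shaped polyhedron of uniformly controlled geometry. In each case $U$ is bi-Lipschitz equivalent to the unit ball of $\R^n$ via a homeomorphism whose bi-Lipschitz constant depends only on $n$. Since open balls are Loewner spaces and bi-Lipschitz maps distort $n$-modulus by a bounded multiplicative factor, $U$ itself is Loewner with constants depending only on $n$.

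By the assumption, $\dist(F_1, F_2) \leq \diam(Q_1 \cup Q_2) \lesssim_n \max\{\ell(Q_1), \ell(Q_2)\}$ while $\diam(F_i) \geq a\ell(Q_i)$, so the relative distance $\Delta(F_1, F_2)$ is bounded by a constant depending only on $n$ and $a$. The Loewner property of $U$ (applied to $F_1, F_2 \subset \br U$) therefore yields $\md_n \Gamma_0 \geq c(n, a) > 0$. Fix now $x_0 \in Q_1$ and $R := \diam(Q_1 \cup Q_2)$; every path of $\Gamma_0$ is contained in $Q_1 \cup Q_2 \subset B(x_0, R)$. Applying Lemma~\ref{lemma:path_bound_modulus}~\ref{lemma:path_bound_modulus:ii} to $\Gamma_0$ and to the Borel function $\tilde\rho := \rho\,\x_{Q_1 \cup Q_2}$ produces a rectifiable path $\gamma \in \Gamma_0$ with $\ell(\gamma) \leq c(n,a)R \leq c(n,a)(\ell(Q_1) + \ell(Q_2))$. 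Since $|\gamma| \subset Q_1 \cup Q_2$ we have $\int_\gamma \rho\, ds = \int_\gamma \tilde\rho\, ds$, and the same lemma bounds this by
\begin{align*}
\int_\gamma \rho\, ds \leq c(n, a)\,\|\tilde\rho\|_{L^n(\R^n)} \leq c(n, a)\bigl(\|\rho\|_{L^n(Q_1)} + \|\rho\|_{L^n(Q_2)}\bigr).
\end{align*}

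The one genuine obstacle is the second paragraph, i.e.\ certifying that $U$ is Loewner with constants depending only on $n$. For equal-size cubes this is immediate (after rescaling, $U$ is the fixed box $[0,2]\times[0,1]^{n-1}$, trivially bi-Lipschitz to a ball). For unequal sizes one must either write down an explicit piecewise-affine bi-Lipschitz chart from the L-shaped $U$ onto a ball, or appeal to the standard fact that bounded uniform (equivalently, John) domains are Loewner with parameters controlled by the uniformity data; in either view $U$ is uniform with parameters depending only on $n$, so the constant in the Loewner estimate is indeed of the form $c(n, a)$.
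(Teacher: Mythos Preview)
Your proof is correct and follows essentially the same approach as the paper: reduce to a uniform lower bound on $\md_n\Gamma(F_1,F_2;\inter(Q_1\cup Q_2))$ via the Loewner property of $U$ (obtained by a uniformly bi-Lipschitz map onto a ball), then invoke Lemma~\ref{lemma:path_bound_modulus}~\ref{lemma:path_bound_modulus:ii}. The paper's proof is a two-line sketch of exactly this strategy; you have simply filled in the details (the finite list of side-length ratios, the explicit choice of $x_0$, $R$, and the truncation $\tilde\rho=\rho\,\x_{Q_1\cup Q_2}$).
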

\begin{proof}
By Lemma \ref{lemma:path_bound_modulus}, it suffices to show that $\md_n \Gamma(F_1,F_2;\inter(Q_1\cup Q_2))$ is uniformly bounded from below, depending only on $n$ and $a$.  This can be shown by mapping $Q_1\cup Q_2$ with a uniformly bi-Lipschitz map onto a ball. Euclidean balls are Loewner spaces; see \cite{Heinonen:metric}*{Chapter 8} for the definition and properties. Hence, the desired lower bound is satisfied.
\end{proof}

\begin{lemma}\label{quasi:chain}
Let $\Omega\subset \R^n$ be a domain, $\gamma\colon [0,1]\to \br\Omega$ be a path such that $\gamma((0,1))\subset \Omega$ and $\gamma(0),\gamma(1)\in \partial \Omega$, and $\rho\colon \R^n\to [0,\infty]$ be a Borel function. Then there exists a path $\gamma'\colon [0,1]\to \Omega$ with $ \gamma'(0)=\gamma(0)$, $\gamma'(1)=\gamma(1)$, $\gamma' ((0,1))\subset \bigcup_{\substack{Q\in \mathcal W(\Omega)\\ |\gamma|\cap Q\neq \emptyset}}Q$,
\begin{align*}
\int_{\gamma'}\rho\, ds \leq c(n) \sum_{\substack{Q\in \mathcal W(Q)\\ |\gamma'|\cap Q\neq \emptyset}} \|\rho\|_{L^n(Q)}\quad \textrm{and}\quad \ell( \gamma') \leq c(n) \sum_{\substack{Q\in \mathcal W(Q)\\ |\gamma'|\cap Q\neq \emptyset}} \ell(Q).
\end{align*} 
\end{lemma}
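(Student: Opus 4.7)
The plan is to build $\gamma'$ as a concatenation of short arcs, one per consecutive pair in an adjacency chain of Whitney cubes visited by $\gamma$. Set $\mathcal Q=\{Q\in\mathcal W(\Omega):|\gamma|\cap Q\neq\emptyset\}$. Since $\gamma((0,1))\subset\Omega$ is covered by $\mathcal Q$ and every compact subset of $\Omega$ meets only finitely many Whitney cubes, $\mathcal Q$ is at most countable and can accumulate only at $\gamma(0),\gamma(1)\in\partial\Omega$. First I would build an adjacency chain $(Q_k)_{k\in\Z}\subset\mathcal Q$ with $Q_k$ and $Q_{k+1}$ adjacent for each $k$, $Q_k\to\gamma(0)$ as $k\to-\infty$, and $Q_k\to\gamma(1)$ as $k\to+\infty$. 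Its existence follows from the connectedness of $|\gamma|$ together with the Whitney property (3) that cubes sharing a common point have comparable side lengths: whenever $\gamma$ crosses between two visited cubes along a lower-dimensional common face, a dimensionally bounded number of additional cubes near the crossing point (which $\gamma$ must also visit by continuity) can be inserted to restore face-adjacency.

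For each consecutive pair $(Q_k,Q_{k+1})$ I would choose continua $F_k^+\subset Q_k$ and $F_{k+1}^-\subset Q_{k+1}$ of diameter $\simeq_n \min\{\ell(Q_k),\ell(Q_{k+1})\}$ abutting their common face, and invoke Lemma~\ref{quasi:adjacent} to obtain a rectifiable path $\eta_k\subset\inter(Q_k\cup Q_{k+1})$ from $F_k^+$ to $F_{k+1}^-$ satisfying
\[
\int_{\eta_k}\rho\,ds\lesssim_n \|\rho\|_{L^n(Q_k)}+\|\rho\|_{L^n(Q_{k+1})}, \qquad \ell(\eta_k)\lesssim_n \ell(Q_k)+\ell(Q_{k+1}).
\]
Within each $Q_k$, an additional arc connecting the endpoint of $\eta_{k-1}$ in $F_k^-$ to the initial point of $\eta_k$ in $F_k^+$ is constructed with length $\lesssim_n \ell(Q_k)$ and $\rho$-integral $\lesssim_n \|\rho\|_{L^n(Q_k)}$, using either Lemma~\ref{lemma:path_bound_modulus} applied to the family of curves in $\inter Q_k$ between suitable sub-continua of $F_k^\pm$ (whose $n$-modulus is bounded below by the Loewner property of a cube), or a direct Fubini-type argument. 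Concatenating these arcs in chain order yields a curve whose trace lies in $\bigcup_k Q_k\subset\bigcup_{Q\in\mathcal Q}Q$; since each $Q_k$ appears in at most two consecutive pairs, summing the estimates produces the inequalities in the statement.

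Finally, to promote the concatenation to a path $\gamma'\colon[0,1]\to\br\Omega$ with the required endpoints, I would note that $\gamma(t)\to\gamma(0)\in\partial\Omega$ forces $\delta_\Omega(\gamma(t))\to 0$, which in turn forces $\ell(Q_k)\to 0$ and $Q_k\to\{\gamma(0)\}$ as $k\to-\infty$ (and symmetrically at $\gamma(1)$). Hence the tail pieces $\eta_k$ have vanishing diameters, so the concatenation extends continuously at the endpoints. The main obstacle I foresee is the first step: constructing the adjacency chain \emph{without leaving} $\mathcal Q$. Bridging visited cubes that meet only along a lower-dimensional face by cubes that $\gamma$ also visits requires a careful geometric argument combining the continuity of $\gamma$ with the rigid size-comparability of nearby Whitney cubes.
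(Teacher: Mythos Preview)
Your approach is essentially the paper's: build an adjacency chain $(Q_k)_{k\in\Z}$ of Whitney cubes visited by $\gamma$ (the paper cites \cite{NtalampekosYounsi:rigidity}*{p.~143} for this step rather than arguing it out), then apply Lemma~\ref{quasi:adjacent} to each consecutive pair and concatenate, using $\ell(Q_k)\to 0$ at both ends to get continuity at $\gamma(0),\gamma(1)$.

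The one point where you diverge from the paper is the joining of consecutive pieces, and as written that step has a gap. You fix the continua $F_k^\pm$ in advance, obtain $\eta_{k-1}$ ending at \emph{some} point of $F_k^-$ and $\eta_k$ starting at \emph{some} point of $F_k^+$, and then propose to bridge these two \emph{specific} points by a short arc with $\rho$-integral $\lesssim_n\|\rho\|_{L^n(Q_k)}$, via Lemma~\ref{lemma:path_bound_modulus} or a Loewner argument. But modulus arguments only produce a path between two \emph{continua}; they give no control on a path between two prescribed points (the relevant curve family has modulus zero). So the bridging arc cannot be obtained this way.

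The paper avoids this by choosing the continua \emph{inductively}: for the pair $(Q_k,Q_{k+1})$ it takes $F_2$ to be the face of $Q_{k+1}$ opposite the common face, and $F_1$ to be a subcontinuum of $|\gamma_{k-1}|$ that connects opposite faces of $Q_k$ (such a subcontinuum exists precisely because the previous $F_2$ was chosen as the far face of $Q_k$, forcing $\gamma_{k-1}$ to cross $Q_k$). Then $\gamma_k$ automatically meets $|\gamma_{k-1}|$ and no separate connecting arc is needed. Replacing your a~priori $F_k^\pm$ by this inductive choice closes the gap.
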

\begin{proof}
There exists a sequence $Q_i$, $i\in \Z$, of distinct Whitney cubes such that $Q_i$ is adjacent to $Q_{i+1}$ for each $i\in \Z$, $|\gamma|\cap Q_i\neq \emptyset$ for each $i\in \Z$, and $Q_i\to \gamma(0)$ as $i\to -\infty$ and $Q_i\to \gamma(1)$ as $i\to \infty$; see \cite{NtalampekosYounsi:rigidity}*{p.~143} for an argument. 

Consider the adjacent cubes $Q_0$ and $Q_1$. Let $F_1$ be the face of $Q_0$ that is opposite to the common face of $Q_0,Q_1$ and $F_2$ be the corresponding face of $Q_1$. We apply Lemma \ref{quasi:adjacent} to obtain a path $\gamma_{0}$ in $Q_0\cup Q_1$ connecting $F_1$ with $F_2$ and satisfying the conclusions of the lemma with $a=1/2$. We now consider $Q_1$ and $Q_2$. Let $F_1$ be a subcontinuum of $|\gamma_{0}|$ connecting opposite sides of $Q_1$ and let $F_2$ be the face of $Q_2$ opposite to the common face between $Q_1$ and $Q_2$. Applying Lemma \ref{quasi:adjacent} with $a=1/2$, we obtain a path $\gamma_{1}$ connecting $F_1$ with $F_2$ in $Q_1\cup Q_2$. Inductively, for each $i\in \Z$ we obtain a path $\gamma_{i}$ in $Q_i\cup Q_{i+1}$ as in the conclusions of Lemma \ref{quasi:adjacent} such that $|\gamma_i|\cap |\gamma_{i+1}|\neq \emptyset$. Since $\diam(Q_i)\to 0$ as $|i|\to \infty$, we can concatenate the paths $\gamma_i$ to obtain a path $\gamma'$ with the desired properties.
\end{proof}

Suppose that there exists a base point $x_0\in \Omega$ with $k_{\Omega}(\cdot,x_0)\in L^n(\Omega)$. It is shown in \cite{JonesSmirnov:removability}*{pp.~273--274} that there exists a tree-like family $\mathcal G$ of curves starting at $x_0$, connecting centers of adjacent Whitney cubes, and landing at $\partial \Omega$, that behave essentially like quasihyperbolic geodesics and so that each point of $\partial \Omega$ is the landing point of a curve of $\mathcal G$. For each cube $Q\in \mathcal W(\Omega)$ we define the \textit{shadow} $\SH(Q)$ of $Q$ to be the set of points $x\in \partial \Omega$ such that there exists a curve of $\mathcal G$ {starting at $x_0$}, passing through $Q$ and landing at $x$. We define
\begin{align*}
s(Q)= \diam( \SH(Q)).
\end{align*}
The set $\mathit{SH}(Q)$ is a compact subset of $\partial \Omega$ for each $Q\in \mathcal W(\Omega)$; see \cite{NtalampekosYounsi:rigidity}*{Lemma 2.7}. Moreover,  it is shown in \cite{JonesSmirnov:removability}*{p.~275} that
\begin{align}\label{quasi:shadow_quasihyperbolic}
\sum_{Q\in \mathcal W(\Omega)} s(Q)^n \lesssim_n \int_{\Omega} k(x,x_0)^n \,dx.
\end{align}

\begin{lemma}[\cite{NtalampekosYounsi:rigidity}*{Lemma 2.10}]\label{quasi:subpaths}
Let $\Omega\subset \R^n$ be a domain such that $k_{\Omega}(\cdot, x_0)\in L^n(\Omega)$ for some $x_0\in \Omega$. For each simple path $\gamma\colon [0,1] \to \R^n$ and $\varepsilon>0$ there exists a finite collection of paths $\{\gamma_i\colon [0,1] \to \br \Omega\}_{i\in I}$ such that
\begin{enumerate}[\upshape(i)]
	\item\label{quasi:i} $\partial \gamma_i\subset \partial \Omega$ and either $\gamma_i$ is constant or $\gamma_i((0,1))\subset \Omega$ for each $i\in I$,\smallskip
	\item\label{quasi:ii} there exists a path $\widetilde \gamma$ with $\partial \widetilde \gamma=\partial \gamma$ and $|\widetilde \gamma|\subset (|\gamma|\setminus \partial \Omega)\cup \bigcup_{i\in I} |\gamma_i|$,\smallskip
	\item\label{quasi:iii} if $Q\in \mathcal W(\Omega)$ is a Whitney cube with $|\gamma_i|\cap Q\neq \emptyset$ for some $i\in I$, then 
	$$|\gamma|\cap \SH(Q)\neq \emptyset \quad \textrm{and} \quad \ell(Q)\leq \varepsilon,$$
	\item\label{quasi:iv} $|\gamma_i|$ and $|\gamma_j|$ intersect disjoint sets of Whitney cubes $Q\in \mathcal W(\Omega)$ for $i\neq j$.
\end{enumerate}
Moreover,  if one replaces $\gamma_i$, for each $i\in I$, with a path $\gamma_i'\colon [0,1]\to \br \Omega$ such that $ \gamma_i'(0)=\gamma_i(0)$, $\gamma_i'(1)=\gamma_i(1)$, and either $\gamma_i'$ is constant or $\gamma_i' ((0,1))\subset \bigcup_{\substack{Q\in \mathcal W(\Omega)\\ |\gamma_i|\cap Q\neq \emptyset}}Q$, then conclusions \ref{quasi:i}--\ref{quasi:iv} also hold for the collection $\{\gamma_i'\}_{i\in I}$.
\end{lemma}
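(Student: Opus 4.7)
The plan is to exploit the tree-like family $\mathcal{G}$ landing on $\partial\Omega$ to build explicit detour paths inside $\Omega$, using the quasihyperbolic integrability only implicitly (it ensures the shadows $\SH(Q)$ are well-defined and compact, as recorded in the excerpt), and to reduce to a finite collection by compactness of $|\gamma|\cap\partial\Omega$. First I would cover $|\gamma|\cap \partial\Omega$ by finitely many small shadows: for each $x\in |\gamma|\cap \partial\Omega$ a curve of $\mathcal{G}$ lands at $x$ and passes through Whitney cubes of arbitrarily small diameter, each of whose shadows contains $x$; so I may pick a cube $Q_x$ with $\ell(Q_x)\le \varepsilon$ and $x\in \SH(Q_x)\subset \partial\Omega$. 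Since shadows are closed in $\R^n$, I pass to open $\R^n$-neighborhoods of each $\SH(Q_x)$ and use compactness of $|\gamma|\cap \partial\Omega$ to extract finitely many cubes $Q_1,\dots,Q_N$ whose shadows cover $|\gamma|\cap \partial\Omega$. Setting $T_j=\gamma^{-1}(\SH(Q_j))$ and $I_j=[a_j,b_j]$ its convex hull in $[0,1]$, closedness of $\SH(Q_j)$ gives $\gamma(a_j),\gamma(b_j)\in \SH(Q_j)\subset \partial\Omega$. Iteratively merging overlapping or abutting intervals in the finite collection $\{I_j\}$ yields a finite pairwise-disjoint collection $\{J_i=[\alpha_i,\beta_i]\}_{i\in I}$ still covering $\gamma^{-1}(\partial\Omega)$, with $\gamma(\alpha_i),\gamma(\beta_i)\in \partial\Omega$, and each $J_i$ inheriting a nonempty subfamily $\mathcal{Q}_i\subset\{Q_1,\dots,Q_N\}$ for which $\gamma(J_i)\cap \partial\Omega \subset \bigcup_{Q\in \mathcal{Q}_i}\SH(Q)$.

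Next, for each $i$ with $\alpha_i<\beta_i$ I would construct $\gamma_i\colon [0,1]\to \br\Omega$ with $\gamma_i(0)=\gamma(\alpha_i)$, $\gamma_i(1)=\gamma(\beta_i)$, and $\gamma_i((0,1))\subset \Omega$, by taking two curves of $\mathcal{G}$ landing at $\gamma(\alpha_i)$ and $\gamma(\beta_i)$ that pass through cubes of $\mathcal{Q}_i$, and stitching them together via a short chain of adjacent Whitney cubes inside $\Omega$ using Lemmas~\ref{quasi:adjacent}--\ref{quasi:chain}. If $\alpha_i=\beta_i$ I declare $\gamma_i$ the constant path at $\gamma(\alpha_i)\in\partial\Omega$. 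Replacing each restriction $\gamma|_{J_i}$ by $\gamma_i$ after reparametrization produces a path $\widetilde\gamma$ with $\partial\widetilde\gamma=\partial\gamma$ whose trace lies in $(|\gamma|\setminus \partial\Omega)\cup \bigcup_i |\gamma_i|$, verifying \ref{quasi:i} and \ref{quasi:ii}. For \ref{quasi:iii}, every Whitney cube $Q$ crossed by $\gamma_i$ either belongs to $\mathcal{Q}_i$ or lies on one of the $\mathcal{G}$-segments that terminate at a point of $\gamma(J_i)\cap \partial\Omega\subset |\gamma|$; in either case $\SH(Q)\cap |\gamma|\neq \emptyset$, and by truncating each $\mathcal{G}$-curve close to $\partial\Omega$ and staying within $\mathcal{Q}_i$ and its descendants in the tree we can force $\ell(Q)\le \varepsilon$ throughout. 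Disjointness \ref{quasi:iv} can be arranged by having the $\mathcal{G}$-segments for different $J_i$ branch off at deep enough nodes of the tree: the finitely many landing points $\gamma(\alpha_i),\gamma(\beta_i)$ are all distinct by simplicity of $\gamma$, so there is room to separate the branches. The moreover part is immediate, since replacing $\gamma_i$ by any $\gamma_i'$ with the same endpoints and interior in $\bigcup\{Q\in \mathcal{W}(\Omega):|\gamma_i|\cap Q\neq \emptyset\}$ can only shrink the set of Whitney cubes met, preserving \ref{quasi:iii} and \ref{quasi:iv} and obviously preserving \ref{quasi:i} and \ref{quasi:ii}.

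The hard part will be securing \ref{quasi:iii} and \ref{quasi:iv} simultaneously, because the natural $\mathcal{G}$-based detour for one $J_i$ tends to share trunk cubes of the tree (those closer to $x_0$) with the detour for a neighboring $J_{i'}$. Resolving this will require keeping the $\mathcal{G}$-segments shallow, i.e.\ close to $\partial\Omega$, and connecting their deepest cubes laterally inside $\Omega$ via finite chains of adjacent Whitney cubes (Lemma~\ref{quasi:chain}) that can be chosen to avoid the chains used for other detours. This is precisely where the tree structure of $\mathcal{G}$, together with the freedom to branch arbitrarily deep, becomes essential.
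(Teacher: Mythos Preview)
Your outline for \ref{quasi:i}--\ref{quasi:iii} is close to the paper's: it too builds the $\gamma_i$ from subpaths of curves in the tree $\mathcal G$, covers $|\gamma|\cap\partial\Omega$ by finitely many shadows, and replaces the corresponding arcs of $\gamma$. However, there is a real gap in your verification of \ref{quasi:iii}. You propose to truncate the $\mathcal G$-segments near $\partial\Omega$ and then stitch the two truncated pieces together ``via a short chain of adjacent Whitney cubes'' using Lemma~\ref{quasi:chain}. The cubes in such a lateral chain are \emph{not} on any $\mathcal G$-curve landing at a point of $|\gamma|$, so there is no reason their shadows should meet $|\gamma|$; condition \ref{quasi:iii} would fail for them. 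The paper avoids this entirely: the $\gamma_i$ are pure concatenations of subpaths of $\mathcal G$-curves (which, being a tree rooted at $x_0$, always share a common ancestor segment), so every cube that $\gamma_i$ crosses lies on a $\mathcal G$-curve landing at one of the endpoints of $\gamma_i$, and hence automatically has $|\gamma|\cap\SH(Q)\neq\emptyset$.

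For \ref{quasi:iv} your strategy diverges from the paper's, and the paper's is both simpler and complete. You try to force cube-disjointness \emph{a priori} by branching deep enough in the tree and choosing lateral chains that avoid each other---you yourself flag this as the hard part and leave it unresolved. The paper instead treats \ref{quasi:iv} as a post-processing step: start with any collection satisfying \ref{quasi:i}--\ref{quasi:iii}; whenever two paths $\gamma_i,\gamma_j$ meet a common Whitney cube $Q$, join them by a segment inside $Q$ and extract a single subpath $\gamma_{ij}$ with endpoints on $\partial\Omega$ so that \ref{quasi:i}--\ref{quasi:iii} still hold for the smaller collection. Since $I$ is finite, finitely many such merges yield \ref{quasi:iv}. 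This merge-then-shrink argument sidesteps all the separation difficulties you anticipated, and it also makes the ``moreover'' clause genuinely straightforward: the replacement $\gamma_i'$ meets no more cubes than $\gamma_i$ did, so \ref{quasi:iii} and \ref{quasi:iv} are inherited, while \ref{quasi:ii} holds because $\widetilde\gamma$ was literally built by splicing the $\gamma_i$ into $\gamma$ at fixed parameter intervals.
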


\begin{figure}
	\begin{overpic}[width=.9\linewidth]{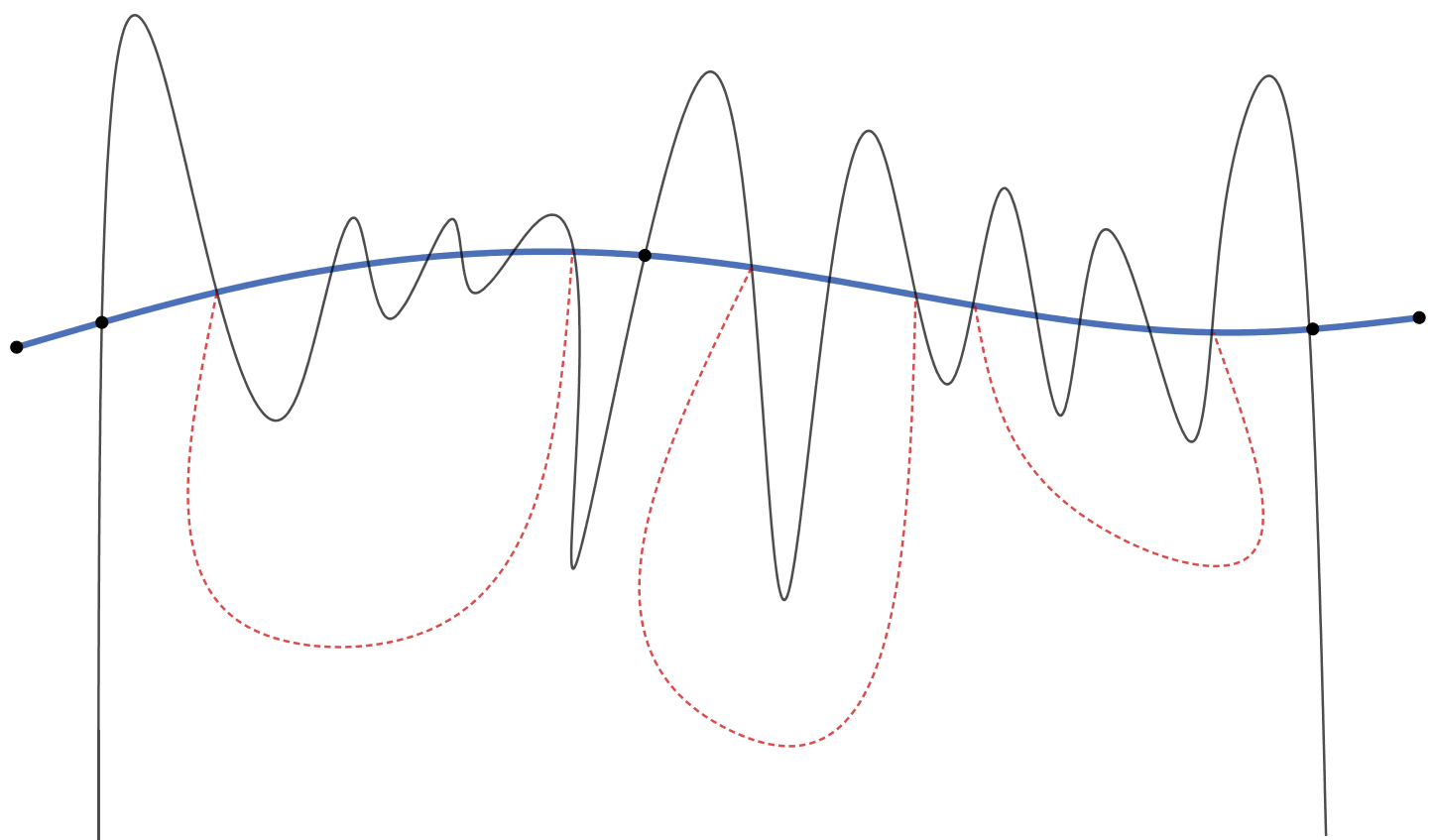}
		\put (-2,30) {$\gamma(0)$}
		\put (97, 32) {$\gamma(1)$}
		\put (4,37) {$\gamma_1$}
		\put (20,11) {$\gamma_2$}
		\put (42,42.5) {$\gamma_3$}
		\put (50, 5) {$\gamma_4$}
		\put (80, 17.5) {$\gamma_5$}
		\put (92,38) {$\gamma_6$}
		\put (80, 5) {$\Omega$}
		\put (93, 5) {$\partial \Omega$}
	\end{overpic}
	\caption{The path $\gamma$ (blue) and the paths $\gamma_i$ given by Lemma \ref{quasi:subpaths}. There are three constant paths $\gamma_i$. The path $\widetilde \gamma$ arises by replacing the arcs of $\gamma$ between the endpoints of $\gamma_i$ with $\gamma_i$.}\label{figure:detours}
\end{figure}

See Figure \ref{figure:detours} for an illustration. The formulation of this lemma in \cite{NtalampekosYounsi:rigidity}*{Lemma 2.10} is slightly different; however, the proof of \ref{quasi:i}, \ref{quasi:ii}, and \ref{quasi:iii} is identical. The paths $\gamma_i$ are concatenations of subpaths of paths of $\mathcal G$ and are replacing finitely many arcs of $\gamma$ that cover the set $|\gamma|\cap \partial \Omega$. Hence, using the path $\widetilde \gamma$ arising from these replacements one essentially avoids the set $|\gamma|\cap \partial \Omega$, except at the endpoints of $\gamma_i$.  We provide a sketch of the proof of \ref{quasi:iv}. One needs to concatenate appropriately the paths $\gamma_i$ that satisfy \ref{quasi:i}--\ref{quasi:iii}. If two paths $\gamma_i$, $\gamma_j$, $i\neq j$, meet a common Whitney cube $Q$, then one can concatenate these paths with a line segment inside $Q$. Then one considers a subpath ${\gamma_{ij}}$ of the concatenation so that \ref{quasi:i} and \ref{quasi:ii} are satisfied with $\gamma_{ij}$ in place of $\gamma_i$ and $\gamma_j$. After finitely many concatenations, one can obtain the family $\{\widetilde \gamma_i\}_{i\in \widetilde I}$ that satisfies all conditions \ref{quasi:i}--\ref{quasi:iv}.  The last part of the lemma provides some extra freedom in the choice of the paths $\gamma_i$; conclusion \ref{quasi:ii} for the collection $\{\gamma_i'\}_{i\in I}$ follows from the construction and the other conclusions are immediate since $\partial \gamma_i'=\partial \gamma_i$ and the trace of the path $\gamma_i'$ intersects no more Whitney cubes than $\gamma_i$ does.

\begin{proof}[Proof of Theorem \ref{theorem:quasihyperbolic}]
We will verify Theorem \ref{theorem:criterion_pfamily} \ref{cp:ii} to show that $\partial \Omega$ is $\CNED$. Let  $\rho\colon \R^n \to [0,\infty]$ be a Borel function with $\rho\in L^n(\R^n)$. We check condition \ref{cp:ii:i}. The proof of \ref{cp:ii:ii} is very similar and we omit it. Let $\gamma$ be a non-constant rectifiable path, and set $g= \rho+ \x_{\Omega}$. Since $\Omega$ is bounded (see Remark \ref{remark:quasihyperbolic}), we have $g\in L^n(\R^n)$. By Lemma \ref{lemma:p2} we have 
\begin{align*}
\int_{\R^n} \hspace{-1em}\sum_{\substack{\ell(Q)\leq \varepsilon \\ |\gamma+x|\cap \SH(Q)\neq \emptyset}} \hspace{-1em} \|g\|_{L^n(Q)}\, dx &= \sum_{\ell(Q)\leq \varepsilon} \|g\|_{L^n(Q)}\cdot  m_n( \{x:|\gamma+x|\cap \SH(Q)\neq \emptyset\}) \\
&\lesssim_n \sum_{\ell(Q)\leq \varepsilon} \|g\|_{L^n(Q)} \cdot \max \{\ell(\gamma),s(Q)\}s(Q)^{n-1}.
\end{align*}
Since $s(\cdot )\in \ell^n( \mathcal W(\Omega))$ by \eqref{quasi:shadow_quasihyperbolic}, we have $s(Q)\leq \ell(\gamma)$ if $\ell(Q)\leq \varepsilon$ and $\varepsilon$ is sufficiently small. Using this fact and H\"older's inequality, we bound the above sum by
\begin{align*}
\ell(\gamma) \bigg(\sum_{\ell(Q)\leq \varepsilon} \int_{Q} g^n \bigg)^{1/n} \cdot \bigg( \sum_{\ell(Q)\leq \varepsilon}  s(Q)^{n} \bigg)^{1-1/n}.
\end{align*} 
As $\varepsilon\to 0$, this converges to $0$. We conclude that as $\varepsilon \to 0$ along a sequence, for a.e.\ $x\in \R^n$ we have
\begin{align}\label{quasi:zeta}
\sum_{\substack{\ell(Q)\leq \varepsilon \\ |\gamma+x|\cap \SH(Q)\neq \emptyset}} \|\rho\|_{L^n(Q)} =o(1) \quad \textrm{and}\quad \sum_{\substack{\ell(Q)\leq \varepsilon \\ |\gamma+x|\cap \SH(Q)\neq \emptyset}} \ell(Q) =o(1)
\end{align}

Let $x\in \R^n$ such that \eqref{quasi:zeta} holds. Let $\eta$ be a strong subpath of $\gamma+x$ with distinct endpoints. We claim that Conclusion \ref{conclusion:b}$(\partial \Omega,\rho,\eta)$ is true. It suffices to prove this for a simple weak subpath of $\eta$ with the same endpoints that we still denote by $\eta$.  

For $\varepsilon>0$ we apply Lemma \ref{quasi:subpaths} to the path $\eta$ and obtain a finite collection of paths $\{\eta_i\}_{i\in I}$. To each non-constant path $\eta_i$ we apply Lemma \ref{quasi:chain} and we obtain a path $\eta_i'$ that has the same endpoints as $\eta_i$ and if $|\eta_i'|\cap Q\neq \emptyset$ for some $Q\in \mathcal W(\Omega)$, then $|\eta_i|\cap Q\neq \emptyset$. If $\eta_i$ is constant, we set $\eta_i'=\eta_i$. The last part of Lemma \ref{quasi:subpaths} allows us to replace each $\eta_i$ with $\eta_i'$ while retaining properties \ref{quasi:i}--\ref{quasi:iv}. By Lemma \ref{quasi:subpaths} \ref{quasi:i} and \ref{quasi:ii}, there exists a simple path $\widetilde \eta$ such that
\begin{enumerate}[\upshape(1)]
	\item $|\widetilde \eta|\cap \partial \Omega$ is a finite set (only the endpoints of $\eta_i'$ can lie in $\partial \Omega$), and\smallskip
	\item $\partial \eta=\partial \widetilde \eta$ and $|\widetilde \eta|\subset (|\eta|\setminus \partial \Omega)\cup \bigcup_{i\in I} |\eta_i'|$. \smallskip
\end{enumerate}
We  discard the paths $\eta_i'$ whose trace does not intersect $|\widetilde \eta|$. Furthermore, since the paths $\eta_i'$ satisfy the conclusions of Lemma \ref{quasi:chain}, in combination with Lemma \ref{quasi:subpaths} \ref{quasi:iii} and \ref{quasi:iv}, we have
\begin{enumerate}[\upshape(1)]\setcounter{enumi}{2}\smallskip
	\item $\displaystyle{\sum_{i\in I} \ell(\eta_i')\leq c(n) \sum_{i\in I} \sum_{\substack{\ell(Q)\leq \varepsilon \\ |\eta_i'|\cap Q\neq \emptyset}} \ell(Q) \leq  c(n)\sum_{\substack{\ell(Q)\leq \varepsilon \\ |\eta|\cap \SH(Q)\neq \emptyset}} \ell(Q) },\quad$ and 
	\item $\displaystyle{\sum_{i\in I} \int_{\eta_i'}\rho\, ds \leq c(n) \sum_{i\in I }\sum_{\substack{\ell(Q)\leq \varepsilon\\ |\eta_i'|\cap Q\neq \emptyset}} \|\rho\|_{L^n(Q)} \leq c(n)\sum_{\substack{\ell(Q)\leq \varepsilon \\ |\eta|\cap \SH(Q)\neq \emptyset}} \|\rho\|_{L^n(Q)}}$.
\end{enumerate}
Note that (1) implies \ref{criterion:bi}, and (2), (3), (4), together with \eqref{quasi:zeta}, imply \ref{criterion:biii} and \ref{criterion:biv}. Finally, given an open neighborhood $U$ of $|\eta|$, if $\varepsilon$ is sufficiently small, the sum of the lengths of $\eta_i'$ is small, and thus $\bigcup_{i\in I} |\eta_i'|\subset U$; this proves \ref{criterion:bii}. 
\end{proof}

\subsection{\texorpdfstring{Projections to axes and $\NED$ sets}{Projections to axes and NED sets}}

We present a result for sets whose projections to the coordinate axes have measure zero. This result will be crucial for the proof of Theorem \ref{example:ned}.

\begin{theorem}\label{theorem:projection}
Let $E\subset \R^2$ be a set whose projection to each coordinate direction has $1$-measure zero. Then $E\in \NED^w$. If, in addition, $m_2(\br E)=0$, then $E\in \NED$.  
\end{theorem}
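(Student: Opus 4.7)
The plan is to adapt the classical Ahlfors--Beurling argument for closed sets \cite{AhlforsBeurling:Nullsets}*{Theorem 10}, exploiting the axis-parallel structure suggested by the projection hypothesis. Write $A_x$ and $A_y$ for the projections of $E$ to the horizontal and vertical axes; by hypothesis $m_1(A_x)=m_1(A_y)=0$, so for a.e.\ $y_0 \in \R$ the horizontal line $\R \times \{y_0\}$ is disjoint from $E$, and for a.e.\ $x_0 \in \R$ the vertical line $\{x_0\}\times \R$ is disjoint from $E$. Consequently, $\mathcal{F}_0(E)$ contains every polygonal ``staircase'' path whose horizontal segments have $y$-coordinate outside $A_y$ and whose vertical segments have $x$-coordinate outside $A_x$. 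Note also that $m_2(E)=0$ by Fubini.

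For the weak statement $E \in \NED^w$, I would fix disjoint continua $F_1,F_2 \subset \R^2$ and a non-negative $\rho \in L^2(\R^2)$ admissible for $\Gamma(F_1,F_2;\R^2) \cap \mathcal{F}_0(E)$, and show that $C\rho$ is admissible for the full family $\Gamma(F_1,F_2;\R^2)$ for some absolute constant $C$, which gives $\NED^w$ with $M=C^2$. The construction: for modulus-a.e.\ $\gamma \in \Gamma(F_1,F_2;\R^2)$---the exceptional family being the curves with $\h^1(|\gamma|\cap((A_x\times \R)\cup(\R\times A_y)))>0$ (of $2$-modulus zero by \ref{m:positive_length}, since $m_2((A_x\times\R)\cup(\R\times A_y))=0$), together with the Fuglede-type exceptional set from Theorem \ref{theorem:lebesgue_differentiation_lp} associated to $\rho$---I would build a staircase replacement $\widetilde\gamma \in \mathcal{F}_0(E)$ with $\partial\widetilde\gamma=\partial\gamma$. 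Parametrize $\gamma\colon [0,1]\to \R^2$ by arclength, partition $[0,1]$ finely at times $t_i$ with $\gamma(t_i)\notin (A_x\times \R)\cup(\R\times A_y)$ (possible for a.e.\ $t_i$ by the exceptional-set hypothesis), and replace each $\gamma|_{[t_i,t_{i+1}]}$ with a horizontal segment in $\R\times \{y(t_i)\}$ joined to a vertical segment in $\{x(t_{i+1})\}\times \R$. The taxicab inequality $|x(t_{i+1})-x(t_i)|+|y(t_{i+1})-y(t_i)| \leq \sqrt{2}\,\ell(\gamma|_{[t_i,t_{i+1}]})$, combined with a Fuglede-type averaging of $\rho$ along axis-parallel slices in a narrow tube around $\gamma$, would yield $\int_{\widetilde\gamma}\rho\,ds \leq C\int_\gamma \rho\,ds + \varepsilon$ for arbitrary $\varepsilon>0$. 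Since $\int_{\widetilde\gamma}\rho\,ds\geq 1$ by admissibility, $\int_\gamma C\rho\,ds \geq 1-\varepsilon$, and letting $\varepsilon\to 0$ completes the step.

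For $E \in \NED$ under the additional hypothesis $m_2(\br E)=0$, the previous step gives $E \in \NED^w(\R^2)$, and since $\R^2 \supset \br E$ this is condition \ref{cc:iii} of Theorem \ref{theorem:criterion_compact}. Under $m_2(\br E)=0$, the equivalences in that theorem upgrade this directly to $E \in \NED$.

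The main obstacle is justifying the $\rho$-integral bound $\int_{\widetilde\gamma}\rho\,ds \leq C\int_\gamma \rho\,ds + \varepsilon$: the staircase's segments lie on lines distinct from $\gamma$, so the comparison cannot be purely geometric and requires a Fuglede-type argument applied to small horizontal and vertical translates, in the spirit of Lemma \ref{modulus:avoid}. The partition times $t_i$ must simultaneously have generic coordinates (outside $A_x\cup A_y$), satisfy a Lebesgue differentiation property for $\rho$ along axis-parallel slices, and be fine enough to control the taxicab-to-Euclidean ratio; verifying compatibility of these requirements is the crux of the argument.
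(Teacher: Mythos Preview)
Your reduction of the $\NED$ case to $\NED^w$ via Theorem \ref{theorem:criterion_compact} \ref{cc:iii} is correct.

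For $\NED^w$, however, the staircase approach has a genuine gap, and it is precisely the one you flag: the inequality $\int_{\widetilde\gamma}\rho\,ds \leq C\int_\gamma \rho\,ds + \varepsilon$. The staircase $\widetilde\gamma$ lives on axis-parallel segments that are \emph{disjoint} from $|\gamma|$ except at the partition nodes, and for a generic $\rho\in L^2$ there is no relationship between $\rho$ on those segments and $\rho$ on $\gamma$. An averaging argument in the spirit of Theorem \ref{theorem:lebesgue_differentiation_lp} compares $\int_\gamma \rho$ to $\fint_{B(0,r)}\int_{\gamma+x}\rho\,ds\,dx$, but your staircase is not a translate $\gamma+x$; refining the partition does not make it one, and averaging over translates of the \emph{staircase} would move its endpoints off $F_1,F_2$. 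There is also an endpoint issue: the first and last segments emanate from $\gamma(0)\in F_1$ and $\gamma(1)\in F_2$, whose coordinates you cannot choose generically, so those segments may meet $E$ away from the endpoint.

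The paper sidesteps both obstacles by abandoning curve modification and using a capacity argument. It fattens $F_1,F_2$ to $F_1^r,F_2^r$, builds an increasing exhaustion $X_m=(\R^2\setminus V_m)\cup F_1^r\cup F_2^r$ by closed sets with $V_m\supset E$ having small projections, and defines $g_m(x)=\min\bigl\{1,\inf_{\gamma_x}\int_{\gamma_x}\rho\,ds\bigr\}$ over paths in $X_m$ from $F_1^r$ to $x$, so that $\rho$ is an upper gradient of $g_m$ on $X_m$. The projection hypothesis enters exactly once: a.e.\ axis-parallel line lies in some $X_m$, so the monotone limit $g=\lim_m g_m$ is ACL with $|\nabla g|\leq \sqrt{2}\rho$ a.e.\ (and $|\nabla g|\leq \rho$ off $\br E$). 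Mollifying $g$ yields an admissible function for the full family $\Gamma(F_1,F_2;\R^2)$, giving $c_0=2$ in general and $c_0=1$ when $m_2(\br E)=0$; the passage $r\to 0$ uses Lemma \ref{lemma:fattening_proj}. In short, the paper transfers the problem from curves to a potential, where the axis-parallel hypothesis becomes an ACL condition---this is what buys the $\rho$-comparison that your direct approach lacks.
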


This was proved for closed sets by Ahlfors--Beurling \cite{AhlforsBeurling:Nullsets}*{Theorem 10}. For sets that are not closed the proof is substantially more complicated. Note that $\br E$ is not $\NED$ in general even if it has measure zero. For example take $E= \Q \times \{0\}$, which is $\NED$ by Theorem \ref{theorem:zero}. However, its closure is $\br E=\R\times \{0\}$, which has measure zero, but it is not $\NED$ since it is not totally disconnected. 

We prove Theorem \ref{theorem:projection} only in dimension $2$, because of the following lemma, whose statement is very similar to Lemma \ref{lemma:loewner}. Except for that lemma, none of the arguments in the proof of Theorem \ref{theorem:projection} depend on the dimension.

\begin{lemma}\label{lemma:loewner_proj}
Let $E\subset \R^2$ be a set whose projection to each coordinate direction has $1$-measure zero. For $t>0$, denote by $Q_t$ the open square centered at the origin with side length $t$ and sides parallel to the coordinate axes. Let $0<r<R$ and suppose that $F_1,F_2\subset \R^2$ are disjoint continua such that $\partial Q_t$ intersects both $F_1$ and $F_2$ for every $r<t<R$. Then 
\begin{align*}
\md_n (\Gamma(F_1,F_2; Q_R\setminus \br{Q_r}) \cap \mathcal F_0(E))\geq \frac{1}{4} \log\left(\frac{R}{r}\right).
\end{align*}
\end{lemma}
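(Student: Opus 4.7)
The plan is to bound $\int \rho^2\, dx$ from below for every Borel function $\rho$ admissible for the family $\Gamma := \Gamma(F_1,F_2; Q_R\setminus \br{Q_r})\cap \mathcal F_0(E)$. The crucial use of the projection hypothesis is the observation that, since both $\pi_1(E)$ and $\pi_2(E)$ have $\h^1$-measure zero, the set of $t\in (r,R)$ for which $\partial Q_t\cap E\neq \emptyset$ has $\h^1$-measure zero in $\R$: indeed, $\partial Q_t$ is contained in the four lines $\{x_1=\pm t\}\cup\{x_2=\pm t\}$, so $\partial Q_t\cap E=\emptyset$ whenever $t$ lies outside $\pi_1(E)\cup \pi_2(E)$. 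Call such $t\in (r,R)$ \emph{good}.

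For each good $t$ the hypothesis gives points $p_i(t)\in F_i\cap \partial Q_t$ for $i=1,2$, and these are distinct since $F_1\cap F_2=\emptyset$. As $\partial Q_t$ is a Jordan curve, there is a simple polygonal arc $\alpha_t\subset \partial Q_t$ from $p_1(t)$ to $p_2(t)$; its length satisfies $\ell(\alpha_t)\leq \h^1(\partial Q_t)=4t$. Since $|\alpha_t|\subset \partial Q_t\subset Q_R\setminus \br{Q_r}$ and $|\alpha_t|\cap E=\emptyset$, we have $\alpha_t\in \Gamma$, so the admissibility of $\rho$ together with the Cauchy--Schwarz inequality yields
\begin{align*}
1\leq \int_{\alpha_t}\rho\, ds \leq \sqrt{4t}\,\left(\int_{\alpha_t}\rho^2\, ds\right)^{1/2}.
\end{align*}
Using Lemma \ref{lemma:paths} \ref{lemma:paths:iv} (since $\alpha_t$ is simple) and the inclusion $|\alpha_t|\subset \partial Q_t$, I conclude that $\int_{\partial Q_t}\rho^2\, d\h^1 \geq \int_{\alpha_t}\rho^2\, ds \geq 1/(4t)$ for every good $t$.

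The final step is an integration in $t$. Partition $Q_R\setminus \br{Q_r}$ (up to a set of measure zero) into the four wedges cut out by the diagonals $x_2=\pm x_1$; on each wedge, the coordinate $t=\|x\|_\infty$ coincides with one of the Cartesian coordinates (say $x_1$ on the right wedge $\{x_1>|x_2|\}$) and the other coordinate parametrizes the corresponding face of $\partial Q_t$ with unit Jacobian. Summing over the four wedges gives
\begin{align*}
\int_{Q_R\setminus \br{Q_r}}\rho^2\, dx = \int_r^R \int_{\partial Q_t}\rho^2\, d\h^1\, dt \geq \int_r^R \frac{1}{4t}\, dt = \frac{1}{4}\log\!\left(\frac{R}{r}\right),
\end{align*}
and infimizing over admissible $\rho$ yields the lemma. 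The argument presents no genuine obstacle: it is the standard modulus computation for a square annulus, made possible by the fact that the projection hypothesis forces $\partial Q_t\subset \R^2\setminus E$ for a.e.\ $t$, so the natural arcs in $\partial Q_t$ joining $F_1$ to $F_2$ automatically lie in $\mathcal F_0(E)$ and restricting the family to $\mathcal F_0(E)$ costs nothing.
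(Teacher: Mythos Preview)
Your proof is correct and follows essentially the same route as the paper's: both observe that $\partial Q_t\cap E=\emptyset$ for a.e.\ $t\in(r,R)$ by the projection hypothesis, use an arc of $\partial Q_t$ joining $F_1$ to $F_2$ as a member of the restricted family, apply Cauchy--Schwarz with $\ell(\partial Q_t)=4t$, and integrate in $t$ via Fubini. The only slip is cosmetic: since $Q_t$ has side length $t$, the faces lie on $\{x_i=\pm t/2\}$ rather than $\{x_i=\pm t\}$, but this does not affect the measure-zero conclusion or any of the subsequent estimates.
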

\begin{proof}
We have $\partial Q_t\cap E=\emptyset$ for a.e.\ $t\in (r,R)$. Let $\rho$ be an admissible function for $\Gamma(F_1,F_2; Q_R\setminus \br{Q_r}) \cap \mathcal F_0(E)$. Then 
\begin{align*}
	 1\leq \int_{\partial Q_t} \rho\, ds\leq \left(\int_{\partial Q_t} \rho^2\, ds \right)^{1/2} \sqrt{4t}
\end{align*} 
for a.e.\ $t\in (r,R)$. By integration and Fubini's theorem, we have
\begin{align*}
\frac{1}{4}\log \left(\frac{R}{r}\right)=\int_{r}^R\frac{1}{4t}\, dt\leq  \int_{Q_R\setminus \br{Q_r}} \rho^2
\end{align*}
Infimizing over $\rho$ gives the conclusion.
\end{proof}

The proof of the following lemma is exactly the same as the proof of Lemma \ref{lemma:fattening}, where one uses Lemma \ref{lemma:loewner_proj} in place of Lemma \ref{lemma:loewner}; see Remark \ref{remark:fattening}. 

\begin{lemma}\label{lemma:fattening_proj}
Let $E\subset \R^2$ be set whose projection to each coordinate direction has $1$-measure zero. Then for every open set $U\subset \R^2$ and for every pair of non-empty, disjoint continua $F_1,F_2 \subset  U$ we have
\begin{align*}
\md_2(\Gamma(F_1,F_2;U) \cap \mathcal F_0(E))=\lim_{r\to 0} \md_2 (\Gamma(F_1^r,F_2^r;U) \cap \mathcal F_0(E)).
\end{align*}
\end{lemma}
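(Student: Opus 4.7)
The plan is to follow verbatim the scheme of Lemma \ref{lemma:fattening}, replacing spherical rings $A(x;r,R)$ with rectangular rings $Q_R(x)\setminus \br{Q_r(x)}$ (where $Q_t(x)$ denotes the open square of side length $t$ centered at $x$, with sides parallel to the coordinate axes), and invoking Lemma \ref{lemma:loewner_proj} in place of Lemma \ref{lemma:loewner}. As indicated in Remark \ref{remark:fattening}, the fattening argument requires only properties \ref{perturbations:3}, \ref{perturbations:4}, and a Loewner-type lower bound for $\md_2$ restricted to the family in question, and by Remark \ref{remark:p3p4} the family $\mathcal F_0(E)$ automatically satisfies \ref{perturbations:3} and \ref{perturbations:4}.

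The inequality $\leq$ is immediate from monotonicity, since $\Gamma(F_1,F_2;U)\cap \mathcal F_0(E)\subset \Gamma(F_1^r,F_2^r;U)\cap \mathcal F_0(E)$ for each $r>0$. For the reverse inequality, the degenerate case where some $F_i$ reduces to a single point is handled as in Lemma \ref{lemma:fattening} via the explicit modulus formula \ref{m:ring} for a spherical annulus. So assume $\diam(F_i)>0$ for $i=1,2$. Fix $\rho\in L^2(\R^2)$ admissible for $\Gamma(F_1,F_2;U)\cap \mathcal F_0(E)$ and $q\in (0,1)$, and suppose for contradiction that there exist $r_k\to 0^+$ and paths $\gamma_{r_k}\in \Gamma(F_1^{r_k},F_2^{r_k};U)\cap \mathcal F_0(E)$ with $\int_{\gamma_{r_k}}\rho\, ds<q$. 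Using \ref{perturbations:3}, pass to subpaths whose interior avoids $F_1\cup F_2$, and fix $R_0>0$ small enough that $F_i^{R_0}\subset U$, $\diam(F_i)>2R_0$, and $F_1^{R_0}\cap F_2^{R_0}=\emptyset$.

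For each sufficiently small $r_k$ and $i=1,2$, choose $x_{i,k}\in F_i$ and a subpath of $\gamma_{r_k}$ whose one endpoint lies in $\br{Q_{r_k}(x_{i,k})}$ while the other lies outside $Q_{R_0}(x_{i,k})$; the large diameter of $F_i$ guarantees that both $|\gamma_{r_k}|$ and $F_i$ cross every boundary $\partial Q_t(x_{i,k})$ for $r_k<t<R_0$. By (translation invariance of) Lemma \ref{lemma:loewner_proj},
\begin{align*}
\md_2(\Gamma(|\gamma_{r_k}|, F_i;\, Q_{R_0}(x_{i,k})\setminus \br{Q_{r_k}(x_{i,k})})\cap \mathcal F_0(E)) \geq \tfrac{1}{4}\log(R_0/r_k),
\end{align*}
which tends to $\infty$ as $r_k\to 0$. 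Set $\varepsilon=(1-q)/2$. By Lemma \ref{lemma:path_bound_modulus} \ref{lemma:path_bound_modulus:ii}, if $r_k$ is sufficiently small there exist paths $\gamma_i^{(k)}\in \Gamma(|\gamma_{r_k}|,F_i;\,Q_{R_0}(x_{i,k})\setminus \br{Q_{r_k}(x_{i,k})})\cap \mathcal F_0(E)$, $i=1,2$, with $\int_{\gamma_i^{(k)}}\rho\, ds<\varepsilon$. Concatenate $\gamma_1^{(k)}$ and $\gamma_2^{(k)}$ with a suitable subpath of $\gamma_{r_k}$; the common endpoints lie on $|\gamma_{r_k}|\setminus \partial \gamma_{r_k}$ and hence, since $\gamma_{r_k}\in \mathcal F_0(E)$, avoid $E\subset \partial \mathcal F_0(E)$, so property \ref{perturbations:4} allows the concatenation to remain in $\mathcal F_0(E)$. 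The resulting curve lies in $\Gamma(F_1,F_2;U)\cap \mathcal F_0(E)$ and has $\rho$-integral at most $q+2\varepsilon=1$, contradicting admissibility of $\rho$. Letting $q\to 1$ and infimizing over $\rho$ gives the claimed inequality.

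The only point that demands care is the geometric setup of step three: ensuring that, after choosing $x_{i,k}\in F_i$, both continua $|\gamma_{r_k}|$ and $F_i$ genuinely cross every rectangular sphere $\partial Q_t(x_{i,k})$ for $r_k<t<R_0$. This is completely analogous to the spherical-ring setup in Lemma \ref{lemma:fattening}; the remainder of the argument is formally identical.
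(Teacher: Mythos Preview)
Your proposal is correct and follows exactly the approach the paper indicates: the paper's own proof is nothing more than a pointer to Remark \ref{remark:fattening}, stating that the argument of Lemma \ref{lemma:fattening} goes through verbatim once Lemma \ref{lemma:loewner_proj} is substituted for Lemma \ref{lemma:loewner}, and you have written out precisely that substitution, correctly invoking Remark \ref{remark:p3p4} for properties \ref{perturbations:3} and \ref{perturbations:4} of $\mathcal F_0(E)$. The only cosmetic point is that the endpoint of $\gamma_{r_k}$ near $F_i$ lies within Euclidean distance $r_k$ of $x_{i,k}$, hence in $\br{Q_{2r_k}(x_{i,k})}$ rather than $\br{Q_{r_k}(x_{i,k})}$, so the rectangular ring should be $Q_{R_0}(x_{i,k})\setminus \br{Q_{2r_k}(x_{i,k})}$; this changes nothing since $\tfrac14\log(R_0/(2r_k))$ still tends to infinity.
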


\begin{proof}[Proof of Theorem \ref{theorem:projection}]
Let $F_1,F_2\subset \R^2$ be non-empty, disjoint continua.  We fix a small $r>0$ so that $F_1^r\cap F_2^r=\emptyset$ and let $\rho\colon \R^2\to [0,\infty]$ be a Borel function with $\rho\in L^2(\R^2)$ that is admissible for $\Gamma(F_1^r,F_2^r;\R^2)\cap \mathcal F_0(E)$. 

Consider a sequence of open sets $\{V_m\}_{m\in \N}$, such that $E\subset V_{m+1}\subset V_m\subset N_{1/m}(E)$ and such that the projection of $V_m$ to each coordinate axis has measure less than $1/m$ for each $m\in \N$. Observe that $\bigcap_{m=1}^\infty V_m$ has $2$-measure zero. For each $m\in \N$ define the closed set $X_m= (\R^2\setminus V_m)\cup F_1^r\cup F_2^r$. Note that a.e.\ line parallel to a coordinate direction does not intersect the set $\bigcap_{m=1}^\infty V_m$. Hence,   a.e.\ line parallel to a coordinate direction   lies in $\bigcup_{m=1}^\infty X_m$. Moreover, if $\gamma$ is a rectifiable path in $X_m$ joining $F_1^r$ to $F_2^r$, then $\gamma$ has a subpath in $\R^2\setminus V_{m}\subset \R^2\setminus E$ joining $F_1^r$ to $F_2^r$. By the admissibility of $\rho$, we obtain
\begin{align*}
\int_{\gamma}\rho\, ds \geq1.
\end{align*}

For each $m\in \N$, we define on $X_m$ the function 
\begin{align*}
g_m(x)= \min \left\{ \inf_{\gamma_x}\int_{\gamma_x} \rho\, ds, 1\right\}
\end{align*}
where the infimum is taken over all rectifiable paths $\gamma_x$ in $X_m$ that connect $F_1^r$ to $x$. By \cite{JarvenpaaEtAl:measurability}*{Corollary 1.10}, the function $g_m$ is measurable; the fact that $X_m$ is closed, thus complete, is important here. One can alternatively argue using \cite{HeinonenKoskelaShanmugalingamTyson:Sobolev}*{Lemma 7.2.13, p.~187}. Moreover, we have $0\leq g_m\leq 1$, $g_m=0$ on $F_1^r$, and $g_m=1$ on $F_2^r$.   Next, we show that $\rho$ is an \textit{upper gradient} of $g_m$. That is,
\begin{align*}
|g_m(y)-g_m(x)|\leq \int_{\gamma} \rho\, ds
\end{align*} 
for every rectifiable path $\gamma \colon [0,1]\to  X_m$ with $\gamma(0)=x$ and $\gamma(1)=y$. Since the roles of $x$ and $y$ are symmetric, we will only show that 
$$g_m(y)-g_m(x) \leq \int_{\gamma}\rho\, ds.$$
If $g_m(y)=1$, then this inequality is immediate, since then $g_m(y)-g_m(x)\leq 0$. Suppose $g_m(y)= \inf_{\gamma_y}\int_{\gamma_y} \rho\, ds$. We fix a curve $\gamma_x$ joining $F_1^r$ to $x$. Define a curve $\gamma_y$ by concatenating $\gamma_x$ with $\gamma$. Then 
$$g_m(y) \leq \int_{\gamma_y}\rho\, ds= \int_{\gamma}\rho\, ds + \int_{\gamma_x}\rho\, ds.$$
Infimizing over $\gamma_x$ gives the desired inequality. 

The sequence of sets $\{X_k\}_{k\in \N}$ is increasing. Thus, if $k\geq m$, then $g_k$ is defined by infimizing over a larger collection of paths compared to the definition of $g_m$. It follows that $0\leq g_k\leq g_m$ in $X_m$. Therefore, for each $m\in \N$, $g_k$ converges pointwise as $k\to \infty$ to a measurable function $g$ in $X_m$. Moreover, by the pointwise convergence, for each $m\in \N$ the function $\rho$ is an upper gradient of $g$ in $X_m$, $0\leq g\leq 1$ in $\bigcup_{m=1}^\infty X_m$, $g=0$ in a neighborhood of $F_1$, and $g=1$ in a neighborhood of $F_2$. On $\R^2\setminus \bigcup_{m=1}^\infty X_m \subset \bigcap_{m=1}^\infty V_m$ we define $g=0$. Thus, we have extended $g$ to a measurable function in $\R^2$. 
 
We claim that $g$ is absolutely continuous in a.e.\ line segment parallel to a coordinate direction. Let $L$ be a line segment parallel to $e_1=(1,0)$.  By construction, for $\h^1$-a.e.\ $z\in \{e_1\}^{\perp}$  the line segment $L+z$ lies in $ X_m$ for some $m\in \N$. Moreover, since $\rho\in L^2(\R^2)$, we have $\int_{L+z} \rho\, ds<\infty$ for a.e.\ $z\in \{e_1\}^{\perp}$.  By the upper gradient inequality we conclude that $g$ is absolutely continuous in $L+z$ for a.e.\ $z\in \{e_1\}^{\perp}$ and $|g_x|\leq \rho$ almost everywhere on $L+z$. This implies that $|g_x|\leq \rho$ a.e. Similarly, $|g_y|\leq \rho$ a.e.  Thus, $g$ lies in the classical Sobolev space $W^{1,2}_{\loc}(\R^2)$ and $|\nabla g|\leq \sqrt{2}\rho$ a.e. in $\R^2$; see \cite{Ziemer:Sobolev}*{Theorem 2.1.4, p.~44}.

If $\R^2 \setminus \br E\neq \emptyset$, then each $x\in \R^2\setminus \br E$ has a bounded open neighborhood $Y$ that is disjoint from $V_m$ for sufficiently large $m$, since $V_m\subset N_{1/m}(\br E)$. Thus, $Y\subset X_m$ and $g\in W^{1,2}(Y)$. Since $\rho$ is an upper gradient of $g$ in $Y$, by \cite{Hajlasz:Sobolev}*{Corollary 7.15} we conclude that $|\nabla g|\leq \rho$ a.e.\ in $Y$. Therefore, $|\nabla g|\leq \rho$ a.e.\ in $\R^2\setminus \br E$. Summarizing, at a.e.\ point of $\R^2$ we have
$$|\nabla g| \leq \rho \x_{\R^2\setminus \br E} + \sqrt{2}\rho\x_{\br E}.$$

Since $g=0$ in a neighborhood of $F_1$ and $g=1$ in a neighborhood of $F_2$, for each $\varepsilon>0$ there exists (by mollification) a smooth function $g_{\varepsilon}$ on $\R^2$ with the same properties and with $\|\nabla g_{\varepsilon}\|_{L^2(\R^2)}^2<\|\nabla g\|_{L^2(\R^2)}^2+\varepsilon$; see \cite{Ziemer:Sobolev}*{Lemma 2.1.3, p.~43}. It is immediate that $|\nabla g_{\varepsilon}|$ is admissible for $\Gamma(F_1,F_2;\R^2)$. Thus, 
\begin{align*}
\md_2\Gamma(F_1,F_2;\R^2)\leq \|\nabla g_{\varepsilon}\|_{L^2(\R^2)}^2 \leq \|\rho\|_{L^2(\R^2\setminus \br E)}^2+2\|\rho\|_{L^2(\br E)}^2+\varepsilon.
\end{align*}
First we let $\varepsilon\to 0$, and then we infimize over $\rho$ to obtain
\begin{align*}
\md_2\Gamma(F_1,F_2;\R^2) \leq c_0\md_2(\Gamma(F_1^r,F_2^r;\R^2)\cap \mathcal F_0(E)),
\end{align*}
where $c_0=1$ if $m_2(\br E)=0$ and $c_0=2$ otherwise. Now, we let $r\to 0$ and by Lemma \ref{lemma:fattening_proj} we obtain
\[\md_2 \Gamma(F_1,F_2;\R^2) \leq c_0\md_2(\Gamma(F_1,F_2;\R^2)\cap \mathcal F_0(E)).\qedhere\]
\end{proof}

\subsection{\texorpdfstring{A non-measurable $\CNED$ set}{A non-measurable CNED set}}\label{section:nonmeasurable}

Sierpi\'nski \cite{Sierpinski:nonmeasurable}, using the axiom of choice but not the continuum hypothesis, constructed a striking example of a non-mea\-sur\-able set $E\subset \R^2$ such that every line intersects $E$ in at most two points. This example served at that time as a counterexample to the converse of Fubini's theorem: if the slices of a planar set are measurable, then is the whole set measurable? We show here that Sierpi\'nski's set is $\CNED$.  Thus, the assumption of measurability in Lemma \ref{lemma:measure_zero} is necessary in order to derive that $\CNED$ sets have measure zero. 

\begin{prop}\label{prop:nonmeasurable}
There exists a non-measurable set $E\subset \R^2$ that is $ \CNED$. 
\end{prop}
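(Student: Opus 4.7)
The plan is to verify condition~\ref{cc:v} of Theorem~\ref{theorem:criterion_compact}, which via the implication \ref{cc:v}$\Rightarrow$\ref{cc:ii} (valid for all sets $E\subset\R^n$) yields $E\in\CNED$. Note that the $P$-family criterion of Theorem~\ref{theorem:criterion_pfamily} is not directly available here, since $E$ is non-measurable and so $m_2(\br E)>0$.

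The key structural fact about Sierpi\'nski's set is that every line in $\R^2$ meets $E$ in at most two points. Consequently, every countable concatenation of line segments---in particular every simple polygonal path---lies automatically in $\mathcal F_\sigma(E)$. The strategy is therefore to construct, for a generic rectifiable path $\gamma$, a polygonal approximation $\widetilde\gamma$ satisfying Conclusion~\ref{conclusion:b}$(E,\rho,\gamma)$, so that condition~\ref{criterion:bi} comes for free.

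Fix a non-negative Borel $\rho\in L^2_{\loc}(\R^2)$, and let $\Gamma_0$ be the path family with $\md_2\Gamma_0=0$ from Theorem~\ref{theorem:lebesgue_differentiation_lp} applied to $\rho$, enlarged to include non-rectifiable paths and paths of infinite $\rho$-length. For $\gamma\notin\Gamma_0$ and $\varepsilon>0$, consider the mollification $\rho_\eta(y)=\fint_{B(y,\eta)}\rho$, which is continuous. Theorem~\ref{theorem:lebesgue_differentiation_lp} gives $\bigl|\int_\gamma \rho_\eta\,ds-\int_\gamma\rho\,ds\bigr|<\varepsilon/3$ for $\eta$ small. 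Taking a sufficiently fine chord polygonal approximation $\widetilde\gamma_0$ of $\gamma$ (with vertices on $|\gamma|$), the continuity of $\rho_\eta$ yields $\bigl|\int_{\widetilde\gamma_0}\rho_\eta\,ds-\int_\gamma \rho_\eta\,ds\bigr|<\varepsilon/3$. Finally, by slightly perturbing the vertices of $\widetilde\gamma_0$ to avoid the Lebesgue differentiation exceptional sets for each individual chord segment, one obtains $\bigl|\int_{\widetilde\gamma_0}\rho\,ds-\int_{\widetilde\gamma_0}\rho_\eta\,ds\bigr|<\varepsilon/3$. Summing the three estimates yields~\ref{criterion:biv}; conditions~\ref{criterion:bii} and~\ref{criterion:biii} follow from the fineness of the partition and the chord-length inequality.

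The principal technical obstacle is coordinating the three approximation parameters---partition size, mollification scale $\eta$, and vertex perturbation magnitude---so that all three error bounds hold simultaneously, while ensuring that the perturbed vertices lie outside the various Lebesgue differentiation exceptional families for individual segments. Once this synchronization is achieved, the resulting polygonal $\widetilde\gamma$ lies in $\mathcal F_\sigma(E)$ and satisfies Conclusion~\ref{conclusion:b}, establishing~\ref{cc:v} and hence $E\in\CNED$.
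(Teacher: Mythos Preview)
Your polygonal-approximation strategy correctly exploits the key feature of Sierpi\'nski's set---any countable concatenation of segments meets $E$ in at most countably many points, so \ref{criterion:bi} comes for free---but step~3 of your sketch contains a genuine gap. Placing each chord segment outside the exceptional family $\Gamma_0$ of Theorem~\ref{theorem:lebesgue_differentiation_lp} only guarantees that $\int_{\widetilde\gamma_0}\rho_r\,ds \to \int_{\widetilde\gamma_0}\rho\,ds$ as $r\to 0$; it yields no estimate at the specific scale $\eta$ already fixed in step~1. The convergence in that theorem is not uniform in the curve, so the order of quantifiers is wrong: you choose $\eta$ first (step~1), then build $\widetilde\gamma_0$ depending on $\eta$ (step~2), but step~3 would require $\eta$ small depending on $\widetilde\gamma_0$. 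This is precisely the circularity you flag as the ``principal technical obstacle,'' and the proposal does not resolve it. One can repair the argument---for instance by averaging $\int_{\widetilde\gamma_0+x}\rho\,ds$ over $x\in B(0,\eta)$ (which has mean $\int_{\widetilde\gamma_0}\rho_\eta\,ds$) and then attaching short radial connectors at the endpoints chosen via Lemma~\ref{modulus:avoid}---but this is nontrivial additional work and amounts to reproving a polygonal-density result for conformal modulus.

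The paper's proof avoids this entirely by citing such a result as a black box: Aseev \cite{Aseev:nedhyperplane}*{Theorem 2.1} gives $\md_2\Gamma(F_1,F_2;\R^2) = \md_2(\Gamma(F_1,F_2;\R^2)\cap\mathcal F)$, where $\mathcal F$ is the family of curves that are piecewise linear in $\R^2\setminus(F_1\cup F_2)$. Since each line meets $E$ in at most two points, every curve in $\mathcal F$ meets $E$ in a countable set, so $\mathcal F\subset\mathcal F_\sigma(E)$ and the $\CNED$ equality follows in one line. Your route via condition~\ref{cc:v} of Theorem~\ref{theorem:criterion_compact} is pathwise and more constructive, but it demands the very approximation fact that Aseev already packages.
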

\begin{proof}
Let $E$ be the non-measurable set of Sierpi\'nski. Let  $F_1,F_2\subset \R^2$ be non-empty, disjoint continua. We will show that
\begin{align*}
\md_2\Gamma(F_1,F_2;\R^2)\leq  \md_2(\Gamma(F_1,F_2;\R^2)\cap \mathcal F_{\sigma}(E)).
\end{align*}
According to a remarkable result of Aseev \cite{Aseev:nedhyperplane}*{Theorem 2.1}, we have 
$$\md_2\Gamma(F_1,F_2;\R^2)= \md_2(\Gamma(F_1,F_2;\R^2)\cap \mathcal F),$$
where $\mathcal F$ is the family of \textit{piecewise linear curves with respect to} $\R^2\setminus (F_1\cup F_2)$; that is, $\gamma \in \mathcal F$ if each point of $|\gamma|\setminus (F_1\cup F_2)$ has a neighborhood $V$ such that $|\gamma|\cap V$ consists of finitely many straight line segments. Hence, it suffices to show that 
$$\Gamma(F_1,F_2;\R^2)\cap \mathcal F\subset \Gamma(F_1,F_2;\R^2)\cap \mathcal F_{\sigma}(E).$$
Let $\gamma\in \Gamma(F_1,F_2;\R^2)\cap \mathcal F$. By the properties of the set $E$, and since $\gamma$ is piecewise linear, the set $|\gamma|\cap E$ is countable. Hence, $\gamma\in \mathcal F_{\sigma}(E)$, as desired.
\end{proof}

\section{Examples of non-negligible sets}\label{section:nonexamples}

\begin{proof}[Proof of Proposition \ref{proposition:packings}]
Let $E$ be the residual set of a packing as in the statement. Let $\Gamma_0$ denote the family of non-constant curves that intersect the set
$$S=\bigcup_{\substack{i,j\in \N\\  i\neq j}}(\partial D_i\cap \partial D_j).$$ 
Since $S$ is countable, we have $\md_n\Gamma_0=0$.  Consider continua $F_1,F_2$, contained in $D_1,D_2$, respectively. The claim that $E\notin \CNED$ follows once we establish that
\begin{align}\label{prop:packing:claim}
\Gamma(F_1,F_2;D_0) \cap \mathcal F_{\sigma}(E)\subset \Gamma_0.
\end{align} 

Let $\gamma\in \Gamma(F_1,F_2;D_0) \setminus \Gamma_0$. By considering a weak subpath, we assume that $\gamma$ is a simple path with the same properties. We consider an injective parametrization $\gamma\colon [0,1]\to D_0$. Let $A$ be the set of $t\in [0,1]$ such that there exists $\delta>0$ and $i\in \N$ with the property that $\gamma( (t-\delta,t)\cup (t,t+\delta))\subset D_i$ and $\gamma(t)\in \partial D_i$; that is, $\gamma$ ``bounces" on $\partial D_i$ at time $t$. Then $A$ is countable and relatively open in $\gamma^{-1}(E)$. 

We claim that the compact set $B=\gamma^{-1}(E)\setminus A$ is non-empty and perfect, in which case, it is uncountable. Therefore, $|\gamma|\cap E$ is uncountable and $\gamma \notin \mathcal F_{\sigma}(E)$, which completes the proof of \eqref{prop:packing:claim}. To see that $B$ is non-empty, let $t=\sup\{a\in [0,1]: \gamma(a)\in D_1\}$. Then $\gamma(t)\in \partial D_1$, so $t\in \gamma^{-1}(E)$, $\gamma((t-
\delta,t))\cap {D_1}\neq \emptyset$ for every $\delta>0$, and $\gamma((t,1])\cap {D_1}=\emptyset$. Thus, $t\notin A$ and $t\in B$, showing that $B\neq \emptyset$. We now show perfectness. Let  $t\in B$ and let $I\subset [0,1]$ be an open interval containing $t$. 

\smallskip
\noindent
\textit{Case 1:} Suppose that $\gamma(I)\subset E$. Since $A$ is countable, there exists $s\in I\setminus A$, $s\neq t$, with $\gamma(s)\in E$. Hence, $(I\setminus \{t\})\cap B\neq \emptyset$. 

\smallskip
\noindent
\textit{Case 2:} Suppose that $\gamma(I)\cap D_{i_0}\neq\emptyset$ for some $i_0\in \N$ and  $\gamma(t)\notin \partial D_{i_0}$. Without loss of generality $\gamma(s)\in D_{i_0}$ for some $s\in I$ with $s<t$. Let $s_1=\sup\{a\in (s,t): \gamma(a)\in D_{i_0}\}$. Then $\gamma(s_1)\in \partial D_{i_0}$, $s_1\neq t$, $\gamma((s_1-\delta,s_1))\cap D_{i_0}\neq \emptyset$ for every $\delta>0$, and $\gamma((s_1,t))\cap D_{i_0}=\emptyset$. Thus, $s_1\notin A$ and we have $s_1\in I\cap B$, so $(I\setminus \{t\})\cap B\neq \emptyset$. 

\smallskip
\noindent
\textit{Case 3:} Suppose $\gamma(t)\in \partial D_{i_0}$ for some $i_0\in \N$. Since $t\notin A$, there exists an open subinterval of $I$, say $J=(s,t)$, such that $\gamma(J)$ intersects the complement of $D_{i_0}$. If $\gamma(J)\subset E$, then by Case 1 we have $(J\setminus  \{ t \} )\cap B\neq \emptyset$. Suppose that $\gamma(J)\cap D_{i_1}\neq\emptyset$ for some $i_1\neq i_0$. Then $\gamma(t)\notin \partial D_{i_1}$, since $\gamma\notin \Gamma_0$ and $\gamma$ avoids the set $\partial D_{i_1}\cap \partial D_{i_0}$. We are now reduced to Case 2 with $i_1$ in place of $i_0$ and $J$ in place of $I$.
\end{proof}

\begin{remark}\label{remark:prop:packings}
One can relax the assumption of the proposition to requiring that $\partial D_i\cap \partial D_j$, $i\neq j$, has Sobolev $n$-capacity zero (see \cite{HeinonenKoskelaShanmugalingamTyson:Sobolev}*{Section 7.2}). Then the family of curves passing through $\partial D_i\cap \partial D_j$ has $n$-modulus zero.   
\end{remark}

Next, we establish a preliminary elementary result before proving Theorem \ref{example:ned}.

\begin{lemma}\label{lemma:open_compact}
Let $U\subset \R$ be an open set and $E\subset \R$ be a compact set with $m_1(E)>0$. Then there exists a sequence of similarities $\tau_i\colon \R \to \R$, $i\in \N$, and a set $N\subset \R$ with $m_1(N)=0$ such that 
\begin{align*}
U= N\cup \bigcup_{i\in \N} \tau_i(E).
\end{align*}
\end{lemma}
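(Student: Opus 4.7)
The plan is to reduce the problem to the case where $U$ is a bounded open interval, and then exhaust such an interval by similar copies of $E$ via an iteration that decreases the remaining measure by a fixed factor at each stage. The key quantitative input is a density-type estimate that lets one fit a similar copy of $E$ of controlled measure into any open interval.

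More precisely, let $[\alpha,\beta]$ be the convex hull of $E$ and set $\delta = m_1(E)/(\beta-\alpha) > 0$. I claim that for every bounded open interval $J\subset \R$ there is a similarity $\tau\colon \R\to\R$ with $\tau(E)\subset J$ and $m_1(\tau(E))\geq (\delta/2) m_1(J)$. Indeed, set $\varepsilon = m_1(J)/4$, let $\lambda = (m_1(J)-2\varepsilon)/(\beta-\alpha)$, and take $\tau(x) = \lambda(x-\alpha) + c + \varepsilon$, where $c$ is the left endpoint of $J$. Then $\tau([\alpha,\beta])$ sits inside $J$ with an $\varepsilon$-buffer on each side, and $m_1(\tau(E)) = \lambda m_1(E) = \delta(m_1(J)-2\varepsilon) = \delta m_1(J)/2$.

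With this in hand, fix a bounded open interval $I$ and construct a countable disjoint family $\{\tau_k(E)\}_{k\in\N}$ exhausting $I$ up to measure zero by induction. Set $V_0=I$, and given an open set $V_n\subset I$, write it as the disjoint union of its (at most countably many) connected components; in each component $J$ insert a copy $\tau_{n,J}(E)\subset J$ given by the estimate above, and let $V_{n+1}=V_n\setminus \bigcup_J \tau_{n,J}(E)$. Summing the estimate over the components of $V_n$ yields $m_1(V_{n+1})\leq (1-\delta/2) m_1(V_n)$, so $m_1(\bigcap_n V_n)=0$. Relabelling the double-indexed family $\{\tau_{n,J}\}$ as a single sequence $\{\tau_k\}_{k\in\N}$ gives a decomposition $I = N_I \cup \bigcup_k \tau_k(E)$ with $m_1(N_I)=0$.

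Finally, for a general open $U$, decompose it into its connected components; each is an open interval, bounded or unbounded. Subdivide each unbounded component at the integers into bounded open subintervals, absorbing the countable set of integer endpoints into the null set $N$. Applying the previous step to each bounded subinterval and taking the union of the resulting similarity families produces the desired decomposition $U = N \cup \bigcup_i \tau_i(E)$. The construction has no genuine difficulty; the only nontrivial ingredient is the density-type estimate, after which the problem reduces to a geometric-series exhaustion together with routine bookkeeping to handle arbitrarily many components of $U$ and arbitrarily many components that arise at each stage of the exhaustion.
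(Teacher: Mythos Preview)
Your proof is correct and follows essentially the same strategy as the paper's: iteratively insert a similar copy of $E$ into each component of the remaining open set so that the leftover measure decays geometrically, and handle unbounded $U$ by reducing to countably many bounded pieces. The only cosmetic difference is that the paper takes an open interval $(a,b)\supset E$ and maps it onto each component (getting decay factor $1-\lambda$ with $\lambda=m_1(E)/(b-a)$), whereas you use the convex hull with a buffer (getting $1-\delta/2$); this changes nothing of substance.
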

\begin{proof}
Suppose that $E\subset (a,b)$ and set $\lambda=m_1(E)/(b-a)$. Moreover, suppose that $U$ is bounded. Let $U_0=U$ and  let $I_{0,i}$, $i\in \N$, be the connected components of $U_0$, which are bounded open intervals. For each $i\in \N$, define $\tau_{0,i}$ to be the similarity that maps $(a,b)$ onto $I_{0,i}$. Then $m_1( \tau_{0,i}(E))/m_1(I_{0,i})=\lambda$ and 
\begin{align*}
m_1\left( \bigcup_{i\in \N} \tau_{0,i}(E)\right)= \lambda m_1(U_0).
\end{align*} 
We now define $U_1= U_0\setminus \bigcup_{i\in \N} \tau_{0,i}(E)$, which is open, and note that $m_1(U_1)=(1-\lambda)m_1(U_0)$. We proceed in the same way to obtain similarities $\tau_{1,i}$, $i\in \N$, that map $(a,b)$ to the connected components of $U_1$. In the $k$-th step, we obtain the set 
$$U_k=U_0\setminus \bigcup_{j=0}^{k-1}\bigcup_{i\in \N} \tau_{j,i}(E)$$
with $m_n(U_k)=(1-\lambda)^k m_1(U_0)$. Thus, $N=U_0\setminus \bigcup_{j=0}^{\infty}\bigcup_{i\in \N} \tau_{j,i}(E)$ is a null set, as desired. If $U$ is unbounded, we can simply write it as a countable union of bounded open sets and apply the previous result to each of them.
\end{proof}


\begin{proof}[Proof of Theorem \ref{example:ned}]
According to a construction of Wu \cite{Wu:cantor}*{Example 2}, there exist Cantor sets $G,F\subset \R$ such that $m_1(G)=0$, $m_1(F)>0$, and $G\times F$ is removable for the Sobolev space $W^{1,2}$. Thus, $G\times F$ is of class $\NED\subset \CNED$; this follows from \cite{VodopjanovGoldstein:removable}.

By Lemma \ref{lemma:open_compact}, there exist countably many scaled and translated copies $F_i\subset (0,1)$, $i\in \N$, of $F$, such that the set $E_1=[0,1]\setminus \bigcup_{i\in \N}F_i$ has $1$-measure zero.  We let $E_2=[0,1]\setminus E_1=\bigcup_{i\in \N}F_i$. We have
\begin{align*}
G\times [0,1] =G\times (E_1\cup E_2) = (G\times E_1) \cup (G\times E_2).
\end{align*}
The set  $G\times [0,1]$ is  not $\QCH$-removable (recall the discussion in the Introduction), so it is not $\CNED$ by Theorem \ref{theorem:removable}; this can also be proved directly. 

On the other hand, for each $i\in \N$ the set $G\times F_i$ is the quasiconformal image of $G\times F$, which is $\NED$. Compact $\NED$ sets are invariant under quasiconformal maps by Corollary \ref{corollary:qc_invariance}  (this also follows from \cite{AhlforsBeurling:Nullsets}*{Theorem 4}). Thus, $G\times F_i$ is  $\NED$ for each $i\in \N$. By Theorem \ref{theorem:unions} we conclude that
\begin{align*}
G\times E_2= \bigcup_{i\in \N} G\times F_i \in \NED.
\end{align*}
Finally, note that the projections of the set $G\times E_1$ to the coordinate axes have measure zero. Moreover, $\br {G\times E_1} \subset G\times [0,1]$, and the latter has $2$-measure zero. By Theorem \ref{theorem:projection} we conclude that $G\times E_1\in \NED$.
\end{proof}

\bibliography{biblio}
\end{document}